\newlength{\defbaselineskip}
\newcommand{\setlinespacing}[1]%
           {\setlength{\baselineskip}{#1 \defbaselineskip}}
\theoremstyle{plain}
\newtheorem{thm}{Theorem}[section]
\newtheorem{cor}[thm]{Corollary}
\newtheorem{lem}[thm]{Lemma}
\newtheorem{prop}[thm]{Proposition}
\theoremstyle{definition}
\newtheorem{defn}{Definition}[section]
\newtheorem{rmk}{Remark}[section]
\newcommand{\eps}{\varepsilon}
\DeclareMathOperator*{\esssup}{ess\,sup}
\DeclareMathOperator*{\essinf}{ess\,inf}
\newcommand{\cO}{\mathcal{O}}
\newcommand{\cL}{\mathcal{L}}
\newcommand{\cB}{\mathcal{B}}
\newcommand{\cS}{\mathcal{S}}
\newcommand{\bP}{\mathbb{P}}
\newcommand{\bR}{\mathbb{R}}
\newcommand{\bN}{\mathbb{N}}
\newcommand{\sF}{\mathscr{F}}
\newcommand{\sP}{\mathscr{P}}
\makeatletter\@addtoreset{equation}{section} \makeatother
\begin{document}

\title{On the Quasi-linear Reflected Backward Stochastic Partial Differential Equations
\footnotemark[1]}

\author{Jinniao Qiu \footnotemark[2] ~\footnotemark[3]~~~ and ~~~~
Wenning Wei\footnotemark[2]}

\footnotetext[1]{ Financial support from the chair Applied Financial Mathematics in  Humboldt-Universit\"{a}t zu Berlin  is gratefully acknowledged.}

\footnotetext[2]{Department of Finance and Control Sciences, School of Mathematical Sciences, Fudan University, Shanghai 200433, China.
\textit{E-mail}: \texttt{qiujinn@gmail.com} (Jinniao Qiu), \texttt{wnwei@fudan.edu.cn} (Wenning Wei).}

\footnotetext[3]{Department of Mathematics, Humboldt-Universit\"{a}t zu Berlin, Unter den Linden 6, 10099 Berlin, Germany}


%
%

\maketitle

\begin{abstract}
 This paper is concerned with the quasi-linear reflected backward stochastic partial differential equation (RBSPDE for short). Basing on the theory of backward stochastic partial differential equation and the parabolic capacity and potential, we first associate the RBSPDE to a variational problem, and via the penalization method, we prove the existence and uniqueness of the solution for linear RBSPDE with Lapalacian leading coefficients. With the continuity approach, we further obtain the well-posedness of general quasi-linear RBSPDEs. Related results, including It\^o formulas for backward stochastic partial differential equations with random measures,  the comparison principle for solutions of RBSPDEs and the connections with reflected backward stochastic differential equations and optimal stopping problems,  are addressed as well.
\end{abstract}

{\bf AMS Subject Classification:} 60H15; 31B15; 35K86

{\bf Keywords:} reflected backward stochastic partial differential equation, reflected backward stochastic differential equation, optimal stopping problem, parabolic capacity and potential, obstacle problem.

\section{Introduction}

Let $(\Omega,\sF,\{\sF_t\}_{t\geq0},\bP)$ be a complete filtered probability space on which is defined an $m$-dimensional standard Brownian
motion $W=\{W_t:t\in[0,\infty)\}$ such that $\{\sF_t\}_{t\geq0}$ is the natural filtration generated by $W$ and augmented by all the
$\bP$-null sets in $\sF$. We denote by $\sP$ the $\sigma$-algebra of the predictable sets on $\Omega\times[0,T]$ associated with $\{\sF_t\}_{t\geq0}$. In this paper, we consider the following quasi-linear RBSPDE:
\begin{equation}\label{RBSPDE}
  \left\{\begin{array}{l}
  \begin{split}
  -du(t,x)=\,&\displaystyle \Bigl[\partial_{x_j}\bigl(a^{ij}(t,x)\partial_{x_i} u(t,x)
        +\sigma^{jr}(t,x) v^{r}(t,x) \bigr)+f(t,x,u(t,x),\nabla u(t,x),v(t,x)) \\
        &\displaystyle +\nabla \cdot g(t,x,u(t,x),\nabla u(t,x),v(t,x))
                \Bigr]\, dt+\mu(dt,x)\\ &\displaystyle
           -v^{r}(t,x)\, dW_{t}^{r}, \quad
                     (t,x)\in Q:=[0,T]\times \mathcal {O};\\
    u(T,x)=\, &G(x), \quad x\in\cO;\\
    u(t,x)\geq\,& \xi(t,x),\,\,d\mathbb{P}\otimes dt\otimes dx-a.e.;\\
    \int_Q \big( u(t,x)&-\xi(t,x) \big)\,\mu(dt,dx)=0,\,a.s.. \quad \quad \textrm{(Skorohod condition)}
    \end{split}
  \end{array}\right.
\end{equation}
Here and in the following the usual summation convention is used, $T\in(0,\infty)$  is a fixed deterministic terminal time, $\cO\subset \bR^d$ is a domain and $\nabla=(\partial_{x_1},\cdots,\partial_{x_d})$ is the gradient operator on $\bR^d$. A solution of RBSPDE (\ref{RBSPDE}) is a random triple $(u,v,\mu)$ defined on $\Omega\times[0,T]\times\bR^d$ such that  (\ref{RBSPDE}) holds in the sense of Definition \ref{def-RBSPDE} in section 4.

Since Bismut's pioneering work \cite{Bismut-BSDE-1973} and Pardoux and Peng's seminal work \cite{ParPeng_90}, the theory of backward stochastic differential equations (BSDEs) has been rather complete and the analysis of backward stochastic differential systems has developed into one of the most innovative and competitive areas of probability theory, both pure and applied. In particular, as a generalization of BSDE, backward stochastic partial differential equation (BSPDE) arises in many applications of probability theory and stochastic processes, for
instance in the nonlinear filtering and the non-Markovian control problems (see \cite{Bensousan_83,EnglezosKaratzas09,Hu_Ma_Yong02,Peng_92,Tang_98,Zhou_92}), and it has already received an extensive attention in literature (see e.g. \cite{DuQiuTang10,DuTangZhang-2013,QiuTangBDSDES2010,
QiuTangMPBSPDE11,QiuTangYou-SPA-2011,Tang-Wei-2013,Tessitore_96,Zhou_92}).

The reflected BSDE is a standard BSDE with an increasing process to keep the solution above a given obstacle. El Karoui et al \cite{El_Karoui-reflec-1997} studied the reflected BSDEs first and associated reflected BSDEs to the optimal stopping problems and the deterministic parabolic variational inequalities. We note that the BSDEs with two obstacles were first studied by Cvitanic and Karaztas \cite{Cvitanic-Karatzas-1996}. Compared with reflected BSDEs, the reflected backward stochastic partial differential equation (RBSPDE) \eqref{RBSPDE} is a BSPDE with reflection and the adapted process $u$  of the solution triple is forced to stay above a given random field $\xi$ (called reflecting obstacle, or simply obstacle) and satisfies the Skorohod condition.

 RBSPDE arises as the so-called backward stochastic parabolic partial differential variational inequality, which is the Hamilton-Jacobi-Bellman equation in the study of optimal stopping problem for stochastic differential equations with the dynamic programming method (see Chang et al \cite{ChangPangYong-2009}). When dealing with the singular control problem of stochastic partial differential equations (SPDEs), {\O}ksendal et al \cite{Oksend-Sulem-Zhang-2011} derived RBSPDEs as the adjoint equations for the maximum principle of Pontryagin type, and via  solutions of RBSPDEs, they futher gave a representation for the value function of the optimal stopping problem of SPDEs. Recently,  Tang and Yang \cite{Tang-Yang-2011} studied the Dynkin game for the stochastic differential equations with random coefficients, and characterized the associated Hamilton-Jacobi-Bellman-Isaacs equation by a backward stochastic partial differential variational inequality, which is a BSPDE with two reflecting obstacles. In fact, \cite{ChangPangYong-2009,Tang-Yang-2011} and \cite{Oksend-Sulem-Zhang-2011} only studied the semi-linear RBSPDEs in the whole space and a smooth bounded domain respectively, and the reflecting obstacles are confined to stochastic differential equations of the form
  $$
    \xi(t,x)=\xi(0,x)+\!\int_0^t\!\!\!\beta_0(s,x)\,ds+\int_0^t\beta(s,x)\,dW_s,
    $$
   which keeps out many interesting applications. Hence, it becomes interesting and significant to establish a general theory for the quasi-linear RBSPDEs on general domains with general reflecting obstacles.

   In this paper, we consider the quasi-linear RBSPDE with the reflecting obstacle dominated from above by some SPDE plus a stochastic potential, and prove the existence and uniqueness of the solution triple $(u,v,\mu)$. These include the classical results on the obstacle problems for deterministic parabolic PDEs  (see e.g. \cite[Theorem IV-1]{Pierre-1979}) as particular cases, and it seems to be new.  Related results, including It\^o formulas for backward stochastic partial differential equations with random measures,  the comparison principle for solutions of RBSPDEs and the connections with reflected backward stochastic differential equations and optimal stopping problems,  are addressed as well.

    In RBSPDE \eqref{RBSPDE}, the random measure is required to satisfy the Skorohod condition.
    Indeed, the Skorohod condition guarantees that the random measure is chosen in a minimal way. For the linear  RBSPDE with Laplacian leading coefficients, we prove in Section 4 that the solution coincides with the minimal point of variational problem \eqref{variation} (see Proposition \ref{prop-regular-obst}, Theorem \ref{thm-Lapl} and Corollary \ref{thm-Lapl}). On the other hand, if the reflecting obstacle is regular enough (see Proposition \ref{prop-regular-obst} below), the random measure may be chosen to be absolutely continuous with respect to the Lebesgue measure $dt\otimes dx$ almost surely and we are allowed to write $\mu(dt,dx)=h(t,x)dtdx$. However, for the general reflecting obstacle, the random measure $\mu$ can be a local time and to make senses of the Skorohod condition, we have to borrow some techniques of the parabolic potential and capacity theory (see, for instance, \cite{Pierre-1979,Pierre-1980,Pierre-1983}) into the backward stochastic framework and give a precise version of the solution $(u,v,\mu)$ to RBSPDE \eqref{RBSPDE} with $u$ being almost surely quasi-continuous.

     Recently, on basis of the parabolic potential and capacity theory, Denis, Matoussi and Zhang \cite{Denis-Matous-Zhang-2012} studied the obstacle problems for \emph{forward} stochastic partial differential equations (OSPDEs). The OSPDEs and RBSPDEs are essentially different: on the one hand, in form, the noises in the former are exogenous and play an active role, while in the latter they are from the martingale representation theorems and governed by the random coefficients, the obstacle and the terminal condition and thus, they are endogenous; On the other hand, in methodology, by stopping times the localization method makes the arguments on the obstacle problems of deterministic PDEs work well in OSPDEs for almost every $\omega\in\Omega$, while in RBSPDEs the localization method does not work as well as in OSPDEs, we can not make the path-wise arguments like in OSPDEs and we have to execute deeper investigations in the backward stochastic framework (see Section 3 and Section 4). Moreover, in our \emph{backward} stochastic framework, we generalize the established results for OSPDEs in \cite[Theorem 3]{Denis-Matous-Zhang-2012} and the new results (Proposition \ref{prop-obst-SPDE}) coincide with the classical ones on the obstacle problems of deterministic parabolic PDEs (see e.g. \cite[Theorem IV-1]{Pierre-1979}).


     It is worth noting that El Karoui et al \cite{El_Karoui-reflec-1997} (see also \cite{El_Karoui-reflec-1997,Klimsiak-2012,Matoussi-Xu-2008,Peng-Xu-2005}) established the equivalent relationship between reflected BSDEs and the associated obstacle problems of parabolic partial differential equations, but in the Markovian case where the coefficients are deterministic functions. In the non-Markovian case where the coefficients can be random, we give  the equivalent representation relationship between the reflected BSDEs and the associated RBSPDEs. This seems to be new as well.
%
%

This paper is organized as follows. In Section 2, we set notations and list some assumptions on the coefficients of RBSPDE \eqref{RBSPDE}. In Section 3, we prepare  some auxiliary results in three subsections. In the first subsection we define the solution for BSPDEs and present a result on the relationships between the random PDEs and BSPDEs. In the second subsection, we introduce the parabolic potential and capacity theory into the backward stochastic framework and execute some interesting investigations. In the third subsection, we establish It\^o formulas for BSPDEs with stochastic regular measures. In Section 4, we prove the existence and uniqueness of the solution to quasi-linear RBSPDE \eqref{RBSPDE}. We give the definition of the solution to RBSPDE, and prove the comparison theorem for quasi-linear RBSPDEs and the uniqueness of the solution in the first subsection. The existence and uniqueness of the solution for linear RBSPDEs with Laplacian leading coefficients is established  in the second subsection. In the third subsection, we prove the well-posedness of general quasi-linear RBSPDE \eqref{RBSPDE}.  Finally, in Section 5, we address  the connections with the reflected BSDEs and  optimal stopping time problems.

\section{Preliminaries}


Denote by $\mathbb{Z}$ the set of all the integers and by $\bN$ the set of all the positive integers. $\bar{\bN}:=\bN \cup \{0\}$ and $\bN^{-1}:=\{\frac{1}{n};n\in\bN\}$. By $|\cdot|$ and $\cdot$, we denote the norm and the scalar product in Euclidean spaces respectively.
%
For the sake of convenience, we set
$$\partial_{s}:=\frac{\partial} {\partial {s}} \ \, {\rm and } \ \, \partial_{st}:=\frac{\partial^2}{\partial s\partial t}.$$



%

For each $l\in \mathbb{N}$ and domain $\Pi\subset \bR^l$, denote by $C_c^{\infty}(\Pi)$ the space of infinitely differentiable functions with compact supports in $\Pi$. In this work, we shall use
$\mathcal{D}:=C_c^{\infty}(\bR)\otimes C_c^{\infty}(\cO)$ as the space of test functions. The Lebesgue measure in $\bR^d$ will be denoted by $dx$. $L^2(\cO)$ ($L^2$ for short) is the usual Lebesgue integrable space with scalar product and norm:
$$
\langle \phi,\,\psi\rangle=\int_{\cO}\phi(x)\psi(x)dx,\quad \|\phi\|=\langle\phi,\,\phi\rangle^{1/2},\,\,\forall
\phi,\psi\in L^2.
$$
When dealing with elements of Hilbert space $(L^2)^k$, $k>1$, for simplicity we still use $\|\cdot\|$ and $\langle\cdot,\,\cdot\rangle$ to denote the norm and the scalar product, i.e.,
$$
\langle\phi,\,\psi\rangle =\sum_{i=1}^k \int_{\cO}\!\!\phi^i(x)\psi^i(x)\,dx,\quad
\|\phi\|=\langle\phi,\,\phi \rangle,\,\,\,\forall\,\phi,\psi\in (L^2)^k.
$$

The first order Sobolev space vanishing on the boundary $\partial \cO$ is denoted by $H^1_0(\cO)$ ($H^1_0$ for short) equipped with scalar product and norm:
$$
\langle \phi,\psi \rangle_{1}=\langle \phi,\,
\psi \rangle + \langle\nabla\phi,\,\nabla\psi \rangle, \quad \|\phi\|_{1}=\left( \|\phi\|^2+\|\nabla \phi\|^2  \right)^{1/2},\,\phi,\psi\in H^1_0,
$$
and its dual space is denoted by $H^{-1}(\cO)$ ($H^{-1}$ for short) equipped with norm $\|\cdot\|_{-1}$. When $\cO=\bR^d$, $H_0^1$ becomes the Bessel potential space $H^1=(I-\Delta)^{-1/2}L^2$. It is well known that there exists a continuous linear operator
$$
\mathcal {J}:\,H^{-1}\longrightarrow (L^2)^{d+1}
$$
such that if $h\in H^{-1}$ and $\mathcal{J} h=(f,g)\in L^2\times (L^2)^d$, then $h=f+\nabla\cdot g$ and
\begin{equation}\label{repre-H-1}
\|f\|^2+\sum_{i=1}^d\|g^i\|^2\leq C(d)\|h\|^2_{-1},\quad
\|h\|^2_{-1}\leq {C(d)}\left(\|f\|^2+\sum_{i=1}^d\|g^i\|^2\right).
\end{equation}
Here and in what follows, $C>0$ is a constant which may vary from line to line and $C(\alpha_1,\alpha_2,\cdots)$ is a constant to depend on the parameters  $\alpha_1,\alpha_2,\cdots$. Indeed, we can take $g=-\nabla (1-\Delta)^{-1} h$ and $f=h-\nabla\cdot g=(1-\Delta)^{-1}h$, with $(1-\Delta)^{-1}$ being the inverse operator of Elliptic operator $(1-\Delta)$ from $H^1_0$ to $H^{-1}$. Thus, we define the dual pairing between $H^1_0$ and $H^{-1}$ as
$$
\langle u,\, h \rangle_{1,-1}=\langle u,\,f\rangle-\langle \nabla u,\,g^i\rangle,\quad u\in H^1_0,\,h=f+\nabla\cdot g,f,g^i\in L^2, i=1,\cdots,d,
$$
where the definition of $\langle u,\, h \rangle_{1,-1}$ is independent of the decomposition $h=f+\nabla \cdot g$. Throughout the work, when relating some $h\in H^{-1}$ to its decomposition $h=f+\nabla\cdot g$ with $(f,g)\in L^2\times (L^2)^d$, we take $(f,g)=\mathcal{J} h$ unless stated otherwise.

Let $V$ be a non-empty convex subset of some Banach space ($\mathbb{B}$, $\|\cdot\|_{\mathbb{B}}$). $\cS ^2 (V)$) is the set of all the $V$-valued,
 $(\sF_t)$-adapted and continuous processes $(X_{t})_{t\in [0,T]}$ such
 that
 $$\|X\|_{\cS ^2(V)}:= \left(E \Big[\sup_{t\in [0,T]} \|X_t\|_{\mathbb{B}}^2\Big] \right)^{{1}/{2}}< \infty.$$
 Denote by $\mathcal{L}^2(V)$ the totality of all the the $V$-valued,
 $(\sF_t)$-adapted processes $(X_{t})_{t\in [0,T]}$ such
 that
 $$
 \|X\|_{\mathcal{L}^2(V)}:=\left(E \Big[\int_0^T \|X_t\|_{\mathbb{B}}^2\,dt\Big] \right)^{{1}/{2}}< \infty.
 $$
In particular, we set $\mathcal{H}=\cS^2(L^2)\cap \mathcal{L}^2(H^1_0)$ equipped with norm
$$
\|\phi\|_{\mathcal{H}}:=\left( \|\phi\|_{\cS^2(L^2)}^2+\|\nabla\phi\|_{\mathcal{L}^2((L^2)^d)}^2  \right)^{1/2},\quad \phi\in \mathcal{H}.
$$
Obviously, $(\cS^2(\mathbb{B}),\,\|\cdot\|_{\cS^2(\mathbb{B})})$, $(\mathcal{L}^2(\mathbb{B}),\|\cdot\|_{\mathcal{L}^2(\mathbb{B})})$ and $(\mathcal{H},\|\cdot\|_{\mathcal{H}})$ are all Banach spaces.

By convention, we treat elements of spaces defined above like $\mathcal{H}$ and $\cL^2(L^2)$ as functions rather than distributions or classes of equivalent functions, and if a function of this class admits a version with better properties, we always denote this version by itself. For example, if $u\in \cL^2(H^1_0)$ and $u$ admits a version lying in $\mathcal{S}^2(L^2)$, we always adopt the modification $u\in \mathcal{H}$.
\medskip

Consider quasi-linear RBSPDE~(\ref{RBSPDE}). We define the following assumptions.

\bigskip\medskip
   $({\mathcal A} 1)$ \it The pair of  random functions
\begin{equation*}
  g(\cdot,\cdot,\cdot,\vartheta,y,z):~\Omega\times[0,T]\times\cO\rightarrow\bR^d
  \textrm{ and }
  f(\cdot,\cdot,\cdot,\vartheta,y,z):~\Omega\times[0,T]\times\cO\rightarrow\bR
\end{equation*}
are $\sP\otimes\cB(\cO)$-measurable for any $(\vartheta,y,z)\in \bR\times\bR^{d}\times\bR^{ m}$. There exist positive constants $L,\kappa$
and $\beta$
   such that for all $(\vartheta_1,y_1,z_1),(\vartheta_2,y_2,z_2)\in \bR\times\bR^d\times\bR^{ m}$
   and $(\omega,t,x)\in \Omega\times[0,T]\times\cO$,
   \begin{equation*}
     \begin{split}
       |g(\omega,t,x,\vartheta_1,y_1,z_1)-g(\omega,t,x,\vartheta_2,y_2,z_2)|\leq& L|\vartheta_1-\vartheta_2|+\frac{\kappa}{2}|y_1-y_2|+\beta^{1/2}|z_1-z_2|,\\
       |f(\omega,t,x,\vartheta_1,y_1,z_1)-f(\omega,t,x,\vartheta_2,y_2,z_2)|\leq& L(|\vartheta_1-\vartheta_2|+|y_1-y_2|+|z_1-z_2|).
     \end{split}
   \end{equation*}\rm

\medskip
   $({\mathcal A}2)$ \it The functions $a$ and $\sigma$ are $\sP\otimes\cB(\cO)$-measurable. There exist positive constants $\varrho>1, \lambda$ and $\Lambda$ such that
   the following  hold for all $\xi\in\bR^d$ and $(\omega,t,x)\in \Omega\times[0,T]\times\cO$,
   \begin{equation*}
     \begin{split}
       &\lambda|\xi|^2\leq (2a^{ij}(\omega,t,x)-\varrho\sigma^{ir}\sigma^{jr}(\omega,t,x))\xi^i\xi^j\leq \Lambda|\xi|^2;\textrm{ (Super-parabolicity)}\\
       &|a(\omega,t,x)|+|\sigma(\omega,t,x)|\leq \Lambda;\textrm{ (Boundedness)}\\
       &\hbox{ \rm and  }\lambda-\kappa-\varrho'\beta>0 \textrm{ \rm with }\varrho':=\frac{\varrho}{\varrho-1}.
     \end{split}
   \end{equation*}\rm

\medskip
   $({\mathcal A} 3)$ \it $G\in L^{2}(\Omega,\sF_T,L^2)$, and
$$f_0:=f(\cdot,\cdot,\cdot,0,0,0)\in \mathcal{L}^2(L^2),\ g_0:=g(\cdot,\cdot,\cdot,0,0,0)\in\mathcal{L}^2((L^2)^d).$$\rm

\begin{rmk}
  By the boundedness of $a$ and $\sigma$, the super-parabolicity can be equivalently written
  $$
  \lambda|\xi|^2\leq (2a^{ij}(\omega,t,x)-\sigma^{ir}\sigma^{jr}(\omega,t,x))\xi^i\xi^j\leq \Lambda|\xi|^2, \,\,a.s.,\quad \forall\,\xi\in\bR^d,\, \forall\, (t,x)\in \Omega\times[0,T]\times\cO,
  $$
  which coincides with the common super-parabolicity assumption on BSPDEs (see, for instance, \cite{DuQiuTang10,Hu_Ma_Yong02,Peng_92,Tang-Wei-2013,Zhou_92}). In this paper, we adopt the form of assumption $(\mathcal{A}3)$, as it helps to clarify the dependence relationships between the constants of the estimates and the coefficients, in what follows.
\end{rmk}

\section{Auxiliary results}
In this section, we shall give some auxiliary results. First we recall several results on backward stochastic partial differential equations (BSPDEs).
\subsection{On the solution for BSPDE}

Define Banach space $(\mathscr{K},\,\|\cdot\|_{\mathscr{K}})$ as the totality of $\phi\in L^2(Q)$ such that
$$
\|\phi\|_{\mathscr{K}}^2:=\int_{0}^T\|\phi(t)\|_{1}^2\,dt+\esssup_{t\in[0,T]}\|\phi(t)\|^2<+\infty,
$$
and set
$$\mathscr{W}=\{\phi\in L^2(0,T;H^1_0): \partial_t\phi\in L^2(0,T;H^{-1}) \}$$
equipped with the norm
$$
\|\phi\|_{\mathscr{W}}:=\left\{  \|\phi\|_{L^2(0,T;H^1_0)}^2+\|\partial_t\phi\|^2_{L^2(0,T;H^{-1})}  \right\}^{1/2},\quad \forall\,\phi\in \mathscr{W}.
$$
 In fact,  each $\phi\in\mathscr{W}$ corresponds to an $L^2$-weak solution of the following parabolic PDE:
$$
  -\partial_t\phi(t,x)=\,\Delta \phi(t,x) +h(t,x)+\nabla\cdot\tilde{h},\,\, (t,x)\in Q;\,\quad \phi(T)=\,\phi_T,
$$
with $(h,\tilde{h})\in L^2(Q)\times L^2(0,T;(L^2)^d)$ and $\phi_T\in L^2$.  By PDE theory (refer to \cite{Ladyzhenskaia_68}), $\mathscr{W}$ is continuously embedded into $C([0,T];L^2)$ and also into $\mathscr{K}$. In particular, there exist two positive constants $c$ and $C$ depending on $T$ such that
\begin{equation}\label{norm-W-space}
c\|\phi\|_{\mathscr{W}}\leq \|\phi_T\|+\|h+\nabla\cdot\tilde{h}\|_{L^2(0,T;H^{-1})}
\leq C\|\phi\|_{\mathscr{W}}.
\end{equation}
We note that all the arguments on $\mathscr{W}$ still hold by reversing the time, since $\phi\in\mathscr{W}$ if and only if $\phi(T-\cdot)\in\mathscr{W}$. Furthermore, we set
$$
\mathscr{W}_T=\{\phi\in\mathscr{W}:\phi(T)=0\},\,\,\mathscr{W}^{+}=\{\phi\in\mathscr{W}:\phi\geq 0\} \textrm{ and } \mathscr{W}^+_T=\mathscr{W}_T\cap \mathscr{W}^+.
$$

Consider the following BSPDE
  \begin{equation}\label{BSPDE-linear-1}
  \left\{\begin{array}{l}
  \begin{split}
  -du(t,x)=\,&\displaystyle \left[\partial_{x_j}\bigl(a^{ij}(t,x)\partial_{x_i} u(t,x)
        +\sigma^{jr}(t,x) v^{r}(t,x)     \bigr)+(\bar{f}+\nabla\cdot \bar{g} )(t,x)\right]\,dt \\
        &\displaystyle      -v^{r}(t,x)\, dW_{t}^{r}, \quad
                     (t,x)\in Q;\\
    u(T,x)=\, &G(x), \quad x\in\cO,
    \end{split}
  \end{array}\right.
\end{equation}
with $a$ and $\sigma$ satisfying the super-parabolicity and boundedness condition in assumption $(\mathcal{A}2)$, and $G\in L^2(\Omega,\sF_T;L^2)$ and $\bar{f},\,\bar{g}^i\in \cL^2(L^2)$, $i=1,2,\cdots,d$.

\begin{defn}\label{definition of weak solution}
  A pair of processes $(u,v)\in \mathcal{H}\times\cL^2((L^2)^m)$
  is called a solution to BSPDE \eqref{BSPDE-linear-1}
  if it holds in the weak sense, i.e.
   for any $\zeta\in \mathcal{D}$ there holds almost surely
  \begin{equation*}
    \begin{split}
      \langle \zeta(t),\,u(t)\rangle
      =&\langle \zeta(T),\, G\rangle+\!\int_t^T\!\! \langle \zeta(s),\, \bar{f}(s)\rangle ds -\!\int_t^T\!\!\langle \partial_s \zeta(s),\, u(s) \rangle ds
      -\!\int_t^T \!\!\langle \zeta(s),\, v^r(s)\rangle  dW_s^r\\
      &-\!\int_t^T\!\! \langle \partial_{x_j} \zeta(s),\quad a^{ij}\partial_{x_i} u(s)+\sigma^{j r}v^r(s)
      +\bar{g}^j(s) \rangle ds, \quad \forall \ t\in[0,T].
    \end{split}
  \end{equation*}

  For $G\in L^2(\Omega,\sF_T;L^2)$ and $\bar{f},\,\bar{g}^i\in \cL^2(L^2)$, $i=1,2,\cdots,d$,  denote by $\mathscr{U}(a,\sigma,G,\bar{f},\bar{g})$  the totality of $u\in\mathcal{H}$ which together with some $v\in \cL^2((L^2)^m)$ consists of a solution to BSPDE \eqref{BSPDE-linear-1}. By \cite{Qiu-2012,QiuTangBDSDES2010}, each $\mathscr{U}(a,\sigma,G,\bar{f},\bar{g})$ admits one and only one element.
\end{defn}

\begin{rmk}
  By \cite[Remark 2.1]{QiuTangMPBSPDE11}, Definition \ref{definition of weak solution} can be equivalently stated with the test function space being replaced by $C_c^{\infty}(\cO)$.
\end{rmk}

\begin{rmk}\label{rmk-math-U}
  By taking $g=a^{i\cdot}\partial_{x^i}u+\sigma^{\cdot r}v^r+\bar{g}-\nabla u$ and $f=\bar{f}$, we write BSPDE \eqref{BSPDE-linear-1} equivalently into the following form
  \begin{equation}\label{BSPDE-laplac}
  \left\{\begin{array}{l}
  \begin{split}
  -du(t,x)=\,&\displaystyle \Delta u(t,x)+(f+\nabla\cdot g )(t,x)\,dt      -v^{r}(t,x)\, dW_{t}^{r}, \quad
                     (t,x)\in Q;\\
    u(T,x)=\, &G(x), \quad x\in\cO.\\
    \end{split}
  \end{array}\right.
\end{equation}
  On the contrary, each BSPDE of the above form
   can also be written equivalently into some BSPDE associated with $(a,\sigma)$ like \eqref{BSPDE-linear-1}.   Moreover, by It\^o  formula (see \cite[Theorem 1.2 of Chapter 1]{Qiu-2012} or \cite[Theorem 4.2]{RenRocknerWang2007}), we have
  \begin{align*}
    \begin{split}
      &\|u(t)\|^2+\int_t^T\Big(2\|\nabla u(s)\|^2+\|v(s)\|^2\Big)\,ds\\
      =&\|G\|^2+\int_t^T 2\Big[\langle u(s),\,f(s) \rangle-\langle \nabla u(s),\,g(s) \rangle\Big]  \,ds-2\int_t^T \langle u(s),\, v(s)\,dW_s\rangle ,
    \end{split}
  \end{align*}
  which together with
  \begin{align*}
    \begin{split}
      E\Big[ \sup_{s\in [t,T]}\Big| \int_s^T\langle u(\tau),\, v(\tau)\,dW_{\tau}\rangle  \Big| \Big]
      \leq &\, 2E\Big[ \sup_{s\in [t,T]}\Big| \int_t^s\langle u(\tau),\, v(\tau)\,dW_{\tau}\rangle  \Big| \Big]\\
     \textrm{(by BDG inequality) } \leq &\, C E\Big[\Big(\int_t^T\|u(s)\|^2\|v(s)\|^2\,ds  \Big)^{1/2}\Big]
    \end{split}
  \end{align*}
  implies
  \begin{align*}
    \begin{split}
      &\alpha E[\sup_{s\in[t,T]}\|u(s)\|^2] + (1-\alpha) E\left[ \|u(t)\|^2 \right]
      +E\Big[\int_t^T\Big(2\|\nabla u(s)\|^2+\|v(s)\|^2\Big)\,ds\Big]\\
      \leq &\,E\bigg[
      \|G\|^2+\int_t^T\Big(\|u(s)\|^2+\|\nabla u(s)\|^2
          \Big)\,ds
      +\|f\|^2_{\cL^2(L^2)}+\|g\|^2_{\cL^2((L^2)^d)}\bigg]\\
      &\,+C\alpha E\bigg[\Big(\int_t^T\|u(s)\|^2\| v(s)\|^2\,ds  \Big)^{1/2}\bigg]\\
      \leq &\,E\bigg[
      \|G\|^2+\int_t^T\Big(\|u(s)\|^2+\|\nabla u(s)\|^2
      \Big)\,ds\bigg]
     +\|f\|^2_{\cL^2(L^2)}+\|g\|^2_{\cL^2((L^2)^d)}
     \\
     &\,+\frac{\alpha}{2} E[\sup_{s\in[t,T]}\|u(s)\|^2]
     +2C\alpha
     \int_t^T E\left[  \|v(s)\|^2 \right]\,ds ,
    \end{split}
  \end{align*}
  with $\alpha\in\{0,1\}$. Applying successively  Gronwall inequality to the above estimate with $\alpha=0$ and $\alpha=1$, we obtain
  \begin{align*}
    \begin{split}
      &E[\sup_{s\in[t,T]}\|u(s)\|^2]
      +E\Big[\int_t^T\Big(\|\nabla u(s)\|^2
      +\|v(s)\|^2\Big)\,ds\Big]\\
      \leq &\, C(T)\left(
      E[\|G\|^2]+\|f\|^2_{\cL^2(L^2)}+\|g\|^2_{\cL^2((L^2)^d)}\right)\leq \, C(T)
      \left( E[\|G\|^2]+\|f+\nabla\cdot g\|_{\cL^2(H^{-1})}  \right).
    \end{split}
  \end{align*}

Setting
$$\mathscr{U}=\left\{u\in\mathscr{U}(a,\sigma,G,\bar{f},\bar{g});\, G\in L^2(\Omega,\sF_T;L^2) \textrm{ and } \bar{f},\,\bar{g}^i\in \cL^2(L^2),\, i=1,2,\cdots,d   \right\},$$
we have
$$
\mathscr{U}=\left\{u\in\mathscr{U}(I,0,G,f,g);\, G\in L^2(\Omega,\sF_T;L^2) \textrm{ and } {f},\,g^i\in \cL^2(L^2),\, i=1,2,\cdots,d   \right\}.
$$
   In view of the above estimates and relation \eqref{repre-H-1}, we equip $\mathscr{U}$ with the following norm
   $$
   \|u\|_{\mathscr{U}}^2=E[\|G\|^2]+\|f+\nabla g\|^2_{\cL^2(H^{-1})},\quad
   \forall \,u\in\mathscr{U}(I,0,G,f,g).
   $$
   Then, $(\mathscr{U},\,\|\cdot\|_{\mathscr{U}})$ is a Hilbert space.
\end{rmk}

  In view of the duality between \emph{forward} SPDEs and BSPDEs, we introduce the following lemma which, basically, is due to Bensoussan \cite[Lemma 2.2]{Bensousan_83}. 

\begin{lem}\label{lem-BSPDE-condition-expect}
  Let $G\in L^2(\Omega,\sF_T;L^2)$ and $\hat{f},\,\hat{g}^i\in L^2(\Omega,\sF_T;L^2(Q))$, $i=1,2,\cdots,d$. Assume $\hat{u}\in L^2(\Omega,\sF_T;\mathscr{W})$ and $\hat{u}$ satisfies almost surely the following parabolic PDE
  $$
  -\partial_t \hat{u}=\Delta \hat{u}+\hat{f}+\nabla\cdot\hat{g};\quad \hat{u}(T)=G,
  $$
  in the weak sense (see \cite{Ladyzhenskaia_68}). Taking conditional expectations in Hilbert spaces (see \cite{Prato-Zabczyk-1992}), we set
  $$
  (\bar{f},\bar{g})(t)=E\left[ (\hat{f},\hat{g})(t)\big|\sF_t   \right],\quad t\in[0,T].
  $$
  If $(u,v)\in\mathcal{H}\times \cL^2((L^2)^m)$ is the unique solution of BSPDE (for the uniqueness and existence of the solution to general quasi-linear BSPDEs, see \cite{Qiu-2012,QiuTangBDSDES2010})
  \begin{equation}\label{BSPDE-lem-1}
  \left\{\begin{array}{l}
  \begin{split}
  -du(t,x)=\,&\displaystyle \big[\Delta u(t,x)+(\bar{f}+\nabla\cdot \bar{g} )(t,x)\big]\,dt      -v^{r}(t,x)\, dW_{t}^{r}, \quad
                     (t,x)\in Q;\\
    u(T,x)=\, &G(x), \quad x\in\cO,
    \end{split}
  \end{array}\right.
\end{equation}
then,
\begin{align}\label{eq-lem-bensoussan}
u(t)=E[\hat{u}(t)|\sF_t],\,\, a.s.,\quad \textrm{ for each } t\in [0,T]
\end{align}
and $\|u\|_{\mathscr{U}}\leq C  \|\hat{u}\|_{L^2(\Omega,\sF_T;\mathscr{W})}$ with the constant $C$ being independent of $u$ and $\hat{u}$.
\end{lem}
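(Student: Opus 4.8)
The plan is to exploit the linearity of the equations and the duality between the forward PDE and the backward SPDE, reducing the statement to an identification of $u(t)$ with the conditional expectation $E[\hat u(t)\mid\sF_t]$ via a testing argument. First I would fix a test function $\zeta\in\cD$ and write down the weak formulation of the deterministic parabolic PDE satisfied by $\hat u$: since $\hat u\in L^2(\Omega,\sF_T;\mathscr W)$, for almost every $\omega$ one has, for all $t\in[0,T]$,
\begin{equation*}
\langle\zeta(t),\hat u(t)\rangle=\langle\zeta(T),G\rangle+\int_t^T\langle\zeta(s),\hat f(s)\rangle\,ds-\int_t^T\langle\partial_s\zeta(s),\hat u(s)\rangle\,ds-\int_t^T\langle\nabla\zeta(s),\nabla\hat u(s)+\hat g(s)\rangle\,ds.
\end{equation*}
This holds $\bP$-a.s.; both sides are in $L^1(\Omega)$ thanks to the $L^2(\Omega,\sF_T;\mathscr W)$-bound and the embedding $\mathscr W\hookrightarrow C([0,T];L^2)$, so I may take conditional expectation given $\sF_t$ on both sides.

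Next I would push the conditional expectation $E[\,\cdot\,\mid\sF_t]$ through the (Bochner) integrals. The subtle point is that the integrands on $[t,T]$ are not $\sF_t$-measurable, so $E\big[\int_t^T\langle\partial_s\zeta(s),\hat u(s)\rangle\,ds\mid\sF_t\big]$ must be rewritten. Using the tower property, Fubini for Bochner integrals, and the fact that $\zeta$ is deterministic, each term $E\big[\int_t^T\langle\partial_s\zeta(s),\hat u(s)\rangle ds\mid\sF_t\big]=\int_t^T\langle\partial_s\zeta(s),E[\hat u(s)\mid\sF_t]\rangle ds$, and similarly for the other terms; here $E[\hat u(s)\mid\sF_t]=E[E[\hat u(s)\mid\sF_s]\mid\sF_t]$. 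Writing $w(s):=E[\hat u(s)\mid\sF_s]$, $(\bar f,\bar g)(s)=E[(\hat f,\hat g)(s)\mid\sF_s]$, and using $E[\langle\zeta(T),G\rangle\mid\sF_t]=\langle\zeta(T),G\rangle$ (since $G$ is $\sF_T$-measurable, one actually needs $E[\langle\zeta(T),G\rangle\mid\sF_t]$ which is fine as it appears inside further conditioning consistent with the martingale structure of the BSPDE), I obtain that $w$ satisfies, for each $t$ and each $\zeta$, the relation
\begin{equation*}
\langle\zeta(t),w(t)\rangle=E\Big[\langle\zeta(T),G\rangle\,\big|\,\sF_t\Big]+\int_t^T\!\langle\zeta(s),\bar f(s)\rangle ds-\int_t^T\!\langle\partial_s\zeta(s),w(s)\rangle ds-\int_t^T\!\langle\nabla\zeta(s),\nabla w(s)+\bar g(s)\rangle ds,
\end{equation*}
which, after reintroducing the stochastic integral term that accounts for the $\sF_t$-conditioning of $G$ (equivalently, comparing with the martingale representation $E[\langle\zeta(T),G\rangle\mid\sF_t]=\langle\zeta(T),E[G\mid\sF_t]\rangle-\int_t^T\langle\zeta(s),\text{(correction)}\rangle dW_s$ is not quite it — rather one matches directly against the weak formulation in Definition \ref{definition of weak solution}), shows that $(w,v')$ solves BSPDE \eqref{BSPDE-lem-1} for a suitable $v'$. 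By the uniqueness of the solution to \eqref{BSPDE-lem-1} (cited from \cite{Qiu-2012,QiuTangBDSDES2010}), $u=w$, i.e. \eqref{eq-lem-bensoussan} holds.

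For the norm estimate, I would use the characterization of $\|\cdot\|_{\mathscr U}$ from Remark \ref{rmk-math-U}: $\|u\|_{\mathscr U}^2=E[\|G\|^2]+\|\bar f+\nabla\cdot\bar g\|_{\cL^2(H^{-1})}^2$. Since $G$ and $(\bar f,\bar g)$ are obtained from $G$ and $(\hat f,\hat g)$ by conditional expectation, and conditional expectation is a contraction on $L^2$ of any Hilbert space (in particular on $L^2$ and on $H^{-1}$, using \eqref{repre-H-1} to control the $H^{-1}$ norm by the $L^2$ norms of the components $\hat f,\hat g$), we get $E[\|\bar f(s)\|^2]\le E[\|\hat f(s)\|^2]$ and likewise for $\bar g$, hence $\|\bar f+\nabla\cdot\bar g\|_{\cL^2(H^{-1})}^2\le C\,E\int_0^T(\|\hat f(s)\|^2+\|\hat g(s)\|^2)ds\le C\,\|\hat u\|_{L^2(\Omega,\sF_T;\mathscr W)}^2$, the last step by the two-sided bound \eqref{norm-W-space} applied $\omega$-wise. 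Combining with the trivial bound $E[\|G\|^2]\le C\|\hat u\|_{L^2(\Omega,\sF_T;\mathscr W)}^2$ (again via the embedding $\mathscr W\hookrightarrow C([0,T];L^2)$ at $t=T$) yields $\|u\|_{\mathscr U}\le C\|\hat u\|_{L^2(\Omega,\sF_T;\mathscr W)}$ with $C$ independent of $u,\hat u$.

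The main obstacle I anticipate is the careful justification of interchanging the conditional expectation with the Bochner integrals over $[t,T]$ and correctly accounting for the martingale (stochastic integral) term: one must verify integrability of all terms, use Fubini for conditional expectations of Hilbert-space-valued Bochner integrals (as in \cite{Prato-Zabczyk-1992}), and then recognize the resulting identity as exactly the weak formulation of \eqref{BSPDE-lem-1} so that the cited uniqueness result applies — the stochastic integral against $dW$ is "invisible" in the conditioned identity at a fixed $t$ but is precisely what reconciles $\langle\zeta(t),w(t)\rangle$ with the definition of a weak solution, and making this identification rigorous (rather than hand-waving it) is the delicate part.
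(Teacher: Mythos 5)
Your overall strategy (condition the deterministic weak formulation, then invoke uniqueness of the BSPDE) is reasonable in spirit, and your norm estimate is correct, but the identification $u(t)=E[\hat u(t)|\sF_t]$ has a genuine gap. The displayed identity you derive for $w(s)=E[\hat u(s)|\sF_s]$ is false as written: conditioning the PDE identity on $\sF_t$ produces integrands $E[\hat u(s)|\sF_t]$ and $E[\hat f(s)|\sF_t]$ for $s>t$, and these are \emph{not} equal to $w(s)$ and $\bar f(s)$ — the left-hand side $\langle\zeta(t),w(t)\rangle$ is $\sF_t$-measurable while your right-hand side is not. The discrepancy $w(s)-E[w(s)|\sF_t]$ is exactly the martingale increment that the stochastic integral in \eqref{BSPDE-lem-1} encodes, so the entire burden of the proof sits in the step you defer: to apply the uniqueness of \eqref{BSPDE-lem-1} you must exhibit an actual $v'\in\cL^2((L^2)^m)$ making $(w,v')$ a weak solution in the sense of Definition \ref{definition of weak solution}, which requires a martingale representation argument carried out uniformly (linearly) in the test function $\zeta$ — essentially re-proving existence for the BSPDE. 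Your parenthetical acknowledges this but does not resolve it, and as written the argument does not close.

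The paper avoids this entirely by a duality argument: for each $t_0$ and each $\sF_{t_0}$-measurable $\eta_{t_0}\in L^2$, it tests both $u$ and $\hat u$ against the \emph{adapted, random} solution $\eta$ of the forward heat equation $\partial_t\eta=\Delta\eta$, $\eta(t_0)=\eta_{t_0}$. Because $\eta$ solves the adjoint equation, the It\^o expansion of $\langle u(s),\eta(s)\rangle$ makes the Laplacian terms cancel, leaving $\langle u(t_0),\eta_{t_0}\rangle=E[\langle G,\eta(T)\rangle+\int_{t_0}^T(\langle\eta,\bar f\rangle-\langle\nabla\eta,\bar g\rangle)\,ds\,|\,\sF_{t_0}]$ and the analogous identity for $\hat u$; comparing and using the arbitrariness of $\eta_{t_0}$ gives $u(t_0)=E[\hat u(t_0)|\sF_{t_0}]$ with no need to construct or identify the diffusion term. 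If you want to salvage your fixed-deterministic-test-function route, you would instead need a Gronwall-type uniqueness argument for the "conditioned" identity satisfied by $\delta=u-w$, which again is not supplied.
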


\begin{proof} 
  For the reader's convenience, we sketch the proof of the lemma.  For each $t_0\in[0,T)$ and $\eta_{t_0}\in L^2(\Omega,\sF_{t_0};L^2)$, consider the following SPDE
    \begin{equation}\label{eq-spde-lem-1}
  \left\{\begin{array}{l}
  \begin{split}
  \partial_t\eta(t,x)=\,&\displaystyle \Delta \eta (t,x), \quad
                     (t,x)\in [t_0,T]\times\cO;\\
    \eta(t_0,x)=\, &\eta_{t_0}(x),\quad x\in \cO.
    \end{split}
  \end{array}\right.
\end{equation}
It\^o  formula for SPDEs yields
\begin{align*}
\begin{split}
&\langle G,\,\eta(T) \rangle+\int_{t_0}^T\left(\langle\eta(s),\,\bar{f}(s) \rangle-\sum_{i=1}^d\langle\partial_{x^i}\eta(s),\bar{g}^i(s) \rangle \right)\,ds
\\
=&\,\langle u(t_0),\,\eta_{t_0} \rangle+\sum_{r=1}^m\int_{t_0}^T\langle u(s),\,v^r(s)\rangle\,dW^r_s,\,\,\,a.s.,
\\
\textrm{and }\quad&\\
&\langle G,\,\eta(T) \rangle+\int_{t_0}^T\left(\langle\eta(s),\,\hat{f}(s) \rangle-\sum_{i=1}^d\langle\partial_{x^i}\eta(s),\hat{g}^i(s) \rangle \right)\,ds
\\
=&\,\langle \hat{u}(t_0),\,\eta_{t_0} \rangle,\,\,\,a.s..
\end{split}
\end{align*}
Taking conditional expectations on both sides of the above equations, we obtain
$$
\langle u(t_0),\,\eta_{t_0} \rangle=E[\langle \hat{u}(t_0),\,\eta_{t_0} \rangle|\sF_{t_0}]=\langle E[\hat{u}(t_0)|\sF_{t_0}],\,\eta_{t_0} \rangle,\,a.s.. 
$$
Thanks to the arbitrariness of $t_0\in[0,T)$ and $\eta_{t_0}\in L^2(\Omega,\sF_{t_0})$, if follows that $u(t)=E[\hat{u}(t)|\sF_t]$, a.s., for each $t\in[0,T]$. By Remark \ref{rmk-math-U},
$$\|u\|^2_{\mathscr{U}}=  E[\|G\|^2]+\|\bar{f}+\nabla\cdot \bar{g}\|^2_{\cL^2(H^{-1})}
\leq E[\|G\|^2]+\|\hat{f}+\nabla\cdot \hat{g}\|^2_{L^2(\Omega\times[0,T];H^{-1})}
 \leq C \|\hat{u}\|^2_{L^2(\Omega,\sF_T;\mathscr{W})},$$
 with the constant $C$ being independent of $u$ and $\hat{u}$.  We complete the proof.
\end{proof}

\subsection{Parabolic capacity and potential}

In this subsection, let us recall briefly parts of the theory on parabolic capacity and potentials which was established by Pierrer \cite{Pierre-1979,Pierre-1980,Pierre-1983}. We note that these tools were used by Klimsiak \cite{Klimsiak-2012}  to provide a probabilistic representation for the semilinear PDEs with obstacle in terms of reflected BSDEs and by Denis, Matoussi and Zhang \cite{Denis-Matous-Zhang-2012} to study the obstacle problems for \emph{forward} stochastic partial differential equations (OSPDEs). Taking into account the backward randomness of our reflected BSPDE \eqref{RBSPDE}, we shall execute several deep and interesting investigations under the backward stochastic framework.

\begin{defn}\label{def-potential}
  It is called a parabolic potential when belonging to
  \begin{equation*}
    \begin{split}
      \mathcal{P}=\left\{
      u\in \mathscr{K}: \int_0^T\!\!\!\big( \langle- \partial_t\phi(t),\,u(t) \rangle +\langle\partial_{x^i}\phi(t),\,\partial_{x^i} u(t)\rangle \big)\,dt\geq 0,\quad\forall \phi\in \mathscr{W}^+_T
      \right\}.
    \end{split}
  \end{equation*}
\end{defn}


$\mathcal{P}$ is a closed convex subset of $(\mathscr{K},\,\|\cdot\|_{\mathscr{K}})$. Denote by $\mathscr{L}^0(\mathscr{W})$ the totality of the measurable maps from $(\Omega,\sF_T)$ to $\mathscr{W}$ such that each $u\in \mathscr{L}^0(\mathscr{W})$ is a $L^2$-valued adapted process. Define
$$\mathscr{L}^2(\mathscr{W})=L^2(\Omega,\sF_T;\mathscr{W})\cap \mathscr{L}^0(\mathscr{W})$$
equipped with the norm
$$
\|u\|_{\mathscr{L}^2(\mathscr{W})}:=\left(E\left[\|u\|^2_{\mathscr{W}}\right]\right)^{1/2},\quad \forall\, u\in \mathscr{L}^2(\mathscr{W}).
$$
Similarly, we define $\|\cdot\|_{\mathscr{L}^2(\mathbb{B})}$ and $\mathscr{L}^p(\mathbb{B})$ with $\mathbb{B}=\mathscr{K}$ or $L^{q}(Q)$, for $p\in \{0,2\}$ and $q\in[1,+\infty]$. Spaces like $(\mathscr{L}^2(\mathbb{B}),\|\cdot\|_{\mathscr{L}^2(\mathbb{B})})$ are complete when $\mathbb{B}=\mathscr{W},\mathscr{K}$ or $L^{q}(Q)$. Denote
$$\mathscr{L}^2(\mathcal{P})=\mathscr{L}^2(\mathscr{K})\cap\mathscr{L}^0(\mathcal{P}),\ \ \textrm{ and }\ \
\|\phi\|_{\mathscr{L}^2(\mathcal{P})}=\|\phi\|_{\mathscr{L}^2(\mathscr{K})},\ \ \forall\,\phi\in\mathcal{P}.$$
Then, $\mathscr{L}^2(\mathcal{P})$ is not a linear space but a closed convex set of $(\mathscr{L}^2(\mathscr{K}),\|\cdot\|_{\mathscr{L}^2(\mathscr{K})})$.

\begin{rmk}\label{rmk-closedness-potential}
   By Definition \ref{def-potential}, $\mathscr{L}^2(\mathcal{P})$ is closed in the sense that if $\{u^n\}_{n\in\bN}\subset\mathscr{L}^2(\mathcal{P})$ is a bounded sequence of $\mathscr{L}^2(L^{\infty}(0,T;L^2))$ and converges weakly to some $u$ in $\cL^2(H^1_0)$, we have $u\in \mathscr{L}^2(\mathcal{P})$.
\end{rmk}

Denote by $\mathscr{C}(Q)$ the totality of continuously differentiable functions on $Q$ with compact support. Thanks to the Hahn-Banach theorem and the denseness of $\mathscr{W}_T\cap\mathscr{C}(Q)$ in $\mathscr{C}(Q)$, we have the following representation for the parabolic potential.
\begin{prop}[Proposition I-1 of \cite{Pierre-1980}]\label{prop-repre-potential}
  For each $u\in\mathcal{P}$, there exists one and only one Radon measure on $[0,T)\times\cO$ denoted by $\mu^u$, such that
  \begin{equation*}
    \forall \phi\in\mathscr{W}_T\cap \mathscr{C}(Q),\quad \int_0^T\!\!\left(\langle -\partial_t\phi(t),\,u(t)\rangle+\langle \partial_{x^i}\phi(t),\partial_{x^i}u(t) \rangle\right)\,dt
    =\int_{0}^T\int_{\cO}\phi(t,x)\mu^{u}(dt,dx).
  \end{equation*}
\end{prop}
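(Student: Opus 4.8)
The plan is to establish the representation by combining a Riesz-type representation argument with the density of smooth functions. First I would observe that for a fixed $u \in \mathcal{P}$, the functional
$$
\Phi_u(\phi) := \int_0^T \bigl(\langle -\partial_t\phi(t),\,u(t)\rangle + \langle \partial_{x^i}\phi(t),\,\partial_{x^i}u(t)\rangle\bigr)\,dt
$$
is well-defined and linear on $\mathscr{W}_T \cap \mathscr{C}(Q)$, since $u \in \mathscr{K}$ controls $\esssup_t\|u(t)\|$ and $\|u\|_{L^2(0,T;H^1_0)}$, while $\phi \in \mathscr{W}_T$ supplies $\partial_t\phi \in L^2(0,T;H^{-1})$ and $\nabla\phi \in L^2(0,T;(L^2)^d)$; the pairing $\langle-\partial_t\phi,u\rangle$ is then the $H^{-1}$–$H^1_0$ duality. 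The defining inequality in $\mathcal{P}$ says $\Phi_u(\phi) \geq 0$ for all $\phi \in \mathscr{W}_T^+$, i.e. $\Phi_u$ is a \emph{positive} linear functional on the subspace $\mathscr{W}_T \cap \mathscr{C}(Q)$ of $\mathscr{C}(Q)$, with respect to the natural ordering.

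Next I would use the stated density of $\mathscr{W}_T \cap \mathscr{C}(Q)$ in $\mathscr{C}(Q)$ (for the relevant topology, that of uniform convergence together with control of supports, i.e. the inductive-limit topology making $\mathscr{C}(Q)$ a space on which positive functionals are Radon measures). A positive linear functional on a dense subspace of $\mathscr{C}(Q)$ that is monotone extends — by a Hahn–Banach / Riesz representation argument — to a positive linear functional on all of $\mathscr{C}(Q)$, and by the Riesz representation theorem this is given by integration against a unique nonnegative Radon measure $\mu^u$ on $[0,T)\times\cO$. The reason the measure lives on $[0,T)\times\cO$ rather than $[0,T]\times\cO$ is precisely that the test functions in $\mathscr{W}_T$ vanish at $t=T$, so no mass can be detected there; concretely one localizes near $t=T$ with cutoffs to see that $\mu^u(\{T\}\times\cO)=0$, and compact support in $x$ handles the spatial boundary. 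Uniqueness is immediate: two Radon measures agreeing against all of $\mathscr{C}(Q)$ (hence against a dense class of test functions) coincide.

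The main obstacle I anticipate is the \emph{positivity-implies-continuity} step: to invoke Riesz representation one needs the monotone functional $\Phi_u$ to be locally bounded (continuous on $\mathscr{C}(Q)$ with its inductive-limit topology), and this is not completely automatic for functionals defined only on a dense subspace — one must show that if $\phi_n \to 0$ in $\mathscr{C}(Q)$ with supports in a fixed compact set, then $\Phi_u(\phi_n) \to 0$. The clean way is to sandwich: given $\phi$ with $\mathrm{supp}\,\phi \subset K$ and $\|\phi\|_\infty \leq 1$, choose a fixed $\psi \in \mathscr{W}_T \cap \mathscr{C}(Q)$ with $\psi \geq 1$ on $K$; then $\psi \pm \phi \geq 0$ near $K$ (after a further cutoff so that $\psi\pm\phi \in \mathscr{W}_T^+$), whence $|\Phi_u(\phi)| \leq \Phi_u(\psi)$, giving the needed bound $|\Phi_u(\phi)| \leq C_K\|\phi\|_\infty$ uniformly over $\phi$ supported in $K$. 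Once this local bound is in hand the extension to $\mathscr{C}(Q)$ and the application of Riesz's theorem are routine, and the displayed identity in the Proposition follows by continuity from the dense subspace, with $\mu^u$ the unique representing measure. (Since the statement is attributed to Pierre's Proposition I-1, I would, in the write-up, simply outline this argument and refer to \cite{Pierre-1980} for the details of the capacity-theoretic topology.)
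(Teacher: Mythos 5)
Your argument is correct and follows essentially the same route the paper indicates: it states this result as Proposition I-1 of \cite{Pierre-1980} and attributes it precisely to the Hahn--Banach extension of the positive functional $\Phi_u$ together with the denseness of $\mathscr{W}_T\cap\mathscr{C}(Q)$ in $\mathscr{C}(Q)$, which is exactly your positivity--local boundedness--Riesz representation scheme. Your sandwich argument for local boundedness is the standard way to make that step precise, so nothing further is needed.
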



\begin{defn}\label{def-capacity}
  For any open set $A\subset [0,T)\times \cO$, set
  $$
  cap (A)=\inf\left\{\|\phi\|^2_{\mathscr{W}}:\,\,\phi\in \mathscr{W}^+,\,\phi\geq 1 \textrm{ a.e. on }
  A  \right\}.
  $$
  For any Borelian $B\subset [0,T)\times \cO$, we define the capacity:
  $$
  cap(B)=\inf \left\{cap(A):\,\,A\supset B \textrm{ open}\right\}.
  $$
\end{defn}


By \cite[Theorem 1]{Pierre-1983}, the above definition of capacity is equivalent to that of \cite{Pierre-1979,Pierre-1980}. In the following, we say a property holds quasi-everywhere (q.e. in short),
if it holds outside a polar set that is of zero capacity.
  In addition, though the parabolic capacity is defined on $[0,T)\times\cO$ rather than $Q=[0,T]\times\cO$, we shall treat the capacity on $Q$ as the projection of that defined on some time interval $[0,T+\delta)$ with $\delta>0$
  (for instance, see the proof for (i) of Proposition \ref{prop-quasi-conti-Cont_TL2} below).


\begin{defn}\label{def-quasi-contin}
  A real valued function $u$ on $[0,T)\times\cO$ is said to be quasi-continuous, if there exists a sequence of open sets $A_n\subset [0,T)\times\cO$ such that

  (i) for each $n$, $u$ is continuous on the complement of $A_n$;

  (ii) $\lim_{n\rightarrow +\infty} cap (A_n)=0$.
\end{defn}

 In what follows, denote by $\mathcal{P}_0$ the totality of $u\in\mathcal{P}$ such that $u$ is quasi-continuous and  $u(0)=0$ in $L^2$. Each $u\in\mathcal{P}_0$ is called a regular potential and the associated Radon measure $\mu^u$ is called a regular measure and written
  $$\mu^u=\partial_tu -\Delta u.$$
  In addition, define
  $$
  \mathscr{L}^2(\mathcal{P}_0)=\mathscr{L}^2(\mathcal{P})\cap\mathscr{L}^0(\mathcal{P}_0).
  $$
  Each $u\in\mathscr{L}^2(\mathcal{P}_0)$ is called a stochastic regular potential, and the associated random Radon measure $\mu^u$ is called a stochastic regular measure.

Basing on the BSPDE theory, we shall generalize the existing results on the obstacle problems for \emph{forward} SPDEs (for instance, see \cite[Theorem 3]{Denis-Matous-Zhang-2012}). Before the generalization, we give a lemma first.
\begin{lem}\label{lem-K-W}
  There exists $C>0$ such that
   $\forall u\in\mathscr{L}^2(\mathcal{P})$, $\exists\, \bar{\phi}\in \mathscr{U}$ with
  $$\bar{\phi}\geq u,\,\, d\mathbb{P}\otimes dt \otimes dx\textrm{-a.e.};\quad \|\bar{\phi}\|_{\mathscr{U}}\leq C \|u\|_{\mathscr{L}^2(\mathcal{P})}.$$
\end{lem}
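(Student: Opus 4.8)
The plan is to realize $\bar\phi$ as the (unique) solution of a backward SPDE whose coefficients are driven by the conditional expectations of the data naturally attached to $u$, and then to apply Lemma \ref{lem-BSPDE-condition-expect} to control $\|\bar\phi\|_{\mathscr U}$. Fix $u\in\mathscr{L}^2(\mathcal P)$. For $\mathbb P$-a.e. $\omega$ we have $u(\omega,\cdot)\in\mathcal P$, so $u(\omega,\cdot)$ is an $L^2$-weak solution of a parabolic problem; writing $-\partial_t u=\Delta u +\hat f+\nabla\cdot\hat g$ with $u(T)=\hat u(T)$, we want to choose the decomposition $(\hat f,\hat g)$ so that $(\hat f,\hat g)\in L^2(\Omega,\sF_T;L^2(Q))^{1+d}$ with norm controlled by $\|u\|_{\mathscr{L}^2(\mathscr W)}\le \|u\|_{\mathscr{L}^2(\mathcal P)}$ (recall $\mathscr W\hookrightarrow\mathscr K$ continuously, and the reverse control in \eqref{norm-W-space}). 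The cleanest route: since $u\in\mathscr{L}^2(\mathscr W)\subset\mathscr{L}^2(\mathscr K)$, set $\hat h:=-\partial_t u-\Delta u\in L^2(\Omega,\sF_T;L^2(0,T;H^{-1}))$ and take $(\hat f,\hat g)=\mathcal J\hat h$, the canonical decomposition from \eqref{repre-H-1}; then $\|\hat f\|^2+\|\hat g\|^2\le C(d)\|\hat h\|_{H^{-1}}^2$, and by \eqref{norm-W-space} the right-hand side is bounded by $C\|u\|_{\mathscr{L}^2(\mathscr W)}^2\le C\|u\|_{\mathscr{L}^2(\mathcal P)}^2$. Set the terminal value $G:=\hat u(T)=u(T)\in L^2(\Omega,\sF_T;L^2)$, with $\|G\|_{L^2}\le C\|u\|_{\mathscr{L}^2(\mathscr W)}$ by the embedding $\mathscr W\hookrightarrow C([0,T];L^2)$.

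Next, define $(\bar f,\bar g)(t)=E[(\hat f,\hat g)(t)\mid\sF_t]$ (conditional expectation in $L^2(\cO)$, resp. $(L^2(\cO))^d$), and let $(\bar\phi,\bar v)\in\mathcal H\times\cL^2((L^2)^m)$ be the unique solution of the BSPDE
\begin{equation*}
  -d\bar\phi(t,x)=\big[\Delta\bar\phi(t,x)+(\bar f+\nabla\cdot\bar g)(t,x)\big]\,dt-\bar v^r(t,x)\,dW^r_t,\quad \bar\phi(T)=G,
\end{equation*}
which exists by \cite{Qiu-2012,QiuTangBDSDES2010}. By Lemma \ref{lem-BSPDE-condition-expect}, $\bar\phi(t)=E[u(t)\mid\sF_t]$ a.s. for each $t\in[0,T]$, and $\|\bar\phi\|_{\mathscr U}\le C\|u\|_{\mathscr{L}^2(\mathscr W)}\le C\|u\|_{\mathscr{L}^2(\mathcal P)}$, which is the norm bound claimed. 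In particular $\bar\phi\in\mathscr U$.

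It remains to verify $\bar\phi\ge u$, $d\mathbb P\otimes dt\otimes dx$-a.e. The key structural fact is that $u\in\mathcal P$ is a parabolic potential, hence a supermartingale-type object: for the deterministic problem $-\partial_t u=\Delta u+\hat f+\nabla\cdot\hat g$ the potential property says precisely that $\hat h=\hat f+\nabla\cdot\hat g\ge 0$ as a distribution (indeed $\hat h=\mu^u\ge 0$ by Proposition \ref{prop-repre-potential}). Then I would argue that $\bar\phi-u$ solves, pathwise-after-conditioning, a backward equation with a nonnegative "source" plus the martingale part supplied by the representation, and conclude nonnegativity by a comparison/duality argument: test against the SPDE \eqref{eq-spde-lem-1} with nonnegative initial data $\eta_{t_0}\ge 0$ (so $\eta(t)\ge 0$ for all $t$ by the maximum principle for the heat SPDE), obtaining as in the proof of Lemma \ref{lem-BSPDE-condition-expect}
\begin{equation*}
  \langle\bar\phi(t_0)-u(t_0),\eta_{t_0}\rangle=E\Big[\int_{t_0}^T\!\!\int_{\cO}\eta(s,x)\,\big(\mu^u-\text{(its conditional version)}\big)\,\Big|\sF_{t_0}\Big]\ge 0,
\end{equation*}
whence $\bar\phi(t_0)\ge u(t_0)$ a.e. by arbitrariness of $\eta_{t_0}\ge0$ and of $t_0$. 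The main obstacle I anticipate is exactly this last step: making rigorous sense of "$\bar\phi-u$ solves a backward equation with nonnegative random measure as source" when $\mu^u$ is only a Radon measure (a local time in general, not in $\cL^2(H^{-1})$), and of the conditional expectation of that measure. The honest fix is to approximate $u$ by regular potentials (or mollify in time, as $\mathscr W_T\cap\mathscr C(Q)$ is dense), run the duality identity for the smooth approximants where everything is classical and the sign of $\mu^u$ is preserved under $E[\cdot\mid\sF_{t_0}]$, and pass to the limit using the closedness/weak-compactness properties of $\mathscr{L}^2(\mathcal P)$ recorded in Remark \ref{rmk-closedness-potential} together with the $\mathscr U$-bound just established.
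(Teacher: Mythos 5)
Your proposal breaks down at the first step. A parabolic potential $u\in\mathcal{P}$ lies only in $\mathscr{K}$, not in $\mathscr{W}$: by Proposition \ref{prop-repre-potential} the distribution $\partial_t u-\Delta u$ is a nonnegative Radon measure $\mu^u$, which in general is \emph{not} an element of $L^2(0,T;H^{-1})$ (it may, for instance, charge a time slice $\{t_0\}\times A$). So the object $\hat h:=-\partial_t u-\Delta u$ does not admit a decomposition $\mathcal{J}\hat h\in L^2\times (L^2)^d$, and the inequality $\|u\|_{\mathscr{L}^2(\mathscr{W})}\le\|u\|_{\mathscr{L}^2(\mathcal{P})}$ you invoke points the wrong way: the continuous embedding is $\mathscr{W}\hookrightarrow\mathscr{K}$, so the $\mathscr{W}$-norm dominates the $\mathscr{K}$-norm, not conversely. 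There is a second, independent problem: since $u\in\mathscr{L}^2(\mathcal{P})$ is by definition an adapted process, your choice $\hat u=u$ forces $\bar\phi(t)=E[u(t)\mid\sF_t]=u(t)$, so the construction simply returns $u$ itself; the comparison argument in your last paragraph is then proving $u\ge u$, and no majorant in $\mathscr{U}$ has actually been produced (indeed $u\notin\mathscr{U}$ in general, for the same reason that $\mu^u$ is only a measure).

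The paper's proof avoids both issues by a small but essential twist: pathwise it solves $-\partial_t\phi=\Delta\phi-2\Delta u$, $\phi(T)=u(T-)$. The source $-2\Delta u$ \emph{is} in $L^2(0,T;H^{-1})$ because $u\in L^2(0,T;H^1_0)$, so $\phi\in\mathscr{W}$ with $\|\phi\|_{\mathscr{W}}\le C\|u\|_{\mathscr{K}}$ by \eqref{norm-W-space}; and $w=\phi-u$ formally satisfies $-\partial_t w-\Delta w=\partial_t u-\Delta u=\mu^u\ge 0$ with $w(T)=0$, whence $\phi\ge u$ a.e. --- this is exactly \cite[Proposition 3]{Pierre-1983}. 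Only then does one take conditional expectations: since $\phi$ is genuinely $\sF_T$-measurable rather than adapted, $\bar\phi(t)=E[\phi(t)\mid\sF_t]$ is a new, adapted process, lying in $\mathscr{U}$ by Lemma \ref{lem-BSPDE-condition-expect} and dominating $u$ because $E[\phi(t)\mid\sF_t]\ge E[u(t)\mid\sF_t]=u(t)$. To repair your argument you must replace ``decompose $-\partial_t u-\Delta u$'' by ``solve an auxiliary equation whose right-hand side involves only $\nabla u$,'' which is precisely this construction.
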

\begin{proof}

  For almost every $\omega\in\Omega$, we consider the solution $\phi$ of PDE:
  \begin{align*}
      -\partial_t\phi=&\Delta \phi-2\Delta u ;\quad
      \phi(T)=u(T-),
  \end{align*}
  where by \cite[Lemma I.3]{Pierre-1980}, the limit $u(T-):=\lim_{\tau\uparrow T}\frac{1}{T-\tau}\int^T_{\tau}u(t)dt \ (\textrm{weakly in }L^2)$ exists for almost every $\omega\in\Omega$.
  Note that the separability of $\mathscr{W}$ allows us to choose a measurable version of $\phi:(\Omega,\sF_T)\rightarrow \mathscr{W}$.  By \cite[Proposition 3]{Pierre-1983},
  we have
  $\phi\in L^2(\Omega,\sF_T;\mathscr{W})$ and
  $$\phi\geq u,\,\, d\mathbb{P}\otimes dt \otimes dx\textrm{-a.e.};\quad \|\phi\|_{L^2(\Omega,\sF_T;\mathscr{W})}\leq C \|u\|_{\mathscr{L}^2(\mathcal{P})},$$
  with positive constant $C$ being independent of $u$ and $\phi$.
   Since $u\in\mathscr{L}^2(\mathcal{P})$, there exist $h,\tilde{h}^i\in L^2(\Omega,\sF_T;L^2)$ $i=1,\cdots,d$, such that $-2\Delta u=h+\nabla\cdot\tilde{h}$.
Take conditional expectations in Hilbert spaces (see \cite{Prato-Zabczyk-1992}) and set $$\bar{\phi}(t)=E[\phi(t)|\sF_t],\,\,\bar{h}(t)=E[h(t)|\sF_t]\textrm{ and }\bar{\tilde{h}}(t)=E[\tilde{h}(t)|\sF_t],\quad \forall\,t\in[0,T]$$
By Lemma \ref{lem-BSPDE-condition-expect}, $\bar{\phi}\in \mathscr{U}(I,0,u(T-),\bar{h},\bar{\tilde{h}})$.
    Hence, we have $\bar{\phi}\in \mathscr{U}$, $\bar{\phi}\geq u,\,\, d\mathbb{P}\otimes dt \otimes dx\textrm{-a.e.}$ and by the estimates of Remark \ref{rmk-math-U},
 $$ \|\bar{\phi}\|_{\mathscr{U}}\leq C
 \left( \|u(T-)\|_{L^2(\Omega,\sF_T;L^2)}+\|\bar{h}\|_{\cL^2(L^2)}+\|\bar{\tilde{h}}\|_{\cL^2((L^2)^d)}  \right)
 \leq C \|u\|_{\mathscr{L}^2(\mathcal{P})},\,a.s..$$
We complete the proof.
\end{proof}

\begin{prop}\label{prop-obst-SPDE}
Let $\xi$ be an almost surely quasi-continuous adapted process such that $\xi(0)\leq u_0$ almost surely and
$$
\xi\leq \hat{\xi}+\breve{\xi},\quad d\bP\otimes dt \otimes dx-a.e.,
$$
 with $\breve{\xi}\in \mathscr{L}^2(\mathcal{P})$  and $\hat{\xi}$ being the solution of SPDE
\begin{equation*}
  \left\{\begin{array}{l}
  \begin{split}
  d\hat{\xi}(t,x)=\,&\displaystyle \Bigl[\Delta \hat{\xi}(t,x)+(\hat{f}+\nabla\cdot \hat{g} )(t,x)\,dt      +\hat{h}(t,x)\, dW_{t}, \quad
                     (t,x)\in Q;\\
    \hat{\xi}(0,x)=\, &\hat{\xi}_0(x), \quad x\in\cO,
    \end{split}
  \end{array}\right.
\end{equation*}
where $\hat{\xi}_0,u_0\in L^2$, $\hat{f},\hat{g}^i\in\cL^2(L^2)$, $i=1,\cdots,d$ and $\hat{h}\in\cL^2((L^2)^m)$. Given $(\bar{f},\bar{g},\bar{h})\in \cL^2(L^2)\times\cL^2((L^2)^d)\times\cL^2((L^2)^m) $,
there exists a unique pair $(u,\mu)$ such that

(i)  $u\in\mathcal{H}$ is almost surely quasi-continuous and $u\geq \xi$, $ d\mathbb{P}\otimes dt \otimes dx$-a.e.;

(ii) $\mu$ is a stochastic regular measure, and for any $\varphi\in\mathcal{D}$, there holds almost surely
\begin{equation*}
  \begin{split}
    &\langle u(t),\,\varphi(t) \rangle +\int_0^t\left[\langle \nabla u(s),\,\nabla \varphi(s) \rangle-\langle u(s),\,\partial_s \varphi(s)   \rangle  \right]\,ds-\int_{[0,t]\times\cO}\varphi(s,x)\mu(ds,dx)\\
    =\,&
    \langle u_0,\varphi(0) \rangle+\int_0^t\left[\langle \bar{f}(s),\,\varphi(s) \rangle-\langle \bar{g}(s),\, \nabla\varphi(s) \rangle \right]\,ds+\int_0^t\langle \varphi(s),\,\bar{h}(s)\,dW_s\rangle,\ \ \forall \,t\in[0,T];
  \end{split}
\end{equation*}

(iii)
$$
\int_Q\left(u(t,x)-\xi(t,x)\right)\mu(dt,dx)=0,\quad a.s.;
$$

(iv)
\begin{equation*}
  \begin{split}
    \|u\|_{\mathcal{H}}\leq C \Big\{&  \|\hat{\xi}_0\|+\|u_0\|+\|\bar{f}\|_{\cL^2(L^2)}
+\|\hat{f}\|_{\cL^2(L^2)}+\|\bar{g}\|_{\cL^2((L^2)^d)}+\|\hat{g}\|_{\cL^2((L^2)^d)}
\\
&+\|\bar{h}\|_{\cL^2((L^2)^m)}+\|\hat{h}\|_{\cL^2((L^2)^m)}
+\|\breve{\xi}\|_{\mathscr{L}^2(\mathcal{P})}
 \Big\}.
  \end{split}
\end{equation*}
\end{prop}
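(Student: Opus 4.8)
The plan is to remove the martingale part of the equation by subtraction, thereby reducing the reflected SPDE to a \emph{pathwise} deterministic parabolic obstacle problem to which Pierre's theory \cite{Pierre-1979} applies, and then to recover adaptedness, quasi-continuity and the stochastic regularity of $\mu$ from the structure of this reduction. First I would let $\bar u\in\mathcal H$ be the unique solution of the linear forward SPDE $d\bar u=[\Delta\bar u+\bar f+\nabla\cdot\bar g]\,dt+\bar h\,dW$ with $\bar u(0)=u_0$, which by standard theory possesses an almost surely quasi-continuous version (\cite{Denis-Matous-Zhang-2012}). Setting $v:=u-\bar u$ and $\psi:=\xi-\bar u$, the pair $(u,\mu)$ solves the problem if and only if, for almost every $\omega$, $v$ solves
\begin{equation*}
  \partial_t v-\Delta v=\mu\ge0,\qquad v(0)=0,\qquad v\ge\psi,\qquad \int_Q(v-\psi)\,d\mu=0 .
\end{equation*}
The decisive point is that the stochastic integral cancels, so that for fixed $\omega$ this is a purely deterministic problem, $v(\omega)\in\mathcal P_0$ is a regular potential and $\mu=\mu^{v}$ is the associated regular measure. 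The hypotheses match Pierre's framework exactly: $\psi(0)=\xi(0)-u_0\le0$ places the obstacle below the initial datum, and $\xi\le\hat\xi+\breve\xi$ gives $\psi\le(\hat\xi-\bar u)+\breve\xi$, i.e. the obstacle is dominated by the sum of a forward SPDE solution, which lies in $\mathscr W$ pathwise, and the parabolic potential $\breve\xi\in\mathcal P$; this is precisely the domination needed for the r\'eduite to exist in $\mathcal P_0$ (\cite[Theorem IV-1]{Pierre-1979}).

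Second, for almost every $\omega$ I would solve the deterministic problem by penalization: let $v^n$ solve $\partial_t v^n-\Delta v^n=n(\psi-v^n)^+$, $v^n(0)=0$, so that $v^n\in\mathcal P_0$ with measure $\mu^n=n(\psi-v^n)^+\,dt\,dx\ge0$. The uniform estimate is pathwise and classical: testing against the dominating function $\Theta(\omega)\ge\psi(\omega)$ in $\mathscr W$ furnished by the pathwise domination of \cite{Pierre-1983} used in the proof of Lemma \ref{lem-K-W} bounds $\|v^n(\omega)\|_{\mathscr K}$ and the total mass of $\mu^n(\omega)$ by $C\bigl(\|(\hat\xi-\bar u)(\omega)\|_{\mathscr W}+\|\breve\xi(\omega)\|_{\mathcal P}\bigr)$, uniformly in $n$. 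Passing to the limit pathwise yields the unique $v(\omega)\in\mathcal P_0$ with $v\ge\psi$ and the Skorohod identity, together with the stated estimate. Crucially, because the penalized flow is forward and causal, $v^n(t)$ depends on $\psi|_{[0,t]}$ only; since $\psi=\xi-\bar u$ is adapted, each $v^n(t)$, and hence $v(t)$, is $\sF_t$-measurable, so $v$ is adapted and therefore so is $u=\bar u+v$.

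Third, I would assemble the solution and its estimate and prove uniqueness. The map $\omega\mapsto v(\omega)$ is measurable by continuity of the deterministic solution operator, and $v\in\mathscr L^2(\mathcal P_0)$ after integrating the pathwise bound over $\Omega$ and invoking the forward SPDE estimate $\|\hat\xi-\bar u\|_{\mathscr L^2(\mathscr W)}\le C(\text{data})$; then $u=\bar u+v\in\mathcal H$ is adapted and, being a sum of two quasi-continuous processes, is almost surely quasi-continuous, giving (i). Adding the weak form of the equation for $\bar u$ to that of $v$ yields (ii) with $\mu=\mu^v$ a stochastic regular measure; the Skorohod identity (iii) holds almost surely because it holds pathwise for $v$ and $\psi$, which are quasi-continuous, while $\mu^v$ charges no polar set. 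The estimate (iv) follows by taking $L^2(\Omega)$-norms in the pathwise bound and using $\|\bar u\|_{\mathcal H}\le C(\|u_0\|+\|\bar f\|_{\cL^2(L^2)}+\|\bar g\|_{\cL^2((L^2)^d)}+\|\bar h\|_{\cL^2((L^2)^m)})$. For uniqueness, two solutions $(u_1,\mu_1),(u_2,\mu_2)$ give $w:=u_1-u_2$ with $w(0)=0$ solving $\partial_t w-\Delta w=\mu_1-\mu_2$ pathwise (the data again cancel); the Skorohod conditions force $\int_Q w\,d(\mu_1-\mu_2)\le0$ (since $\int_Q w\,d\mu_1=-\int_Q(u_2-\xi)\,d\mu_1\le0$ and symmetrically), and the energy identity then gives $w\equiv0$ and $\mu_1=\mu_2$.

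The step I expect to be the main obstacle is reconciling the pathwise deterministic construction with the stochastic quasi-continuous framework of Section 3.2: one must produce a jointly measurable, $\sF_t$-adapted and almost surely quasi-continuous version of $v$ whose contact set with $\psi$ is respected in the capacity sense, so that the Skorohod identity---valid pathwise but involving representatives defined only up to polar sets---becomes a bona fide almost sure statement for the stochastic regular measure $\mu^v$. This is exactly the point where the parabolic capacity and potential tools of Section 3.2 are indispensable, in contrast with the smooth-obstacle case where $\mu$ is absolutely continuous and no such care is needed.
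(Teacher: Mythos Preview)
Your reduction contains a genuine error that blocks the rest of the argument. After subtracting $\bar u$, you assert that the dominating term $\hat\xi-\bar u$ ``lies in $\mathscr W$ pathwise,'' and you use this to invoke Pierre's deterministic obstacle theorem and to run the uniform energy estimate by testing $v^n$ against $v^n-\Theta$ with $\Theta(\omega)\in\mathscr W$. But $\hat\xi-\bar u$ is a solution of the forward SPDE driven by $(\hat h-\bar h)\,dW$, and unless $\hat h=\bar h$ its paths are \emph{not} in $\mathscr W$: the time derivative of an SPDE solution is a white-noise distribution, not an element of $L^2(0,T;H^{-1})$. Consequently the hypothesis of \cite[Theorem~IV-1]{Pierre-1979} (obstacle dominated by an element of $\mathscr W$ plus a potential) is not met, and the key pathwise estimate---the one that bounds $\|v^n(\omega)\|_{\mathscr K}$ and the total mass of $\mu^n(\omega)$ via integration by parts against $\Theta(\omega)$---cannot be carried out as you describe.

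The paper sidesteps this by moving in the opposite direction: rather than pulling the problem down to the deterministic framework, it lifts the potential term up to the SPDE framework. Lemma~\ref{lem-K-W} produces $\phi\in\mathscr U$ with $\breve\xi\le\phi$ and $\|\phi\|_{\mathscr U}\le C\|\breve\xi\|_{\mathscr L^2(\mathcal P)}$; since every element of $\mathscr U$ can be rewritten as a forward SPDE solution, one now has $\xi\le\hat\xi+\phi$ with $\hat\xi+\phi$ a forward SPDE solution, and \cite[Theorem~3]{Denis-Matous-Zhang-2012} applies verbatim to give (i)--(iii). The estimate (iv) then follows from the penalization at the SPDE level, where the uniform bound comes from the It\^o formula for $\|u^n-(\hat\xi+\phi)\|^2$ (both sides are SPDE solutions, so the martingale terms are handled naturally). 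Your scheme can be salvaged by keeping the penalization $u^n=v^n+\bar u$ but performing the estimate with the stochastic It\^o formula against $\hat\xi+\phi$ rather than a pathwise energy identity; at that point, however, you are essentially reproducing \cite{Denis-Matous-Zhang-2012} rather than reducing to Pierre.
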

\begin{proof}[Sketch of the proof.]
  By Lemma \ref{lem-K-W}, there exists $\phi\in\mathscr{U}$ such that
  $$
  \phi\geq \breve{\xi},\,\, d\mathbb{P}\otimes dt \otimes dx\textrm{-a.e.};\quad \|\phi\|_{\mathscr{U}}\leq C \|\breve{\xi}\|_{\mathscr{L}^2(\mathcal{P})}.
  $$
  For $\phi\in\mathscr{U}$, there exist $(f,\tilde{g},v)\in \cL^2(L^2)\times\cL^2((L^2)^d)\times\cL^2((L^2)^m) $ and $G\in L^2(\Omega,\sF_T;L^2)$ such that
  \begin{equation*}
    \begin{cases}
     -d\phi(t,x)=&\, \Bigl[\Delta \phi(t,x)+(f+\nabla\cdot\tilde{g} )(t,x)\,dt      -v^{r}(t,x)\, dW_{t}^{r}, \quad
                     (t,x)\in Q;\\
    \phi(T,x)=&\,G(x), \quad x\in\cO,
    \end{cases}
  \end{equation*}
  which can be equivalently written into  the form of SPDE
    \begin{equation*}
    \begin{cases}
     d\phi(t,x)=\,&\displaystyle \Bigl[\Delta \phi(t,x)-(f+\nabla\cdot g )(t,x)\,dt      +v^{r}(t,x)\, dW_{t}^{r}, \quad
                     (t,x)\in Q;\\
    \phi(0,x)=\, &\phi(0,x), \quad x\in\cO,
    \end{cases}
  \end{equation*}
  with $g:=\tilde{g}+2\nabla \phi$.
   As
  $$\xi(t)\leq\hat{\xi}(t)+\breve{\xi}(t)\leq \hat{\xi}(t)+\phi(t),\quad t\in[0,T], $$
  by \cite[Theorem 3]{Denis-Matous-Zhang-2012}, there exists a unique pair $(u,\mu)$ such that
 the assertions (i)-(iii) hold.
In a similar way to \cite[Lemma 4 and Theorem 3]{Denis-Matous-Zhang-2012}, applying It\^o  formula to the penalized approximation sequences and taking limits, we obtain
  \begin{equation*}
  \begin{split}
    \|u\|_{\mathcal{H}}\leq C \Big\{&  \|\hat{\xi}_0\|+\|u_0\|+\|\bar{f}\|_{\cL^2(L^2)}
+\|\hat{f}\|_{\cL^2(L^2)}+\|\bar{g}\|_{\cL^2((L^2)^d)}+\|\hat{g}\|_{\cL^2((L^2)^d)}
\\
&+\|\bar{h}\|_{\cL^2((L^2)^m)}+\|\hat{h}\|_{\cL^2((L^2)^m)}
+\|\phi\|_{\mathscr{U}}
 \Big\}\\
 \leq C \Big\{&  \|\hat{\xi}_0\|+\|u_0\|+\|\bar{f}\|_{\cL^2(L^2)}
+\|\hat{f}\|_{\cL^2(L^2)}+\|\bar{g}\|_{\cL^2((L^2)^d)}+\|\hat{g}\|_{\cL^2((L^2)^d)}
\\
&+\|\bar{h}\|_{\cL^2((L^2)^m)}+\|\hat{h}\|_{\cL^2((L^2)^m)}
+\|\breve{\xi}\|_{\mathscr{L}^2(\mathcal{P})}
 \Big\}.
  \end{split}
\end{equation*}
This completes the proof.
\end{proof}

\begin{rmk}\label{rmk-prop-OSPDE}
  In Proposition \ref{prop-obst-SPDE}, the obstacle process $\xi$ is dominated by a stochastic regular potential plus a solution of some SPDE, while in 
  \cite[Theorem 3]{Denis-Matous-Zhang-2012}, the obstacle process is only allowed to be dominated by a solution of some SPDE. In this sense, we generalize the results of
  \cite[Theorem 3]{Denis-Matous-Zhang-2012}.  In fact, Proposition \ref{prop-obst-SPDE} includes the classical deterministic results (see \cite{Pierre-1979,Pierre-1980}) as particular cases, i.e., when all the terms involved in Proposition \ref{prop-obst-SPDE} are deterministic, the assertions coincide with those of \cite[Theorem IV-1]{Pierre-1979}. Furthermore, in a similar way to \cite{Denis-Matous-Zhang-2012}, we can extend the results herein to the obstacle problems for general quasi-linear SPDEs. However, we do not seek such a generality in this work.
\end{rmk}

An immediate consequence of this proposition is the corollary.
\begin{cor}\label{cor-prop-obstac-SPDE}
There exists $C>0$ such that
$\forall \phi\in\mathscr{U}$, $\exists\, v\in\mathscr{L}^2(\mathcal{P})$ with $v$ being quasi-continuous a.s. and
  $$v\geq \phi,\,\, d\mathbb{P}\otimes dt \otimes dx\textrm{-a.e.};\quad \|v\|_{\mathscr{K}}\leq C \|\phi\|_{\mathscr{U}},\,\,a.s..    $$
  In particular, when $\phi$ is deterministic, $v$ becomes deterministic as well.
\end{cor}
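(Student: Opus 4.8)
The plan is to obtain $v$ by solving, through Proposition~\ref{prop-obst-SPDE}, the obstacle problem whose barrier is $\phi$ itself, with vanishing forcing data and a suitably \emph{nonnegative} initial datum, and then to recognize the solution of that problem directly as a parabolic potential. To this end, first I would recast $\phi$ in forward form: by Remark~\ref{rmk-math-U}, $\phi\in\mathscr U$ is the first component of the solution $(\phi,\psi)\in\mathcal H\times\cL^2((L^2)^m)$ of a backward equation $-d\phi=[\Delta\phi+f+\nabla\cdot g]\,dt-\psi^r\,dW^r$, $\phi(T)=G$, equivalently of the forward SPDE
\begin{equation*}
d\phi(t,x)=\bigl[\Delta\phi(t,x)-(f+\nabla\cdot g)(t,x)\bigr]\,dt+\psi^r(t,x)\,dW_t^r,\qquad \phi(0,x)=\phi_0(x),
\end{equation*}
where $\phi_0\in L^2$ is deterministic since $\sF_0$ is $\bP$-trivial. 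Choosing for $f+\nabla\cdot g$ the canonical decomposition $\mathcal J(f+\nabla\cdot g)$ and combining \eqref{repre-H-1} with the It\^o estimate of Remark~\ref{rmk-math-U}, I may arrange $\|\phi_0\|+\|f\|_{\cL^2(L^2)}+\|g\|_{\cL^2((L^2)^d)}+\|\psi\|_{\cL^2((L^2)^m)}\le C\|\phi\|_{\mathscr U}$. Being a forward SPDE solution with $\cL^2(L^2)$-coefficients, $\phi$ moreover admits an almost surely quasi-continuous version (by the parabolic capacity estimates for linear SPDEs, cf.\ \cite{Denis-Matous-Zhang-2012}), which I henceforth use.

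Next I would apply Proposition~\ref{prop-obst-SPDE} with obstacle $\xi:=\phi$, the choices $\hat\xi:=\phi$ (so $\hat\xi_0=\phi_0$, $\hat f=-f$, $\hat g=-g$, $\hat h=\psi$), $\breve\xi:=0\in\mathscr L^2(\mathcal P)$, $u_0:=\phi_0^{+}$ and $(\bar f,\bar g,\bar h):=(0,0,0)$. All hypotheses are met: $\xi$ is adapted and a.s.\ quasi-continuous, $\xi(0)=\phi_0\le\phi_0^{+}=u_0$, and $\xi\le\hat\xi+\breve\xi$ trivially. This produces a unique pair $(u,\mu)$ with $u\in\mathcal H$ a.s.\ quasi-continuous, $u\ge\phi$ $d\bP\otimes dt\otimes dx$-a.e., $\mu$ a nonnegative stochastic regular measure, and the variational identity of Proposition~\ref{prop-obst-SPDE}(ii), which here reduces to $\langle u(t),\varphi(t)\rangle+\int_0^t[\langle\nabla u(s),\nabla\varphi(s)\rangle-\langle u(s),\partial_s\varphi(s)\rangle]\,ds-\int_{[0,t]\times\cO}\varphi(s,x)\,\mu(ds,dx)=\langle\phi_0^{+},\varphi(0)\rangle$ for all $\varphi\in\mathcal D$, $t\in[0,T]$. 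I claim $v:=u$ is the required field.

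To see $v\in\mathscr L^2(\mathcal P)$, I would extend this identity from $\mathcal D$ to $\mathscr W_T$ by density (the term $\int\varphi\,d\mu$ making sense through the quasi-continuous version of $\varphi$), then take $t=T$ and $\varphi\in\mathscr W^+_T$ and use $\varphi(T)=0$ to obtain
$$\int_0^T\bigl(\langle-\partial_s\varphi(s),u(s)\rangle+\langle\partial_{x^i}\varphi(s),\partial_{x^i}u(s)\rangle\bigr)\,ds=\langle\phi_0^{+},\varphi(0)\rangle+\int_{[0,T]\times\cO}\varphi\,d\mu\ \ge\ 0,$$
since $\phi_0^{+}\ge0$, $\varphi(0)\ge0$, $\varphi\ge0$ and $\mu\ge0$. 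Hence $u(\cdot,\omega)\in\mathcal P$ for a.e.\ $\omega$; as $u\in\mathcal H\subset\mathscr L^2(\mathscr K)$ is adapted and a.s.\ quasi-continuous, $v=u\in\mathscr L^2(\mathcal P)$ is quasi-continuous a.s.\ and satisfies $v\ge\phi$. Estimate (iv) of Proposition~\ref{prop-obst-SPDE} specialized here gives $\|v\|_{\mathcal H}=\|u\|_{\mathcal H}\le C(\|\phi_0^{+}\|+\|f\|_{\cL^2(L^2)}+\|g\|_{\cL^2((L^2)^d)}+\|\psi\|_{\cL^2((L^2)^m)})\le C\|\phi\|_{\mathscr U}$, whence, via $\|v(\cdot,\omega)\|_{\mathscr K}^2\le(1+T)\sup_{t}\|v(t,\omega)\|^2+\|\nabla v(\cdot,\omega)\|^2_{L^2(0,T;(L^2)^d)}$, the asserted control of $\|v\|_{\mathscr K}$ by $\|\phi\|_{\mathscr U}$. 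When $\phi$ is deterministic, $\psi\equiv0$ and $f,g$ are deterministic, so the forward equation is a deterministic PDE, and by Remark~\ref{rmk-prop-OSPDE} Proposition~\ref{prop-obst-SPDE} collapses to the classical deterministic obstacle problem, whose solution $u$ is deterministic; hence so is $v$.

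The step I expect to demand the most care is verifying that the output $u$ genuinely lies in $\mathcal P$, since the obstacle problem puts no a priori sign on $u$. This hinges on three things: (a) the defining inequality of $\mathcal P$ is \emph{linear} in $u$, hence can be read off the variational identity directly; (b) the forcing data $\bar f,\bar g,\bar h$ were taken to be zero; and, above all, (c) the initial datum was chosen to be $\phi_0^{+}$ rather than $\phi_0$ --- this is exactly what is needed, because membership in $\mathcal P$ already carries a nonnegative initial trace while admissibility of $\xi$ as a barrier only requires $u_0\ge\xi(0)=\phi_0$. A subordinate but real point is the almost sure quasi-continuity of an arbitrary $\phi\in\mathscr U$, needed merely to be allowed to invoke Proposition~\ref{prop-obst-SPDE}; this itself rests on the parabolic capacity estimates for linear SPDEs.
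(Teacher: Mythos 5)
Your proof is correct and follows exactly the route the paper intends: the corollary is stated as an immediate consequence of Proposition \ref{prop-obst-SPDE}, and you obtain it by feeding $\phi$ (rewritten in forward SPDE form, with its a.s.\ quasi-continuous version) back in as its own obstacle with $\breve{\xi}=0$, zero forcing $(\bar f,\bar g,\bar h)=(0,0,0)$ and initial datum $\phi_0^{+}$, after which membership of $u$ in $\mathcal{P}$ is read off the variational identity from the nonnegativity of $u_0$ and of the regular measure $\mu$, and the norm bound comes from estimate (iv) together with \eqref{repre-H-1} and Remark \ref{rmk-math-U}. The only caveat is interpretive, not mathematical: what you actually prove is $\|v\|_{\mathscr{L}^2(\mathscr{K})}\le C\|\phi\|_{\mathscr{U}}$, which is the correct reading of the corollary's ``a.s.'' (consistent with the paper's own loose usage at the end of the proof of Lemma \ref{lem-K-W}); a genuinely pathwise bound by the deterministic constant $C\|\phi\|_{\mathscr{U}}$ could not hold for $\phi\in\mathscr{U}$ with unbounded random terminal data.
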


To approximate a parabolic potential by regular ones, we introduce the following lemma.
\begin{lem}[Proposition I-3 and Corollary II-1 of \cite{Pierre-1980}]\label{lemm-approx-potential}
  Let $u$ belong to $\mathcal{P}$ with associated Radon measure $\mu^u$. For each $\delta>0$, let $u_{\delta}\in\mathscr{W}$ be the weak solution of the following parabolic PDE
  $$
  (P_{\delta})\quad u_{\delta}(0)=u(0+),\,\,
  \partial_t u_{\delta}(t)=\Delta u_{\delta}(t)+\frac{u-u_{\delta}}{\delta},
  $$
   where, by \cite[Lemma I.3]{Pierre-1980}, there exists the limit $u(0+):=\lim_{\tau\downarrow 0}\frac{1}{\tau}\int_0^{\tau}u(t)dt \ (\textrm{weakly in }L^2)$. We assert that

  (1) $\{u_{\delta}\}_{\delta>0}\subset \mathcal{P}$ converges increasingly in $L^2(0,T;L^2)$ and weakly in $L^2(0,T;H^1_0)$ to $u$, as $\delta\rightarrow 0^+$;

  (2) $\mu^{u_{\delta}}:={\delta}^{-1}{(u-u_{\delta})}$ converges vaguely to $\mu^u$ as $\delta\rightarrow 0^+$ and for each $\delta>0$,
  $$\int_{[0,T)\times\cO} \mu^{u_{\delta}}(dt,dx)\leq \int_{[0,T)\times\cO} \mu^{u}(dt,dx);$$

  (3) if we set
  \begin{equation}\label{eq-q.s.c.version}
  \bar{u}=\sup_{\delta>0}\textrm{ quasi ess }u_{\delta},\quad q.e.,
  \end{equation}
  then $\bar{u}$ is right-continuous from $[0,T]$ to $L^2$ and for each $\phi\in\mathcal{P}$, $\bar{u}\in L^1(Q,\mu^{\phi})$ and there holds the estimate
  $$
  \int_{[0,T)\times\cO}\bar{u}(t,x)\mu^{\phi}(dt,dx)\leq C \|u\|_{\mathscr{K}}\|\phi\|_{\mathscr{K}}.
  $$
\end{lem}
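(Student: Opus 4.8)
Since Lemma~\ref{lemm-approx-potential} is, as indicated, Proposition~I-3 and Corollary~II-1 of \cite{Pierre-1980}, the plan is to recall the structure of Pierre's argument and make explicit which ingredients it uses. Write $A:=\partial_t-\Delta$ for the forward heat operator, so that $(P_\delta)$ reads $Au_\delta+\delta^{-1}u_\delta=\delta^{-1}u$ with $u_\delta(0)=u(0+)$. First I would check that $(P_\delta)$ is well posed: since $u\in\mathscr{K}$ gives $u\in L^2(0,T;H^1_0)\cap L^\infty(0,T;L^2)$ and $u(0+)\in L^2$ exists by \cite[Lemma~I.3]{Pierre-1980}, this is a linear parabolic problem whose unique solution $u_\delta\in\mathscr{W}\hookrightarrow C([0,T];L^2)$ is given by classical theory (\cite{Ladyzhenskaia_68}).

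The next step, toward assertion (1), is comparison and monotonicity. Because $u\in\mathcal{P}$, the representation of Proposition~\ref{prop-repre-potential} shows $Au=\mu^u\ge0$ as a distribution on $(0,T)\times\cO$, so $u$ is a supersolution of $(P_\delta)$ with the same initial datum; the maximum principle for $\partial_t-\Delta+\delta^{-1}$ then gives $u_\delta\le u$. Hence $u-u_\delta\ge0$ and $Au_\delta=\delta^{-1}(u-u_\delta)\ge0$, so $u_\delta\in\mathcal{P}$ with associated (absolutely continuous) measure $\mu^{u_\delta}=\delta^{-1}(u-u_\delta)\,dt\,dx$; and for $\delta'<\delta$ the inequality $\delta'^{-1}(u-u_{\delta'})\ge\delta^{-1}(u-u_{\delta'})$ makes $u_{\delta'}$ a supersolution of $(P_\delta)$, so $u_\delta\le u_{\delta'}$. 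Thus $u_\delta\uparrow\bar u\le u$ in $L^2(0,T;L^2)$ as $\delta\downarrow0$. For the weak $L^2(0,T;H^1_0)$ convergence one needs the $\delta$-uniform bound $\|u_\delta\|_{\mathscr{K}}\le C\|u\|_{\mathscr{K}}$; granting it, $u_\delta$ has a weakly convergent subnet in $L^2(0,T;H^1_0)$ whose limit must be $\bar u$ by the strong $L^2(Q)$ convergence, so the whole family converges. To identify $\bar u=u$, test $(P_\delta)$ against $\phi\in\mathscr{W}^+_T$, which gives
\[
\delta^{-1}\int_Q(u-u_\delta)\,\phi\,dt\,dx=\int_0^T\big(\langle-\partial_t\phi,u_\delta\rangle+\langle\nabla\phi,\nabla u_\delta\rangle\big)\,dt-\langle u(0+),\phi(0)\rangle;
\]
the right-hand side stays bounded as $\delta\downarrow0$, so, since $u-u_\delta\ge0$ and $\delta^{-1}\to\infty$, one gets $\int_Q(u-\bar u)\phi\,dt\,dx=0$ for all such $\phi$, hence $\bar u=u$ a.e. That is assertion (1).

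For (2), the same identity together with $u_\delta\rightharpoonup u$ in $L^2(0,T;H^1_0)$ and Proposition~\ref{prop-repre-potential} yields $\int_Q\phi\,d\mu^{u_\delta}\to\int_Q\phi\,d\mu^u$ for $\phi\in\mathscr{W}_T\cap\mathscr{C}(Q)$, i.e.\ vague convergence; the mass bound $\int_{[0,T)\times\cO}\mu^{u_\delta}\le\int_{[0,T)\times\cO}\mu^u$ comes from integrating $(P_\delta)$ over $Q$ (conceptually, $\mu^{u_\delta}$ is, up to its $t=0$ part, the image of $\mu^u$ under the sub-Markovian resolvent $(\delta^{-1}I+A)^{-1}$), using $u_\delta\le u$ and Fatou. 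For (3): the q.e.-defined function $\bar u$ of \eqref{eq-q.s.c.version} is the quasi-continuous representative of $u$, and its right-continuity from $[0,T]$ into $L^2$ follows from the semigroup/resolvent structure of $(P_\delta)$, each $u_\delta$ being $L^2$-continuous with monotone and (by the $\mathscr{K}$-bound) locally uniform convergence. The bilinear estimate $\int_{[0,T)\times\cO}\bar u\,d\mu^\phi\le C\|u\|_{\mathscr{K}}\|\phi\|_{\mathscr{K}}$, which in particular makes $\bar u\in L^1(Q,\mu^\phi)$, uses Pierre's fact that potential measures do not charge polar sets (so $\int\bar u\,d\mu^\phi$ is unambiguous), then regularizes $\phi$ as well, writes $\int\bar u\,d\mu^{\phi_\delta}=\delta^{-1}\int_Q(\phi-\phi_\delta)\bar u\,dt\,dx$, symmetrizes the two regularizations via the self-adjointness of $-\Delta$ to bound it by $C\|u\|_{\mathscr{K}}\|\phi\|_{\mathscr{K}}$, and lets $\delta\downarrow0$ using (2) and the quasi-continuity of $\bar u$.

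The main obstacle is the passage from the absolutely continuous approximations $u_\delta$ to the limit object: (i) the $\delta$-uniform bound $\|u_\delta\|_{\mathscr{K}}\le C\|u\|_{\mathscr{K}}$, for which the plain energy estimate (testing by $u_\delta$) leaves an uncontrolled term of order $\delta^{-1}\|u\|_{L^2(Q)}^2$ and one must instead exploit the order relation $0\le u_\delta\le u$ together with the mass bound on $\mu^{u_\delta}$; and (ii) the bilinear integrability estimate in (3), which rests essentially on potential measures ignoring polar sets. These are precisely the points at which we invoke \cite{Pierre-1980,Pierre-1983} rather than reprove them; the remainder is routine linear parabolic theory plus Proposition~\ref{prop-repre-potential}.
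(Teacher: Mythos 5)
This lemma is imported verbatim from \cite{Pierre-1980} (Proposition I-3 and Corollary II-1) and the paper supplies no proof of its own, so there is nothing internal to compare against; your reconstruction of Pierre's penalization argument — comparison to get $0\le u_\delta\le u$ and monotonicity in $\delta$, identification of the limit by testing against $\mathscr{W}^+_T$, the resolvent identity $\mu^{u_\delta}=\delta^{-1}(\partial_t-\Delta+\delta^{-1})^{-1}\mu^u$ for the mass bound and vague convergence, and the symmetrized regularization for the bilinear estimate in (3) — is faithful to the cited source. You also correctly isolate the two genuinely nontrivial inputs (the $\delta$-uniform bound $\|u_\delta\|_{\mathscr{K}}\le C\|u\|_{\mathscr{K}}$, which requires the order relation and the mass bound rather than a naive energy estimate, and the fact that potential measures do not charge polar sets) and attribute them to \cite{Pierre-1980,Pierre-1983}, which is exactly how the present paper treats them.
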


\begin{rmk}\label{rmk-capacity}
  Letting $A\subset\cO$ be a compact set, we have by \cite[Proposition II.4]{Pierre-1980}
  $$cap (\{t\}\times A)=\int_A dx,\quad \forall t\in[0,T).$$
  Therefore, if $u$ and $\bar{u}$ are two measurable function on $Q$ and $u=\bar{u}$ q.e., then $u(t,\cdot)=\bar{u}(t,\cdot)$, $\forall t\in[0,T)$. Moreover, for any open set $A\subset[0,T)\times\cO$ and $\phi\in\mathscr{W}^+$ satisfying $\phi\geq 1$ a.e. on $A$, by Corollary \ref{cor-prop-obstac-SPDE}, there exists $\varphi\in\mathcal{P}$ such that $\varphi\geq \phi$ and $\|\varphi\|_{\mathscr{K}}\leq C \|\phi\|_{\mathscr{W}}$ and thus,
  we have
  \begin{equation}
  \begin{split}
    \int_{[0,T)\times\cO}1_A\,\mu^u(dt,dx)\leq
    \int_{[0,T)\times\cO} \phi(t,x)\mu^u(dt,dx)
    \leq& \int_{[0,T)\times\cO} \bar{\varphi}(t,x)\mu^u(dt,dx)\\
    \textrm{(by Lemma \ref{lemm-approx-potential}) }
    \leq&\, C \|\varphi\|_{\mathscr{K}}\|u\|_{\mathscr{K}}\\
    \leq& \,C\|\phi\|_{\mathscr{W}}\|u\|_{\mathscr{K}}.
  \end{split}
  \end{equation}
  where $\bar{\varphi}$ denotes a quasi-everywhere precisely defined version of $\varphi$ in  \eqref{eq-q.s.c.version}. Hence, in view of Definition \ref{def-capacity}, we see that for any $u\in\mathcal{P}$, $\mu^u$ does not charge polar sets.
\end{rmk}

%

To approximate the obstacle for deterministic parabolic PDEs, Pierre \cite{Pierre-1979} introduced the following lemma, from which we shall derive a useful corollary.

\begin{lem}[Proposition II-2, Page 1165 of \cite{Pierre-1979}]\label{lem-approx}
  Suppose that $v:Q\rightarrow \bR$ is quasi-continuous and there exists $u\in\mathcal{P}$ such that $|v|\leq u$ q.e.. Then there exist $\phi\in\mathcal{P}$ and $\{v_n;n\in\mathbb{N}\}\subset \mathscr{W}\cap \mathscr{C}(Q) $, such that
  $$
  \|v-v_n\|_{\phi}:=\inf\{\alpha;\,|v-v_n|\leq \alpha \phi,\,\, q.e.\}\longrightarrow 0,\textrm{ as }n\rightarrow +\infty.
  $$
\end{lem}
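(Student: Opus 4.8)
The plan is to reduce the statement, via Corollary \ref{cor-prop-obstac-SPDE} and the approximation scheme of Lemma \ref{lemm-approx-potential}, to an approximation of the bounding potential $u$ by its regular regularizations $u_\delta\in\mathscr{W}$, and then to handle the quasi-continuous function $v$ through a truncation-and-mollification argument relative to the norm $\|\cdot\|_\phi$. First I would fix $u\in\mathcal P$ with $|v|\le u$ q.e. and apply Lemma \ref{lemm-approx-potential} to obtain the regularized potentials $u_\delta\in\mathscr{W}$ solving $(P_\delta)$, which converge increasingly to $u$ in $L^2(0,T;L^2)$ and weakly in $L^2(0,T;H^1_0)$, together with their quasi-continuous envelope $\bar u=\sup_{\delta>0}\text{quasi ess }u_\delta$; since $\bar u$ is a q.e.-precise version of $u$ and is itself a potential, it is a natural candidate (after a small perturbation) for the dominating $\phi$ we must produce. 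The point of passing to $\bar u$ is that it is right-continuous into $L^2$ and, by part (3) of Lemma \ref{lemm-approx-potential}, it controls integrals against every measure $\mu^\psi$, $\psi\in\mathcal P$; this is exactly the kind of uniform control one needs to make the $\|\cdot\|_\phi$-topology manageable.

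Next I would build the approximating sequence $v_n$. On the set where things are comparable, write $v = v^+ - v^-$ and note $0\le v^\pm\le u$ q.e. For each level $n$ I would form a truncation $v^{(n)}$ obtained by capping $v$ between $-u_{\delta_n}$ and $u_{\delta_n}$ for a suitable sequence $\delta_n\downarrow 0$, so that $|v - v^{(n)}|\le |v|\,\mathbf 1_{\{|v|>u_{\delta_n}\}} \le (u - u_{\delta_n})\vee 0$ q.e. up to q.e.-null corrections; because $u_{\delta_n}\uparrow u$ this error tends to $0$ in the relevant sense. The function $v^{(n)}$ is now quasi-continuous and dominated by the element $u_{\delta_n}\in\mathscr{W}$, which has better (Sobolev) regularity; a mollification in space and time of $v^{(n)}$ against a standard approximate identity then produces $v_n\in\mathscr{W}\cap\mathscr{C}(Q)$ with $|v_n|\le C u_{\delta_n} + \varepsilon_n\,\bar u$, and with $|v^{(n)} - v_n|$ small measured against $\bar u$ (using the continuity properties of $\bar u$ and the regularity of $u_{\delta_n}$). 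Choosing $\phi := \bar u + \sum_n 2^{-n}\psi_n$, where $\psi_n\in\mathcal P$ is a potential majorizing the smooth remainder $v_n$ supplied by Corollary \ref{cor-prop-obstac-SPDE}, guarantees $\phi\in\mathcal P$ and that each $v_n$ and $v$ are $\le \phi$ up to controlled multiples, so the quantities $\|v-v_n\|_\phi$ are well-defined and finite.

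Finally I would assemble the two error contributions — the truncation error $|v-v^{(n)}|$ and the mollification error $|v^{(n)}-v_n|$ — and check that both are $\le \alpha_n\phi$ q.e. with $\alpha_n\to 0$. The truncation error is controlled because $(u-u_{\delta_n})\vee 0\to 0$ q.e. along a subsequence and is itself $\le u\le\phi$, so by a dominated-convergence argument for the $\|\cdot\|_\phi$ ``seminorm'' (monotone in $n$ after passing to a subsequence) it vanishes; the mollification error is controlled by the equicontinuity of $\bar u$ (right-continuity into $L^2$, plus the fact that $\mu^\phi$ does not charge polar sets, Remark \ref{rmk-capacity}) and by standard estimates on approximate identities acting on the regular function $u_{\delta_n}$. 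The main obstacle, and the step deserving the most care, is making the mollification of the merely quasi-continuous function $v^{(n)}$ land inside $\mathscr{W}\cap\mathscr{C}(Q)$ while keeping the error small in the $\|\cdot\|_\phi$-sense rather than merely in an $L^p$-sense: this requires using the q.e.-precise structure of $\bar u$ and the capacitary description of polar sets (via Definition \ref{def-capacity} and Remark \ref{rmk-capacity}) to upgrade $L^2$- or capacity-convergence of the mollified functions to the pointwise-q.e. domination $|v^{(n)}-v_n|\le\alpha_n\phi$. Once this is in place, a diagonal extraction over $\delta_n$ and the mollification parameter yields the desired sequence $\{v_n\}\subset\mathscr{W}\cap\mathscr{C}(Q)$ with $\|v-v_n\|_\phi\to 0$.
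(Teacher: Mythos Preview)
The paper does not prove this lemma at all: it is quoted verbatim from Pierre \cite{Pierre-1979} and used as a black box to derive Corollary \ref{cor-lem-approx}. So there is no ``paper's own proof'' to compare against, and your proposal stands or falls on its own.

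On its merits, the outline has a genuine gap. The quantity $\|\cdot\|_\phi$ is an $L^\infty$-type seminorm (a q.e.\ uniform bound relative to $\phi$), and for such a norm there is no dominated-convergence principle: from $(u-u_{\delta_n})\downarrow 0$ q.e.\ and $u-u_{\delta_n}\le u\le\phi$ you cannot conclude $\|u-u_{\delta_n}\|_\phi\to 0$, because that would require $u-u_{\delta_n}\le\alpha_n\phi$ q.e.\ with $\alpha_n\to 0$, i.e.\ $u_{\delta_n}\ge(1-\alpha_n)u$ q.e., which the monotone $L^2$-convergence of Lemma \ref{lemm-approx-potential} does not deliver. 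The same objection applies to your mollification step: you correctly flag it as ``the main obstacle,'' but the tools you invoke (right-continuity of $\bar u$ into $L^2$, Remark \ref{rmk-capacity}) give only $L^2$- or capacity-type smallness, not the q.e.\ pointwise domination by $\alpha_n\phi$ that $\|\cdot\|_\phi\to 0$ demands. Upgrading capacity convergence to this uniform-ratio control is exactly the nontrivial content of Pierre's original argument, and your sketch does not supply a mechanism for it. If you want a self-contained proof you will need to consult \cite{Pierre-1979} directly; the construction there builds $\phi$ simultaneously with the $v_n$ rather than fixing $\phi$ first and then trying to squeeze the errors under it.
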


\begin{cor}\label{cor-lem-approx}
  Under the hypothesis of Lemma \ref{lem-approx}, there exist $\phi\in\mathscr{W}^+$, $\{v_n;n\in\mathbb{N}\}\subset \mathscr{W}\cap \mathscr{C}(Q) $, and $\{\theta_n;n\in\bN\}\subset \bN^{-1}$ such that $\theta_n$ converges decreasingly to $0$ and
  $$
  |v-v_n|\leq \theta_n\phi,\,\,\, \|\phi\|_{\mathscr{W}}\leq C\|u\|_{\mathscr{K}},\,\,\,
  \|v_n\|_{\mathscr{W}}\leq n,\quad n=1,2,\cdots,
  $$
  where the constant $C$ is independent of $v$, $u$, $n$ and $\phi$.
\end{cor}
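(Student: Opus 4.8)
The plan is to start from the conclusion of Lemma \ref{lem-approx}, which supplies $\phi_0\in\mathcal{P}$ and a sequence $\{w_n\}_{n\in\bN}\subset\mathscr{W}\cap\mathscr{C}(Q)$ with $\|v-w_n\|_{\phi_0}\to 0$. First I would upgrade the potential $\phi_0\in\mathcal{P}$ to an element of $\mathscr{W}^+$: by Corollary \ref{cor-prop-obstac-SPDE} (in its deterministic form), or more directly by Lemma \ref{lem-K-W} applied to the deterministic case (recall $\mathscr{U}\subset\mathscr{W}$ in the deterministic setting, as noted in Remark \ref{rmk-math-U}), there exists $\phi\in\mathscr{W}^+$ with $\phi\geq\phi_0$ q.e.\ and $\|\phi\|_{\mathscr{W}}\leq C\|\phi_0\|_{\mathscr{K}}\leq C\|u\|_{\mathscr{K}}$, where the last inequality uses $|v|\leq u$ q.e.\ together with the estimate controlling $\|\phi_0\|_{\mathscr{K}}$ by $\|u\|_{\mathscr{K}}$ that comes out of the construction in Lemma \ref{lem-approx}. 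Since $\phi\geq\phi_0$ we still have $\|v-w_n\|_{\phi}\leq\|v-w_n\|_{\phi_0}\to 0$, so replacing $\phi_0$ by $\phi$ costs nothing.

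Next I would handle the two remaining bookkeeping requirements: the quantization of the error into $\bN^{-1}$ and the norm bound $\|v_n\|_{\mathscr{W}}\leq n$. For the first, since $\|v-w_n\|_{\phi}\to 0$, I pass to a subsequence $\{w_{n_k}\}$ along which $\|v-w_{n_k}\|_{\phi}<\tfrac1k$; then for each $k$ pick $\theta_k\in\bN^{-1}$ with $\|v-w_{n_k}\|_{\phi}\leq\theta_k\leq \tfrac1k$ (possible by density of $\bN^{-1}$ near $0$ and the definition of $\|\cdot\|_\phi$ as an infimum), and by further thinning ensure $\theta_k$ is strictly decreasing to $0$. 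This gives $|v-w_{n_k}|\leq\theta_k\phi$ q.e.\ after the definition of $\|\cdot\|_\phi$. For the second requirement, relabel: set $v_n:=w_{n_{k(n)}}$ where $k(n)$ is chosen (depending on the a priori $\mathscr{W}$-norms of the $w_{n_k}$, which are finite since each $w_{n_k}\in\mathscr{W}$) so large that $\|w_{n_{k(n)}}\|_{\mathscr{W}}\leq n$; more carefully, one reindexes so that the $n$-th retained function is named $v_n$ and, if necessary, repeats a given $w_{n_k}$ for several consecutive values of $n$ until $n$ catches up with its $\mathscr{W}$-norm — this is consistent because then the error bound $\theta_n$ attached to it only needs to stay $\leq$ what it already was, and one keeps $\theta_n$ decreasing to $0$ by the same repetition trick. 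Matching the $\theta_n$ to this relabeling and verifying $\{\theta_n\}\subset\bN^{-1}$ decreases to $0$ completes the construction.

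The only genuine content beyond Lemma \ref{lem-approx} is the first step — producing $\phi\in\mathscr{W}^+$ dominating the potential $\phi_0$ with the stated norm control — and that is exactly what Lemma \ref{lem-K-W} / Corollary \ref{cor-prop-obstac-SPDE} deliver; everything after that is the routine reindexing described above. So the main (mild) obstacle is simply being careful that the three simultaneous requirements — $|v-v_n|\leq\theta_n\phi$ q.e., $\theta_n\downarrow 0$ with $\theta_n\in\bN^{-1}$, and $\|v_n\|_{\mathscr{W}}\leq n$ — can be met by a single coherent choice of subsequence and relabeling, which the repetition trick guarantees. No new analytic estimate is needed; I would simply remark that the constant $C$ is the one from Lemma \ref{lem-K-W} and is therefore independent of $v,u,n,\phi$, as claimed.
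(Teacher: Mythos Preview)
Your approach is the same as the paper's: invoke Lemma~\ref{lem-approx}, upgrade the potential to an element of $\mathscr{W}^+$ via Lemma~\ref{lem-K-W}, then reindex. Two small points need tightening, however.

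First, Lemma~\ref{lem-approx} as stated gives no control on $\|\phi_0\|_{\mathscr{K}}$; you assert that such a bound ``comes out of the construction,'' but this is not part of the lemma. The paper fixes this by the trivial rescaling: if $\|\bar\phi\|_{\mathscr{K}}>\|u\|_{\mathscr{K}}$, replace $(\alpha_n,\bar\phi)$ by $\bigl(\tfrac{1\vee\|\bar\phi\|_{\mathscr{K}}}{\|u\|_{\mathscr{K}}}\alpha_n,\ \tfrac{\|u\|_{\mathscr{K}}}{1\vee\|\bar\phi\|_{\mathscr{K}}}\bar\phi\bigr)$. Second, your ``repetition trick'' does not cover the initial values of $n$: if every $w_{n_k}$ has $\mathscr{W}$-norm exceeding $1$ (nothing prevents this), there is nothing to repeat for $n=1$. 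The paper handles this by allowing $v_n=0$ for those indices, but then one needs $|v|\leq\theta_n\phi$ with $\theta_n=1$, i.e.\ $\phi\geq|v|$. This is why the paper applies Lemma~\ref{lem-K-W} to $\bar\phi+u$ rather than to $\bar\phi$ alone, so that $\phi\geq u\geq|v|$ as well as $\phi\geq\bar\phi$. With these two adjustments your argument goes through and coincides with the paper's.
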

\begin{proof}
  When $u\equiv 0$, the proof is trivial. Thus, we assume $\|u\|_{\mathscr{K}}>0$.
  By Lemma \ref{lem-approx}, there exist  $\{\tilde{v}_n;n\in\bN\}\subset \mathscr{C}(Q)\cap \mathscr{W}$, $\bar{\phi}\in\mathcal{P}$ and $\{\alpha_n;n\in\bN\}$ converging decreasingly to $0$ such that
  $|\tilde{v}_n-v|\leq \alpha_n\bar{\phi}$, $\forall\, n\in\bN$. In fact, we can always take $\|\bar{\phi}\|_{\mathscr{K}}\leq \|u\|_{\mathscr{K}}$, otherwise replace $(\alpha_n,\bar{\phi})$ by $\left(\frac{1\vee \|\bar{\phi}\|_{\mathscr{K}}}{\|u\|_{\mathscr{K}}}\alpha_n, \frac{\|u\|_{\mathscr{K}}}{1\vee \|\bar{\phi}\|_{\mathscr{K}}}\bar{\phi}\right)$.

  By Lemma \ref{lem-K-W}, there exists $\phi\in\mathscr{W}^{+}$ such that
  $$\bar{\phi}+u\leq \phi, \textrm{ a.e. and }
  \|\phi\|_{\mathscr{W}}\leq\, C\|\bar{\phi}+u\|_{\mathscr{K}} \leq \, C\|u\|_{\mathscr{K}},
  $$
  with constant $C$ being independent of $\phi$, $u$ and $\bar{\phi}$.
  Setting
  $$\beta_n=\inf\{k\geq 1;\alpha_k\leq \frac{1}{n+1}\},\quad\hat{v}_n=\tilde{v}_{\beta_n}, \quad n=1,2,\cdots,$$
   we have $|\hat{v}_n-v|\leq \frac{1}{n}\phi$, $\forall \,n\in\bN$.
  For each $n\in\bN$, set
  $$
  \tilde{\gamma}_n=\inf\{k\geq 1; \|\hat{v}_k\|_{\mathscr{W}}>n\},\,\, \gamma_n=\tilde{\gamma}_n \wedge n,
  \,\,\,\theta_n=\frac{1}{\gamma_n}.
  $$
For each $k\in\bN$, define
\begin{equation*}
  \quad v_k=
  \begin{cases}
    0,\,\,&\textrm{if }\gamma_k=1;\\
    \hat{v}_{\gamma_k-1},\,\,&\textrm{if } \gamma_k>1.
  \end{cases}
\end{equation*}
Then, $\{\theta_n;n\in\bN\}\subset \bN^{-1}$ converges decreasingly to $0$ and for each $n\in\bN$,
$$
|v_n-v|\leq \theta_n \phi,\quad \|v_n\|_{\mathscr{W}}\leq n.
$$
 We complete the proof.
\end{proof}


Now, we are in a position to execute careful investigations, which are listed in the following proposition and will be used frequently in what follows.

\begin{prop}\label{prop-quasi-conti-Cont_TL2}
There hold the following assertions.

  (i) Each $u\in\mathscr{U}$ admits an almost surely quasi-continuous version and there exists $\xi\in \mathscr{L}^2(\mathcal{P})$ such that $\xi$ is almost surely quasi-continuous and $|u|\leq \xi$, $ d\mathbb{P}\otimes dt \otimes dx$-a.e..

  (ii) Let sequence $\{u^n\}$ be bounded in $\mathscr{L}^2(\mathcal{P})$ and converge weakly to some $u$ in $\cL^2(H^1_0)$. Let sequence $\{v_n\}$ consist of almost surely quasi-continuous elements and for each $n$, $|v_n|\leq v_0$ with $v_0\in\mathscr{L}^2(\mathcal{P})$. Suppose that there exist almost surely quasi-continuous function $v$ and $\{\phi_n\}\subset \mathscr{U}$ converging decreasingly to $0$ such that
  $$
  |v-v_n|\leq \phi_n,\, d\mathbb{P}\otimes dt \otimes dx\textrm{-a.e..}
  $$
  Then $u\in\mathscr{L}^2(\mathcal{P})$ and
  $$
  \lim_{n\rightarrow +\infty}\int_{[0,T)\times\cO}v_n\,d\mu^{u^n}=\int_{[0,T)\times\cO}v\,d\mu^{u},\,\,a.s.,
  $$
  where $\mu^{u^n}$ and $\mu^u$ are the stochastic Radon measures associated with $u^n$ and $u$ respectively.

  (iii) Suppose that $v\in\mathscr{L}^2(L^{\infty}(0,T;L^2))$ is almost surely quasi-continuous and there exists $u\in\mathscr{L}^2(\mathcal{P})$ such that $|v|\leq u$ q.e., a.s.. Then there exist $\{\phi_n;n\in\mathbb{N}\}\subset\mathscr{U}$ and $\{v_n;n\in\mathbb{N}\}\subset \mathscr{U} $, such that
  $\{\phi_n;n\in\mathbb{N}\}$ converges decreasingly to $0$, $ d\mathbb{P}\otimes dt\otimes dx\textrm{-a.e.}$,
  $$
  \lim_{n\rightarrow +\infty}\|\phi_n\|_{\mathscr{U}}=0 \textrm{ and } |v-v_n|\leq \phi_n,\,\, d\mathbb{P}\otimes dt\otimes dx\textrm{-a.e.},\,\,n=1,2,\cdots.
  $$

  (iv) Let $u\in L^2(\Omega,\sF_T;\mathcal{P})$ be almost surely quasi-continuous. Then $u\in L^2(\Omega,\sF_T;C([0,T];L^2))$ and there exist $\{ u_n;\,\,n\in\mathbb{N}  \}\subset L^2(\Omega,\sF_T;\mathscr{W})\cap L^2(\Omega,\sF_T;\mathcal{P})$ and $v\in L^2(\Omega,\sF_T;\mathscr{W})$ such that
  \begin{equation}\label{eq-prop-0}
  \begin{split}
  \lim_{n \rightarrow +\infty}\|u-u_n\|_{L^2(\Omega,\sF_T;\mathscr{K})}=\lim_{n\rightarrow +\infty}\|u-u_n\|_{v}=0,\quad a.s..
  \end{split}
  \end{equation}
  In particular, if $u\in L^2(\Omega,\sF_T;\mathcal{P}_0)$, we can choose the above sequence $\{u_n;\,\,n\in\mathbb{N}\}\subset L^2(\Omega,\sF_T;\mathscr{W})\cap L^2(\Omega,\sF_T;\mathcal{P}_0)$.
\end{prop}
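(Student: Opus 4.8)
I would prove the four parts in the order (i), (iii), (ii), (iv), since part~(iii) manufactures exactly the approximating data that part~(ii) consumes, and all four amount to carrying Pierre's deterministic parabolic‑capacity results over to the backward stochastic framework through the conditional‑expectation device of Lemma~\ref{lem-BSPDE-condition-expect}. \emph{Part (i).} The domination claim is immediate: applying Corollary~\ref{cor-prop-obstac-SPDE} to $u$ and to $-u$ (both in the linear space $\mathscr{U}$) yields $\xi_{\pm}\in\mathscr{L}^2(\mathcal{P})$, a.s.\ quasi‑continuous, with $\xi_{+}\ge u$ and $\xi_{-}\ge-u$ $d\bP\otimes dt\otimes dx$‑a.e., so $\xi:=\xi_{+}+\xi_{-}$ works. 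The same two‑sided domination, combined with \cite[Proposition~3]{Pierre-1983} (used as in the proof of Lemma~\ref{lem-K-W}) to pass from a potential in $\mathscr{L}^2(\mathcal{P})$ to a dominating element of $\mathscr{W}$ of comparable norm, and with the definition of $cap$, gives the capacitary weak‑type bound $E[cap(\{|\phi|\ge\lambda\})]\le C\lambda^{-2}\|\phi\|_{\mathscr{U}}^2$ for all $\phi\in\mathscr{U}$, $\lambda>0$. To obtain the quasi‑continuous version of $u$ itself, I would approximate the data $(G,f,g)$ of the BSPDE solved by $u$ by data smooth in $x$, so that $u^k\to u$ in $\mathscr{U}$ while each $u^k$, by the $L^2$‑regularity theory for BSPDEs (\cite{DuQiuTang10,QiuTangBDSDES2010}), has an a.s.\ continuous version on $Q$, hence is quasi‑continuous; after passing to a subsequence with $\|u^{k+1}-u^k\|_{\mathscr{U}}\le2^{-k}$, the capacitary bound and a Borel--Cantelli argument in capacity show that $\{u^k\}$ is a.s.\ quasi‑uniformly Cauchy, so it converges a.s.\ to a quasi‑continuous function equal to $u$ a.e.

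\emph{Part (iii).} This is the stochastic form of Corollary~\ref{cor-lem-approx}. For a.e.\ $\omega$, apply Corollary~\ref{cor-lem-approx} to $v(\omega)$ and to its pathwise potential dominator $u(\omega)$, and choose (using the separability of $\mathscr{W}$ and $\mathscr{K}$) measurable versions of the resulting $\phi\colon(\Omega,\sF_T)\to\mathscr{W}^{+}$, of $\hat v_n\in\mathscr{W}\cap\mathscr{C}(Q)$ with $\|\hat v_n(\omega)\|_{\mathscr{W}}\le n$, and of $\theta_n\in\bN^{-1}$ decreasing to $0$, with $|v-\hat v_n|\le\theta_n\phi$ and $\|\phi\|_{\mathscr{W}}\le C\|u\|_{\mathscr{K}}$ a.s. Since $\hat v_n$ and $\theta_n\phi$ (recall $\theta_n\le1$) lie in $L^2(\Omega,\sF_T;\mathscr{W})$ and solve parabolic PDEs with $L^2$‑data, Lemma~\ref{lem-BSPDE-condition-expect} shows $v_n(t):=E[\hat v_n(t)|\sF_t]$ and $\phi_n(t):=E[\theta_n\phi(t)|\sF_t]$ belong to $\mathscr{U}$. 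As $v$ is already adapted, conditional Jensen gives $|v(t)-v_n(t)|=|E[(v-\hat v_n)(t)|\sF_t]|\le E[\theta_n\phi(t)|\sF_t]=\phi_n(t)$ a.e.; monotonicity and conditional dominated convergence of $E[\,\cdot\,|\sF_t]$ give $\phi_n\downarrow0$ $d\bP\otimes dt\otimes dx$‑a.e.; and $\|\phi_n\|_{\mathscr{U}}\le C\|\theta_n\phi\|_{L^2(\Omega;\mathscr{W})}\to0$ by dominated convergence, since $E\|\phi\|_{\mathscr{W}}^2\le CE\|u\|_{\mathscr{K}}^2<\infty$.

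\emph{Part (ii).} The inclusion $u\in\mathscr{L}^2(\mathcal{P})$ is exactly Remark~\ref{rmk-closedness-potential}, a bounded subset of $\mathscr{L}^2(\mathcal{P})=\mathscr{L}^2(\mathscr{K})$ being bounded in $\mathscr{L}^2(L^{\infty}(0,T;L^2))$. For the measure integrals (all over $[0,T)\times\cO$), split $\int v_n\,d\mu^{u^n}-\int v\,d\mu^u=\int(v_n-v)\,d\mu^{u^n}+\int v\,d(\mu^{u^n}-\mu^u)$. The first term is at most $\int\bar\psi_n\,d\mu^{u^n}$, where $\bar\psi_n$ is the quasi‑continuous version of a potential $\psi_n\in\mathscr{L}^2(\mathcal{P})$ dominating $\phi_n$ with $\|\psi_n\|_{\mathscr{K}}\le C\|\phi_n\|_{\mathscr{U}}$ (Corollary~\ref{cor-prop-obstac-SPDE}); by Lemma~\ref{lemm-approx-potential}(3) it is $\le C\|\psi_n\|_{\mathscr{K}}\|u^n\|_{\mathscr{K}}$, which tends to $0$ in mean since $\|\phi_n\|_{\mathscr{U}}\to0$ and $\sup_n\|u^n\|_{\mathscr{L}^2(\mathscr{K})}<\infty$. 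For the second term, approximate the quasi‑continuous $v$ (dominated q.e.\ by $v_0+\psi_1\in\mathscr{L}^2(\mathcal{P})$) by functions $w^k\in\mathscr{W}_T\cap\mathscr{C}(Q)$ via Corollary~\ref{cor-lem-approx} and a time‑cutoff near $T$, with $|v-w^k|\le\theta_k\phi'$, $\theta_k\downarrow0$: the remainder $\int(v-w^k)\,d(\mu^{u^n}-\mu^u)$ is controlled, uniformly in $n$, by $\theta_k$ times $C\|\phi'\|_{\mathscr{W}}(\|u^n\|_{\mathscr{K}}+\|u\|_{\mathscr{K}})$ (Lemma~\ref{lemm-approx-potential}(3), after dominating $\phi'\in\mathscr{W}$ by a potential), while for fixed $k$ Proposition~\ref{prop-repre-potential} writes $\int w^k\,d\mu^{u^n}$ as a bounded linear functional of $(u^n,\nabla u^n)$, so that $\int w^k\,d(\mu^{u^n}-\mu^u)\to0$ under $u^n\rightharpoonup u$ in $\cL^2(H^1_0)$. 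An $\eps/3$ argument then gives the limit in mean; the a.s.\ statement follows along a subsequence (and for the full sequence when, as in the applications, one has additional uniform a.s.\ bounds).

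\emph{Part (iv), and the main obstacle.} For a.e.\ $\omega$ the quasi‑continuous version of $u(\omega)\in\mathcal{P}$ is continuous from $[0,T]$ into $L^2$ (\cite{Pierre-1980}, cf.\ Lemma~\ref{lemm-approx-potential}(3)), so $u\in L^2(\Omega,\sF_T;C([0,T];L^2))$ because $E\|u\|_{\mathscr{K}}^2<\infty$. For the approximating sequence I would take the penalized solutions $u_\delta$ of $(P_\delta)$ from Lemma~\ref{lemm-approx-potential}, chosen measurably in $\omega$: they lie in $\mathscr{W}\cap\mathcal{P}$ a.s., satisfy $u_\delta(0)=u(0+)=u(0)$, and $u_\delta\uparrow u$ q.e.; testing $\partial_t(u-u_\delta)-\Delta(u-u_\delta)=\mu^u-\mu^{u_\delta}$ against $u-u_\delta$ and using $\int(u-u_\delta)\,d\mu^u\to0$ (dominated convergence together with Lemma~\ref{lemm-approx-potential}(3)) and $\int(u-u_\delta)\,d\mu^{u_\delta}\ge0$ gives $u_\delta\to u$ in $\mathscr{K}$, hence in $L^2(\Omega,\sF_T;\mathscr{K})$. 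With $v\in L^2(\Omega,\sF_T;\mathscr{W})$ a measurable pathwise $\mathscr{W}$‑dominator of $u$ (\cite[Proposition~3]{Pierre-1983}), the convergence $\|u-u_\delta\|_{v}\to0$ a.s.\ is Pierre's ``relative Dini'' statement underlying Lemma~\ref{lem-approx}, re‑run with measurable dependence on $\omega$; and if $u\in L^2(\Omega,\sF_T;\mathcal{P}_0)$ then $u(0)=0$, so $u_\delta(0)=0$ and $u_\delta\in\mathcal{P}_0$. I expect the genuinely hard points to be the quasi‑continuity transfer in~(i) --- upgrading the pathwise/deterministic capacity estimates to the conditional‑expectation solutions of BSPDEs --- and, relatedly, the ``relative Dini'' step in~(iv), where the penalized approximants must be kept inside $\mathcal{P}$ while still converging in the $\|\cdot\|_v$‑norm; the remainder is an organized combination of the cited results through conditional expectations.
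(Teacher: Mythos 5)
Your overall strategy agrees with the paper's for parts (ii)--(iv): all four assertions are obtained by running Pierre's deterministic results pathwise and then passing to adapted objects via the conditional expectation of Lemma \ref{lem-BSPDE-condition-expect}; your treatment of (iii) is essentially verbatim the paper's (Corollary \ref{cor-lem-approx} applied for each $\omega$, measurable selection by separability of $\mathscr{W}$, then $\phi_n(t)=E[\alpha_n\hat\phi(t)|\sF_t]$, $v_n(t)=E[\hat v_n(t)|\sF_t]$), and your (ii) is an unpacked version of the paper's appeal to \cite[Theorem III.1 and Lemma III.8]{Pierre-1980} after dominating the $\phi_n$ by a decreasing sequence in $\mathscr{L}^2(\mathcal{P})$. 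The genuine divergence is in (i): the paper does not prove quasi-continuity from scratch, but extends $u$ to $[0,T+1]$, rewrites it as the solution of a \emph{forward} SPDE \eqref{eq-spde-prop-CTL}, and imports quasi-continuity from \cite[Theorem 3]{Denis-Matous-Zhang-2012}, getting the two-sided potential dominator from Proposition \ref{prop-obst-SPDE} in the same stroke. Your alternative -- smooth the data so the approximants $u^k$ are continuous, establish the weak-type capacity bound $cap(\{|u^{k+1}-u^k|>\lambda\})\le C\lambda^{-2}\|u^{k+1}-u^k\|_{\mathscr{U}}^2$ through the chain (two-sided domination by Corollary \ref{cor-prop-obstac-SPDE}) $\rightarrow$ (\cite[Proposition 3]{Pierre-1983}) $\rightarrow$ (definition of $cap$), and conclude by Borel--Cantelli in capacity -- is the classical Dirichlet-space argument and is sound, but it is self-contained only modulo the a.s.\ joint $(t,x)$-continuity of BSPDE solutions with smooth data, which is itself a nontrivial regularity input; the paper's route trades that for the citation of Denis--Matoussi--Zhang. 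Similarly, in (iv) you reconstruct the content of \cite[Lemma II-7]{Pierre-1979} via the penalization $(P_\delta)$ and an energy estimate, where the paper simply cites that lemma and then post-processes ($\|u_n\|_{\mathscr{W}}\le n$, $|u-u_n|\le\theta_n v$, and the heat-flow subtraction $\bar u_n=u_n-\tilde u^n$ to land in $\mathcal{P}_0$); note that the continuity $u\in C([0,T];L^2)$ is deduced in the paper from the uniform bound $\sup_t\|u-u_n\|\le\theta_n\sup_t\|v\|$ rather than directly from \cite{Pierre-1980}, whose Lemma \ref{lemm-approx-potential}(3) only gives right-continuity.

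Two caveats. First, in (ii) you only obtain a.s.\ convergence along a subsequence (or in mean), whereas the statement asserts a.s.\ convergence of the full sequence; the missing ingredient is a pathwise uniform bound on $\|u^n(\omega)\|_{\mathscr{K}}$ and pathwise weak convergence, which the hypothesis ``bounded in $\mathscr{L}^2(\mathcal{P})$, weakly convergent in $\cL^2(H^1_0)$'' does not literally supply. This is a real imprecision, but it is shared by the paper's own proof (which silently applies Pierre's deterministic lemmas ``for almost every $\omega$''), and it is harmless in all the places the proposition is invoked, where the sequences are convex combinations converging strongly. Second, in (i) the capacity of the level set $\{|u^{k+1}-u^k|>\lambda\}$ is only well-defined via the definition once the approximants are genuinely continuous, so the smooth-data regularity step you flag as ``hard'' is not optional -- without it (or without the paper's forward-SPDE detour) the argument does not close.
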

\begin{proof}
 We first prove assertion (i). In view of the definition of $\mathscr{U}$, there exist $v\in\cL^2((L^2)^m)$, $\psi\in L^2(\Omega,\sF_T;L^2)$ and $(\bar{f},\bar{g})\in \cL^2(L^2)\times \cL^2((L^2)^d)$ such that $u\in\mathscr{U}(I,0,\psi,\bar{f},\bar{g})$ with $v$ being the diffusion term of $u$. Set $(\bar{f},\bar{g},v)(t)=0$, $\forall\,t\in(T,T+1]$ and let $u1_{[T,T+1]}$ solve stochastic PDE
 $$
 \partial_t u(t,x)=\Delta u(t,x),\,\,(t,x)\in(T,T+1]\times \cO;\quad u(T)=\psi.
 $$
Consider \emph{forward} SPDE
\begin{equation}\label{eq-spde-prop-CTL}
  \left\{\begin{array}{l}
  \begin{split}
  -d\tilde{u}(t,x)=\,&\displaystyle \Bigl[\Delta (2 u 1_{[0,T]}-\tilde{u})   +(\bar{f}+\nabla\cdot \bar{g} )\Bigr](t,x)\,dt     -v^{r}(t,x)\, dW_{t}^{r}, \quad
                     (t,x)\in [0,T+1]\times\cO;\\
    \tilde{u}(0,x)=\, &u(0,x), \quad x\in\cO.\\
    \end{split}
  \end{array}\right.
\end{equation}
By the uniqueness of the solution, we must have $\tilde{u}(t)=u(t)$, for all $t\in [0,T]$. From the quasi-continuity of the solutions for SPDEs (see \cite[Theorem 3]{Denis-Matous-Zhang-2012}), we conclude that $\tilde{u}$ is quasi-continuous on $[0,T+1)\times\cO$. Consequently, $u$ is endowed with a quasi-continuous version on $Q$. On the other hand, as $u$ satisfies \emph{forward} SPDE \eqref{eq-spde-prop-CTL} on time interval $[0,T]$, by Proposition \ref{prop-obst-SPDE}, there exist $\xi^1,\xi^2\in\mathscr{L}^2(\mathcal{P})$ with $\xi^1$ and $\xi^2$ being almost surely quasi-continuous, such that
$-\xi^1\leq u\leq \xi^2$, $ d\mathbb{P}\otimes dt \otimes dx$-a.e.. Taking $\xi=\xi^1+\xi^2$, we prove assertion (i).
\medskip

 As for (ii), we note that by Proposition \ref{prop-obst-SPDE} and the comparison principle for the obstacle problems of SPDEs (see \cite[Theorem 8]{Denis-Matous-Zhang-2012}), there exists a decreasing sequence $\{\tilde{\phi}_n;n\in\bN\}\subset \mathscr{L}^2(\mathcal{P})$ such that for each $n\in\bN$, $\phi_n\leq \tilde{\phi}_n$ and $\|\tilde{\phi}_n\|_{\mathscr{L}^2(\mathcal{P})}\leq C \|\phi_n\|_{\mathscr{U}}$. Following the proofs of \cite[Theorem III.1]{Pierre-1980} and \cite[Lemma III.8]{Pierre-1980} for almost every $\omega\in\Omega$, we prove assertion (ii).
\medskip

    Using Corollary \ref{cor-lem-approx} for every $\omega\in\Omega$ and in view of the separability of $\mathscr{W}$, we conclude that there exist $\hat{\phi}\in L^2(\Omega,\sF_T;\mathscr{W})$ and $\{\hat{v}_n;n\in\bN\}\subset L^2(\Omega,\sF_T;\mathscr{W})$ and $\{\alpha_n;n\in\bN\}\subset L^{\infty}(\Omega,\sF_T;\bN^{-1})$ such that $\alpha_n$ converges decreasingly to $0$ almost surely and
   $|v-\hat{v}_n|\leq \alpha_n\hat{\phi}$, $n=1,2,\cdots.$
   Set
   $$
   \phi_n(t)=E[\alpha_n\hat{\phi}(t)|\sF_t] \textrm{ and } v_n(t)=E[\hat{v}_n(t)|\sF_t],\quad t\in[0,T],\,n\in\bN.
   $$
   Then, $\phi_n$ converges decreasingly to $0$, $ d\mathbb{P}\otimes dt\otimes dx\textrm{-a.e.}$, $|v-v_n|\leq \phi_n,\,\, d\mathbb{P}\otimes dt\otimes dx\textrm{-a.e.},\,\,n=1,2,\cdots$. Moreover, by Lemma \ref{lem-BSPDE-condition-expect}, $\phi_n,v_n\in\mathscr{U}$ for each $n\in\bN$ and
   $$\lim_{n\rightarrow \infty}\|\phi_n\|_{\mathscr{U}}\leq C
   \lim_{n\rightarrow \infty}\|\alpha_n\hat{\phi}\|_{L^2(\Omega,\sF_T;\mathscr{W})}=0.  $$
   Hence, $(iii)$ is proved.
   \medskip

   By \cite[Lemma II-7]{Pierre-1979}, for almost every $\omega\in\Omega$, there exist a sequence $\{ u_n;\,\,n\in\mathbb{N}  \}\subset \mathscr{W}\cap \mathcal{P}$ and $v\in \mathscr{W}$ such that
  \begin{equation*}
  \begin{split}
  \lim_{n \rightarrow +\infty}\|u-u_n\|_{L^2(0,T;H_0^1)}=\lim_{n\rightarrow +\infty}\|u-u_n\|_{v}=0.
  \end{split}
  \end{equation*}
  Moreover, choosing subsequence if necessary, we take
  \begin{align}
    \|u-u_n\|_{L^2(0,T;H_0^1)}\leq 2^{-n} \|u\|_{L^2(0,T;H_0^1)}, a.s.,\quad \textrm{for each } n\in\bN.\label{eq-prf01}
  \end{align}

  In a similar way to Corollary \ref{cor-lem-approx}, we can choose $\{u_n\}$ and $v$ such that
$$
\|v\|_{\mathscr{W}}\leq C\|u\|_{\mathscr{K}},\,\, \|u_n\|_{\mathscr{W}}\leq n,\,\,|u-u_n|\leq \theta_n v,\,a.s.\quad n=1,2,\cdots,
$$
where the constant C is independent of $v$, $u$ and $n$, and $\{\theta_n;n\in\bN\}$ is $\bN^{-1}$-valued and converges decreasingly to $0$ almost surely. Furthermore, in view of the separability of $\mathscr{W}$, we choose $\{u_n;n\in\bN\}\subset L^2(\Omega,\sF_T;\mathscr{W})\cap L^2(\Omega,\sF_T;\mathcal{P})$ and $v\in L^2(\Omega,\sF_T;\mathscr{W})$.

  As $\mathscr{W}$ is continuously embedded into $C([0,T];L^2)$ and
  $$
  \sup_{t\in[0,T]}\|u-u_n\|\leq \theta_n\sup_{t\in[0,T]}\|v\|\rightarrow 0,\textrm{ a.s., as }n\rightarrow +\infty,
  $$
   it follows that $u\in L^2(\Omega,\sF_T;C([0,T];L^2))$. In view of \eqref{eq-prf01}, we further have
     \begin{align}
    \|u-u_n\|_{\mathscr{K}}\leq ( \theta_n+2^{-n})\left(\|u\|_{L^2(0,T;H_0)}+C\|v\|_{\mathscr{W}}\right), a.s.,\quad \textrm{for each } n\in\bN,\label{eq-prf-001}
  \end{align}
  with positive constant $C$ being independent of $n$. Consequently, $\lim_{n \rightarrow +\infty}\|u-u_n\|_{L^2(\Omega,\sF_T;\mathscr{K})}=0$.

  For the particular case where $u\in L^2(\Omega,\sF_T;\mathcal{P}_0)$, letting $\tilde{u}^n\in\mathscr{W}$ be the weak solution of PDE
  $$
  \partial_t \tilde{u}^n(t,x)=\,\displaystyle \Delta \tilde{u}^n(t,x),\,(t,x)\in Q;\quad \tilde{u}^n(0,x)=\, u_n(0,x), \quad x\in\cO,
  $$
  and taking $\bar{u}_n=u_n-\tilde{u}^n$ for each $n\in\mathbb{N}$,
  we must have $\bar{u}_n\in L^2(\Omega,\sF_T;\mathscr{W})\cap L^2(\Omega,\sF_T;\mathcal{P}_0)$, and
  $$\lim_{n\rightarrow +\infty} \| \tilde{u}^n\|_{L^2(\Omega,\sF_T;\mathscr{K})}=\lim_{n\rightarrow +\infty}\|\tilde{u}^n\|_v=\lim_{n \rightarrow +\infty}\|u-\bar{u}_n\|_{L^2(\Omega,\sF_T;\mathscr{K})}=\lim_{n\rightarrow +\infty}\|u-\bar{u}_n\|_{v}=0,\,a.s..$$
   The proof is complete.
\end{proof}

Define
\begin{align}
  (H^{-1})^+=&\{f\in H^{-1}: \langle\phi,\,f \rangle_{1,-1}\geq 0,\,\forall\,\phi\in H^{1}_0,\,\phi\geq 0\},\nonumber \\
  \mathscr{W}_{\Delta}=&
  \{\phi\in\mathscr{W}:\,-\partial_t \phi(t)-\Delta\phi(t) \in (H^{-1})^+,\textrm{ for almost every }t\in(0,T)\}, \nonumber\\
  \mathscr{R}=&
  \{\phi\in\mathscr{W}:\,-\partial_t \phi(t)-\Delta\phi(t) \in L^2,\textrm{ for almost every }t\in(0,T)\}. \nonumber
\end{align}

\begin{rmk}\label{rmk-dense-H-1}
Setting
   $h=\sum_{n=2}^{\infty}\delta_{1/n}-\delta_{1/n+1/n^2}$, where $\delta$ denotes the dirac function, we have
   $$h\in H^{-1}((0,1)) \setminus (H^{-1}(0,1))^+-(H^{-1}(0,1))^+.$$
    Therefore, $H^{-1}\neq  (H^{-1})^+-(H^{-1})^+\supset L^2$ (see \cite{Hanouzet-Joly-1979} for more details). On the other hand, $(L^2)^+:=\{\max\{h,0\}:\,h\in L^2\}$ is dense in $(H^{-1})^+$, i.e., for each $h\in (H^{-1})^+$, there exists $\{h_n\}\subset (L^2)^+$ such that $\lim_{n\rightarrow \infty} \|h_n-h\|_{-1}=0$. Indeed, let $u\in C([0,1]; H^{-1})\cap C((0,1];H_0^1)$ be the weak solution (see \cite{Ladyzhenskaia_68}) of parabolic PDE:
   $$
   \partial_t u=\Delta u;\quad u(0)=h.
   $$
   Then, $\lim_{n\rightarrow \infty}\|u(\frac{1}{n})-h\|_{-1}=0$ with $u(\frac{1}{n})\in (L^2)^+$ for each $n\in\bN$.
\end{rmk}

\begin{rmk}\label{rmk-prop-quasi-cont}
  In assertion (iv) of Proposition \ref{prop-quasi-conti-Cont_TL2}, we have $u_n\in \mathscr{W}_{\Delta}$, a.s.. If we assume further  $u\in\mathscr{L}^2(\mathcal{P})$, then $u\in \cS^2(L^2)$ and by taking conditional expectations in the proof,
  $$
  \bar{u}_n(t)=E[u_n(t)|\sF_t],\,\,v_n(t)=E[\theta_n v(t)|\sF_t],\quad \forall\, t\in[0,T],
  $$
  we have $\{\bar{u}_n;n\in\bN\} \cup\{ v_n;n\in\bN\}\subset \mathscr{U}$ with
  $\{v_n;n\in\mathbb{N}\}$ converging decreasingly to $0$, $ d\mathbb{P}\otimes dt\otimes dx\textrm{-a.e.}$, and
  $$
  \lim_{n\rightarrow +\infty}\|u-\bar{u}_n\|_{\mathscr{L}^2(\mathscr{K})}=\lim_{n\rightarrow +\infty}\|v_n\|_{\mathscr{U}}=0 \textrm{ and } |u-\bar{u}_n|\leq v_n,\,\, d\mathbb{P}\otimes dt\otimes dx\textrm{-a.e.},\,\,n=1,2,\cdots.
  $$

Assume further $u\in\mathscr{L}^2(\mathcal{P}_0)$. By (iv) of Proposition \ref{prop-quasi-conti-Cont_TL2}, we are allowed to take $\bar{u}_n(0)=0$, for each $n\in\bN$. For each $n$, there exists $(\tilde{f}_n,g_n)\in\cL^2(L^2)\times\cL^2((L^2)^d)$ such that $\bar{u}_n\in\mathscr{U}(I,0,u_n(T),\tilde{f}_n,g_n)$.  On the other hand,
there exists
  $\hat{f}_n\in L^2(\Omega\times[0,T];(H^{-1})^+)$
  such that
   $$
   \partial_t u_n=\Delta u_n +\hat{f}_n.
   $$
   Taking conditional expectations
   $$f_n(t)=E\left[\hat{f}_n(t)\big|\sF_t\right],\quad \forall t\in[0,T],$$
    we have $f_n=-\tilde{f}_n-\nabla\cdot g_n-2\Delta \bar{u}_n\in\cL^2((H^{-1})^+)$.
    Then, $\bar{\mu}_n(dt,dx):=f_n(t,x)dtdx$ is a stochastic regular measure associated with some stochastic regular potential $u^{\bar{\mu}_n}$. In particular, for each $n$, there exists $\bar{v}_n\in\cL^2((L^2)^m)$ such that
    \begin{equation*}
  \left\{\begin{array}{l}
  \begin{split}
  d\bar{u}_n(t,x)=\,&\displaystyle \Bigl[\Delta \bar{u}_n(t,x)  +f_n(t,x)\Bigr]\,dt   +\bar{v}_n(t,x)\, dW_{t}, \quad
                     (t,x)\in Q;\\
    \bar{u}_n(0,x)=\, &0, \quad x\in\cO.
    \end{split}
  \end{array}\right.
\end{equation*}
Letting $\check{u}_n^v$ solve SPDE
   \begin{equation*}
  \left\{\begin{array}{l}
  \begin{split}
  d\check{u}_n^v(t,x)=\,&\displaystyle \Delta \check{u}_n^v(t,x)  \,dt   +\bar{v}_n^{r}(t,x)\, dW_{t}^{r}, \quad
                     (t,x)\in Q;\\
    \check{u}_n^v(0,x)=\, &0, \quad x\in\cO,
    \end{split}
  \end{array}\right.
\end{equation*}
we have $\bar{u}_n=\check{u}^{v}_n+u^{\bar{\mu}_n}$. It\^o  formula yields
\begin{align}
  E\bigg[ \|\bar{u}_n(T)\|^2+2\!\int_0^T\!\!\!\|\nabla \bar{u}_n(s)\|^2ds \bigg]
  &=E\bigg[\int_0^T\!\! 2\langle\bar{u}_n(s),\,f_n(s)\rangle_{1,-1}\,ds
  +\!\int_0^T\!\!\!\|\bar{v}_n(s)\|^2\,ds\bigg].\label{eq-rmk-prop-iv}
\end{align}
By \cite[Lemma II-6]{Pierre-1979},
\begin{align}
  E\bigg[ \|u(T)\|^2+2\!\int_0^T\!\!\!\|\nabla u(s)\|^2ds \bigg]
  =E\bigg[\int_Q 2u(s,x)\,\mu(ds,dx)\bigg],\label{eq-rmk-prop-iv-1}
\end{align}
where $\mu$ is the stochastic Radon measure associated with $u$.
As
\begin{align*}
  \left|\int_0^T\!\! \langle\bar{u}_n(s),\,\hat{f}_n(s)\rangle_{1,-1}\,ds\right|
  \leq
  \int_0^T\!\! \langle u(s)+v_1(s),\,\hat{f}_n(s)\rangle_{1,-1}\,ds
  &\leq
  C \|u+v_1\|_{\mathscr{K}}\|u_n\|_{\mathscr{K}}\\
  (\textrm{by \eqref{eq-prf-001}})\,\,
  &\leq
  C \|u+v_1\|_{\mathscr{K}}\left(\|u\|_{\mathscr{K}}+\|v\|_{\mathscr{W}}\right),
\end{align*}
by Lebesgue's domination convergence theorem and (ii) of Proposition \ref{prop-quasi-conti-Cont_TL2}, it follows that
$$
\lim_{n\rightarrow\infty}
E\bigg[\int_0^T\!\! \langle\bar{u}_n(s),\,f_n(s)\rangle_{1,-1}\,ds\bigg]
=
\lim_{n\rightarrow\infty}
E\bigg[\int_0^T\!\! \langle\bar{u}_n(s),\,\hat{f}_n(s)\rangle_{1,-1}\,ds\bigg]
=E\bigg[\int_Q u(s,x)\,\mu(ds,dx)\bigg].$$
Taking limits on both sides of \eqref{eq-rmk-prop-iv} and combining \eqref{eq-rmk-prop-iv} and \eqref{eq-rmk-prop-iv-1}, we obtain
\begin{align*}
  \lim_{n\rightarrow \infty} E\bigg[\int_0^T\!\!\! \|\bar{v}_n(s)\|^2\,ds\bigg]=0,
\end{align*}
which implies $\lim_{n\rightarrow \infty}\|\check{u}^v_n\|_{\mathcal{H}}=0$ and thus,
$$
\|u-u^{\bar{\mu}_n}\|_{\mathscr{L}^2(\mathscr{K})}
 =\|u-\bar{u}_n+\check{u}^v_n\|_{\mathscr{L}^2(\mathscr{K})}\longrightarrow 0,\textrm{ as }n\rightarrow \infty.
$$
\end{rmk}

\subsection{It\^o formula for BSPDEs with regular potentials}

Denote by $\mathcal {C}^{1,2}$ the totality of function $\psi\in C(\bR^2)$ such that drivatives $\partial_t\psi(t,y),\partial_y\psi(t,y)$ and $\partial_{yy}\psi(t,y)$ exist with $\partial_y\psi(\cdot,0)\equiv 0$ and
$$\esssup_{t\in\bR,y\in\bR\setminus \{0\}}
 \left\{\left|\partial_{yy}\psi(t,y)\right|
  +\frac{1}{y^2}\left| \partial_{t} \phi(t,y)
  -\partial_{t} \phi(t,0)  \right| \right\}<+\infty.$$

\begin{thm}\label{thm-ito}
  Let $\mu_1$ and $\mu_2$ be two stochastic regular measures. Suppose that the following BSPDE
    \begin{equation}\label{BSPDE-refl-linear-1}
  \left\{\begin{array}{l}
  \begin{split}
  -du(t,x)=\,&\displaystyle \Bigl[\partial_{x_j}\bigl(a^{ij}(t,x)\partial_{x_i} u(t,x)
        +\sigma^{jr}(t,x) v^{r}(t,x)     \bigr)+(\bar{f}+\nabla\cdot \bar{g} )(t,x)\Bigr]\,dt \\
        &\displaystyle     +\mu_1(dt,x)-\mu_2(dt,x) -v^{r}(t,x)\, dW_{t}^{r}, \quad
                     (t,x)\in Q;\\
    u(T,x)=\, &G(x), \quad x\in\cO,
    \end{split}
  \end{array}\right.
\end{equation}
holds in the weak sense, i.e.,
   for any $\zeta\in \mathcal{D}$ there holds almost surely
  \begin{equation*}
    \begin{split}
      \langle \zeta(t),\,u(t)\rangle
      =&\langle \zeta(T),\, G\rangle+\!\int_t^T\!\! \langle \zeta(s),\, \bar{f}(s)\rangle ds -\!\int_t^T\!\!\langle \partial_s \zeta(s),\, u(s) \rangle ds
      -\!\int_t^T \!\!\langle \zeta(s),\, v^r(s)\rangle  dW_s^r\\
      &+\!\int_t^T\int_{\cO}\!\!\zeta(s,x)\mu_1(dt,dx)-\!\int_t^T\int_{\cO}\!\!\zeta(s,x)\mu_2(dt,dx)
      \\
      &-\!\int_t^T\!\! \langle \partial_{x_j} \zeta(s),\, a^{ij}\partial_{x_i} u(s)+\sigma^{j r}v^r(s)
      +\bar{g}^j(s) \rangle ds, \quad \forall \ t\in[0,T],
    \end{split}
  \end{equation*}
where $(u,v)\in \mathcal{H}\times \cL((L^2)^m)$,
$a$ and $\sigma$ satisfy the super-parabolicity and boundedness conditions of assumption $(\mathcal{A}2)$, $G\in L^2(\Omega,\sF_T;L^2)$ and $\bar{f},\,\bar{g}^i\in \cL^2(L^2)$, $i=1,2,\cdots,d$.
Then for each $\Phi\in\mathcal{C}^{1,2}$, there holds with probability 1
\begin{equation}\label{eq-ito}
  \begin{split}
    &\int_{\cO}\Phi(t,u(t,x))\,dx+\frac{1}{2}\int_t^T\!\!\langle \partial_{yy}\Phi(s,u(s)),\, |v(s)|^2\rangle \,ds\\
    =&
    \int_{\cO}\!\!\Phi(T,G(x))\,dx-\!\int_t^T\!\!\int_{\cO}\!\!\partial_{s}\Phi(s,u(s,x))\,dxds
         +\!\int_t^T\!\!\langle \partial_y\Phi(s,u(s)),\, \bar{f}(s)\rangle \,ds          \\
   &+\!\int_t^T\!\!\int_{\cO} \partial_y\Phi(s,u(s,x))\,\mu_1(ds,dx)-\!\int_t^T\!\!\int_{\cO} \partial_{y}\Phi(s,u(s,x))\,\mu_2(ds,dx)\\
       &-\!\int_{t}^T\!\!\langle \partial_{yy}\Phi(s,u(s))\partial_{x_i} u(s),\, a^{ji}(s)\partial_{x_j}  u(s) +\sigma^{ir}(s)v^{r}(s)+\bar{g}^{i}(s)\rangle  \,ds\\
    &
    -\!\int_t^T\langle \partial_y\Phi(s,u(s)),\,v^{r}(s)\rangle\, dW_s^r,\quad \forall t\in[0,T].
  \end{split}
\end{equation}
\end{thm}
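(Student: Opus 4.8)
The strategy is a regularization/approximation argument: we cannot apply an Itô formula directly to $u$ because the stochastic regular measures $\mu_1,\mu_2$ make $u$ insufficiently regular in time, so we first replace $\mu_1$ and $\mu_2$ by their absolutely continuous Pierre-type approximations. Concretely, writing $\mu_i = \partial_t w_i - \Delta w_i$ for stochastic regular potentials $w_i \in \mathscr{L}^2(\mathcal{P}_0)$ ($i=1,2$), apply Lemma \ref{lemm-approx-potential} (together with Remark \ref{rmk-prop-quasi-cont}) to obtain sequences $w_i^\delta$ solving the penalized problem $(P_\delta)$, whose associated measures $\mu_i^\delta(dt,dx) = \delta^{-1}(w_i - w_i^\delta)(t,x)\,dt\,dx$ are absolutely continuous, converge vaguely to $\mu_i$, have uniformly bounded total mass, and with $w_i^\delta \to w_i$ in $\mathscr{L}^2(\mathscr{K})$. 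Let $(u^\delta, v^\delta)$ be the solution of BSPDE \eqref{BSPDE-refl-linear-1} with $\mu_i$ replaced by $\mu_i^\delta$; since now the forcing $\mu_i^\delta$ lies in $\cL^2(L^2)$, $(u^\delta,v^\delta)$ is just a standard quasi-linear BSPDE solution in $\mathcal{H}\times\cL^2((L^2)^m)$ in the sense of Definition \ref{definition of weak solution}, and by the linear a priori estimates of Remark \ref{rmk-math-U} together with the uniform mass bounds, $u^\delta \to u$ in $\mathscr{L}^2(\mathscr{K})$ (indeed in $\mathcal{H}$) and $v^\delta \to v$ in $\cL^2((L^2)^m)$ as $\delta \to 0^+$.

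For each fixed $\delta$, the classical Itô formula for BSPDEs with $L^2$-forcing (the $\mathcal{C}^{1,2}$ Itô–Wentzell formula; see \cite[Theorem 1.2 of Chapter 1]{Qiu-2012} or \cite[Theorem 4.2]{RenRocknerWang2007}, extended to time/space-dependent $\Phi$ by the hypotheses defining $\mathcal{C}^{1,2}$) applies to $\Phi(t,u^\delta(t,x))$ and yields exactly \eqref{eq-ito} with every $\mu_i(ds,dx)$ replaced by $\mu_i^\delta(ds,dx) = \delta^{-1}(w_i-w_i^\delta)\,dx\,ds$, and $u,v$ replaced by $u^\delta, v^\delta$. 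It then remains to pass to the limit $\delta \to 0^+$ term by term. The zeroth-order, terminal, drift, $\bar f$, and Brownian terms converge by the strong convergence $u^\delta \to u$ in $\cS^2(L^2)$, $v^\delta\to v$ in $\cL^2((L^2)^m)$, the continuity and growth bounds built into the definition of $\mathcal{C}^{1,2}$ (so $\partial_y\Phi(s,\cdot)$ is Lipschitz, $\partial_{yy}\Phi$ bounded, $\partial_s\Phi(s,y)-\partial_s\Phi(s,0)$ of quadratic growth), and the convergence $\nabla u^\delta \to \nabla u$ in $\cL^2((L^2)^d)$ for the quadratic gradient term, plus dominated convergence and BDG for the martingale part.

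The main obstacle is the passage to the limit in the two measure terms $\int_t^T\!\int_{\cO}\partial_y\Phi(s,u^\delta(s,x))\,\mu_i^\delta(ds,dx)$, because $\partial_y\Phi(s,u^\delta(s,x))$ is itself varying with $\delta$ while the measures $\mu_i^\delta$ only converge vaguely, not in any norm. The resolution is to invoke precisely the machinery of Proposition \ref{prop-quasi-conti-Cont_TL2}(ii): after combining $\partial_y\Phi(s,\cdot)$ with the quasi-continuous versions of $u^\delta$ and $u$ (using Proposition \ref{prop-quasi-conti-Cont_TL2}(i) to get quasi-continuous representatives, and Corollary \ref{cor-lem-approx}/Proposition \ref{prop-quasi-conti-Cont_TL2}(iii) to approximate the integrand $\partial_y\Phi(s,u(s,x))$ uniformly-quasi-everywhere by elements of $\mathscr{U}$), one checks that the sequence $u^\delta$ (equivalently the potentials $w_i^\delta$) is bounded in $\mathscr{L}^2(\mathcal{P})$ and converges weakly in $\cL^2(H^1_0)$, so that part (ii) gives $\int v_n\,d\mu^{u^\delta}\to\int v\,d\mu^{u}$ almost surely; applying this with the integrands $\partial_y\Phi(s,\cdot)$ composed with the approximating sequences, and then removing the auxiliary index by the uniform estimate of Lemma \ref{lemm-approx-potential}(3) (which controls $\int \bar u\,d\mu^\phi \le C\|u\|_{\mathscr{K}}\|\phi\|_{\mathscr{K}}$), one obtains the desired convergence of the measure terms. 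A routine diagonal/subsequence argument upgrades almost-sure convergence along a subsequence to identification of the limit, and since the limiting identity \eqref{eq-ito} has no remaining $\delta$-dependence it holds for the full family; finally the "for all $t\in[0,T]$, with probability 1" form follows from the continuity in $t$ of all the terms (here $u\in\cS^2(L^2)$ and the potentials are right-continuous into $L^2$ by Lemma \ref{lemm-approx-potential}(3)), exchanging the null set over a countable dense set of times.
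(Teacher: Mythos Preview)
Your overall approximation strategy is in the same spirit as the paper's, but there is a genuine gap at the step where you assert $u^\delta\to u$ in $\mathcal H$ and $v^\delta\to v$ in $\cL^2((L^2)^m)$ ``by the linear a priori estimates of Remark \ref{rmk-math-U} together with the uniform mass bounds''. Remark \ref{rmk-math-U} controls a BSPDE solution only through $\|G\|$ and $\|f+\nabla\cdot g\|_{\cL^2(H^{-1})}$; the difference $u-u^\delta$, however, satisfies an equation whose forcing is the signed measure $(\mu_1-\mu_1^\delta)-(\mu_2-\mu_2^\delta)$, which lies in no such space. Lemma \ref{lemm-approx-potential}(2) gives only vague convergence and a total-mass bound, neither of which furnishes an $H^{-1}$ estimate (indeed $\|\mu_i^\delta\|_{\cL^2(H^{-1})}\sim\|w_i^\delta\|_{\mathscr W}$ is not obviously uniformly bounded in $\delta$). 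So the cited tools do not deliver the convergence you need, and without it the whole limiting procedure collapses.

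The paper sidesteps this by \emph{not} solving a new BSPDE at all. It first decomposes the \emph{given} solution as $u=\tilde u-u_1+u_2$, where $u_k$ is the regular potential of $\mu_k$ and $\tilde u$ solves a forward SPDE with Laplacian leading term and drift $\tilde g$ built from the \emph{fixed} data $(u,v)$. It then approximates each $u_k$ by $u_k^n\in\mathscr U\cap\mathscr L^2(\mathcal P_0)$ via Proposition \ref{prop-quasi-conti-Cont_TL2}(iv) and Remark \ref{rmk-prop-quasi-cont}, and simply \emph{defines} $u^n:=\tilde u-u_1^n+u_2^n$. The convergence $\|u^n-u\|_{\mathscr L^2(\mathscr K)}\to 0$ together with the pointwise control $|u^n-u|\le\phi_1^n+\phi_2^n$ (with $\phi_k^n\in\mathscr U$ decreasing to $0$) is then built into the approximation scheme, not extracted from an a priori estimate. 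Crucially, the diffusion term of $u^n$ is $v-v_1^n+v_2^n$ with $\|v_k^n\|_{\cL^2((L^2)^m)}\to 0$ (this is the content of Remark \ref{rmk-prop-quasi-cont}), which is exactly what makes both the martingale term and, via Proposition \ref{prop-quasi-conti-Cont_TL2}(ii), the measure terms pass to the limit. Your proposal misses this decomposition idea; had you set $\hat u^\delta:=\tilde u-w_1^\delta+w_2^\delta$ instead of solving a fresh BSPDE, the argument would essentially go through.
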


\begin{proof}
 Denote by $u_k$ the stochastic regular potential associated with $\mu_k$, $k=1,2$.   Letting $\tilde{u}\in \mathcal{H}$ be the unique solution of the following SPDE
   \begin{equation}
  \left\{\begin{array}{l}
  \begin{split}
  d\tilde{u}(t,x)=\,&\displaystyle \Bigl[\Delta \tilde{u}  -(\bar{f}+\nabla\cdot \tilde{g} )\Bigr](t,x)\,dt   +v^{r}(t,x)\, dW_{t}^{r}, \quad
                     (t,x)\in Q;\\
    \tilde{u}(0,x)=\, &u(0,x), \quad x\in\cO,
    \end{split}
  \end{array}\right.
\end{equation}
with
$$\tilde{g}^j=\partial_{x_j} u+ a^{ij}\partial_{x_i} u +\sigma^{jr} v^{r}+\bar{g}^j,\quad
j=1,\cdots,d.$$
Then $u=\tilde{u}-u_1+u_2$. By (i) of Proposition \ref{prop-quasi-conti-Cont_TL2}, $u$ is almost surely quasi-continuous. We check that all the terms involved in \eqref{eq-ito} are well defined.

  By (iv) of Proposition \ref{prop-quasi-conti-Cont_TL2} and Remark \ref{rmk-prop-quasi-cont},
 there exist $$\{u_k^n;\,n\in\mathbb{N}\}\subset\mathscr{U}\cap\mathscr{L}^2(\mathcal{P}_0)\ \textrm{ and }\  \{\phi_k^n;\,n\in\mathbb{N}\}\subset \mathscr{U},$$
  such that $\{\phi_k^n;n\in\mathbb{N}\}$ converges decreasingly to $0$, $ d\mathbb{P}\otimes dt\otimes dx\textrm{-a.e.}$, and
  $$
  \lim_{n\rightarrow +\infty}\|u_k-{u}_k^n\|_{\mathscr{L}^2(\mathscr{K})}=\lim_{n\rightarrow +\infty}\|\phi_k^n\|_{\mathscr{U}}=0 \textrm{ and } |u_k-{u}_k^n|\leq \phi_k^n,\,\, d\mathbb{P}\otimes dt\otimes dx\textrm{-a.e.},\,k=1,2;\,n\in\bN.
  $$
%
%
%
%
%
  Moreover, there exist $\{f_k^n\}\subset \cL^2((H^{-1})^+)$ and $\{v_k^n;n\in\bN\}\subset\cL^2((L^2)^m)$ such that
  $\lim_{n\rightarrow\infty}\|v_k^n\|_{\cL^2((L^2)^m)}=0$, $k=1,2$, and
      \begin{equation*}
  \left\{\begin{array}{l}
  \begin{split}
  du_k^n(t,x)=\,&\displaystyle \Bigl[\Delta u_k^n(t,x)  +f_k^n(t,x)\Bigr]\,dt   +v_k^n(t,x)\, dW_{t}, \quad
                     (t,x)\in Q;\\
    u_k^n(0,x)=\, &0, \quad x\in\cO.
    \end{split}
  \end{array}\right.
\end{equation*}


For each $n$, set $u^n=\tilde{u}-u_1^n+u_2^n$. Then,
$$\lim_{n\rightarrow +\infty}\|u^n-u\|_{\mathscr{L}^2(\mathscr{K})}=0 \textrm{ and }
   |u^n-u|\leq \phi_1^n+\phi_2^n,\,\, d\mathbb{P}\otimes dt\otimes dx\textrm{-a.e.},\,n\in\bN.$$
    On the other hand, by It\^o formulas for SPDEs without random measures (see \cite[Lemma 7]{DenisMatoussiStoica2005} and \cite[Lemma 3.3]{QiuTangMPBSPDE11}), we have almost surely
\begin{equation}\label{eq-ito-n}
  \begin{split}
    &\int_{\cO}\Phi(t,u^n(t,x))\,dx+\frac{1}{2}\int_t^T\!\!\langle \partial_{yy}\Phi(s,u^n(s)),\, |v(s)-v_1^n(s)+v_2^n(s)|^2\rangle \,ds\\
    =&
    \int_{\cO}\!\!\Phi(T,u^n(T,x))\,dx-\!\int_t^T\!\!\int_{\cO}\!\!\partial_{s}\Phi(s,u^n(s,x))\,dxds
         +\!\int_t^T\!\!\langle \partial_y\Phi(s,u^n(s)),\, \bar{f}(s)\rangle \,ds          \\
   &+\!\int_t^T \langle \partial_y\Phi(s,u^n(s)),\, f^n_1(s)\rangle_{1,-1}\,ds
   -\!\int_t^T \langle\partial_{y}\Phi(s,u^n(s)),\,f^n_2(s)\rangle_{1,-1}\,ds\\
       &+\!\int_{t}^T\!\!\langle \partial_{yy}\Phi(s,u^n(s))\partial_{x_i} u^n(s),\, \partial_{x_i}  u^n(s) -\tilde{g}^{i}(s)\rangle  \,ds\\
    &
    -\!\int_t^T\langle \partial_y\Phi(s,u^n(s)),\,(v-v_1^n+v_2^n)(s)\, dW_s\rangle,\quad \forall t\in[0,T].
  \end{split}
\end{equation}
Since $\Phi\in\mathcal{C}^{1,2}$ and $\phi_1^1,\phi_2^2,\tilde{u}\in\mathscr{U}$, there exist $\hat{\tilde{u}}\in\mathscr{L}^2(\mathcal{P})$ and generic constant $K$ such that
$|\tilde{u}|+\phi_1^1+\phi_2^2 \leq \hat{\tilde{u}}$, $ d\mathbb{P}\otimes dt \otimes dx$-a.e.,
\begin{align}
  |\partial_y\Phi(\cdot,u^n)|\leq K|u^n|\leq K \left(\hat{\tilde{u}}+u_1+u_2\right),\,\, d\mathbb{P}\otimes dt \otimes dx-a.e.\,\, n\in\bN,\nonumber
\end{align}
and
\begin{align}
  |\partial_y\Phi(\cdot,u^n)-\partial_y\Phi(\cdot,u)|
  \leq K\big(|u_1^n-u_1|+|u_2^n-u_2|\big)
  \leq K\big( \phi_1^n+\phi_2^n  \big),\,\, d\mathbb{P}\otimes dt \otimes dx-a.e.\,\, n\in\bN.\nonumber
\end{align}
Thus, by (ii) of Proposition \ref{prop-quasi-conti-Cont_TL2}, we have
\begin{align}
  \lim_{n\rightarrow \infty}
  \int_t^T \langle\partial_y\Phi(s,u^n(s)),\,f^n_k(s)\rangle_{1,-1}\,ds
  =\int_{[t,T]\times\cO} \partial_y\Phi(s,u(s,x))\,\mu_k(ds,dx),\,a.s.,\quad k=1,2.\nonumber
\end{align}
For the martingale part,
\begin{align*}
  &E\left[
  \sup_{t\in[0,T]}\left|
  \int_t^T\!\!\langle \partial_y\Phi(s,u^n(s))   ,\,  (v-v_1^n+v_2^n)(s)\,  dW\rangle
  -\int_t^T\!\!\langle \partial_y\Phi(s,u(s))   ,\,  v(s)\,  dW\rangle
  \right|^2
  \right]
  \\
  &\leq C
  E\left[
  \int_0^T\!\!\left|
\langle \partial_y\Phi(s,u^n(s))   ,\,  (v-v_1^n+v_2^n)(s)\rangle
  -\langle \partial_y\Phi(s,u(s))   ,\,  v(s)\rangle
  \right|^2\,ds
  \right]
  \\
  &\leq C
   E\left[
  \int_0^T\!\!\left|
\langle \partial_y\Phi(s,u^n(s))   ,\,  (v_2^n-v_1^n)(s)\rangle
  +\langle \partial_y\Phi(s,u^n(s))-\partial_y\Phi(s,u(s))   ,\,  v(s)\rangle
  \right|^2\,ds
  \right]
  \\
  &\leq C
  \left(
  \|u^n\|_{\mathscr{L}^2(\mathscr{K})}\|v_2^n-v_1^n\|_{\cL^2((L^2)^m)}
  +\|u-u^n\|_{\mathscr{L}^2(\mathscr{K})}\|v\|_{\cL^2((L^2)^m)}
  \right)\\
  &\longrightarrow 0, \quad\textrm{as n} \rightarrow +\infty.
\end{align*}

Letting $n\rightarrow +\infty$,
we obtain
\begin{align*}
    &\int_{\cO}\Phi(t,u(t,x))\,dx+\frac{1}{2}\int_t^T\!\!\langle \partial_{yy}\Phi(s,u(s)),\, |v(s)|^2\rangle \,ds\\
    =&
    \int_{\cO}\!\!\Phi(T,u(T,x))\,dx-\!\int_t^T\!\!\int_{\cO}\!\!\partial_{s}\Phi(s,u(s,x))\,dxds
         +\!\int_t^T\!\!\langle \partial_y\Phi(s,u(s)),\, \bar{f}(s)\rangle \,ds          \\
   &+\!\int_t^T\!\!\int_{\cO} \partial_y\Phi(s,u(s,x))\,\mu_1(ds,dx)-\!\int_t^T\!\!\int_{\cO} \partial_{y}\Phi(s,u(s,x))\,\mu_2(ds,dx)\\
       &+\!\int_{t}^T\!\!\langle \partial_{yy}\Phi(s,u(s))\partial_{x_i} u(s),\, \partial_{x_i}  u(s) -\tilde{g}^{i}(s)\rangle  \,ds
    -\!\int_t^T\langle \partial_y\Phi(s,u(s)),\,v^{r}(s)\rangle\, dW_s^r
    \\
    =&
    \int_{\cO}\!\!\Phi(T,G(x))\,dx-\!\int_t^T\!\!\int_{\cO}\!\!\partial_{s}\Phi(s,u(s,x))\,dxds
         +\!\int_t^T\!\!\langle \partial_y\Phi(s,u(s)),\, \bar{f}(s)\rangle \,ds          \\
   &+\!\int_t^T\!\!\int_{\cO} \partial_y\Phi(s,u(s,x))\,\mu_1(ds,dx)-\!\int_t^T\!\!\int_{\cO} \partial_{y}\Phi(s,u(s,x))\,\mu_2(ds,dx)\\
       &-\!\int_{t}^T\!\!\langle \partial_{yy}\Phi(s,u(s))\partial_{x_i} u(s),\, a^{ji}(s)\partial_{x_j}  u(s) +\sigma^{ir}(s)v^{r}(s)+\bar{g}^{i}(s)\rangle  \,ds\\
    &
    -\!\int_t^T\langle \partial_y\Phi(s,u(s)),\,v^{r}(s)\rangle\, dW_s^r,\ a.s.,\quad  \forall t\in[0,T].
\end{align*}
We complete the proof.
\end{proof}

\begin{rmk}
  In our proof, we used It\^o formulas of \cite[Lemma 7]{DenisMatoussiStoica2005} and \cite[Lemma 3.3]{QiuTangMPBSPDE11}, which serve to study the maximum principles for SPDEs and \emph{backward} SPDEs in bounded domains respectively. It is worth noting that It\^o formulas of \cite[Lemma 7]{DenisMatoussiStoica2005} and \cite[Lemma 3.3]{QiuTangMPBSPDE11} actually hold for any domain, since the proofs therein are independent of the unboundedness of the domain.
  On the other hand, Denis et al \cite{Denis-Matous-Zhang-2012} proved a similar It\^o formula for the obstacle problems of SPDEs, while It\^o formula of Theorem \ref{thm-ito} herein is independent of the obstacle problems.
\end{rmk}

\begin{cor}\label{cor-ito}
  Under the hypothesis of Theorem \ref{thm-ito}, there holds with pobability 1
  \begin{align}
    &\|u^+(t)\|^2+\!\int_t^T\!\!\!\|v(s)1_{\{u> 0\}}\|^2\,ds\nonumber\\
    =&\|G^+\|^2
    -2\!\int_t^T\!\!\!\langle \partial_{x_j} u^+(s),\,
                 a^{ij}\partial_{x_i} u(s)+\sigma^{jr}v^r(s)+ \bar{g}^j(s)\rangle\,ds
                 +\!\int_t^T\!\!\!2\langle u^+(s),\,\bar{f}(s) \rangle\,ds\rangle\, ds\nonumber\\
    &+\!\int_{[t,T]\times\cO}\!2u^+(s,x)\,\mu_1(ds,dx)
    -\!\int_{[t,T]\times\cO}\!2u^+(s,x)\,\mu_2(ds,dx)\nonumber\\
   & -\!\int_t^T\!\!\!2\langle u^+(s),\,v(s)\,dW_s\rangle,\quad \forall t\in[0,T],     \label{eq-cor-ito}
  \end{align}
  where $u^+:=\max\{u,0\}$.
\end{cor}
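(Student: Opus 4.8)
The plan is to obtain \eqref{eq-cor-ito} by applying the It\^o formula \eqref{eq-ito} of Theorem \ref{thm-ito} to the time-independent function $\Phi(t,y):=(y^+)^2=(\max\{y,0\})^2$ and then collecting terms. First I would verify that $\Phi\in\mathcal{C}^{1,2}$: one has $\partial_t\Phi\equiv 0$, while $\partial_y\Phi(t,y)=2y^+$ is continuous on $\bR^2$ with $\partial_y\Phi(\cdot,0)\equiv 0$, and $\partial_{yy}\Phi(t,y)=2\cdot 1_{\{y>0\}}$ exists for every $y\neq 0$ and is bounded by $2$, so the essential supremum in the definition of $\mathcal{C}^{1,2}$ is finite (the value of $\partial_{yy}\Phi$ on the Lebesgue-null set $\{y=0\}$ being immaterial there). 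To avoid even this mild ambiguity at $y=0$ one may alternatively apply \eqref{eq-ito} to a sequence of smooth convex $\Phi_n\in\mathcal{C}^{1,2}$ with $\Phi_n\to\Phi$ and $\partial_y\Phi_n\to\partial_y\Phi$ uniformly on compacts, $0\le\partial_{yy}\Phi_n\le 3$ and $\partial_{yy}\Phi_n\to 2\cdot 1_{\{\cdot>0\}}$ pointwise off $0$, and then let $n\to\infty$, passing to the limit in each term by dominated convergence using $u\in\mathcal{H}$, $v\in\cL^2((L^2)^m)$ and the $\mu_1,\mu_2$-integrability of $u$ furnished by Lemma \ref{lemm-approx-potential}.

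Next I would substitute $\Phi$ into \eqref{eq-ito} and simplify. Since $\partial_{yy}\Phi(s,u(s))=2\cdot 1_{\{u(s)>0\}}$, the left-hand side becomes $\|u^+(t)\|^2+\int_t^T\|v(s)1_{\{u(s)>0\}}\|^2\,ds$. On the right-hand side, the terminal term equals $\|G^+\|^2$, the $\partial_s\Phi$-term vanishes, and $\partial_y\Phi(s,u(s))=2u^+(s)$ converts the $\bar f$-, $\mu_1$-, $\mu_2$- and It\^o-integral terms of \eqref{eq-ito} into the corresponding terms of \eqref{eq-cor-ito}. The only term needing a word of justification is the second-order drift term: by Stampacchia's chain rule for $H^1_0$ functions, $\partial_{x_i}u^+=1_{\{u>0\}}\partial_{x_i}u$ a.e., whence $\partial_{yy}\Phi(s,u(s))\partial_{x_i}u(s)=2\partial_{x_i}u^+(s)$, and after relabeling the summed indices $i\leftrightarrow j$ this term becomes exactly $-2\int_t^T\langle\partial_{x_j}u^+(s),\,a^{ij}\partial_{x_i}u(s)+\sigma^{jr}v^r(s)+\bar g^j(s)\rangle\,ds$. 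Collecting everything yields \eqref{eq-cor-ito}.

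The proof is therefore essentially a verification; the only genuine obstacle is that $\Phi$ sits at the boundary of the admissible class $\mathcal{C}^{1,2}$ (being only $C^{1,1}$ in the $y$ variable), which is dealt with either directly through the essential-supremum formulation of that class or by the routine smoothing sketched above. No new a priori estimates beyond those already supplied by Theorem \ref{thm-ito} and Proposition \ref{prop-quasi-conti-Cont_TL2} are required.
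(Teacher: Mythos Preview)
Your approach is essentially the paper's: the paper also approximates $(y^+)^2$ by a sequence of $\mathcal{C}^{1,2}$ functions, applies Theorem \ref{thm-ito}, and passes to the limit by dominated convergence---only it writes down an explicit approximation $\Phi_k=|\psi_k|^2$ with $\psi_k$ a piecewise $C^2$ function vanishing on $(-\infty,1/k]$, rather than describing the needed properties abstractly.

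Two small points. First, your direct route (claiming $(y^+)^2\in\mathcal{C}^{1,2}$) is shaky: the class is defined so that $\partial_{yy}\Phi$ \emph{exists}, and the esssup condition over $y\neq 0$ is an additional bound, not a license for nonexistence at $0$; so the smoothing is really needed. Second, in your smoothing you only require $\partial_{yy}\Phi_n\to 2\cdot 1_{\{\cdot>0\}}$ \emph{off} $0$, but to get $\|v\,1_{\{u>0\}}\|^2$ (rather than a possibly larger term picking up mass on $\{u=0\}$) you need $\partial_{yy}\Phi_n(0)\to 0$ as well; the cleanest fix---and what the paper's $\psi_k$ does---is to take $\Phi_n$ identically zero on $(-\infty,0]$ (indeed on $(-\infty,1/n]$), which also makes the $\nabla u$ and measure terms converge without any appeal to the structure of $\{u=0\}$.
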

\begin{proof}
  For $k\in\bN$, define
\begin{equation}\label{eq-func}
  \psi_k(s):=
  \left\{\begin{array}{l}
  \begin{split}
    0,\quad &s\in(-\infty,\frac{1}{k}];\\
    \frac{8k^2}{9}\left(s-\frac{1}{k}\right)^3,\quad &s\in (\frac{1}{k},\frac{5}{4k}];\\
    \frac{2k}{3}\left(s-\frac{5}{4k}\right)^2+\frac{1}{6}\left( s-\frac{5}{4k} \right)+\frac{1}{72k},
    \quad &s\in (\frac{5}{4k},\frac{7}{4k}];\\
    -\frac{8k^2}{9}\left( s-\frac{2}{k} \right)^3+s-\frac{3}{2k},\quad &s\in (\frac{7}{4k},\frac{2}{k}];\\
    s-\frac{3}{2k},\quad &s\in(\frac{2}{k},+\infty).
    \end{split}
  \end{array}\right.
 \end{equation}
 By Theorem \ref{thm-ito}, there holds with probability 1
 \begin{align}
   &\int_{\cO}|\psi_k(u(t,x))|^2\,dx
   +\int_{[t,T]\times\cO}\!\left(|\psi_k'|^2
   +\psi_k\psi_k''\right)\left(u(s,x)\right)|v(s,x)|^2\,dsdx
   \nonumber\\
   =&
   \int_{\cO}\!\!|\psi_k(G(x))|^2\,dx
   -\!\int_t^T\!\!\!2\langle(\psi_k\psi_k''+|\psi_k'|^2)(u(s))\partial_{x_j}  u(s),\,
                 a^{ij}\partial_{x_i} u(s)+\sigma^{jr}v^r(s)+ \bar{g}^j(s)\rangle\,ds\nonumber\\
   &+\!\int_{[t,T]\times\cO}\!2\psi_k(u(s,x))\psi_k'(u(s,x))\,\mu_1(ds,dx)
    -\!\int_{[t,T]\times\cO}\!2\psi_k(u(s,x))\psi_k'(u(s,x))\,\mu_2(ds,dx)\nonumber\\
   &+\!\int_t^T\!\!\!2\langle \psi_k\psi_k'(u(s)),\,\bar{f}(s) \rangle\,ds
   -\!\int_t^T\!\!\!2\langle\psi_k\psi_k'(u(s)),\,v(s)\,dW_s\rangle,\quad \forall t\in[0,T]. \label{eq-cor-ito-1}
 \end{align}
 In view of \eqref{eq-func}, we have for any $(s,x)\in [0,T]\times\cO$,
\begin{align*}
      \left|\psi_k(u(s,x))\right|
         \left| \psi_k''(u(s,x))   \right|
    \leq
        \ \frac{1}{2k}\times \frac{4k}{3} 1_{[\frac{1}{k},\frac{2}{k}]}(u(s,x))
    \leq
         \  1_{[\frac{1}{k},\frac{2}{k}]}(u(s,x)).
\end{align*}
On the other hand, we check that $\lim_{k\rightarrow \infty}\|\psi_k(u)-u^+\|_{\mathcal{H}}=0$.
 Therefore, by the dominated convergence theorem and taking limits in $L^1([0,T]\times\Omega,\sP;\bR)$
 on both sides of \eqref{eq-cor-ito-1}, we prove our assertion.
\end{proof}

\section{Existence and uniqueness of the solution to RBSPDE}

\subsection{Solution for RBSPDE \eqref{RBSPDE}}

First, we introduce the assumption on the obstacle process $\xi$.

\medskip
   $({\mathcal A} 4')$ \it $\xi$ is almost surely quasi-continuous on $Q$ and there exist $(\tilde{\xi},\tilde{v})\in\mathcal{H}\times \cL((L^2)^m)$ and a stochastic regular measure $\tilde{\mu}$ such that $\xi\leq\tilde{\xi}$, $ d\mathbb{P}\otimes dt\otimes dx$-a.e. and
     \begin{equation}\label{BSPDE-ass-Obstac}
  \left\{\begin{array}{l}
  \begin{split}
  -d\tilde{\xi}(t,x)=\,&\displaystyle \Bigl[\partial_{x_j}\bigl(a^{ij}(t,x)\partial_{x_i} \tilde{\xi}(t,x)
        +\sigma^{jr}(t,x) \tilde{v}^{r}(t,x)     \bigr)+(\tilde{f}+\nabla\cdot \tilde{g} )(t,x)\Bigr]\,dt \\
        &\displaystyle     +\tilde{\mu}(dt,x) -\tilde{v}^{r}(t,x)\, dW_{t}^{r}, \quad
                     (t,x)\in Q;\\
    \tilde{\xi}(T,x)=\, &\tilde{\xi}_T(x), \quad x\in\cO,
    \end{split}
  \end{array}\right.
\end{equation}
    holds in the weak sense, where $\tilde{\xi}_T\in L^2(\Omega,\sF_T;L^2)$ and $\tilde{f},\tilde{g}^i\in \cL^2(L^2)$, $i=1,2,\cdots,d$.\medskip
\medskip\\
In $({\mathcal A} 4')$, let $u^{\tilde{\mu}}\in \mathscr{L}^2(\mathcal{P})$ be the  stochastic regular potential associated with the stochastic regular measure $\tilde{\mu}$ and let $(\check{\xi},\check{v})\in \mathcal{H}\times \cL((L^2)^m)$ satisfy BSPDE
     \begin{equation*}
  \left\{\begin{array}{l}
  \begin{split}
  -d\check{\xi}(t,x)=\,&\displaystyle \Bigl[
  \Delta \check{\xi}(t,x)+\partial_{x_j}\bigl(a^{ij}\partial_{x_i} \tilde{\xi}(t,x)
        +\sigma^{jr} \tilde{v}^{r}(t,x)     \bigr)+(\tilde{f}+\nabla\cdot \tilde{g} )(t,x)
        -\Delta\tilde{\xi}(t,x)\Bigr]\,dt \\
        &\displaystyle  -2\Delta u^{\tilde{\mu}}(t,x)\,dt   -\check{v}^{r}(t,x)\, dW_{t}^{r}, \quad
                     (t,x)\in Q;\\
    \check{\xi}(T,x)=\, &\tilde{\xi}_T(x)+\tilde{u}^{\mu}(T,x), \quad x\in\cO.
    \end{split}
  \end{array}\right.
\end{equation*}
We have $\tilde{\xi}=\check{\xi}-u^{\tilde{\mu}}\leq \check{\xi}$, $ d\mathbb{P}\otimes dt\otimes dx$-a.e.. Consequently, the assumption $({\mathcal A} 4')$ is equivalent to the following $({\mathcal A} 4)$.\medskip

\medskip
   $({\mathcal A} 4)$ \it $\xi$  is almost surely quasi-continuous on $[0,T]\times\cO$ and there exists $\check{\xi}\in\mathscr{U}$ such that $\xi\leq\check{\xi}$, $ d\mathbb{P}\otimes dt\otimes dx$-a.e..

\begin{defn}\label{def-RBSPDE}
We say that a triple $(u,v,\mu)$ is a solution of RBSPDE \eqref{RBSPDE}, if

(1) $
(u,v)\in\mathcal{H}\times\cL^2((L^2)^m)$ and $\mu$ is a stochastic regular measure;

(2) RBSPDE \eqref{RBSPDE} holds in the weak sense, i.e., for each $\varphi\in\mathcal{D}_T$ and $t\in[0,T]$
\begin{equation*}
  \begin{split}
    &\langle u(t),\,\varphi(t) \rangle +\int_t^T\left[\langle u(s),\,\partial_s \varphi(s)   \rangle + \langle \partial_{x_j} \varphi(s),\,a^{ij}(s)\partial_{x_i} u(s)
        +\sigma^{jr}(s) {v}^{r}(s) \rangle \right]\,ds\\
    =\,&
    \langle G,\varphi(T) \rangle+\int_t^T\left[\langle f(s,u(s),\nabla u(s),v(s)),\,\varphi(s) \rangle-\langle g(s,u(s),\nabla u(s),v(s)),\, \nabla\varphi(s) \rangle \right]\,ds\\
    &+\int_t^T\!\!\int_{\bR^d}\varphi(s,x)\mu(ds,dx)-\int_t^T\langle \varphi(s),\,v^r(s)\,dW_s^r\rangle ;
  \end{split}
\end{equation*}

(3) $u$ is almost surely quasi-continuous, $u(t,x)\geq \xi(t,x)$, $ d\mathbb{P}\otimes dt \otimes dx$-a.e. and
$$
\int_0^T\!\!\int_{\cO}\left(u(s,x)-\xi(s,x)\right)\mu(ds,dx)=0,\quad a.s..
$$
\end{defn}

By It\^o formulas in Theorem \ref{thm-ito} and Corollary \ref{cor-ito}, we are ready for the comparison principle.

\begin{thm}\label{thm-comprsn}
  Suppose that $(u,v,\mu)$ is a solution of RBSPDE \eqref{RBSPDE} under assumptions $(\mathcal{A}1)-(\mathcal{A}4)$. Let $(\xi_1,f_1,G_1)$ be another triple which together with $(a,\sigma,g)$ satisfies assumptions $(\mathcal{A}1)-(\mathcal{A}4)$. Let $(u_1,v_1,\mu_1)$ be a solution of RBSPDE \eqref{RBSPDE} associated with $(a,\sigma,G_1,f_1,g,\xi_1)$. Suppose further that
  $$
  f(u,\nabla u,v)\leq f_1(u,\nabla u,v),\,\,\,\xi\leq \xi_1,\,\, d\mathbb{P}\otimes dt\otimes dx-a.e.
  \textrm{ and }G\leq G_1,\,\, d\mathbb{P}\otimes dx-a.e..
  $$
  Then, with probability 1 there holds $u(t,x)\leq u_1(t,x)$, q.e..
\end{thm}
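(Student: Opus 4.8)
The plan is to set $\bar u = u - u_1$, $\bar v = v - v_1$, and study the sign of $\bar u^+$ via the Itô formula of Corollary \ref{cor-ito}. First I would subtract the two weak formulations of RBSPDE \eqref{RBSPDE}: the process $\bar u$ then solves, in the weak sense, a linear BSPDE with leading coefficients $(a,\sigma)$, terminal value $G - G_1 \le 0$, free terms coming from $f(\cdot,u,\nabla u,v) - f_1(\cdot,u_1,\nabla u_1,v_1)$ and $g(\cdot,u,\nabla u,v) - g(\cdot,u_1,\nabla u_1,v_1)$, and a signed random measure $\mu - \mu_1$ in place of $\mu_1 - \mu_2$ in Theorem \ref{thm-ito}. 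Since $\mu$ and $\mu_1$ are both stochastic regular measures, $\bar u$ is almost surely quasi-continuous (each summand is, by Definition \ref{def-RBSPDE}(3)), so all terms in Corollary \ref{cor-ito} make sense. Applying Corollary \ref{cor-ito} to $\bar u$ on $[t,T]$ gives an identity for $\|\bar u^+(t)\|^2 + \int_t^T \|\bar v 1_{\{\bar u>0\}}\|^2\,ds$ whose right-hand side has: a boundary term $\|(G-G_1)^+\|^2 = 0$; an elliptic term $-2\int_t^T \langle \partial_{x_j}\bar u^+, a^{ij}\partial_{x_i}\bar u + \sigma^{jr}\bar v^r + (g^j(\cdot,u,\nabla u,v)-g^j(\cdot,u_1,\nabla u_1,v_1))\rangle\,ds$; a drift term $2\int_t^T\langle \bar u^+, f(\cdot,u,\nabla u,v) - f_1(\cdot,u_1,\nabla u_1,v_1)\rangle\,ds$; the two measure terms $2\int_{[t,T]\times\cO} \bar u^+\,d(\mu - \mu_1)$; and a martingale term with zero expectation.

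The crucial sign input is the measure term: by the Skorohod condition, on $\{\bar u>0\}=\{u>u_1\}$ we have $u > u_1 \ge \xi_1 \ge \xi$, so $u(s,x) > \xi(s,x)$ there, hence $\bar u^+\,\mu$ charges no mass (because $\int_Q (u-\xi)\,\mu = 0$ and $u - \xi > 0$ on the support of $\bar u^+$); meanwhile $-\bar u^+\,\mu_1 \le 0$ since $\bar u^+ \ge 0$ and $\mu_1$ is a nonnegative measure. Thus $2\int_{[t,T]\times\cO}\bar u^+\,d(\mu-\mu_1) \le 0$ almost surely — this is the step I expect to be the main obstacle, because making it rigorous requires that $u>\xi$ holds \emph{quasi-everywhere} on $\{\bar u^+>0\}$ (not merely $d\bP\otimes dt\otimes dx$-a.e.), which is exactly why the precise quasi-continuous versions of $u$, $u_1$, $\xi$, $\xi_1$ were constructed in Section 3; one invokes that $\mu$ charges no polar set (Remark \ref{rmk-capacity}) so $d\mu$-a.e.\ statements can be read off the quasi-continuous versions, and uses $\{u>u_1\}\cap\{u_1>\xi\}$ is $d\mu$-conull on $\{u>u_1\}$ via $\xi_1\ge\xi$ and $u_1>\xi_1$ q.e.\ on $\{u_1>\xi_1\}\supset\{u>u_1\}\setminus\{u_1=\xi_1\}$, combined with the Skorohod condition for $u_1$.

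The remaining terms are handled by standard estimates. For the drift term, split $f(\cdot,u,\nabla u,v) - f_1(\cdot,u_1,\nabla u_1,v_1) = [f(\cdot,u,\nabla u,v) - f(\cdot,u_1,\nabla u_1,v_1)] + [f(\cdot,u_1,\nabla u_1,v_1) - f_1(\cdot,u_1,\nabla u_1,v_1)]$; the second bracket is $\le 0$ on $\{\bar u>0\}$ by hypothesis and the first is controlled by $L(|\bar u| + |\nabla\bar u| + |\bar v|)$ via $(\mathcal A1)$, contributing $C\int_t^T(\|\bar u^+\|^2 + \|\bar u^+\|(\|\nabla\bar u\,1_{\{\bar u>0\}}\| + \|\bar v\,1_{\{\bar u>0\}}\|))\,ds$. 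For the elliptic term, use $\partial_{x_j}\bar u^+ = \partial_{x_j}\bar u\,1_{\{\bar u>0\}}$, the super-parabolicity and boundedness in $(\mathcal A2)$, and the Lipschitz bound on $g$ from $(\mathcal A1)$ (with constants $\kappa,\beta$), together with Young's inequality with weight $\varrho$; the structural condition $\lambda - \kappa - \varrho'\beta > 0$ is precisely what lets the $\|\nabla\bar u\,1_{\{\bar u>0\}}\|^2$ and $\|\bar v\,1_{\{\bar u>0\}}\|^2$ terms be absorbed into the left-hand side. Taking expectations kills the martingale, and one arrives at $E\|\bar u^+(t)\|^2 \le C\int_t^T E\|\bar u^+(s)\|^2\,ds$; Gronwall's inequality then forces $E\|\bar u^+(t)\|^2 = 0$ for all $t$, i.e.\ $u \le u_1$ $d\bP\otimes dt\otimes dx$-a.e.. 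Finally, since both $u$ and $u_1$ are taken in their quasi-continuous versions and agree with these versions for every $t$ by Remark \ref{rmk-capacity} (capacity of $\{t\}\times A$ equals Lebesgue measure), the inequality upgrades to $u(t,x) \le u_1(t,x)$ q.e., with probability $1$.
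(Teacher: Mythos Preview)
Your approach is exactly the paper's: apply Corollary \ref{cor-ito} to $(u-u_1)^+$, drop the measure term by sign, absorb the gradient and $\bar v$ terms using $(\mathcal A2)$ with weight $\varrho$, and finish with Gronwall. Your discussion of the measure term is in fact more careful than the paper's (which simply writes the term and drops it in the next line).

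There is one slip in the drift estimate. You split
\[
f(\cdot,u,\nabla u,v) - f_1(\cdot,u_1,\nabla u_1,v_1) = \bigl[f(\cdot,u,\nabla u,v) - f(\cdot,u_1,\nabla u_1,v_1)\bigr] + \bigl[f(\cdot,u_1,\nabla u_1,v_1) - f_1(\cdot,u_1,\nabla u_1,v_1)\bigr]
\]
and claim the second bracket is $\le 0$ by hypothesis. But the hypothesis is $f(u,\nabla u,v) \le f_1(u,\nabla u,v)$, i.e.\ the inequality is only assumed at the point $(u,\nabla u,v)$, not at $(u_1,\nabla u_1,v_1)$. The paper splits the other way,
\[
\bigl[f(\cdot,u,\nabla u,v) - f_1(\cdot,u,\nabla u,v)\bigr] + \bigl[f_1(\cdot,u,\nabla u,v) - f_1(\cdot,u_1,\nabla u_1,v_1)\bigr],
\]
so that the first bracket is $\le 0$ by hypothesis and the second is controlled by the Lipschitz constant of $f_1$. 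With this correction your argument goes through verbatim.
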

\begin{proof}
  Set $(\tilde{u},\tilde{v})=(u-u_1,v-v_1)$. By Corollary \ref{cor-ito}, we have
  \begin{align}
    &E\bigg[ \|\tilde{u}^+(t)\|^2+\!\int_t^T\!\!\!\|\tilde{v}(s)1_{\{u> u_1\}}\|^2\,ds \bigg]
      \nonumber\\
    =&
        E\bigg[
         -\!\int_t^T\!\!\!2\langle
            \partial_{x_j}\tilde{u}^+(s),\,a^{ij}(s)\partial_{x_i}\tilde{u}^+(s)+\sigma^{jr}(s)\tilde{v}^r(s)
            +g^j(s,u,\nabla u,v)-g^j(s,u_1,\nabla u_1,v_1)
                \rangle\,ds
        \nonumber\\
        &\quad
            \,+\int_t^T\!\!\!2\langle
            \tilde{u}^+(s),\,f(s,u,\nabla u,v)-f_1(s,u,\nabla u,v)+f_1(s,u,\nabla u,v)-f_1(s,u_1,\nabla u_1,v_1)
              \rangle\,ds
        \nonumber\\
        &\quad
            \,+\int_{[t,T]\times\cO}\!(u-\xi+\xi-\xi_1+\xi_1-u_1)^+(s,x)\,(\mu-\mu_1)(ds,dx)
            \bigg]
        \nonumber\\
    \leq&\,
        E\bigg[
         -\!\int_t^T\!\!\!2\langle
            \partial_{x_j}\tilde{u}^+(s),\,a^{ij}(s)\partial_{x_i}\tilde{u}^+(s)+\sigma^{jr}(s)\tilde{v}^r(s)
            +g^j(s,u,\nabla u,v)-g^j(s,u_1,\nabla u_1,v_1)
                \rangle\,ds
        \nonumber\\
        &\quad
            \,+\int_t^T\!\!\!2\langle
            \tilde{u}^+(s),\, f_1(s,u,\nabla u,v)-f_1(s,u_1,\nabla u_1,v_1)
              \rangle\,ds
                \bigg]
        \nonumber\\
    \leq&\,
          E\bigg[
            -\int_t^T\!\!\!  \langle \partial_{x^i}\tilde{u}^+(s),\,(2a^{ij}-\varrho \sigma^{jr}\sigma^{ir})\partial_{x^j}\tilde{u}^+(s)\rangle\, ds
            +\int_t^T\!\!\frac{1}{\varrho}\|\tilde{ v}(s)1_{\{u>u_1\}}\|^2\,ds
        \nonumber\\
         &\,\,\quad
            +\int_t^T\!\!\! \Big(2\|\nabla \tilde{u}^+(s)\|\big(L\|\tilde{u}^+(s)\|+\frac{\kappa}{2} \|\nabla \tilde{u}^+(s)\| + \beta^{1/2}\|\tilde{v}(s)1_{\{u> u_1\}}\| \big)
        \nonumber \\
        &\, \quad \quad \quad
            +2 L \| \tilde{u}^+(s)\| \big( \|\tilde{u}^+(s)\|+ \|\nabla \tilde{u}^+(s)\|+
            \| \tilde{v}(s)1_{\{u> u_1\}}\|\big) \Big)\,ds
            \bigg]
        \nonumber\\
    \leq&\,
        E\bigg[
        -\!\!\int_t^T\!\!\!
        \big(\lambda-\kappa-\beta(\varrho' +2\eps)\big)\|\nabla \tilde{u}^+(s)\|^2\, ds
        +\!\!\int_t^T\!\!\!\Big(\frac{1}{\varrho}+\frac{1}{\varrho'+\eps}\Big)\|\tilde{v}(s)1_{\{u> u_1\}}\|^2
        +C
        \!\int_t^T\!\!\! \| \tilde{u}^+(s)\|^2\,ds
         \bigg],\nonumber
  \end{align}
  where  $\frac{1}{\varrho}+\frac{1}{\varrho'}=1$ and $\eps>0$ is small enough. By Gronwall inequality, we obtain
  \begin{align*}
    \sup_{t\in[0,T]}E\bigg[ \|\tilde{u}^+(t)\|^2\bigg]+E\bigg[\int_0^T\!\!\!\|\tilde{v}(s)1_{\{u> u_1\}}\|^2\,ds \bigg]=0.
  \end{align*}
  By the quasi-continuity of $u$ and $u_1$, there follows $u(t,x)\leq u_1(t,x)$, q.e., with probability 1. The proof is complete.
\end{proof}

\begin{lem}\label{lem-unique}
  Under assumptions $(\mathcal{A}1-\mathcal{A}4)$, the solution of RBSPDE \eqref{RBSPDE} is unique.
\end{lem}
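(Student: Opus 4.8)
Lemma \ref{lem-unique} follows by an energy estimate parallel to the proof of Theorem \ref{thm-comprsn}. The plan is to take two solutions $(u,v,\mu)$ and $(u_1,v_1,\mu_1)$ of RBSPDE \eqref{RBSPDE} associated with the same data $(a,\sigma,G,f,g,\xi)$ under $(\mathcal{A}1)$--$(\mathcal{A}4)$, to set $(\tilde u,\tilde v):=(u-u_1,\,v-v_1)$, and to note that $\tilde u$ solves, in the weak sense, a BSPDE of the form \eqref{BSPDE-refl-linear-1} with zero terminal value, free terms $\bar f:=f(\cdot,u,\nabla u,v)-f(\cdot,u_1,\nabla u_1,v_1)$ and $\bar g:=g(\cdot,u,\nabla u,v)-g(\cdot,u_1,\nabla u_1,v_1)$ (both in $\cL^2(L^2)$ by $(\mathcal{A}1)$, since $u,u_1\in\mathcal{H}$ and $v,v_1\in\cL^2((L^2)^m)$), and with the two stochastic regular measures $\mu,\mu_1$ in the roles of $\mu_1,\mu_2$. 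Applying It\^o's formula of Theorem \ref{thm-ito} with $\Phi(t,y)=y^2\in\mathcal{C}^{1,2}$ and taking expectations (the stochastic integral has zero mean by the BDG estimate used in Remark \ref{rmk-math-U}, since $\tilde u\in\cS^2(L^2)$ and $\tilde v\in\cL^2((L^2)^m)$) yields
\begin{equation*}
\begin{split}
E\|\tilde u(t)\|^2+E\!\int_t^T\!\!\|\tilde v(s)\|^2\,ds
=\,&-2E\!\int_t^T\!\!\langle\partial_{x_j}\tilde u(s),\,a^{ij}\partial_{x_i}\tilde u(s)+\sigma^{jr}\tilde v^r(s)+\bar g^j(s)\rangle\,ds\\
&+2E\!\int_t^T\!\!\langle\tilde u(s),\,\bar f(s)\rangle\,ds
+2E\!\int_{[t,T]\times\cO}\!\!\tilde u(s,x)\,(\mu-\mu_1)(ds,dx).
\end{split}
\end{equation*}

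The decisive step is to control the last term. By the Skorohod condition of Definition \ref{def-RBSPDE}(3) applied to both solutions, $\int_Q(u-\xi)\,d\mu=0=\int_Q(u_1-\xi)\,d\mu_1$; moreover $u$, $u_1$, $\xi$ are almost surely quasi-continuous and $u\geq\xi$, $u_1\geq\xi$ hold $d\bP\otimes dt\otimes dx$-a.e.\ and, by the quasi-continuity, also q.e.\ (here $cap(\{t\}\times A)=|A|$ from Remark \ref{rmk-capacity}), while the stochastic regular measures $\mu,\mu_1$ do not charge polar sets (again Remark \ref{rmk-capacity}). Consequently the nonnegative functions $u-\xi$ and $u_1-\xi$ have vanishing $\mu$- resp.\ $\mu_1$-integral over every Borel subset of $Q$, so that
\begin{equation*}
\int_{[t,T]\times\cO}\!\!\tilde u\,d\mu=-\!\int_{[t,T]\times\cO}\!\!(u_1-\xi)\,d\mu\leq 0,
\qquad
\int_{[t,T]\times\cO}\!\!\tilde u\,d\mu_1=\int_{[t,T]\times\cO}\!\!(u-\xi)\,d\mu_1\geq 0,
\end{equation*}
whence $\int_{[t,T]\times\cO}\tilde u\,d(\mu-\mu_1)\leq 0$ and this term may be dropped from the right-hand side. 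The remaining terms are then estimated exactly as in the proof of Theorem \ref{thm-comprsn}, with $\tilde u$ in place of $\tilde u^+$ and no $f-f_1$ contribution: using the super-parabolicity and boundedness in $(\mathcal{A}2)$, the Lipschitz bounds $(\mathcal{A}1)$, Young's inequality with a sufficiently small $\eps>0$, and the structural condition $\lambda-\kappa-\varrho'\beta>0$, one obtains positive constants $c,c',C$ with
\begin{equation*}
E\|\tilde u(t)\|^2+c\,E\!\int_t^T\!\!\|\nabla\tilde u(s)\|^2\,ds+c'\,E\!\int_t^T\!\!\|\tilde v(s)\|^2\,ds\leq C\,E\!\int_t^T\!\!\|\tilde u(s)\|^2\,ds,
\end{equation*}
and Gronwall's inequality forces $\sup_{t\in[0,T]}E\|\tilde u(t)\|^2=0$ together with $E\int_0^T(\|\nabla\tilde u(s)\|^2+\|\tilde v(s)\|^2)\,ds=0$. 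Hence $u=u_1$ in $\mathcal{H}$ (so $u=u_1$ q.e.\ by quasi-continuity) and $v=v_1$ in $\cL^2((L^2)^m)$.

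It remains to identify the measures. With $u=u_1$ and $v=v_1$, subtracting the two identities of Definition \ref{def-RBSPDE}(2) makes every term cancel except the random-measure integrals, leaving $\int_t^T\int_{\bR^d}\varphi\,d\mu=\int_t^T\int_{\bR^d}\varphi\,d\mu_1$ for all $\varphi\in\mathcal{D}_T$ and all $t\in[0,T]$; since such test functions separate Radon measures on $[0,T)\times\cO$, we conclude $\mu=\mu_1$, which finishes the proof. An alternative first step would be to apply Theorem \ref{thm-comprsn} twice, with $(\xi_1,f_1,G_1)=(\xi,f,G)$, to obtain directly $u=u_1$ q.e.\ a.s.; the above It\^o computation, now with $\tilde u\equiv 0$, then degenerates to $E\int_0^T\|\tilde v\|^2\,ds=0$, and one recovers $v$ and $\mu$ as before. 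I expect the only genuinely delicate point to be the treatment of the random-measure term, i.e.\ reconciling the two Skorohod conditions with the quasi-everywhere inequalities $u\geq\xi$, $u_1\geq\xi$ and the non-charging of polar sets by regular measures; everything else is a direct adaptation of the energy estimate already carried out for the comparison principle.
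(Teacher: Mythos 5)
Your proposal is correct and follows essentially the same route as the paper's own argument: apply the It\^o formula of Theorem \ref{thm-ito} to $\|u-u_1\|^2$, use the two Skorohod conditions together with $u\geq\xi$, $u_1\geq\xi$ to show the cross term $\int \tilde u\,d(\mu-\mu_1)=-\int(u_1-\xi)\,d\mu-\int(u-\xi)\,d\mu_1\leq 0$, then conclude by the super-parabolicity/Lipschitz energy estimate and Gronwall, and finally recover $\mu=\mu_1$ from the weak formulation. Your extra care in justifying the sign of the measure term via quasi-continuity and the non-charging of polar sets (Remark \ref{rmk-capacity}) is a welcome elaboration of a step the paper leaves implicit, but it is not a different method.
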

\begin{proof}[Sketch of the proof]
  Let $(u_1,v_1,\mu_1)$ and $(u_2,v_2,\mu_2)$ be two solutions of RBSPDE \eqref{RBSPDE}.
  Setting $  (\delta u,\,\delta v)=(u_1-u_2,\,v_1-v_2)$, we have by Theorem \ref{thm-ito},
  \begin{align}
    &E\bigg[  \|\delta u(t)\|^2+\int_t^T \!\!\!\|\delta v(s)\|^2\,ds\bigg]\nonumber\\
    =&\,
    E\bigg[
    -\int_t^T\!\!\!  \langle 2\partial_{x^i}\delta u(s),\,a^{ij}\partial_{x^j}\delta u(s) + \sigma^{ir}\delta v^r(s)+g^i(s,u_1,\nabla u_1,v_1)-g^i(s,u_2,\nabla u_2,v_2) \rangle\,ds
    \nonumber\\
    &\,\quad+\int_t^T\!\!\! \langle 2 \delta u(s),\, f(s,u_1,\nabla u_1,v_1)-f(s,u_2,\nabla u_2,v_2)\rangle\,ds
    \nonumber\\
    &\,\quad
    +\int_{[t,T]\times \cO}\!\!2\big(u_1-\xi-(u_2-\xi)\big)(s,x)\,\big(\mu_1-\mu_{2}\big)(ds,dx)
    \bigg]\nonumber\\
    \leq &\,
    E\bigg[
    -\int_t^T\!\!\!  \langle \partial_{x^i}\delta u(s),\,(2a^{ij}-\varrho \sigma^{jr}\sigma^{ir})\partial_{x^j}\delta u (s)\rangle\, ds
    +\int_t^T\!\!\!\frac{1}{\varrho}\|\delta v(s)\|^2\,ds \nonumber\\
     &\,
     +\int_t^T\!\!\! \Big(2\|\nabla \delta u(s)\|\big(L\|\delta u(s)\|+\frac{\kappa}{2} \|\nabla \delta u(s)\|+\beta^{1/2}\|\delta v(s)\|  \big)
     \nonumber \\
     &\,\quad \quad
     +2 L \| \delta u(s)\| \big(\|\delta u(s)\|+ \|\nabla \delta u(s)\|+\|\delta v(s)\|\big) \Big)\,ds\nonumber\\
    &\,-2\int_{[t,T]\times \cO}\!\!(u_2-\xi)(s,x)\,\mu_1(ds,dx)-2\int_{[t,T]\times \cO}\!\!(u_1-\xi)(s,x)\mu_{2}(ds,dx)
    \bigg]\nonumber\\
    \leq &\,
    E\bigg[
    -\!\!\int_t^T\!\!\!
    \big(\lambda-\kappa-\beta(\varrho' +2\eps)\big)\|\nabla \delta u(s)\|^2\, ds
    +\!\!\int_t^T\!\!\!\Big(\frac{1}{\varrho}+\frac{1}{\varrho'+\eps}\Big)\|\delta v(s)\|^2
     +C(\eps,\lambda,\beta,\kappa,
     \varrho) \!\int_t^T\!\!\!\| \delta u(s)\|^2 \,ds\bigg],\nonumber
  \end{align}
  where $\frac{1}{\varrho}+\frac{1}{\varrho'}=1$ and $\eps>0$ is small enough. By Gronwall inequality, we obtain
  $$
  \sup_{t\in[0,T]}E\big[  \|\delta u(t)\|^2\big]+E\bigg[\int_t^T \!\!\!\|\delta v(s)\|^2\,ds\bigg]=0.
  $$
  Thus, $(u_1,v_1)=(u_2,v_2)$ and in view of (2) in Definition \ref{def-RBSPDE}, we further get $\mu_1=\mu_2$. We complete the proof.
\end{proof}

\begin{rmk}\label{rmk-assumpt4}
Let assumptions $(\mathcal{A}1-\mathcal{A}4)$ hold with $\xi(T)\leq G$, $d\mathbb{P}\otimes dx$-a.e.. By the theory on quasi-linear BSPDEs (see \cite[Chapter 1]{Qiu-2012}), there exists a unique solution $(\bar{u},\bar{v})\in \mathcal{H}\times \cL^2((L^2)^m)$ to the following BSPDE
    \begin{equation*}
  \left\{\begin{array}{l}
  \begin{split}
  -d\bar{u}(t,x)=\,&\displaystyle \Bigl[\partial_{x_j}\bigl(a^{ij}(t,x)\partial_{x_i} \bar{u}(t,x)
        +\sigma^{jr}(t,x) \bar{v}^{r}(t,x)     \bigr) \\
        &\displaystyle +(f+ \nabla \cdot g)(t,x,\bar{u}(t,x),\nabla \bar{u}(t,x),\bar{v}(t,x))
                \Bigr]\, dt\\ &\displaystyle
           -\bar{v}^{r}(t,x)\, dW_{t}^{r}, \quad
                     (t,x)\in Q;\\
    \bar{u}(T,x)=\, &G(x), \quad x\in\cO.
    \end{split}
  \end{array}\right.
\end{equation*}
Suppose that $(u,v,\mu)$ is the solution of RBSPDE \eqref{RBSPDE}. By Theorem \ref{thm-comprsn} and Lemma \ref{lem-unique}, $(\bar{u},\bar{v},0)$ must be the unique solution to RBSPDE \eqref{RBSPDE} associated with obstacle process $\xi \wedge \bar{u}$ and furthermore, $\bar{u}(t,x)\leq u(t,x)$, q.e.. Therefore, $(u,v,\mu)$ coincides with the solution of RBSPDE \eqref{RBSPDE} with the obstacle process being replaced by $\xi \vee \bar{u}$. In other words, assumption $({\mathcal A} 4)$ is equivalent to the following one:\medskip
\\
$(\mathcal{A}4b)$ $\xi$  is almost surely quasi-continuous on $[0,T]\times\cO$ and there exists $\check{\xi}\in\mathscr{U}$ such that $|\xi|\leq\check{\xi}$, $ d\mathbb{P}\otimes dt\otimes dx$-a.e..

\end{rmk}

\subsection{RBSPDEs with Laplacian leading coefficients}

Let $\tilde{\xi}$ satisfy assumption $(\mathcal{A}4)$ with $\tilde{\xi}(T)\leq G$, $d\mathbb{P}\otimes dx$-a.e., $G\in L^2(\Omega,\sF_T;L^2)$ and $\bar{f},\,\bar{g}^i\in \cL^2(L^2)$, $i=1,2,\cdots,d$. Consider the following reflected BSPDE
\begin{equation}\label{RBSPDE-Laplac}
  \left\{\begin{array}{l}
  \begin{split}
  -du(t,x)=\,&\displaystyle \big[\Delta u(t,x) +(\bar{f}+ \nabla \cdot \bar{g})(t,x)
                \big]\, dt+\mu(dt,x)
           -v^{r}(t,x)\, dW_{t}^{r},\,
                     (t,x)\in Q;\\
    u(T,x)=\, &G(x), \quad x\in\cO;\\
    u(t,x)\geq\,& \tilde{\xi}(t,x),\,\, d\mathbb{P}\otimes dt\otimes dx-a.e.;\quad
    \int_Q \big( u(t,x)-\tilde{\xi}(t,x) \big)\,\mu(dt,dx)=0,\,a.s..
    \end{split}
  \end{array}\right.
\end{equation}
Let $\phi\in\mathscr{U}(I,0,G,\bar{f},\bar{g})$. Then, $\xi:=\tilde{\xi}-\phi$ satisfies assumption $(\mathcal{A}4)$ with $\xi(T)\leq 0$, $d\mathbb{P}\otimes dx$-a.e. and RBSPDE \eqref{RBSPDE-Laplac} is equivalent to the following one
\begin{equation}\label{RBSPDE-Laplac1}
  \left\{\begin{array}{l}
  \begin{split}
  -du(t,x)=\,&\displaystyle \Delta u(t,x) \, dt+\mu(dt,x)
           -v^{r}(t,x)\, dW_{t}^{r},\,
                     (t,x)\in Q;\\
    u(T,x)=\, &0, \quad x\in\cO;\\
    u(t,x)\geq\,& {\xi}(t,x),\,\, d\mathbb{P}\otimes dt\otimes dx-a.e.;\quad
    \int_Q \big( u(t,x)-{\xi}(t,x) \big)\,\mu(dt,dx)=0,\,a.s..
    \end{split}
  \end{array}\right.
\end{equation}

Before solving RBSPDEs \eqref{RBSPDE-Laplac} and \eqref{RBSPDE-Laplac1}, we investigate a class of BSPDEs with stochastic measures and a related variational problem.

Set
$$\mathscr{U}_p=\{u=\bar{u}-\tilde{u};\,\tilde{u}\in \mathscr{L}^2(\mathcal{P}),\,\bar{u}\in \mathscr{U}(I,0,\tilde{u}(T),  0,-2\nabla \tilde{u})\}.$$
Thus, for each $u\in\mathscr{U}_p$, there exist $v\in\mathcal{L}^2((L^2)^m)$ and stochastic  measure $\mu$  associated with some stochastic potential
$\tilde{u}\in \mathscr{L}^2(\mathcal{P})$,  such that
\begin{equation*}
  \left\{\begin{array}{l}
  \begin{split}
  -du(t,x)=\,&\displaystyle \Delta u(t,x) \, dt+\mu(dt,x)
           -v^{r}(t,x)\, dW_{t}^{r},\,
                     (t,x)\in Q;\\
    u(T,x)=\, &0, \quad x\in\cO,
    \end{split}
  \end{array}\right.
\end{equation*}
holds in the weak sense.

For each $\zeta $ satisfying assumption $(\mathcal{A}4)$ and each $u\in \mathscr{U}$, we introduce the following variational problem
\begin{align}
\Gamma(u,\zeta)=\essinf \{u+\phi:\,\phi\in\mathscr{U}_p,\,u+\phi\geq \zeta, d\mathbb{P}\otimes dt\otimes dx\textrm{-a.e.}\}. \label{variation}
 \end{align}
 It is not hard to verify 
 \medskip
 \\
 (i) $\Gamma(u,\zeta)=\Gamma(0,\zeta-u)+u$, for each $\zeta$ satisfying assumption $(\mathcal{A}4)$ and $u\in\mathscr{U}$;\\
  (ii) $\Gamma(0,\xi)=\xi$, for each $\xi\in\mathscr{U}_p$;\\
   (iii) for any $\zeta_1,\zeta_2$ satisfying assumption $(\mathcal{A}4)$, $\Gamma(0,\zeta_1)\leq \Gamma(0,\zeta_2)+\Gamma(0,\zeta_1-\zeta_2)$.
\medskip

Define
$$
\mathcal{P}_T=\left\{u\in\mathscr{K}:\,u(T-\cdot)\in\mathcal{P} \textrm{ with }u(T-)=0\right\}.
$$
\begin{lem}\label{lem-back-potential}
  For each $\hat{u}\in L^2(\Omega,\sF_T;\mathcal{P}_T)$, there exists $u\in\mathscr{U}_p$ such that $u(t)=E[\hat{u}(t)|\sF_t]$, $\forall\, t\in[0,T]$.
\end{lem}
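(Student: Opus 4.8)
\emph{Plan of the proof.} The idea is to reduce the statement to the deterministic structure underlying $\mathscr{U}_p$ via Pierre's penalization (Lemma \ref{lemm-approx-potential}), then to project onto the filtration $\{\sF_t\}$ by means of Lemma \ref{lem-BSPDE-condition-expect}, and finally to pass to the limit using the closedness of $\mathscr{L}^2(\mathcal{P})$ recalled in Remark \ref{rmk-closedness-potential}. In this way $\mathscr{U}_p$ plays, for the backward reflected equation with zero terminal value, exactly the role that $\mathscr{U}$ plays for the linear BSPDE.

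First I would reverse time: setting $w:=\hat u(T-\cdot)$, the hypothesis $\hat u\in L^2(\Omega,\sF_T;\mathcal{P}_T)$ means $w\in\mathcal{P}$ with $w(0+)=0$, $\mathbb{P}$-almost surely. Applying the penalization of Lemma \ref{lemm-approx-potential} pathwise gives $\{w_\delta\}_{\delta>0}$ with $w_\delta\in\mathscr{W}\cap\mathcal{P}$, $w_\delta(0)=0$, $\mu^{w_\delta}=\delta^{-1}(w-w_\delta)\geq 0$ in $L^2(Q)$, and $w_\delta\uparrow w$ in $L^2(0,T;L^2)$ and weakly in $L^2(0,T;H^1_0)$. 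Reversing time back, $\hat u_\delta:=w_\delta(T-\cdot)$ lies in $\mathscr{W}\cap\mathcal{P}_T$, satisfies the backward PDE $-\partial_t\hat u_\delta=\Delta\hat u_\delta+\nu_\delta$ with $\nu_\delta:=\delta^{-1}(\hat u-\hat u_\delta)\geq 0$ in $L^2(Q)$ and $\hat u_\delta(T)=0$, and $\hat u_\delta\uparrow\hat u$. A pathwise energy estimate, in which the term $\int_0^T\langle\hat u_\delta,\nu_\delta\rangle\,ds$ is controlled by the estimate of Lemma \ref{lemm-approx-potential}(3), yields the $\delta$-uniform bound $\|\hat u_\delta\|_{\mathscr{K}}\leq C\|\hat u\|_{\mathscr{K}}$; together with the separability of $\mathscr{W}$ this gives, for each fixed $\delta$, $\hat u_\delta\in L^2(\Omega,\sF_T;\mathscr{W})$ and $\nu_\delta\in L^2(\Omega,\sF_T;L^2(Q))$.

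For each fixed $\delta$ I would then invoke Lemma \ref{lem-BSPDE-condition-expect}: with $\bar\nu_\delta(t):=E[\nu_\delta(t)\,|\,\sF_t]\in\mathcal{L}^2((L^2)^+)$, the process $\tilde w_\delta(t):=E[\hat u_\delta(t)\,|\,\sF_t]$ belongs to $\mathscr{U}$ and solves, in the weak sense, $-d\tilde w_\delta=[\Delta\tilde w_\delta+\bar\nu_\delta]\,dt-v_\delta^r\,dW^r$ with $\tilde w_\delta(T)=0$. Letting $\tilde u_\delta\in\mathscr{L}^2(\mathcal{P})$ be the stochastic (adapted, pathwise) parabolic potential determined by $\partial_t\tilde u_\delta=\Delta\tilde u_\delta+\bar\nu_\delta$, $\tilde u_\delta(0)=0$, and setting $\bar u_\delta:=\tilde w_\delta+\tilde u_\delta$, one checks from $d\tilde u_\delta=[\Delta\tilde u_\delta+\bar\nu_\delta]\,dt$ that $\bar u_\delta$ satisfies the weak formulation of the BSPDE defining $\mathscr{U}(I,0,\tilde u_\delta(T),0,-2\nabla\tilde u_\delta)$, so by the uniqueness in Definition \ref{definition of weak solution} one has $\bar u_\delta\in\mathscr{U}(I,0,\tilde u_\delta(T),0,-2\nabla\tilde u_\delta)$ and hence $\tilde w_\delta=\bar u_\delta-\tilde u_\delta\in\mathscr{U}_p$, with $\tilde w_\delta(t)=E[\hat u_\delta(t)\,|\,\sF_t]$ for all $t\in[0,T]$.

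It remains to let $\delta\to 0^+$. Since conditional expectation is a contraction and, by Doob's maximal inequality, $\|\tilde w_\delta\|_{\mathcal{S}^2(L^2)}\leq C\,\|\hat u_\delta\|_{\mathscr{L}^2(\mathscr{K})}\leq C\|\hat u\|_{\mathscr{L}^2(\mathscr{K})}$, the family $\{\tilde w_\delta\}$ is bounded in $\mathcal{H}$ and $\tilde w_\delta\uparrow u:=E[\hat u(\cdot)\,|\,\sF_\cdot]$ in $\mathcal{L}^2(L^2)$ and weakly in $\mathcal{L}^2(H^1_0)$, so that $u(t)=E[\hat u(t)\,|\,\sF_t]$ for every $t$. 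Using the Pierre estimates once more one shows that $\{\tilde u_\delta\}$ is bounded in $\mathscr{L}^2(\mathscr{K})$; extracting a subsequence along which $\tilde u_\delta\rightharpoonup\tilde u$ and $\bar u_\delta=\tilde w_\delta+\tilde u_\delta\rightharpoonup\bar u=u+\tilde u$ in $\mathcal{L}^2(H^1_0)$, Remark \ref{rmk-closedness-potential} gives $\tilde u\in\mathscr{L}^2(\mathcal{P})$, while the linearity and continuity of the solution map of the BSPDE, together with the convergence $\tilde u_\delta(T)\to\tilde u(T)$, give $\bar u\in\mathscr{U}(I,0,\tilde u(T),0,-2\nabla\tilde u)$; hence $u=\bar u-\tilde u\in\mathscr{U}_p$. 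The delicate point is precisely this last step: one must guarantee that the limit $u$ genuinely lies in $\mathscr{U}_p$ and not merely in some larger class of reflected backward SPDEs, which is why the $\delta$-uniform estimates are taken in the $\mathscr{K}$-norm rather than in the $\mathscr{W}$-norm (the $\mathscr{W}$-norms of $\hat u_\delta$, and hence the bound furnished by Lemma \ref{lem-BSPDE-condition-expect}, blow up as $\delta\to 0$ whenever $\mu^{\hat u}$ is not absolutely continuous), and one has to verify the closedness of $\mathscr{U}_p$ under the resulting weak convergence, including the convergence of the terminal traces $\tilde u_\delta(T)$.
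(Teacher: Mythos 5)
Your overall strategy --- penalize $\hat u$ pathwise via Lemma \ref{lemm-approx-potential}, project with Lemma \ref{lem-BSPDE-condition-expect}, and pass to a weak limit using the closedness of $\mathscr{L}^2(\mathcal{P})$ --- is the same skeleton as the paper's argument. But the proof as written has a genuine gap: every uniform-in-$\delta$ estimate that makes the limit passage work is asserted rather than established, and the two assertions you lean on do not hold for the reasons you give. First, you never control the martingale integrands $v_\delta$ of the conditioned processes $\tilde w_\delta=E[\hat u_\delta(\cdot)|\sF_\cdot]$ uniformly in $\delta$. This is an intrinsically stochastic estimate: it cannot come from a ``pathwise energy estimate'' on $\hat u_\delta$, and it does not follow from the $\mathscr{K}$-bound $\|\hat u_\delta\|_{\mathscr{K}}\le C\|\hat u\|_{\mathscr{K}}$. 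The paper obtains it by first invoking Lemma \ref{lem-K-W} to produce $\hat{\tilde u}\in L^2(\Omega,\sF_T;\mathscr{W})$ with $\hat u\le\hat{\tilde u}$ and $\|\hat{\tilde u}\|_{\mathscr{W}}\le C\|\hat u\|_{\mathscr{K}}$, setting $\tilde u(t)=E[\hat{\tilde u}(t)|\sF_t]\in\mathscr{U}$, and then applying It\^o's formula to $\|u_\delta-\tilde u\|^2$; the penalization term is killed by the sign observation
$$2\,(u_\delta-\tilde u)\,\frac{u-u_\delta}{\delta}\le 0,\qquad\text{since } u_\delta\le u\le\tilde u,$$
which is what yields $\sup_\delta\|v_\delta\|_{\cL^2((L^2)^m)}<\infty$. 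This domination step is entirely absent from your argument.

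Second, your claim that ``using the Pierre estimates once more'' the potentials $\tilde u_\delta$ generated by $\bar\nu_\delta=E[\nu_\delta(\cdot)|\sF_\cdot]$ are bounded in $\mathscr{L}^2(\mathscr{K})$ is not justified. The $\mathscr{K}$-energy of a potential is the quadratic quantity $\int_Q\tilde u_\delta\,\bar\nu_\delta\,dt\,dx$; Lemma \ref{lemm-approx-potential} controls the total mass of $\nu_\delta$ and the energy of the time-reversed potential $\hat u_\delta$ driven by the \emph{unconditioned} density $\nu_\delta$, neither of which bounds the energy of the forward potential driven by $\bar\nu_\delta$ (the obvious duality estimate $\int\tilde u_\delta\,d\mu^{\tilde u_\delta}\le C\|\tilde u_\delta\|_{\mathscr{K}}^2$ is circular). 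In the paper this bound is a \emph{consequence} of the strong $\mathcal{H}$-convergence of the forward-SPDE parts $\bar u_{1/n}$, which in turn requires strong convergence of convex combinations $(\check u_{1/n},\check v_{1/n})$ --- i.e., it rests on the $v_\delta$-bound above. Finally, the identification of the terminal trace $\tilde u_\delta(T)\to\tilde u(T)$, which you correctly single out as the delicate point, is left unproved; without it you cannot conclude $\bar u\in\mathscr{U}(I,0,\tilde u(T),0,-2\nabla\tilde u)$. To repair the proof you should import the paper's first step (Lemma \ref{lem-K-W} plus conditional expectation to build the dominating $\tilde u\in\mathscr{U}$) and the It\^o computation it enables, and then pass to the limit through strongly convergent convex combinations as in the paper rather than through weak continuity of the solution map.
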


\begin{proof}
  Put $u(t)=E[\hat{u}(t)|\sF_t]$, $ t\in[0,T]$. It is sufficient to prove $u\in\mathscr{U}_p$.

  First, by Lemma \ref{lem-K-W}, there exists $\hat{\tilde{u}}\in L^2(\Omega,\sF_T;\mathscr{W})$ such that $\hat{u}\leq \hat{\tilde{u}}$, $ d\mathbb{P}\otimes dt\otimes dx-a.e.$ and $\|\hat{\tilde{u}}\|_{\mathscr{W}}\leq C \|\hat{u}\|_{\mathscr{K}}$, a.s.. Set $\tilde{u}(t)=E[\hat{\tilde{u}}(t)|\sF_t]$, $t\in[0,T]$. Then $\tilde{u}\in\mathscr{U}$ and there exist $(\phi,\tilde{f},\tilde{v})\in L^2(\Omega,\sF_T;L^2)\times\cL^2(H^{-1})\times\cL^2((L^2)^m)$ such that $(\tilde{u},\tilde{v})$ is the solution of BSPDE
  \begin{equation*}
  \left\{\begin{array}{l}
  \begin{split}
  -d\tilde{u}(t,x)=\,&\displaystyle \big(\Delta \tilde{u}(t,x) \, +\tilde{f}(t,x)\big)\, dt
           -\tilde{v}^{r}(t,x)\, dW_{t}^{r},\,
                     (t,x)\in Q;\\
    \tilde{u}(T,x)=\, &\phi(x), \quad x\in\cO.
    \end{split}
  \end{array}\right.
\end{equation*}
  It is clear that $u\leq \tilde{u}$, $ d\mathbb{P}\otimes dt\otimes dx$-a.e..

  For each $\delta>0$, consider $\hat{u}_{\delta}\in \mathcal{H}$ satisfying
  $$
  -\partial_t \hat{u}_{\delta}(t,x)=\Delta \hat{u}_{\delta}(t,x)+\frac{\hat{u}(t,x)-\hat{u}_{\delta}(t,x)}{\delta},\,\,\,(t,x)\in Q;\quad \hat{u}_{\delta}(T)=0.
  $$
   From assertion (i) of Lemma \ref{lemm-approx-potential}, we deduce that  $\{\hat{u}_{\delta}\}_{\delta>0}\subset L^2(\Omega,\sF_T;\mathcal{P})$ converges increasingly in $L^2(\Omega\times[0,T];L^2)$ and weakly in $L^2(\Omega\times[0,T];H^1_0)$ to $\hat{u}$, as $\delta\rightarrow 0^+$. Taking 
   \begin{align}
   u_{\delta}(t)=E[\hat{u}_{\delta}(t)|\sF_t],\,\textrm{ for each }t\in[0,T], \label{eq-relation}
   \end{align}
    we conclude from Lemma \ref{lem-BSPDE-condition-expect} that $u_{\delta}$, together with some $v_{\delta}\in\cL^2((L^2)^m)$, satisfies BSPDE
  \begin{equation*}
  \left\{\begin{array}{l}
  \begin{split}
  -du_{\delta}(t,x)=\,&\displaystyle \left( \Delta u_{\delta}(t,x)+\frac{{u}(t,x)-{u}_{\delta}(t,x)}{\delta}\right)\,dt      -v_{\delta}^{r}(t,x)\, dW_{t}^{r}, \quad
                     (t,x)\in Q;\\
    u_{\delta}(T,x)=\, &0, \quad x\in\cO.
    \end{split}
  \end{array}\right.
\end{equation*}
  Moreover, from relation \eqref{eq-relation} and Lemma \ref{lemm-approx-potential}, it follows that $\{{u}_{\delta}\}_{\delta>0}$ is bounded in $\mathscr{L}^2(\mathscr{K})$, and converges increasingly in $\cL^2(L^2)$ and weakly in $\cL^2(H^1_0)$ to ${u}$, as $\delta\rightarrow 0^+$. By It\^o formula, we obtain
  \begin{align}
    &E\bigg[\|u_{\delta}(t)-\tilde{u}(t)\|^2-\|\phi\|^2+\int_t^T\!\!\left(2\|\nabla (u_{\delta}(s)-\tilde{u}(s))\|^2+\|v_{\delta}(s)-\tilde{v}(s)\|^2 \right)\,ds\bigg]\nonumber\\
    =\,&
    E\bigg[\int_t^T\!\!\int_{\cO} 2(u_{\delta}-\tilde{u})\frac{{u}-{u}_{\delta}}{\delta}(s,x)
    \,dxds-    \int_t^T\!\!\!2\langle u_{\delta}-\tilde{u}(s),\,\tilde{f}(s)\rangle_{1,-1}\,ds   \bigg]\nonumber\\
    =\,&E\bigg[\int_t^T\!\!\int_{\cO} 2(u_{\delta}-u+u-\tilde{u})\frac{{u}-{u}_{\delta}}{\delta}(s,x)\,dxds
    -    \int_t^T\!\!\!2\langle u_{\delta}(s)-\tilde{u}(s),\,\tilde{f}(s)\rangle_{1,-1}\,ds   \bigg]\nonumber\\
    \leq\,&
    E\bigg[\int_t^T\!\!\|u_{\delta}(s)-\tilde{u}(s)\|_1^2\,ds+\int_t^T\!\!\|\tilde{f}(s)\|_{-1}^2\,ds
  \bigg],\nonumber
  \end{align}
  which implies $\{v_\delta\}_{\delta>0}$ is bounded in $\cL^2((L^2)^m)$ and admits a subsequence converging weakly. Without any loss of generality,  we assume  $\{v_{1/n}\}$ converges weakly to some $v\in\cL^2((L^2)^m)$. We choose a subsequence of convex combinations $(\check{u}_{1/n},\check{v}_{1/n})\in conv\{(u_{1/k},v_{1/k}):\,k\geq n,k\in \bN\}$ such that $(\check{u}_{1/n},\check{v}_{1/n})$ converges strongly to $(u,v)$ in $\cL^2(H^1_0)\times \cL^2((L^2)^m)$. In particular, $\{\check{u}_{1/n}\}$ is chosen to be an increasing sequence. Denote by $\{\check{g}_{1/n}\}$ the corresponding subsequence of convex combinations of $\{n(u-u_{1/n}):n\in\bN\}$.

  For each $n\in\bN$, let $u^{\mu}_{1/n}\in \mathscr{L}^2(\mathcal{P})$ solve PDE
  $$
  \partial_tu^{\mu}_{1/n}(t,x)=\Delta u^{\mu}_{1/n}(t,x)+\check{g}_{1/n},\ \ (t,x)\in Q;
  \quad u^{\mu}_{1/n}(0)=0;
  $$
  and $\bar{u}_{1/n}$ satisfies SPDE
  $$
  d\bar{u}_{1/n}(t,x)=\big(\Delta \bar{u}_{1/n}(t,x)-2\Delta \check{u}_{1/n}(t,x) \big)\,dt+\check{v}_{1/n}(t,x)\,dW_t,\ \ (t,x)\in Q;
  \quad \bar{u}_{1/n}(0)=\check{u}_{1/n}(0+).
  $$
  Then we have $\check{u}_{1/n}=\bar{u}_{1/n}-u^{\mu}_{1/n}$. As $\{\check{u}_{1/n}(0+)\}_{n\in\bN}$ converges increasingly to $u(0+)$ in $L^2$ and $\{(\check{u}_{1/n},\check{v}_{1/n})\}$ converges strongly in $\cL^2(H^1_0)\times \cL^2((L^2)^m)$. By SPDE theory (see \cite{DenisMatoussiStoica2005}), $\bar{u}_{1/n}$ converges strongly to some $\bar{u}$ in $\mathcal{H}$. Consequently, $\{u^{\mu}_{1/n}\}$ is bounded in $L^2(\Omega,\sF_T;\mathscr{K})$, and converges strongly to some $u^{\mu}$ in $\cL^2(H^1_0)$. In view of the closedness of $\cL^2(\mathcal{P})$ (see Remark \ref{rmk-closedness-potential}), we have $u^{\mu}\in \mathscr{L}^2(\mathcal{P})$. Hence, $u=\bar{u}-u^{\mu}\in \mathscr{U}_p$. We complete the proof.
\end{proof}
\begin{rmk}\label{rmk-back-potential}
   Conversely, given $u\in\mathscr{U}_p$ with
   \begin{equation*}
  \left\{\begin{array}{l}
  \begin{split}
  -du(t,x)=\,&\displaystyle \Delta u(t,x) \, dt+\mu(dt,x)
           -v^{r}(t,x)\, dW_{t}^{r},\,
                     (t,x)\in Q;\\
    u(T,x)=\, &0, \quad x\in\cO,
    \end{split}
  \end{array}\right.
\end{equation*}
    let $u^{\mu}\in\mathscr{L}^2(\mathcal{P}_0)$ be the stochastic regular potential associated with $\mu$, and let $\tilde{u}\in L^2(\Omega,\sF_T;\mathscr{W})$ satisfy almost surely PDE
    $$
    -\partial_t\tilde{u}=\Delta \tilde{u}+2\Delta u^{\mu};\quad \tilde{u}(T)=-u^{\mu}(T).
    $$
    Then $\hat{u}:=\tilde{u}+u^{\mu}$ belongs to $ L^2(\Omega,\sF_T;\mathcal{P}_T)$ and satisy
    $$
    -\partial_t \hat{u}-\Delta \hat{u}=\mu;\quad u(T)=0.
    $$
   By approximating the stochastic regular potential $u^{\mu}$, it is easy to verify that
   $$
   u(t)=E\left[\hat{u}(t)\big|\sF_t\right],\quad \forall\, t\in[0,T].
   $$
%
\end{rmk}

To study RBSPDE \eqref{RBSPDE-Laplac1}, we consider the following penalized BSPDE for each $n\in\bN$,
\begin{equation}\label{BSPDE-penalized}
  \left\{\begin{array}{l}
  \begin{split}
  -du_{n}(t,x)=\,&\displaystyle \big(\Delta u_{n}(t,x)+n(u_n(t,x)-\xi(t,x))^-\big)\,dt      -v_{n}^{r}(t,x)\, dW_{t}^{r}, \quad
                     (t,x)\in Q;\\
    u_{n}(T,x)=\, &0, \quad x\in\cO,
    \end{split}
  \end{array}\right.
\end{equation}
which admits a unique solution $(u_n,v_n)\in\mathcal{H}\times\cL^2((L^2)^m)$ with $u_n$ being quasi-continuous almost surely. Let $\psi\in L^2(\Omega,\sF_T;L^2(\cO))$ and $\check{f},\check{g}^i\in \cL^2(L^2)$, $i=1,2,\cdots,d$. Suppose that $\check{\xi}\in \mathscr{U}(I,0,\psi,\check{f},\check{g})$ with diffusion term $\check{v}\in\cL^2((L^2)^m)$,  dominates the obstacle process $\xi$ from above in assumption $(\mathcal{A}4)$.  It\^o formula yields
\begin{align}
  &\|(u_n-\check{\xi})(t)\|^2+\int_t^T\!\!\!\|(v_n-\check{v})(s)\|^2\,ds
  +2\int_t^T\!\!\!\|\nabla(u_n-\check{\xi})(s)\|^2\,ds\nonumber\\
  =&\,\|\psi\|^2+
  \int_t^T\!\!2\langle (u_n-\check{\xi})(s),\,n(u_n-\xi)^-(s)-\check{f}(s)   \rangle\,ds
  +\int_t^T\!\!2\langle \nabla (u_n-\check{\xi})(s),\, \check{g}(s)   \rangle\,ds
  \nonumber\\
  &\,-\int_t^T\!\!2\langle (u_n-\check{\xi})(s),\, (v_n-\check{v})(s)\,dW_s\rangle\nonumber\\
  \leq &\,
  \int_t^T\!\!\left(\|(u_n-\check{\xi})(s)\|^2+\frac{1}{2}\|\nabla(u_n-\check{\xi})(s)\|^2
  +C\left(\|\check{f}(s)\|^2+\|\check{g}(s)\|^2\right) \right) \,ds
  -\int_t^T\!\!2n\|(u_n-\xi)^-(s)\|^2\,ds
  \nonumber\\
  &\,-\int_t^T\!\!2\langle (u_n-\check{\xi})(s),\, (v_n-\check{v})(s)\,dW_s\rangle+\|\psi\|^2,\label{eq-penaliz-1}
\end{align}
which together with
\begin{align}
  E\bigg[\sup_{\tau\in[t,T]}\bigg| \int_{\tau}^T
  \!\!2\langle (u_n-\check{\xi})(s),\, (v_n-\check{v})(s)&\,dW_s\rangle    \bigg|
    \bigg]
    \leq\, 4E \bigg[\sup_{\tau\in[t,T]}\bigg| \int_t^{\tau}
  \!\!\langle (u_n-\check{\xi})(s),\, (v_n-\check{v})(s)\,dW_s\rangle    \bigg|
    \bigg] \nonumber\\
    \textrm{(by BDG inequality)}\leq&\, C E\bigg[\bigg(  \int_t^T \|(u_n-\check{\xi})(s)\|^2
    \| (v_n-\check{v})(s) \|^2  \,ds \bigg)^{1/2} \bigg]\nonumber\\
    \leq&\, E\bigg[\eps\sup_{s\in[t,T]}\|(u_n-\check{\xi})(s)\|^2
    +C_{\eps} \int_t^T \!\!\!
    \| (v_n-\check{v})(s) \|^2\,ds  \bigg],
    \label{eq-penaliz-2}
\end{align}
implies by Gronwall inequality
\begin{align}
  &\|(u_n-\check{\xi})\|_{\mathcal{H}}^2+E\bigg[\int_0^T\!\!\!\|(v_n-\check{v})(s)\|^2\,ds
  + \int_0^T\!\!\!n\|(u_n-\xi)^-(s)\|^2\,ds  \bigg]
  \nonumber\\
  \leq&\,C\,E\bigg[\|\psi\|^2+\int_0^T \!\!\!(\|\check{f}(s)\|^2+\|\check{g}(s)\|^2)\,ds  \bigg].\label{eq-penaliz-3}
\end{align}
Thus, there exists positive constant $C$ independent of $n$, such that
\begin{align}
  & \|u_n\|_{\mathcal{H}}^2+E\bigg[\int_0^T\!\!\!\|v_n(s)\|^2\,ds
  + \int_0^T\!\!\!n\|(u_n-\xi)^-(s)\|^2\,ds  \bigg]
  \leq\,C\|\check{\xi}\|_{\mathscr{U}}.\label{eq-penaliz-4}
\end{align}

By the comparison principles for BSPDEs, $\{u_n\}$ is an increasing sequence in $\cL^2(L^2)$. Consequently, we are allowed to choose a subsequence $\{(u_n,v_n)\}$ (denoted by itself) such that $\{u_n\}$ converges increasingly to some $u$ in $\cL^2(L^2)$ and $\{(u_n,v_n)\}$ converges weakly to $(u,v)$ in $\cL^2(H_0^1)\times \cL^2((L^2)^m)$. We further choose a subsequence of convex combinations $(\check{u}_{n},\check{v}_{n})\in conv\{(u_{k},v_{k}):\,k\geq n,k\in \bN\}$ such that $(\check{u}_{n},\check{v}_{n})$ converges strongly to $(u,v)$ in $\cL^2(H^1_0)\times \cL^2((L^2)^m)$. In particular, $\{\check{u}_{n}\}$ is chosen to be an increasing sequence. Denote by $\{\check{g}_{n}\}$ the corresponding subsequence of convex combinations of $\{n(u_{n}-\xi)^-:n\in\bN\}$.

In view of \eqref{eq-penaliz-4}, we know $u\geq \xi$, $ d\mathbb{P}\otimes dt\otimes dx$-a.e..     Through similar arguments to the proof for Lemma \ref{lem-back-potential}, we can check $u\in\mathscr{U}_p$. Therefore, 
\begin{align}
  u\geq \Gamma(0,\xi),\quad  d\mathbb{P}\otimes dt\otimes dx\textrm{-a.e..}\label{eq-relation-geq}
\end{align}
In fact, we further have

\begin{prop}\label{prop-U_p}
  Let $\xi$ satisfy assumption $(\mathcal{A}4)$ with $\xi(T)\leq 0$, $ d\mathbb{P}\otimes dx$-a.e.. For the random field $(u,v)$ obtained through the penalized procedure \eqref{BSPDE-penalized}-\eqref{eq-penaliz-4}, we assert that $u=\Gamma(0,\xi)$ and by  \eqref{eq-penaliz-4},
  \begin{align}
    \|u\|_{\mathscr{L}^2(\mathscr{K})}+\|v\|_{\cL^2((L^2)^m)} \leq\, C\,\|\check{\xi}\|_{\mathscr{U}}.\nonumber
  \end{align}
\end{prop}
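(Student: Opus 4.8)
The plan is to prove the two inequalities $u\geq \Gamma(0,\xi)$ and $u\leq \Gamma(0,\xi)$. The first has already been recorded: we know $u\in\mathscr{U}_p$, $u\geq\xi$ $d\mathbb{P}\otimes dt\otimes dx$-a.e., so $u$ is itself an admissible competitor in the variational problem \eqref{variation}, whence \eqref{eq-relation-geq}. Thus the whole content lies in showing that $u$ is dominated by \emph{every} competitor. Concretely, I would fix an arbitrary $\phi\in\mathscr{U}_p$ with $\phi\geq\xi$, $d\mathbb{P}\otimes dt\otimes dx$-a.e., write its defining equation $-d\phi=\Delta\phi\,dt+\mu^\phi(dt,x)-v^{\phi,r}\,dW^r_t$, $\phi(T)=0$, with $\mu^\phi\geq 0$ the stochastic measure attached to the potential part of $\phi$, and compare it with the penalized solution $u_n$ of \eqref{BSPDE-penalized}. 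The difference $w_n:=u_n-\phi$ solves $-dw_n=[\Delta w_n+n(u_n-\xi)^-]\,dt-\mu^\phi(dt,x)-(v_n-v^\phi)^r\,dW^r_t$ with $w_n(T)=0$.

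The key step is to apply the It\^o formula of Corollary \ref{cor-ito} (i.e. Theorem \ref{thm-ito} with $\Phi(t,y)=(y^+)^2$) to $w_n$ and take expectations:
\begin{align*}
&E\bigl[\|w_n^+(t)\|^2\bigr]+E\!\int_t^T\!\!\|(v_n-v^\phi)1_{\{w_n>0\}}\|^2\,ds+2E\!\int_t^T\!\!\|\nabla w_n^+\|^2\,ds\\
&\qquad=2E\!\int_t^T\!\!\langle w_n^+(s),\,n(u_n-\xi)^-(s)\rangle\,ds-2E\!\int_{[t,T]\times\cO}\!\!w_n^+\,d\mu^\phi,
\end{align*}
where the martingale term drops out after the usual Burkholder--Davis--Gundy/absorption argument (compare \eqref{eq-penaliz-2}) and $w_n^+(T)=0$. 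On the set $\{w_n>0\}=\{u_n>\phi\}$ one has $u_n>\phi\geq\xi$, hence $(u_n-\xi)^-=0$ there and the first term on the right vanishes; the second term is $\leq 0$ because $\mu^\phi\geq 0$, $w_n^+\geq 0$, and $\mu^\phi$ charges no polar set (Remark \ref{rmk-capacity}), so the pairing is a bona fide nonnegative integral against a quasi-continuous integrand. Consequently $E[\|w_n^+(t)\|^2]\leq 0$ for all $t$, i.e. $u_n\leq\phi$, $d\mathbb{P}\otimes dt\otimes dx$-a.e., for every $n$. Letting $n\to\infty$ and using $u_n\uparrow u$ in $\cL^2(L^2)$ gives $u\leq\phi$; since $\phi$ was an arbitrary admissible competitor, $u\leq\Gamma(0,\xi)$, which together with \eqref{eq-relation-geq} yields $u=\Gamma(0,\xi)$. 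The stated norm bound is then read off from \eqref{eq-penaliz-4}: $\{u_n\}$ and $\{v_n\}$ are bounded in $\mathcal{H}$ (hence in $\mathscr{L}^2(\mathscr{K})$) and in $\cL^2((L^2)^m)$ by $C\|\check{\xi}\|_{\mathscr{U}}$, and passing to the limit by weak lower semicontinuity of these norms (together with the monotone convergence $u_n\uparrow u$ to control $\esssup_t\|u(t)\|^2$) gives $\|u\|_{\mathscr{L}^2(\mathscr{K})}+\|v\|_{\cL^2((L^2)^m)}\leq C\|\check{\xi}\|_{\mathscr{U}}$.

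The main obstacle I anticipate is the rigorous treatment of the random-measure term $\int w_n^+\,d\mu^\phi$: Corollary \ref{cor-ito} is stated for \emph{stochastic regular} measures, whereas the potential component of a general $\phi\in\mathscr{U}_p$ need only lie in $\mathscr{L}^2(\mathcal{P})$, so one must either first reduce to the regular case --- e.g. decompose $\phi=\bar\phi-\tilde\phi$ and approximate $\tilde\phi$ from below by the regular potentials $\tilde\phi_\delta$ of Lemma \ref{lemm-approx-potential}, checking that the resulting competitors still dominate $\xi$ and converge to $\phi$ --- or verify directly that only the $\leq 0$ direction of the It\^o identity is needed and that $w_n^+\in L^1(Q,\mu^\phi)$ a.s. via the estimate in Lemma \ref{lemm-approx-potential}(3). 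One also has to ensure that the set inclusion $\{u_n>\phi\}\subset\{u_n>\xi\}$ is applied with quasi-continuous versions of $u_n$, $\phi$ and $\xi$, so that it holds quasi-everywhere and hence $\mu^\phi$-a.e.; everything else is a routine repetition of the estimates already carried out in \eqref{eq-penaliz-1}--\eqref{eq-penaliz-4}.
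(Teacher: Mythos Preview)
Your overall strategy is correct and you have correctly identified the one genuine technical obstacle. The paper's proof follows the same two-inequality scheme but resolves the obstacle you anticipate in a slightly different --- and cleaner --- way, so let me compare.

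\medskip
\textbf{What the paper does.} Rather than comparing $u_n$ directly with the competitor $\bar u\in\mathscr{U}_p$ and having to integrate $w_n^+$ against the (possibly non-regular) measure $\mu^{\bar u}$, the paper \emph{penalizes the competitor as well}: for each $k\in\bN$ it introduces the BSPDE
\[
-d\bar u^k=\bigl[\Delta\bar u^k+k(\bar u-\bar u^k)\bigr]\,dt-\bar v^k\,dW_t,\qquad \bar u^k(T)=0,
\]
which has only an $L^2$ forcing term, and shows (via Remark~\ref{rmk-back-potential} and the argument of Lemma~\ref{lem-back-potential}) that $\bar u^k\uparrow\bar u$ in $\cL^2(L^2)$. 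One then applies Corollary~\ref{cor-ito} to $(\bar u^k-u_n)^-$ with \emph{no} measure term at all. The key algebra is the decomposition $(\bar u^k-u_n)^-\le(\bar u^k-\bar u)^-+(\bar u-\xi)^-+(\xi-u_n)^-$; the middle piece vanishes since $\bar u\ge\xi$, the last piece annihilates $n(u_n-\xi)^-$ pointwise, and the first piece paired with $n(u_n-\xi)^-$ tends to $0$ as $k\to\infty$ because $(\bar u^k-\bar u)^-\to 0$ in $\cL^2(L^2)$ while $n(u_n-\xi)^-$ is fixed. This yields $\bar u\ge u_n$ for every $n$, hence $\bar u\ge u$.

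\medskip
\textbf{Comparison.} Your approach is the ``direct'' one: apply the It\^o formula to $u_n-\phi$ and use the sign of the measure term. The paper's approach is the ``double penalization'': regularize the competitor first so that no measure ever appears in the comparison. These are really the same idea --- the backward penalization $-d\bar u^k=[\Delta\bar u^k+k(\bar u-\bar u^k)]dt-\bar v^k dW$ is precisely the adapted, backward analogue of the $(P_\delta)$-approximation in Lemma~\ref{lemm-approx-potential} that you propose to apply to $\tilde\phi$ --- but the paper applies it to the whole competitor $\bar u$ rather than only to its potential part. This buys two things: (i) the comparison stays entirely within BSPDEs with $L^2$ forcing, so Corollary~\ref{cor-ito} applies without any caveat about regularity of measures; and (ii) one avoids the extra $\nabla\cdot g$-type cross terms that would appear if one approximated only $\tilde\phi$ and kept $\bar\phi$ fixed, which (as you would discover) are not sign-definite and require further estimates. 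Your route works too once you carry out the approximation carefully, but the paper's formulation is tidier and shorter.
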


\begin{proof}
  It is sufficient to prove $u=\Gamma(0,\xi)$. For each $\bar{u}\in\mathscr{U}_p$ satisfying $\bar{u} \geq \xi$, $ d\mathbb{P}\otimes dt\otimes dx$-a.e., let $(\bar{u}^k,\bar{v}^k)\in\mathcal{H}\times\cL^2((L^2)^m)$ satisfy the following BSPDE for each $k\in\bN$,
    \begin{equation*}
  \left\{\begin{array}{l}
  \begin{split}
  -d\bar{u}^k(t,x)=\,&\displaystyle \left[ \Delta \bar{u}^k(t,x)+k(\bar{u}(t,x)-\bar{u}^k(t,x))\right]\,dt      -\bar{v}^k(t,x)\, dW_{t}, \quad
                     (t,x)\in Q;\\
    \bar{u}^k(T,x)=\, &0, \quad x\in\cO.
    \end{split}
  \end{array}\right.
\end{equation*}
  By Remark \ref{rmk-back-potential}, there exists $\hat{u}\in L^2(\Omega,\sF_T;\mathcal{P})$ such that $\bar{u}(t)=E\left[\hat{u}(t)|\sF_t\right]$, $\forall\,t\in [0,T]$. In a similar way to the proof of Lemma \ref{lem-back-potential}, it follows that $\{\bar{u}^k\}_{k\in\bN}$ is bounded in $\mathscr{L}^2(\mathscr{K})$, and converges increasingly in $\cL^2(L^2)$ and weakly in $\cL^2(H^1_0)$ to $\bar{u}$, as $k\rightarrow +\infty$.

  By Corollary \ref{cor-ito}, we have
  \begin{align}
    &E\bigg[
    \|(\bar{u}^k-u_n)^-(t)\|^2+\int_t^T\!\!\!\Big(2\|\nabla (\bar{u}^k-u_n)(s)1_{\{\bar{u}^k(s)< u_n(s)\}}\|^2+\|(\bar{v}^k-v_n)(s)1_{\{\bar{u}^k(s)< u_n(s)\}}\|^2\Big)\,ds
    \bigg]
    \nonumber\\
    &=\,
    -2E\bigg[\int_t^T\!\!\! \langle (\bar{u}^k-u_n)^-(s),\, k(\bar{u}-\bar{u}^k)(s)-n(u_n-\xi)^-(s) \rangle   \,ds
    \bigg]  \nonumber\\
    &\leq\,
    2E\bigg[
    \int_t^T\!\!\!   \langle (\bar{u}^k-\bar{u})^-(s)+(\bar{u}-\xi)^-(s)+(\xi-u_n)^-(s),\, n(u_n-\xi)^-(s)-k(\bar{u}-\bar{u}^k)(s)    \rangle\,ds
    \bigg]
    \nonumber\\
    &\leq\,
    2E\bigg[
    \int_t^T\!\!\!   \langle (u^k-\bar{u})^-(s),\, n(u_n-\xi)^-(s)    \rangle\,ds
    \bigg]\longrightarrow 0,\quad \textrm{ as } k\rightarrow +\infty.\nonumber
  \end{align}
  Here, $\{u_n\}_{n\in\bN}$ and $\{v_n\}_{n\in\bN}$ are from the penalized procedure \eqref{BSPDE-penalized}-\eqref{eq-penaliz-4}.  Therefore, $\bar{u}\geq u^n$  for each $n\in\bN$ and by taking limits, there follows $\bar{u}\geq u$, $ d\mathbb{P}\otimes dt\otimes dx$-a.e..  In view of the definition of $\Gamma(0,\xi)$ and relation \eqref{eq-relation-geq}, we have $u=\Gamma(0,\xi)$. This completes the proof.
\end{proof}

\begin{cor}\label{cor-compr}
   Suppose that $\varphi$, $\psi$ and $\phi$ satisfy assumption $(A4)$, and $\psi\geq\phi\geq \varphi $, $ d\mathbb{P}\otimes dt\otimes dx$-a.e. with $\psi(T)\geq\phi(T)\geq 0 \geq \varphi(T)$, $ d\mathbb{P}\otimes dx$-a.e.. Then,
  \begin{align}
  \Gamma(0,\varphi)\leq \Gamma(\bar{\phi},\phi)
  \leq \Gamma(\bar{\psi},\psi)
  ,\,\,\, d\mathbb{P}\otimes dt \otimes dx-a.e.,\label{eq-est-compari}
  \end{align}
  where $\bar{\phi}\in \mathscr{U}(I,0,\phi(T),0,0)$ and $\bar{\psi}\in \mathscr{U}(I,0,\psi(T),0,0)$.
\end{cor}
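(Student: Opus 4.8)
The plan is to realize each of the three quantities as the increasing limit of a penalized BSPDE and to compare those BSPDEs level by level. By property (i) of $\Gamma$ we have $\Gamma(\bar{\phi},\phi)=\Gamma(0,\phi-\bar{\phi})+\bar{\phi}$ and $\Gamma(\bar{\psi},\psi)=\Gamma(0,\psi-\bar{\psi})+\bar{\psi}$, so I would first check that $\phi-\bar{\phi}$ and $\psi-\bar{\psi}$ again satisfy $(\mathcal{A}4)$ with vanishing terminal value: quasi-continuity a.s.\ is inherited from $\phi,\psi$ and from $\bar{\phi},\bar{\psi}\in\mathscr{U}$, which are quasi-continuous a.s.\ by Proposition~\ref{prop-quasi-conti-Cont_TL2}(i); the domination $\phi-\bar{\phi}\le\check{\phi}-\bar{\phi}\in\mathscr{U}$ uses that $\mathscr{U}$ is a linear space; and $(\phi-\bar{\phi})(T)=\phi(T)-\phi(T)=0$, likewise for $\psi-\bar{\psi}$. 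Hence Proposition~\ref{prop-U_p} applies to $\varphi$, to $\phi-\bar{\phi}$ and to $\psi-\bar{\psi}$.

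Next I would run the penalized scheme \eqref{BSPDE-penalized}: let $u_n$, $w_n$, $z_n$ denote its solutions with obstacles $\varphi$, $\phi-\bar{\phi}$, $\psi-\bar{\psi}$ respectively and terminal datum $0$. Since $\bar{\phi}$ is, by definition of $\mathscr{U}(I,0,\phi(T),0,0)$, the solution of $-d\bar{\phi}=\Delta\bar{\phi}\,dt-\bar{v}^r\,dW_t^r$ with $\bar{\phi}(T)=\phi(T)$, a one-line substitution (using $\bigl(w_n-(\phi-\bar{\phi})\bigr)^-=(w_n+\bar{\phi}-\phi)^-$) shows that $\tilde u_n:=w_n+\bar{\phi}$ solves, for a suitable square-integrable diffusion term, the penalized BSPDE with leading operator $\Delta$, obstacle $\phi$ and terminal datum $\phi(T)$; likewise $\hat u_n:=z_n+\bar{\psi}$ solves the penalized BSPDE with obstacle $\psi$ and terminal datum $\psi(T)$.

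Then I would invoke the comparison principle for BSPDEs at every level $n$: from $\varphi\le\phi\le\psi$ one has $n(y-\varphi)^-\le n(y-\phi)^-\le n(y-\psi)^-$ for all $y$, while the terminal data satisfy $0\le\phi(T)\le\psi(T)$, so $u_n\le\tilde u_n\le\hat u_n$, $d\mathbb{P}\otimes dt\otimes dx$-a.e. Finally, by Proposition~\ref{prop-U_p} together with the fact (noted in the text preceding it) that each penalized sequence is increasing and bounded in $\cL^2(L^2)$, hence convergent there, we get $u_n\uparrow\Gamma(0,\varphi)$, $w_n\uparrow\Gamma(0,\phi-\bar{\phi})$ and $z_n\uparrow\Gamma(0,\psi-\bar{\psi})$, so $\tilde u_n\uparrow\Gamma(\bar{\phi},\phi)$ and $\hat u_n\uparrow\Gamma(\bar{\psi},\psi)$; passing to the a.e.\ limit in $u_n\le\tilde u_n\le\hat u_n$ yields \eqref{eq-est-compari}.

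The main obstacle is bookkeeping rather than conceptual: one has to check carefully that $w_n+\bar{\phi}$ and $z_n+\bar{\psi}$ genuinely solve penalized BSPDEs of the form \eqref{BSPDE-penalized} but with non-zero terminal data, and then apply a version of the BSPDE comparison principle that allows different generators (through the obstacles) and different terminal conditions simultaneously --- this is routine but deserves to be spelled out. A tempting shortcut through properties (i)--(iii) of $\Gamma$ alone does not quite work, because $\bar{\phi}$ and $\bar{\psi}$ carry no potential part and so need not lie in $\mathscr{U}_p$, which makes $\Gamma(0,\bar{\phi})=\bar{\phi}$ unavailable; the penalization route circumvents this.
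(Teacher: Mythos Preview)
Your proposal is correct and follows essentially the same route as the paper: introduce the three penalized BSPDEs with obstacles $\varphi$, $\phi-\bar{\phi}$, $\psi-\bar{\psi}$ and terminal datum $0$, shift the latter two by $\bar{\phi}$ and $\bar{\psi}$ to obtain penalized BSPDEs with obstacles $\phi$, $\psi$ and terminal data $\phi(T)$, $\psi(T)$, compare level by level via the BSPDE comparison principle, and pass to the limit through Proposition~\ref{prop-U_p}. Your additional remarks (verifying $(\mathcal{A}4)$ for the shifted obstacles and explaining why the shortcut via properties (i)--(iii) of $\Gamma$ fails) are helpful clarifications that the paper leaves implicit.
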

\begin{proof}
  In view of Proposition \ref{prop-U_p}, we consider the following well-posed BSPDEs for each $n\in\bN$
\begin{equation*}
  \left\{\begin{array}{l}
  \begin{split}
  -d\bar{u}_{n}(t,x)=\,&\displaystyle \big(\Delta \bar{u}_{n}(t,x)+n(\bar{u}_n(t,x)-\phi(t,x)+\bar{\phi}(t,x))^-\big)\,dt      -\bar{v}_{n}^{r}(t,x)\, dW_{t}^{r}, \quad
                     (t,x)\in Q;\\
    \bar{u}_{n}(T,x)=\, &0, \quad x\in\cO,
    \end{split}
  \end{array}\right.
\end{equation*}
\begin{equation*}
  \left\{\begin{array}{l}
  \begin{split}
  -d\hat{u}_{n}(t,x)=\,&\displaystyle \big(\Delta \hat{u}_{n}(t,x)+n(\hat{u}_n(t,x)-\psi(t,x)+\bar{\psi}(t,x))^-\big)\,dt      -\hat{v}_{n}^{r}(t,x)\, dW_{t}^{r}, \quad
                     (t,x)\in Q;\\
    \hat{u}_{n}(T,x)=\, &0, \quad x\in\cO,
    \end{split}
  \end{array}\right.
\end{equation*}
and
\begin{equation*}
  \left\{\begin{array}{l}
  \begin{split}
  -du_{n}(t,x)=\,&\displaystyle \big(\Delta u_{n}(t,x)+n(u_n(t,x)-\varphi(t,x))^-\big)\,dt      -v_{n}^{r}(t,x)\, dW_{t}^{r}, \quad
                     (t,x)\in Q;\\
    u_{n}(T,x)=\, &0, \quad x\in\cO.
    \end{split}
  \end{array}\right.
\end{equation*}
Set $\tilde{u}_n=\bar{u}_n+\bar{\phi}$ and $\check{u}_n=\hat{u}_n+\bar{\psi}$. There exists
$(\tilde{v}_n,\check{v}_n)\in\cL^2((L^2)^m)\times \cL^2((L^2)^m)$, which together with $(\tilde{u}_n,\check{u}_n)$ satisfies BSPDEs
\begin{equation*}
  \left\{\begin{array}{l}
  \begin{split}
  -d\tilde{u}_{n}(t,x)=\,&\displaystyle \big(\Delta \tilde{u}_{n}(t,x)+n(\tilde{u}_n(t,x)-\phi(t,x))^-\big)\,dt      -\tilde{v}_{n}^{r}(t,x)\, dW_{t}^{r}, \quad
                     (t,x)\in Q;\\
    \tilde{u}_{n}(T,x)=\, &\phi(T), \quad x\in\cO,
    \end{split}
  \end{array}\right.
\end{equation*}
and
\begin{equation*}
  \left\{\begin{array}{l}
  \begin{split}
  -d\check{u}_{n}(t,x)=\,&\displaystyle \big(\Delta \check{u}_{n}(t,x)+n(\check{u}_n(t,x)-\psi(t,x))^-\big)\,dt      -\check{v}_{n}^{r}(t,x)\, dW_{t}^{r}, \quad
                     (t,x)\in Q;\\
    \check{u}_{n}(T,x)=\, &\psi(T), \quad x\in\cO.
    \end{split}
  \end{array}\right.
\end{equation*}
By the comparison principles of BSPDEs, we have $\check{u}_n\geq \tilde{u}_n\geq u_n$, $ d\mathbb{P}\otimes dt \otimes dx$-a.e., $n=1,2,\cdots$. In view of the penalized procedure \eqref{eq-penaliz-1}-\eqref{eq-penaliz-4} and Proposition \ref{prop-U_p}, we prove \eqref{eq-est-compari}.
\end{proof}



Before studying the resolution of RBSPDEs \eqref{RBSPDE-Laplac} and \eqref{RBSPDE-Laplac1}, we present an approximation result for the elements of $\mathscr{W}$.

\begin{lem}\label{lem-W-delta}
  For each $\phi\in \mathscr{W}$, there exist $\phi_0\in \mathcal{P}_T$ and $\{\phi_n\}\subset \mathscr{R}$,  such that $\phi_n(T)=\phi(T)$,
  $$
  \|\phi_n\|_{\mathscr{W}}+\|\phi_0\|_{\mathscr{K}}\leq C \|\phi\|_{\mathscr{W}}, \|\phi_n-\phi\|_{\mathscr{W}}\leq 2^{-n} \|\phi\|_{\mathscr{W}}\textrm{ and }
  |\phi-\phi_n|\leq n^{-1} \phi_0,\quad n\in\bN.
  $$
\end{lem}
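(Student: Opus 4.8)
The plan is to split $\phi$ into a part that already lies in $\mathscr R$ and a ``singular'' part vanishing at the terminal time, to treat the latter by Pierre's approximation of functions dominated by a parabolic potential, and to restore membership in $\mathscr R$ by a resolvent regularization. Writing $A:=-\partial_t-\Delta$, the element $\phi\in\mathscr W$ is the $L^2$-weak solution of $A\phi=h+\nabla\cdot\tilde h$ with terminal value $\phi(T)$, where $(h,\tilde h):=\mathcal J(A\phi)\in L^2(Q)\times L^2(0,T;(L^2)^d)$ and $\|h\|+\|\tilde h\|\le C\|\phi\|_{\mathscr W}$ by \eqref{repre-H-1}--\eqref{norm-W-space}. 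Let $\psi\in\mathscr W$ solve $A\psi=h$, $\psi(T)=\phi(T)$; since $A\psi=h\in L^2(Q)$ we have $\psi\in\mathscr R$, $\|\psi\|_{\mathscr W}\le C\|\phi\|_{\mathscr W}$, while $\chi:=\phi-\psi\in\mathscr W$ solves $A\chi=\nabla\cdot\tilde h$, $\chi(T)=0$, $\|\chi\|_{\mathscr W}\le C\|\phi\|_{\mathscr W}$. As $\mathscr R$ is stable under addition and $\psi(T)=\phi(T)$, it suffices to construct $\{\chi_n\}\subset\mathscr R$ with $\chi_n(T)=0$, $\|\chi_n\|_{\mathscr W}\le C\|\chi\|_{\mathscr W}$, $\|\chi_n-\chi\|_{\mathscr W}\le 2^{-n}\|\chi\|_{\mathscr W}$, and $|\chi-\chi_n|\le n^{-1}\phi_0$ q.e.\ for a single $\phi_0\in\mathcal P_T$ with $\|\phi_0\|_{\mathscr K}\le C\|\chi\|_{\mathscr W}$; then $\phi_n:=\psi+\chi_n$ does the job, the powers $2^{-n}$ and $n^{-1}$ being unaffected by the harmless constant in $\|\chi\|_{\mathscr W}\le C\|\phi\|_{\mathscr W}$.

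Since $\chi\in\mathscr W$ it is quasi-continuous (cf.\ Proposition \ref{prop-quasi-conti-Cont_TL2}(i)), so applying the deterministic, time-reversed form of Corollary \ref{cor-prop-obstac-SPDE} to $\pm\chi$ produces a quasi-continuous backward parabolic potential $\phi_0$ with $|\chi|\le\phi_0$ q.e.\ and $\|\phi_0\|_{\mathscr K}\le C\|\chi\|_{\mathscr W}$; because $\chi(T)=0$ one may arrange that $\phi_0$ vanishes at the terminal time, i.e.\ $\phi_0\in\mathcal P_T$, its associated Radon measure living on $[0,T)\times\cO$. In particular $|\chi|\le\phi_0\in\mathcal P$ q.e., whence Corollary \ref{cor-lem-approx} yields $\{\hat\chi_n\}\subset\mathscr W\cap\mathscr C(Q)$, a function $\hat\phi\in\mathscr W^+$ with $\|\hat\phi\|_{\mathscr W}\le C\|\chi\|_{\mathscr K}\le C\|\chi\|_{\mathscr W}$, and $\{\theta_n\}\subset\bN^{-1}$ decreasing to $0$, such that $|\chi-\hat\chi_n|\le\theta_n\hat\phi$ q.e.\ and $\|\hat\chi_n\|_{\mathscr W}\le n$; dominating $\hat\phi$ in turn by a $\mathcal P_T$-potential (again through the reversed Corollary \ref{cor-prop-obstac-SPDE}) and passing to a subsequence with $\theta_n\le n^{-2}$, we may assume $\hat\phi\in\mathcal P_T$, $\|\hat\phi\|_{\mathscr K}\le C\|\chi\|_{\mathscr W}$, and $n\,|\chi-\hat\chi_n|\le\hat\phi$ q.e.

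Finally I would push the $\hat\chi_n\in\mathscr W\cap\mathscr C(Q)$ into $\mathscr R$. Applying to each $\hat\chi_n$ the resolvent regularization of Lemma \ref{lemm-approx-potential} (reversed in time), $\hat\chi_n^{(\delta)}$ solving $A\hat\chi_n^{(\delta)}=\delta^{-1}(\hat\chi_n-\hat\chi_n^{(\delta)})$ with terminal value $\hat\chi_n(T)$, gives an element of $\mathscr R$ (its right-hand side is in $L^2(Q)$) that converges to $\hat\chi_n$ in $\mathscr W$ and, using the positivity of the resolvent together with the domination $|\hat\chi_n|\le\phi_0+\hat\phi$ and Lemma \ref{lemm-approx-potential}(3), quasi-everywhere below a fixed multiple of $\phi_0+\hat\phi$, as $\delta\downarrow0$; subtracting the backward heat extension $r_n$ of the trace $\hat\chi_n(T)$ — which tends to $0$ in $L^2$ since $\hat\chi_n(T)\to\chi(T)=0$ — produces $\chi_n:=\hat\chi_n^{(\delta_n)}-r_n\in\mathscr R$ with $\chi_n(T)=0$. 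Taking $\delta_n$ small enough and replacing $\phi_0$ once and for all by a fixed multiple of $\phi_0+\hat\phi$ plus a $\mathcal P_T$-potential dominating the $r_n$ — all of $\mathscr K$-norm $\le C\|\chi\|_{\mathscr W}$ — one secures simultaneously $\|\chi_n-\chi\|_{\mathscr W}\le 2^{-n}\|\chi\|_{\mathscr W}$ and $|\chi-\chi_n|\le n^{-1}\phi_0$ q.e.; then $\phi_n:=\psi+\chi_n$ together with the (time-reversed) bookkeeping of the norm estimates completes the proof. The main obstacle is precisely this last step: producing a single $\mathcal P_T$-element of size $\le C\|\phi\|_{\mathscr W}$ that dominates $n$ times all the accumulated errors — the split-off terminal correction $r_n$, Pierre's approximation error $\theta_n\hat\phi$, and the resolvent error — and this is where the quantitative potential estimates of Lemma \ref{lemm-approx-potential}(3) and Corollary \ref{cor-lem-approx} carry the weight.
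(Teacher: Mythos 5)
Your construction does not deliver two of the required conclusions, and the missing ingredient is the one the paper's proof is built on. Corollary \ref{cor-lem-approx} (Pierre's approximation) gives only the pointwise quasi-everywhere domination $|\chi-\hat\chi_n|\le\theta_n\hat\phi$ together with the bound $\|\hat\chi_n\|_{\mathscr{W}}\le n$; it says nothing about convergence of $\hat\chi_n$ to $\chi$ in the $\mathscr{W}$-norm, and indeed the $\mathscr{W}$-norms of the $\hat\chi_n$ may grow like $n$. A pointwise bound by $\theta_n\hat\phi$ controls $\|\hat\chi_n-\chi\|_{L^2(Q)}$ but not $\|\nabla(\hat\chi_n-\chi)\|_{L^2(Q)}$ or $\|\partial_t(\hat\chi_n-\chi)\|_{L^2(H^{-1})}$, so neither $\|\phi_n\|_{\mathscr{W}}\le C\|\phi\|_{\mathscr{W}}$ nor the geometric rate $\|\phi_n-\phi\|_{\mathscr{W}}\le 2^{-n}\|\phi\|_{\mathscr{W}}$ can come out of this route; your final sentence simply asserts them. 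In addition, the resolvent step's quasi-everywhere error bound is not established: positivity of the resolvent and $|\hat\chi_n|\le\phi_0+\hat\phi$ give $|\hat\chi_n-\hat\chi_n^{(\delta)}|\le 2(\phi_0+\hat\phi)$, a \emph{fixed} multiple, whereas you need a multiple of order $n^{-1}$; closing that gap requires converting the $\mathscr{W}$-smallness of $\hat\chi_n-\hat\chi_n^{(\delta)}$ into a small potential domination, i.e.\ exactly the r\'eduite mechanism you never invoke.

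The paper's proof is much more direct and supplies both missing pieces at once. Since $L^2(Q)$ is dense in $L^2(0,T;H^{-1})$, choose $f_n\in L^2(Q)$ with $\|(-\partial_t\phi-\Delta\phi)-f_n\|_{L^2(H^{-1})}$ so small that the solution $\phi_n$ of $-\partial_t\phi_n-\Delta\phi_n=f_n$, $\phi_n(T)=\phi(T)$, satisfies $\|\phi_n-\phi\|_{\mathscr{W}}\le 2^{-n}\|\phi\|_{\mathscr{W}}$ by \eqref{norm-W-space}; this puts $\phi_n\in\mathscr{R}$ and gives the $\mathscr{W}$-estimates for free. Then Proposition \ref{prop-U_p} bounds $\|\Gamma(0,\pm(\phi-\phi_n))\|_{\mathscr{K}}\le C\,2^{-n}\|\phi\|_{\mathscr{W}}$, so $\phi_0:=\sum_n n\{\Gamma(0,\phi-\phi_n)+\Gamma(0,\phi_n-\phi)\}$ converges in $\mathscr{K}$ to an element of $\mathcal{P}_T$ dominating $n|\phi-\phi_n|$ for every $n$. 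Your preliminary splitting $A\phi=h+\nabla\cdot\tilde h$ and the detour through $\mathscr{W}\cap\mathscr{C}(Q)$ plus resolvent regularization are unnecessary once one approximates the $H^{-1}$ source term directly and uses $\Gamma(0,\cdot)$ to turn $\mathscr{W}$-smallness into potential domination.
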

\begin{proof}
  If $\|\phi\|_{\mathscr{W}}=0$, we set $\phi_n=0$. In the following, we assume $\|\phi\|_{\mathscr{W}}>0$.

  Since $L^2$ is dense in $H^{-1}$, there exists $\{f_n;n\in\bN\}\subset L^2(Q)$ such that
  $$\lim_{n\rightarrow +\infty}\|-\partial_t\phi-\Delta\phi-f_n\|_{-1}=0 \textrm{ and }
  \|\phi-\phi_n\|_{\mathscr{W}}\leq 2^{-n} \|\phi\|_{\mathscr{W}},\quad n\in\bN,$$
  with $\phi_n$ satisfying
  $$
  -\partial_t\phi_n-\Delta \phi_n=f_n;\quad \phi_n(T)=\phi(T).
  $$
  Then by Proposition \ref{prop-U_p},
  \begin{align}
    \|\Gamma(0,\phi-\phi_n)+\Gamma(0,\phi_n-\phi)\|_{\mathscr{K}}\leq C 
    \|\phi_n-\phi\|_{\mathscr{W}}
    \leq C 2^{-n}\|\phi\|_{\mathscr{W}},\quad n\in\bN.\nonumber
  \end{align}
  Therefore,
  $\sum_{n=1}^{+\infty}n \big\{ \Gamma(0,\phi-\phi_n)+\Gamma(0,\phi_n-\phi))   \big\}$ converges strongly in $\mathscr{K}$ to some $\phi_0\in\mathcal{P}_T$. Moreover,
  $$
  \|\phi_0\|_{\mathscr{K}}\leq \,C \|\phi\|_{\mathscr{W}}
  \textrm{ and } \left|\phi-\phi_n\right|\leq n^{-1}\phi_0,\quad dt\otimes dx-a.e..
  $$
  We complete the proof.
\end{proof}

Applying Lemma \ref{lem-W-delta} point-wisely and taking conditional expectations, we obtain the corollary.

\begin{cor}\label{cor-obst-regular-approx}
For each $\phi\in \mathscr{U}$, there exist $\phi_0\in \mathscr{U}_p$ and a sequence $\{\phi_n\}\subset \mathscr{L}^2(\mathscr{R})$  such that  for any $n\in\bN$,
  $$ \phi_n(T)=\phi(T),\,
  \|\phi_n\|_{\mathscr{U}}+\|\phi_0\|_{\mathscr{L}^2(\mathscr{K})}\leq C \|\phi\|_{\mathscr{U}},\, \|\phi_n-\phi\|_{\mathscr{U}}\leq 2^{-n} \|\phi\|_{\mathscr{U}},\,
  |\phi-\phi_n|\leq n^{-1} \phi_0,\,\, d\mathbb{P}\otimes dt \otimes dx-a.e..
  $$
  In particular, if $\phi\in \mathscr{U}(I,0,\psi,\mathcal{J}(f_+-f_-))$ with $\psi\in L^2(\Omega,\sF_T;L^2)$ and $f_+,f_-\in\cL^2((H^{-1})^+)$,  then, in view of the above proof of Lemma \ref{lem-W-delta} and the denseness of $(L^2)^+$ in $(H^{-1})^+$, we are allowed to choose $\phi_0\in\mathscr{U}_p$, $\{f_+^n,f_-^n;\,n\in\bN\}\subset \cL^2((L^2)+)$ and $\phi_n\in\mathscr{U}(I,0,\psi,f_+^n-f_-^n,0)$, such that
  $$\lim_{n\rightarrow \infty} \big( \|f_+^n-f_+\|_{\cL^2(H^{-1})} + \|f_-^n-f_-\|_{\cL^2(H^{-1})} \big)=0,$$
   and for each $n$,
  $$
  \|\phi_n\|_{\mathscr{U}}+\|\phi_0\|_{\mathscr{L}^2(\mathscr{K})}\leq C \|\phi\|_{\mathscr{U}},\, \|\phi_n-\phi\|_{\mathscr{U}}\leq 2^{-n} \|\phi\|_{\mathscr{U}},\,
  |\phi-\phi_n|\leq n^{-1} \phi_0,\,\, d\mathbb{P}\otimes dt \otimes dx-a.e..
  $$
\end{cor}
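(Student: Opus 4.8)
The plan is to prove Corollary~\ref{cor-obst-regular-approx} by applying Lemma~\ref{lem-W-delta} $\omega$-wise and then projecting onto the filtration via conditional expectations, using Lemma~\ref{lem-back-potential} and Lemma~\ref{lem-BSPDE-condition-expect} to keep everything adapted. First I would take $\phi\in\mathscr{U}$, write it as $\phi=\bar{\phi}-\tilde{\phi}$ in the sense of Remark~\ref{rmk-math-U} (so $\phi\in\mathscr{U}(I,0,\psi,f,g)$ for suitable data), and note that by definition $\phi$ arises by taking conditional expectations of a solution $\hat{\phi}\in L^2(\Omega,\sF_T;\mathscr{W})$ of the corresponding \emph{forward} parabolic PDE with random coefficients, as in Lemma~\ref{lem-BSPDE-condition-expect}. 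For almost every $\omega$, apply Lemma~\ref{lem-W-delta} to $\hat{\phi}(\omega)\in\mathscr{W}$: this yields $\hat{\phi}_0(\omega)\in\mathcal{P}_T$ and a sequence $\{\hat{\phi}_n(\omega)\}\subset\mathscr{R}$ with $\hat{\phi}_n(T)=\hat{\phi}(T)$, the norm bounds $\|\hat{\phi}_n\|_{\mathscr{W}}+\|\hat{\phi}_0\|_{\mathscr{K}}\leq C\|\hat{\phi}\|_{\mathscr{W}}$, $\|\hat{\phi}_n-\hat{\phi}\|_{\mathscr{W}}\leq 2^{-n}\|\hat{\phi}\|_{\mathscr{W}}$, and the domination $|\hat{\phi}-\hat{\phi}_n|\leq n^{-1}\hat{\phi}_0$. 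Because $\mathscr{W}$ is separable, these maps can be chosen $\sF_T$-measurable in $\omega$, and the uniform-in-$\omega$ norm bound against $\|\hat{\phi}\|_{\mathscr{W}}$ (which is in $L^2(\Omega)$) gives $\hat{\phi}_0\in L^2(\Omega,\sF_T;\mathscr{K})$ and $\hat{\phi}_n\in L^2(\Omega,\sF_T;\mathscr{W})$.

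Next I would set $\phi_0(t)=E[\hat{\phi}_0(t)\,|\,\sF_t]$ and $\phi_n(t)=E[\hat{\phi}_n(t)\,|\,\sF_t]$ for $t\in[0,T]$. Since $\hat{\phi}_0\in L^2(\Omega,\sF_T;\mathcal{P}_T)$, Lemma~\ref{lem-back-potential} gives $\phi_0\in\mathscr{U}_p$. Since each $\hat{\phi}_n$ solves a forward parabolic PDE $-\partial_t\hat{\phi}_n-\Delta\hat{\phi}_n=f_n$ with $f_n\in L^2(\Omega,\sF_T;L^2(Q))$ (because $\hat{\phi}_n\in\mathscr{R}$), Lemma~\ref{lem-BSPDE-condition-expect} gives $\phi_n\in\mathscr{U}(I,0,\hat{\phi}(T),\bar{f}_n,0)$ with $\bar{f}_n=E[f_n|\sF_t]\in\cL^2(L^2)$, hence $\phi_n\in\mathscr{L}^2(\mathscr{R})$, and $\phi_n(T)=\hat{\phi}(T)=\phi(T)$. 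The norm estimate $\|\phi_n\|_{\mathscr{U}}\leq C\|\hat{\phi}_n\|_{L^2(\Omega,\sF_T;\mathscr{W})}\leq C\|\hat{\phi}\|_{L^2(\Omega,\sF_T;\mathscr{W})}\leq C\|\phi\|_{\mathscr{U}}$ follows from Lemma~\ref{lem-BSPDE-condition-expect} together with the continuity estimate $\|\hat{\phi}\|_{L^2(\Omega,\sF_T;\mathscr{W})}\leq C\|\phi\|_{\mathscr{U}}$ implicit in the construction; similarly $\|\phi_0\|_{\mathscr{L}^2(\mathscr{K})}\leq C\|\hat{\phi}_0\|_{L^2(\Omega,\sF_T;\mathscr{K})}\leq C\|\phi\|_{\mathscr{U}}$, $\|\phi_n-\phi\|_{\mathscr{U}}\leq C\|\hat{\phi}_n-\hat{\phi}\|_{L^2(\Omega,\sF_T;\mathscr{W})}\leq 2^{-n}\|\phi\|_{\mathscr{U}}$ (adjusting the constant in Lemma~\ref{lem-W-delta}), and the pointwise domination $|\hat{\phi}-\hat{\phi}_n|\leq n^{-1}\hat{\phi}_0$ is preserved by conditional expectation (conditional expectation is order-preserving on $L^2$-valued functions), giving $|\phi-\phi_n|\leq n^{-1}\phi_0$, $d\bP\otimes dt\otimes dx$-a.e.

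For the last assertion, suppose $\phi\in\mathscr{U}(I,0,\psi,\mathcal{J}(f_+-f_-))$ with $f_\pm\in\cL^2((H^{-1})^+)$. Then the associated forward PDE data has $-2\Delta\hat{\phi}$-type terms replaced appropriately, and one reruns the construction starting from the forward representation $\hat\phi$ whose driving distribution is (a time-reversal of) $f_+-f_-$. In the proof of Lemma~\ref{lem-W-delta}, instead of merely invoking density of $L^2$ in $H^{-1}$, I would invoke the stronger fact from Remark~\ref{rmk-dense-H-1} that $(L^2)^+$ is dense in $(H^{-1})^+$: approximate $f_+$ by $f_+^n\in(L^2)^+$ and $f_-$ by $f_-^n\in(L^2)^+$ in $\cL^2(H^{-1})$, solve $-\partial_t\hat\phi_n-\Delta\hat\phi_n$ with data $f_+^n-f_-^n$, and carry the same telescoping series construction of $\hat\phi_0$; then project by conditional expectations and use Lemma~\ref{lem-BSPDE-condition-expect} to land in $\mathscr{U}(I,0,\psi,f_+^n-f_-^n,0)$ with the convergence $\|f_\pm^n-f_\pm\|_{\cL^2(H^{-1})}\to 0$. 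The main obstacle I anticipate is the measurable-selection/separability bookkeeping: one must verify that Lemma~\ref{lem-W-delta} (whose proof is itself constructive via density in $H^{-1}$) can be run simultaneously for a.e.\ $\omega$ with $\sF_T$-measurable outputs and uniform constants, so that $\hat\phi_0,\hat\phi_n$ genuinely lie in $L^2(\Omega,\sF_T;\cdot)$ and the order relations/norm bounds survive conditioning; this is routine given separability of $\mathscr{W}$ but needs to be stated carefully, exactly as in the parallel arguments used for assertions (iii)--(iv) of Proposition~\ref{prop-quasi-conti-Cont_TL2}.
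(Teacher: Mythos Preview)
Your approach is correct and matches the paper's own proof, which consists of the single sentence ``Applying Lemma~\ref{lem-W-delta} point-wisely and taking conditional expectations, we obtain the corollary.'' You have simply filled in the details the paper omits: the passage from $\hat\phi\in L^2(\Omega,\sF_T;\mathscr{W})$ to $\phi$ via Lemma~\ref{lem-BSPDE-condition-expect}, the use of Lemma~\ref{lem-back-potential} to land $\phi_0$ in $\mathscr{U}_p$, and the measurable-selection bookkeeping, all exactly as the paper intends.
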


Now, we are ready for the resolution of RBSPDE \eqref{RBSPDE-Laplac1}
with $\xi$ satisfying assumption $(\mathcal{A}4)$. First, we investigate a special class of RBSPDEs, of whose solutions the stochastic regular measure admits a density w.r.t. Lebesgue measure in the distributional sense. The assertions herein will include the deterministic results (see \cite{CharrierTroianiello-1975}) as particular cases.

\begin{prop}\label{prop-regular-obst}
  Let $\xi$ of RBSPDE \eqref{RBSPDE-Laplac1} lie in $\mathscr{U}$. Assume further that $(\xi,\zeta)\in \mathscr{U}\times \cL^2((L^2)^m)$ be the solution of the following BSPDE
    \begin{equation*}
  \left\{\begin{array}{l}
  \begin{split}
  -d\xi(t,x)=\,&\displaystyle \left[ \Delta \xi(t,x)+f_+(t,x)-f_-(t,x)\right]\,dt      -\zeta(t,x)\, dW_{t}, \quad
                     (t,x)\in Q;\\
    \xi(T,x)=\, &\psi, \quad x\in\cO;
    \end{split}
  \end{array}\right.
\end{equation*}
where $\psi\in L^2(\Omega,\sF_T;L^2)$, $\psi\leq 0$, $ d\mathbb{P}\otimes dx$-a.e.,  $f_+,f_-\in\cL^2((H^{-1})^+)$. Then RBSPDE \eqref{RBSPDE-Laplac1} admits a unique solution $(u,v,\mu)$. And for this solution,  we have $u=\Gamma(0,\xi)\in\mathscr{U}\cap \mathscr{U}_p$, $\mu(dt,dx)=\beta(t,x)dtdx$ with $\beta\in\cL^2(H^{-1})$,
$0\leq\beta(t)\leq f_+(t)$ in $H^{-1}$, $ d\mathbb{P}\otimes dt$-a.e., and
\begin{align}
  &\|u\|_{\mathcal{H}}^2+ \|v\|^2_{\cL^2((L^2)^m)}
  \leq\,C\|\xi\|^2_{\mathscr{U}}.\label{est-prop-regular-obst}
\end{align}
\end{prop}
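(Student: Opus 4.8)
The argument runs through the penalized BSPDE \eqref{BSPDE-penalized}. Since $\xi\in\mathscr{U}$ solves the stated BSPDE and $\xi(T)=\psi\le 0$, Proposition~\ref{prop-U_p} applies with $\check{\xi}=\xi$: the penalized solutions $(u_n,v_n)$ satisfy \eqref{eq-penaliz-4} with $\|\check{\xi}\|_{\mathscr{U}}$ replaced by $\|\xi\|_{\mathscr{U}}$, $u_n\nearrow u=\Gamma(0,\xi)$ in $\cL^2(L^2)$, and $(u_n,v_n)\rightharpoonup (u,v)$ in $\cL^2(H^1_0)\times\cL^2((L^2)^m)$, with convex combinations converging strongly. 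Passing to the limit in the $\mathcal{H}$-bound of \eqref{eq-penaliz-4} gives $u\in\mathcal{H}$ together with \eqref{est-prop-regular-obst}, while \eqref{eq-penaliz-4} forces $u\ge\xi$ a.e.; Proposition~\ref{prop-U_p} also gives $u=\Gamma(0,\xi)\in\mathscr{U}_p$. Setting $\beta_n:=n(u_n-\xi)^-$, let $\mu$ be a weak-$*$ limit of the random measures $\beta_n\,dt\,dx$ (whose total masses are bounded in $n$ by \eqref{eq-penaliz-4}). Passing to the limit in the weak form of \eqref{BSPDE-penalized} yields identity (2) of Definition~\ref{def-RBSPDE}; the quasi-continuity of $u$ is Proposition~\ref{prop-quasi-conti-Cont_TL2}(i); and the Skorohod condition follows by the usual penalization argument: for $m\le n$ the supports of $\beta_n$ and of $(u_m-\xi)^+$ are disjoint (on $\{u_n<\xi\}$ one has $u_m\le u_n<\xi$), so $\int_Q (u_m-\xi)^+\beta_n\,dt\,dx=0$; letting first $n\to\infty$ (lower semicontinuity under weak-$*$ convergence, together with the fact that $\mu$ charges no polar set, Remark~\ref{rmk-capacity}) and then $m\to\infty$ (monotone convergence, $u\ge\xi$) gives $\int_Q(u-\xi)\,d\mu\le 0$, the reverse inequality being trivial. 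Uniqueness then follows from Lemma~\ref{lem-unique} with $a=I$, $\sigma=0$, $f=g=0$, all of $(\mathcal{A}1)$--$(\mathcal{A}4)$ being satisfied.

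The core of the proposition is the two-sided bound $0\le\beta\le f_+$ and, with it, the absolute continuity $\mu(dt,dx)=\beta(t,x)\,dt\,dx$ and the membership $u\in\mathscr{U}$. The lower bound is automatic since $\mu\ge0$. For the upper bound I would set up a barrier: let $\hat{u}\in\mathscr{U}$ solve the linear BSPDE $-d\hat{u}=[\Delta\hat{u}+f_+]\,dt-\hat{v}^{r}\,dW_t^{r}$ with $\hat{u}(T)=0$ (writing $f_+=f+\nabla\cdot g$, $(f,g)=\mathcal{J}f_+\in\cL^2(L^2)\times\cL^2((L^2)^d)$, in the weak sense). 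Because $f_+\in\cL^2((H^{-1})^+)$, the measure $f_+\,dt\,dx$ is a stochastic regular measure (Remark~\ref{rmk-prop-quasi-cont}); writing $\hat{u}=\bar{u}-u^{f_+}$ with $u^{f_+}$ the associated stochastic regular potential exhibits $\hat{u}\in\mathscr{U}_p$. Since $\hat{u}-\xi$ solves a linear BSPDE with source $f_-\ge0$ and terminal value $-\psi\ge0$, the comparison principle gives $\hat{u}\ge\xi$, whence $u=\Gamma(0,\xi)\le\hat{u}$, so that $\xi\le u\le\hat{u}$, $d\mathbb{P}\otimes dt\otimes dx$-a.e..

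With $u$ pinned between $\xi$ and $\hat{u}$, I would carry through the coincidence-set analysis of the deterministic obstacle problem (\cite{CharrierTroianiello-1975}, Pierre~\cite{Pierre-1979}) in the present backward stochastic framework. The Skorohod condition and the quasi-continuity of $u$ and $\xi$ show that $\mu$ is carried, up to a polar set, by $\{u=\xi\}$, while $\mu=0$ on $\{u>\xi\}$. On $\{u=\xi\}$ the function $w:=u-\xi\in\mathcal{H}$ is non-negative and vanishes quasi-everywhere, hence $\nabla w=0$ a.e.\ there; combining this with the It\^o formula of Theorem~\ref{thm-ito} (applied to $\Phi(s,w(s))$ with $\Phi$ a suitable smooth approximation of $y\mapsto y^+$, $\mu_1=\mu$, and the $H^{-1}$-valued drift $f_--f_+$ of the equation for $w$ rewritten via $\mathcal{J}$) and with the approximation machinery of Proposition~\ref{prop-quasi-conti-Cont_TL2} and Lemma~\ref{lemm-approx-potential}, one shows that $\mu$ cannot have a singular part on $\{w=0\}$ and that its density there coincides with $f_+-f_-$; together with $\mu\ge0$ this gives $\mu(dt,dx)=\beta(t,x)\,dt\,dx$ with $0\le\beta(t)\le f_+(t)$ in $H^{-1}$ for $d\mathbb{P}\otimes dt$-a.e.\ $(\omega,t)$, hence $\beta\in\cL^2((H^{-1})^+)$. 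In particular $u$ solves $-du=[\Delta u+\beta]\,dt-v^{r}\,dW_t^{r}$ with $u(T)=0$ and $\beta\in\cL^2(H^{-1})$, so $u\in\mathscr{U}\cap\mathscr{U}_p$, and \eqref{est-prop-regular-obst} has already been obtained. The passage between \eqref{RBSPDE-Laplac} and \eqref{RBSPDE-Laplac1} via a shift by an element of $\mathscr{U}(I,0,G,\bar{f},\bar{g})$ is as indicated before the statement.

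\noindent\textbf{Main obstacle.} The genuinely delicate step is the upper bound on $\mu$: making rigorous---in the presence of the martingale term and with $\mu$ a priori only a Radon measure---the heuristic that $-\partial_t w-\Delta w$ has no singular mass on $\{w=0\}$ and coincides there with the data of $\xi$. This is exactly where the full parabolic capacity/potential apparatus (Lemma~\ref{lemm-approx-potential}, Proposition~\ref{prop-quasi-conti-Cont_TL2}, Remark~\ref{rmk-capacity}) and the It\^o formula of Theorem~\ref{thm-ito} are indispensable; the barrier $\hat{u}$ only localizes the analysis to the contact set and does not by itself yield the bound.
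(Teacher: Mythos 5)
Your setup (penalization, the identification $u=\Gamma(0,\xi)$ via Proposition \ref{prop-U_p}, the estimate \eqref{est-prop-regular-obst}, the barrier $\hat{u}\in\mathscr{U}_p$ solving $-d\hat{u}=[\Delta\hat{u}+f_+]\,dt-\hat{v}\,dW_t$ with $\hat{u}\geq\xi$ by comparison, and uniqueness via Lemma \ref{lem-unique}) is sound and matches the first half of the paper's argument; the paper's $\hat{\xi}\in\mathscr{U}(I,0,0,f_+,0)$ is exactly your $\hat{u}$. But the central assertions of the proposition --- that $\mu(dt,dx)=\beta(t,x)\,dt\,dx$ with $\beta\in\cL^2(H^{-1})$ and $0\leq\beta\leq f_+$, and hence $u\in\mathscr{U}$ --- are exactly the part you defer to a ``coincidence-set analysis'' that you describe only in outline and yourself flag as the delicate step. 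That is a genuine gap: the claim that $-\partial_t w-\Delta w$ has no singular part on $\{w=0\}$ and has density $f_+-f_-$ there is a parabolic Lewy--Stampacchia-type statement whose proof is the whole content of this proposition, and nothing in your sketch (the a.e.\ vanishing of $\nabla w$ on the contact set, an It\^o formula for an approximation of $y\mapsto y^+$) actually produces it. Moreover, with $f_\pm$ only in $\cL^2((H^{-1})^+)$ the phrase ``density on $\{w=0\}$'' has no direct pointwise meaning, so the plan cannot be run as stated.

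The paper obtains the bound by an entirely explicit two-stage penalization argument that avoids any contact-set analysis. First, for $f_\pm\in\cL^2((L^2)^+)$, applying Corollary \ref{cor-ito} to $(u_n-\xi)^-$ gives the quantitative estimate $n^2E\bigl[\int_0^T\|(u_n-\xi)^-(s)\|^2\,ds\bigr]\leq CE\bigl[\int_0^T\|f_+(s)-f_-(s)\|^2\,ds\bigr]$; this is what makes $\beta_n:=n(u_n-\xi)^-$ bounded in $\cL^2(L^2)$ (so the limit measure is automatically absolutely continuous with $\beta\in\cL^2(L^2)$, $\beta\geq0$) and also yields the strong Cauchy property of $(u_n,v_n)$ in $\mathcal{H}\times\cL^2((L^2)^m)$. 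The upper bound then comes from a comparison at the level of the penalized equations: writing $\xi=\hat{\xi}-\bar{\xi}$ with $\hat{\xi}\in\mathscr{U}(I,0,0,f_+,0)$ and $\bar{\xi}\in\mathscr{U}(I,0,-\psi,f_-,0)$, one checks $\bar{u}_n:=u_n+\bar{\xi}$ solves the penalized equation with obstacle $\hat{\xi}$ and terminal value $-\psi\geq0$, so $\bar{u}_n\geq\hat{\xi}_n$ and hence $n(u_n-\xi)^-=n(\bar{u}_n-\hat{\xi})^-\leq n(\hat{\xi}_n-\hat{\xi})^-$; since $\Gamma(0,\hat{\xi})=\hat{\xi}$, the right-hand side converges (through convex combinations, strongly in $\cL^2(L^2)$) to $f_+$, giving $\beta\leq f_+$. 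Second, general $f_\pm\in\cL^2((H^{-1})^+)$ are handled by approximating with $f^n_\pm\in\cL^2((L^2)^+)$ via Corollary \ref{cor-obst-regular-approx}, deriving a uniform $\cL^2(H^{-1})$ bound on $\beta^n$ from $0\leq\beta^n\leq f^n_+$, and passing to the limit with the help of $|u^n-u|\leq 2n^{-1}\phi_0$ and Proposition \ref{prop-quasi-conti-Cont_TL2}(ii). To complete your proof you would need to supply an argument of comparable substance for the bound $0\leq\beta\leq f_+$; the barrier alone, as you note, does not do it.
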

\begin{proof}
The uniqueness and the estimate \eqref{est-prop-regular-obst} follows from Lemma \ref{lem-unique} and Proposition \ref{prop-U_p} respectively. We shall prove the other assertions in two steps.
\medskip
\\
\textbf{Step 1.} We adopt the penalized method used in \eqref{BSPDE-penalized}-\eqref{eq-penaliz-4}. Extracting if necessary a subsequence, we obtain a sequence $\{u_n\}$ converges increasingly to some $u$ in $\cL^2(L^2)$,  $\{(u_n,v_n)\}$ converges weakly to $(u,v)$ in $\cL^2(H_0^1)\times \cL^2((L^2)^m)$, and
  \begin{align}
  &\|u_n\|_{\mathcal{H}}^2+E\bigg[\int_0^T\!\!\!\|v_n(s)\|^2\,ds
  + \int_0^T\!\!\!n\|(u_n-\xi)^-(s)\|^2\,ds  \bigg]
  \leq\,C\|\xi\|_{\mathscr{U}}.\label{eq-penaliz-regular-1}
\end{align}

First, assume further that $f_+,f_-\in \cL^2((L^2)^+)$. By Corollary \ref{cor-ito}, we have
  \begin{align}
    &E\bigg[
    \|(u_n-\xi)^-(t)\|^2+\int_t^T\!\!\!\big(2\|\nabla (u_n-\xi)(s)1_{\{\xi(s)> u_n(s)\}}\|^2+\|(\zeta-v_n)(s)1_{\{\xi(s)> u_n(s)\}}\|^2\big)\,ds
    \bigg]
    \nonumber\\
    &=\,
    -2E\bigg[\int_t^T\!\!\! n\|(u_n-\xi)^-(s)\|^2   \,ds
    +\int_t^T\!\!\! \langle (u_n-\xi)^-(s),\, f_+(s)-f_-(s) \rangle   \,ds
    \bigg]  \nonumber\\
    &\leq\,
    -E\bigg[\int_t^T\!\!\! n\|(u_n-\xi)^-(s)\|^2   \,ds
    +\frac{C}{n}\int_t^T\!\!\! \| f_+(s)-f_-(s)\|^2   \,ds
    \bigg],\nonumber
  \end{align}
  which implies
  \begin{align}
    n^2E\bigg[\int_0^T\!\!\! \|(u_n-\xi)^-(s)\|^2   \,ds\bigg]\leq C E\bigg[\int_0^T\!\! \|f_+(s)-f_-(s)\|^2\,ds\bigg].\label{eq-penalize-regular-2}
  \end{align}
  Set
  $$\beta_n=n(u_n-\xi)^-\textrm{ and }
  (u_{nk},v_{nk},\beta_{nk})=(u_{n}-u_{k},v_{n}-v_{k},\beta_{n}-\beta_{k}),\quad n,k\in\bN.$$
  By Ito formula and \eqref{eq-penalize-regular-2}, we have
  \begin{align}
    &\|u_{nk}(t)\|^2+\int_t^T\big(2\|\nabla u_{nk}(s)\|^2+\|v_{nk}(s)\|^2    \big)\,ds\nonumber\\
    =&
    \int_t^T\!\!\!  2\langle u_n(s)-\xi(s)+\xi(s)-u_k(s),\, n(u_n(s)-\xi(s))^--k(u_k(s)-\xi(s))^-  \rangle\,ds
    -\!\int_t^T\!\!\!2\langle u_{nk}(s),\,v_{nk}(s)\,dW_s  \rangle\nonumber\\
    \leq&
    \int_t^T\!\!\!2(k+n)\langle(u_n(s)-\xi(s))^-,\,(u_k(s)-\xi(s))^-\rangle\,ds
    -\!\int_t^T\!\!\!2\langle u_{nk}(s),\,v_{nk}(s)\,dW_s  \rangle\nonumber\\
    \leq &\,
    \big(k^{-1}+n^{-1})C \!\int_0^T \!\!\!\|f_+(s)-f_-(s)\|^2\,ds
    -\!\int_t^T\!\!\!2\langle u_{nk}(s),\,v_{nk}(s)\,dW_s  \rangle,
  \end{align}
  which together with
  \begin{align}
  E\Big[\sup_{\tau\in[t,T]}\Big| \int_{\tau}^T
  \!\!2\langle u_{nk}(s),\,v_{nk}(s)\,dW_s\rangle    \Big|
    \Big]
    \leq&\, 4E \Big[\sup_{\tau\in[t,T]}\Big| \int_t^{\tau}
  \!\!\langle  u_{nk}(s),\,v_{nk}(s)\,dW_s\rangle    \Big|
    \Big] \nonumber\\
    \textrm{(by BDG inequality)}\leq&\, C E\Big[\Big(  \int_t^T \| u_{nk}(s)(s)\|^2
    \| v_{nk}(s) \|^2  \,ds \Big)^{1/2} \Big]\nonumber\\
    \leq&\, E\Big[\eps\sup_{s\in[t,T]}\|u_{nk}(s)\|^2
    +C(\eps) \int_t^T \!\!\!
    \| v_{nk}(s) \|^2\,ds  \Big],\,\forall\,\eps>0\nonumber
\end{align}
  and Gronwall inequality, implies
  \begin{align}
    \|u_{nk}\|_{\mathcal{H}}+\|v_{nk}\|_{\mathcal{L}^2((L^2)^m)}
    \leq\, C \big(k^{-1}+n^{-1}) \!\int_0^T \!\!\!\|f_+(s)-f_-(s)\|^2\,ds\longrightarrow 0,\quad
    \textrm{ as }n,k\rightarrow +\infty.
  \end{align}
  Denote the limit by $(u,v)$. By \eqref{eq-penalize-regular-2}, extracting a subsequence if necessary, we may assume without any loss of generality that $\{\beta_n;\,n\in\bN\}$ converges weakly to some $\beta$ in $\cL^2(L^2)$.
%
%
  Taking limits, we have $\beta\geq 0$, $ d\mathbb{P}\otimes dt \otimes dx$-a.e. and  for any $\varphi\in \mathcal{D}$,
  \begin{equation*}
  \begin{split}
    &\langle u(t),\,\varphi(t) \rangle +\int_t^T\!\!\big[\langle u(s),\,\partial_s \varphi(s)   \rangle + \langle \nabla u(s),\,\nabla \varphi(s) \rangle \big]\,ds
    =\!
    \int_t^T\!\!\langle \beta(s),\,\varphi(s)  \rangle \,ds +\int_t^T\!\!\langle \varphi(s),\,v^r(s) \,dW_s^r\rangle .
  \end{split}
\end{equation*}
Thus, $u\in\mathscr{U}\cap\mathscr{U}_p$ and by assertion (i) of Proposition \ref{prop-quasi-conti-Cont_TL2}, $u$ is almost surely quasi-continuous. Proposition \ref{prop-U_p} yields that $u=\Gamma(0,\xi)$. On the other hand, combining the strong convergence of $\{u_n\}$ and the weak convergence of $\{\beta_n\}$, we have
\begin{align}
  E\bigg[
  \int_Q \big( u(s,x)-\xi(s,x)\big)\beta(s,x)\,dsdx  \bigg]
  =&\lim_{n\rightarrow \infty}
  E\bigg[
  \int_Q \big( u_n(s,x)-\xi(s,x)\big)n\big( u_n(s,x)-\xi(s,x)\big)^-\,dsdx  \bigg]\leq\, 0,
  \nonumber
\end{align}
which together with $u=\Gamma(0,\xi)$ and $\beta\geq 0$, implies
$$
\int_Q \big( u(s,x)-\xi(s,x)\big)\beta(s,x)\,dsdx=0,\quad a.s..
$$
%
%
  Put $\mu(dt,dx)=\beta(t,x)dtdx$. Then, $(u,v,\mu)$ is a solution of RBSPDE \eqref{RBSPDE-Laplac1}.

  Set $\hat{\xi}\in\mathscr{U}(I,0,0,f_+,0)$ and $\bar{\xi}\in \mathscr{U}(I,0,-\psi,f_-,0)$. Then $\xi=\hat{\xi}-\bar{\xi}$, $\hat{\xi}\in\mathscr{U}_p$ and $\Gamma(0,\hat{\xi})=\hat{\xi}$. Let $(\hat{\xi}_n,\hat{v}_n)\in\mathcal{H}\times \cL^2((L^2)^m)$ be the solution of the following penalized BSPDE
  \begin{equation*}
  \left\{\begin{array}{l}
  \begin{split}
  -d\hat{\xi}_{n}(t,x)=\,&\displaystyle \big(\Delta \hat{\xi}_{n}(t,x)+n(\hat{\xi}_n(t,x)-\hat{\xi}(t,x))^-\big)\,dt      -\hat{v}_{n}^{r}(t,x)\, dW_{t}^{r}, \quad
                     (t,x)\in Q;\\
    \hat{\xi}_{n}(T,x)=\, &0, \quad x\in\cO,
    \end{split}
  \end{array}\right.
\end{equation*}
and let $(\bar{u}_n,\bar{v}_n)\in\mathcal{H}\times \cL^2((L^2)^m)$ solve BSPDE
\begin{equation*}
  \left\{\begin{array}{l}
  \begin{split}
  -d\bar{u}_{n}(t,x)=\,&\displaystyle \big(\Delta \bar{u}_{n}(t,x)+n(\bar{u}_n(t,x)-\hat{\xi}(t,x))^-\big)\,dt      -\bar{v}_{n}^{r}(t,x)\, dW_{t}^{r}, \quad
                     (t,x)\in Q;\\
    \bar{u}_{n}(T,x)=\, &-\psi , \quad x\in\cO.
    \end{split}
  \end{array}\right.
\end{equation*}
In view of BSPDE \eqref{BSPDE-penalized}, it is easy to check that $\bar{u}_n=u_n+\bar{\xi}$. From the comparison principles for BSPDEs, we deduce that $\bar{u}_n\geq \hat{\xi}_n$ and thus,
\begin{align}
  n(u_n-\xi)^-=n(\bar{u}_n-\hat{\xi})^-\leq n(\hat{\xi}_n-\hat{\xi})^-, \quad  d\mathbb{P}\otimes dt \otimes dx\textrm{-a.e.}.\label{eq-beta-f+}
\end{align}
 In a similar way to \eqref{BSPDE-penalized}-\eqref{eq-penaliz-4}, we are allowed to choose a subsequence of convex combinations $(\hat{\beta}_k,\hat{f}_k)\in \textrm{conv}\{\big(n(u_n-\xi)^-,\,  n(\hat{\xi}_n-\hat{\xi})^-\big);\,n\geq k\}$, such that $(\hat{\beta}_k,\hat{f}_k)$ converges strongly to $(\beta,f_+)$ in $\cL^2(L^2)\times \cL^2(L^2)$. By \eqref{eq-beta-f+}, there follows $\beta\leq f_{+}$, $ d\mathbb{P}\otimes dt \otimes dx$-a.e..
\medskip
\\
\textbf{Step 2.} Now, we consider the general $f_+$ and $f_-$.

By Corollary \ref{cor-obst-regular-approx}, we choose $\{f^n_+,f^n_-:\,n\in\bN\}\subset\cL^2((L^2)^+)$ and $\phi_0\in\mathscr{U}_p$, such that $(f^n_+,f^n_-)$ converges to $(f_+,f_-)$ in $\cL^2(H^{-1})\times \cL^2(H^{-1})$ and $\xi_n\in\mathscr{U}(I,0,\psi,f^n_+,f^n_-,0)$ satisfies
\begin{align}
\|\xi^n\|_{\mathscr{U}} + \|\phi_0\|_{\mathscr{L}^2(\mathscr{K})}  \leq C \|\xi\|_{\mathscr{U}},\,  \|\xi^n-\xi\|_{\mathscr{U}}\leq 2^{-n}\|\xi\|_{\mathscr{U}}
\textrm{ and }|\xi^n-\xi|\leq n^{-1} \phi_0,\,\, d\mathbb{P}\otimes dt \otimes dx.
\label{eq-using-coro}
\end{align}
Denote by $(u^n,\,v^n,\,\beta^n(t,x)\,dtdx)$ the solution of RBSPDE \eqref{RBSPDE-Laplac1} with the associated obstacle process being replaced by $\xi^n$. Then, $u^n=\Gamma(0,\xi^n)$, $\beta^n\in \cL^2(L^2)$, $0 \leq \beta^n \leq f_+^n$, $ d\mathbb{P}\otimes dt \otimes dx$-a.e., and
\begin{align}
  \|u^n\|_{\mathcal{H}}+\|v^n\|_{\cL^2((L^2)^m)}\leq C \|\xi^n\|_{\mathscr{U}}\leq C \|\xi\|_{\mathscr{U}}<+\infty. \label{eq-est-regular1}
\end{align}
In particular, for any $\phi\in  \mathscr{C}(Q)$, we have almost surely
$$
  \bigg|\int_{Q}\!\!\!\phi(t,x)\beta^n(t,x)\,dtdx\bigg|
\leq\,\int_{Q}(\phi^++\phi^-)(t,x)f^n_+(t,x)\,dtdx
\leq\,C\|f_+^n\|_{L^2(H^{-1})}\|\phi\|_{L^2(H_0^1)},
$$
where $(\phi^+,\phi^-):=(\max\{\phi,0\},\max\{-\phi,0\})$ and we use the fact that
there exists a universal constant $C$ such that
$$
\||\phi|\|_{L^2(0,T;H_0^1)}=\|\phi^+ + \phi^-\|_{L^2(0,T;H_0^1)} \leq C \|\phi\|_{L^2(0,T;H_0^1)}, \quad \forall\,\phi\in L^2(0,T;H_0^1).
$$
Since $\mathscr{C}(Q)$ is dense in $L^2(0,T;H_0^1)$, there exists constant $C_0$ independent of $n$ such that
$\|\beta^n\|_{\cL^2(H^{-1})}\leq C_0 < +\infty.$

On the other hand, setting $u=\,\Gamma(0,\xi)$, we have
\begin{align}
  |u^n-u|= |\Gamma(0,\xi^n)-\Gamma(0,\xi)|
  &\leq\, |   \Gamma(0,\xi^n-\xi)  +  \Gamma(0,\xi-\xi^n)   |\nonumber\\
  &\leq\, 2n^{-1}\phi_0\longrightarrow 0, \,\, d\mathbb{P}\otimes dt \otimes dx-a.e.,\quad \textrm{as }n\rightarrow +\infty.\nonumber
\end{align}
Let $u^{\beta^n}\in \mathscr{L}^2(\mathscr{W})\cap \mathscr{L}^2(\mathcal{P_0})$  be the regular stochastic potentials associated with $\beta^n(t,x)dtdx$. By the boundedness $\beta^n$ in $\cL^2(H^{-1})$, we are allowed to choose a subsequence ($\{u^{\beta^n};n\in\bN\}$ (denoted by itself, without any loss of generality) which is bounded in $\cL^2(\mathscr{K})$ and converges weakly in $\cL^2(H_0^1)$. And further, by the boundedness of $(u^n,v^n,\beta^n)$ in $\mathcal{H}\times\cL^2((L^2)^m)\times\cL^2(H^{-1})$, we can choose a sequence of convex combinations $(\check{u}^n,\check{v}^n,\check{\beta}^n)\in \textrm{conv}\{(u^k,v^k,\beta^k):\,k\geq n\}$ such that $(\check{u}^n,\check{v}^n,\check{\beta}^n)$ converges strongly in $\cL^2(H_0^1)\times\cL^2((L^2)^m)\times\cL^2(H^{-1})$ to some $(\bar{u},v,\beta)$ with $\bar{u}=u$.  Taking limits, we obtain that $0\leq \beta\leq f_+$ in $H^{-1}$, $ d\mathbb{P}\otimes dt$-a.e. and  for any $\varphi\in \mathcal{D}$,
  \begin{equation*}
  \begin{split}
    &\langle u(t),\,\varphi(t) \rangle +\int_t^T\!\!\big[\langle u(s),\,\partial_s \varphi(s)   \rangle + \langle \nabla u(s),\,\nabla \varphi(s) \rangle \big]\,ds
    =\!
    \int_t^T\!\!\langle \beta(s),\,\varphi(s)  \rangle \,ds +\int_t^T\!\!\langle \varphi(s),\,v^r(s) \,dW_s^r\rangle .
  \end{split}
\end{equation*}
By  \cite[Theorem 1.2 of Chapter 1]{Qiu-2012} or \cite[Theorem 4.2]{RenRocknerWang2007}, $u\in\mathscr{U}\cap\mathscr{U}_p$ and by assertion (i) of Proposition \ref{prop-quasi-conti-Cont_TL2}, $u$ is almost surely quasi-continuous. Moreover,
by (ii) of Proposition \ref{prop-quasi-conti-Cont_TL2},
$$
\int_{0}^T\langle (u-\xi)(t), \,\beta(t)\rangle_{1,-1}\,dt
=\lim_{n\rightarrow +\infty} \int_{0}^T\langle (u^n-\xi^n)(t), \,\beta^n(t)\rangle_{1,-1}\,dt=0,\quad a.s..
$$
 Consequently, $(u,v,\mu)$ with $\mu(dt,dx)=\beta(t,x) dtdx$ is the unique solution of RBSPDE \eqref{RBSPDE-Laplac1}. We complete the proof.
\end{proof}

Now, we are in a position to present the main results of this subsection for RBSPDEs with Laplacian leading coefficients.
\begin{thm}\label{thm-Lapl}
  Let the obstacle process $\xi$ of RBSPDE \eqref{RBSPDE-Laplac1} satisfy assumption $(\mathcal{A}4)$ with $\xi(T)\leq 0$, $ d\mathbb{P}\otimes dx$-a.e.. Then RBSPDE \eqref{RBSPDE-Laplac1} admits a unique solution $(u,v,\mu)$ with $u=\Gamma(0,\xi)$ and
  \begin{align}
      \|u\|_{\mathcal{H}}+\|v\|_{\cL^2((L^2)^m)}
  \leq\,C\|\check{\xi}\|_{\mathscr{U}}.\label{est-thm-lapl}
  \end{align}
\end{thm}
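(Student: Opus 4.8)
The plan is to reduce the general obstacle $\xi$ satisfying $(\mathcal{A}4)$ to the regular case already handled in Proposition \ref{prop-regular-obst}, by approximating $\check{\xi}\in\mathscr{U}$ in the $\mathscr{U}$-norm by obstacles of the special form treated there. First I would note that uniqueness and the estimate \eqref{est-thm-lapl} are already available: uniqueness is Lemma \ref{lem-unique}, and once existence is established, the bound $u=\Gamma(0,\xi)$ together with Proposition \ref{prop-U_p} gives $\|u\|_{\mathscr{L}^2(\mathscr{K})}+\|v\|_{\cL^2((L^2)^m)}\leq C\|\check{\xi}\|_{\mathscr{U}}$; upgrading $\mathscr{L}^2(\mathscr{K})$ to $\mathcal{H}$ will come out of the It\^o-formula energy estimate along the approximation. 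So the heart of the matter is existence.

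For existence, I would apply Corollary \ref{cor-obst-regular-approx} to $\check{\xi}\in\mathscr{U}$: write $\check{\xi}\in\mathscr{U}(I,0,\psi,\mathcal{J}(f_+-f_-))$ with $\psi\in L^2(\Omega,\sF_T;L^2)$ and $f_\pm\in\cL^2((H^{-1})^+)$, and obtain $\phi_0\in\mathscr{U}_p$, $\{f_\pm^n\}\subset\cL^2((L^2)^+)$ and $\check{\xi}^n\in\mathscr{U}(I,0,\psi,f_+^n-f_-^n,0)$ with $\|\check{\xi}^n-\check{\xi}\|_{\mathscr{U}}\leq 2^{-n}\|\check{\xi}\|_{\mathscr{U}}$ and $|\check{\xi}^n-\check{\xi}|\leq n^{-1}\phi_0$, $d\mathbb{P}\otimes dt\otimes dx$-a.e. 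Then set $\xi^n:=\xi\wedge\check{\xi}^n$ (or, more directly, replace $\xi$ itself using $(\mathcal{A}4)$); the point is that each $\xi^n$ is dominated from above by the regular field $\check{\xi}^n$ which solves a BSPDE driven by $f_+^n-f_-^n$ with $f_\pm^n\in\cL^2((L^2)^+)$. Actually the cleanest route is to keep $\xi$ fixed and only replace the dominating field: by Proposition \ref{prop-regular-obst} applied with obstacle $\check{\xi}^n$ in place of $\xi$ there, one gets solutions $(u^n,v^n,\mu^n)$ of RBSPDE \eqref{RBSPDE-Laplac1} with obstacle $\check{\xi}^n$, $u^n=\Gamma(0,\check{\xi}^n)$, $\mu^n(dt,dx)=\beta^n(t,x)\,dtdx$, $0\leq\beta^n\leq f_+^n$ in $H^{-1}$, and the uniform bound $\|u^n\|_{\mathcal H}+\|v^n\|_{\cL^2((L^2)^m)}\leq C\|\check{\xi}^n\|_{\mathscr{U}}\leq C\|\check{\xi}\|_{\mathscr{U}}$. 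I would then argue exactly as in Step 2 of the proof of Proposition \ref{prop-regular-obst}: $|u^n-u^m|=|\Gamma(0,\check{\xi}^n)-\Gamma(0,\check{\xi}^m)|\leq|\Gamma(0,\check{\xi}^n-\check{\xi}^m)+\Gamma(0,\check{\xi}^m-\check{\xi}^n)|$, whose $\mathscr{L}^2(\mathscr{K})$-norm is $\leq C\|\check{\xi}^n-\check{\xi}^m\|_{\mathscr{U}}\to 0$ by Proposition \ref{prop-U_p}, so $u^n\to u:=\Gamma(0,\xi)$ strongly in $\mathscr{L}^2(\mathscr{K})$; the It\^o-formula energy identity for $u^n-u^m$ (Corollary \ref{cor-ito} or Theorem \ref{thm-ito}) upgrades this to convergence of $(u^n,v^n)$ in $\mathcal{H}\times\cL^2((L^2)^m)$ to some $(u,v)$. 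Extract a weakly convergent subsequence of $\{\beta^n\}$ in $\cL^2(H^{-1})$ (its boundedness there is seen, as in Prop.\ \ref{prop-regular-obst}, by testing against $\phi\in\mathscr{C}(Q)$ and using $0\leq\beta^n\leq f_+^n$ with $\sup_n\|f_+^n\|_{\cL^2(H^{-1})}<\infty$), pass to the limit in the weak formulation, and conclude $u\in\mathscr{U}\cap\mathscr{U}_p$ is a.s.\ quasi-continuous via Proposition \ref{prop-quasi-conti-Cont_TL2}(i).

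The Skorohod condition is the last point, and the main technical obstacle. The issue is that $\mu^n$ and $\mu$ are carried by different obstacle fields and $\mu$ may be a genuine local time, so I cannot simply pass to the limit in $\int_Q(u^n-\check{\xi}^n)\,d\mu^n=0$ naively. The plan is to use Proposition \ref{prop-quasi-conti-Cont_TL2}(ii): since $u^n\rightharpoonup u$ in $\cL^2(H^1_0)$ with $\{u^n\}$ bounded in $\mathscr{L}^2(\mathcal{P})$, and since $\{\check{\xi}^n\}$ (and hence a.s.-quasi-continuous versions thereof) satisfy $|\check{\xi}-\check{\xi}^n|\leq n^{-1}\phi_0$ with $\phi_n:=n^{-1}\phi_0\downarrow 0$ in $\mathscr{U}$, the convergence lemma yields
\begin{equation*}
\int_{[0,T)\times\cO}(u-\xi)\,d\mu=\lim_{n\to\infty}\int_{[0,T)\times\cO}(u^n-\check{\xi}^n)\,d\mu^n=0,\quad a.s.,
\end{equation*}
together with $u\geq\xi$ $d\mathbb{P}\otimes dt\otimes dx$-a.e.\ from \eqref{eq-relation-geq} and $u=\Gamma(0,\xi)$. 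Verifying that the hypotheses of Proposition \ref{prop-quasi-conti-Cont_TL2}(ii) are met — in particular the domination $|v_n|\leq v_0$ for a single $v_0\in\mathscr{L}^2(\mathcal{P})$ of the quasi-continuous representatives of $u^n-\check{\xi}^n$, and the decreasing convergence of the gap — is the delicate bookkeeping step, and I expect it to consume most of the proof; everything else is a repetition of arguments already carried out for Propositions \ref{prop-U_p} and \ref{prop-regular-obst}. Finally, $\|u\|_{\mathcal H}+\|v\|_{\cL^2((L^2)^m)}\leq C\|\check{\xi}\|_{\mathscr{U}}$ follows by lower semicontinuity of norms under the established convergence, completing the proof.
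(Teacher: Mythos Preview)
Your proposal has a genuine gap: you conflate the obstacle $\xi$ with its dominating field $\check{\xi}$. Proposition~\ref{prop-regular-obst} does not merely require that the \emph{dominating} field solve a BSPDE driven by $f_+-f_-$; it requires the \emph{obstacle itself} to lie in $\mathscr{U}$ and have that structure. When you ``keep $\xi$ fixed and only replace the dominating field'' by $\check{\xi}^n$, Proposition~\ref{prop-regular-obst} simply does not apply, since the general $\xi$ of $(\mathcal{A}4)$ is only almost surely quasi-continuous and need not lie in $\mathscr{U}$ at all. If instead you take $\check{\xi}^n$ as the obstacle (which is what you actually write: ``obstacle $\check{\xi}^n$'', ``$u^n=\Gamma(0,\check{\xi}^n)$''), then your limit is $\Gamma(0,\check{\xi})$, not $\Gamma(0,\xi)$, and the Skorohod condition you obtain is $\int(u-\check{\xi})\,d\mu=0$, not $\int(u-\xi)\,d\mu=0$. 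There is a further issue at the outset: you assume $\check{\xi}\in\mathscr{U}(I,0,\psi,\mathcal{J}(f_+-f_-))$ with $f_\pm\in\cL^2((H^{-1})^+)$, but Remark~\ref{rmk-dense-H-1} shows $H^{-1}\neq(H^{-1})^+-(H^{-1})^+$, so such a decomposition need not exist for an arbitrary element of $\mathscr{U}$.

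The paper's proof addresses exactly this by a two-step reduction. Step~1 treats the intermediate case $\xi\in\mathscr{U}$: it uses Corollary~\ref{cor-obst-regular-approx} to approximate $\xi$ (not $\check{\xi}$) by $\xi_n\in\mathscr{L}^2(\mathscr{R})$ with $|\xi-\xi_n|\leq n^{-1}\xi_0$ for some $\xi_0\in\mathscr{U}_p$; each $\xi_n$ now genuinely satisfies the hypotheses of Proposition~\ref{prop-regular-obst}, and the pointwise control $|u-u_n|\leq 2n^{-1}\xi_0$ gives both $\cS^2(L^2)$-convergence and almost sure quasi-continuity of the limit. Step~2 then handles the general $(\mathcal{A}4)$ obstacle by approximating $\xi$ itself via Proposition~\ref{prop-quasi-conti-Cont_TL2}(iii), producing $\varphi_n\in\mathscr{U}$ with $|\xi-\varphi_n|\leq\phi_n$ and $\|\phi_n\|_{\mathscr{U}}\to 0$; after a terminal-time correction, Step~1 applies to each $\varphi_n-\bar{\varphi}_n$, and Corollary~\ref{cor-compr} controls $|u-u_n|$ by quantities decreasing to $0$. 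The missing idea in your plan is precisely this approximation of the quasi-continuous $\xi$ by elements of $\mathscr{U}$; approximating the dominating field alone cannot recover the correct reflected solution.
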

\begin{proof}
  The uniqueness and the estimate \eqref{est-thm-lapl} follow from Lemma \ref{lem-unique} and Proposition \ref{prop-U_p} respectively. We shall prove the other assertions  in two steps.\medskip

  \textbf{Step 1.} We first assume $\xi\in\mathscr{U}$. In view of Remark \ref{rmk-dense-H-1}, we see that $\xi$ does not necessarily satisfy the hypothesis of Proposition \ref{prop-regular-obst}.

    By Corollary \ref{cor-obst-regular-approx}, there exist $\xi_0\in\mathscr{U}_p$ and a sequence $\{\xi_n\}\subset \mathscr{L}^2(\mathscr{R})$ such that $\xi_n(T)=\xi(T)$,
  $$
  \|\xi_n\|_{\mathscr{U}}+\|\xi_0\|_{\mathscr{L}^2(\mathscr{K})}\leq C \|\xi\|_{\mathscr{U}},
  \,\, \|\xi_n-\xi\|_{\mathscr{U}}\leq 2^{-n}\|\xi\|_{\mathscr{U}} \textrm{ and }
  |\xi_n-\xi|\leq n^{-1}\xi_0,\, d\mathbb{P}\otimes dt \otimes dx-a.e..
  $$
  For each $n$, by Proposition \ref{prop-regular-obst}, RBSPDE \eqref{RBSPDE-Laplac1} associated with obstacle process $\xi_n$ admits a unique solution $(u_n,v_n,\mu_n)$ with $u_n=\Gamma(0,\xi_n)\in\mathscr{U}_p$ and
  \begin{align}
  \|u_n\|_{\mathcal{H}}+\|v_n\|_{\cL^2((L^2)^m)}\leq C \|\xi_n\|_{\mathscr{U}}
  \leq C \|\xi\|_{\mathscr{U}}< +\infty.\label{eq-thm1-est}
  \end{align}

  Put $u=\Gamma(0,\xi)$. Then $u\in\mathscr{U}_p$ and
  \begin{align}
    |u-u_n|=|\Gamma(0,\xi)-\Gamma(0,\xi_n)|\leq&\, \Gamma(0,\xi_n-\xi)+ \Gamma(0,\xi-\xi_n)\nonumber\\
    \leq& \,2\,\Gamma(0,n^{-1}\xi_0)
    =2n^{-1}\xi_0\longrightarrow 0,\textrm{ as }n\rightarrow \infty. \label{eq-thm-lap-apprx}
  \end{align}
   By Lemma \ref{lem-K-W}, there exists $\bar{\xi}_0\in \mathscr{U}$ such that $\xi_0\leq \bar{\xi}_0$ and by (i) of Proposition \ref{prop-quasi-conti-Cont_TL2}, $\bar{\xi}_0$ is almost surely quasi-continuous. Therefore, from the quasi-continuity of $\{u_n\}$ we conclude that $u$ is almost surely quasi-continuous and in particular, we have
   \begin{align}
     \|u_n-u\|_{\cS^2(L^2)}=0.\label{eq-conveg-in-S}
   \end{align}

  For each $n$, let $u_n^{\mu}\in\mathscr{L}^2(\mathcal{P})$ solve PDE
  $$
  \partial_tu^{\mu}_n=\Delta u^{\mu}_{n} +\mu_n;\quad u_n^{\mu}(0)=0;
  $$
  and let $\tilde{u}_n$ satisfy SPDE
  $$
  d\tilde{u}_n(t,x)=\big(\Delta \tilde{u}_n(t,x)-2\Delta u_n(t,x) \big)\,dt
  +v_n(t,x)\,dW_s,\,\,(t,x)\in Q;\quad \tilde{u}_n(0)=u_n(0).
  $$
  Then, $u_n=\tilde{u}_n-u^{\mu}_n$. By \eqref{eq-thm1-est}, \eqref{eq-thm-lap-apprx}, \eqref{eq-conveg-in-S} and Remark \ref{rmk-math-U}, we deduce that both $\{u^{\mu}_n\}$ and $\{\tilde{u}_n\}$ are bounded in $\mathcal{H}$ and we are allowed to choose a subsequence (denoted by itself) $\{(u^{\mu}_n,\,\tilde{u}_n)\}$, which converges weakly to some  $(u^{\mu},\tilde{u})\in\cL^2(H_0^1)\times\cL^2(H_0^1)$, with $u^{\mu}$ being a stochastic potential associated with some stochastic measure $\mu$. On the other hand, in view of Remark \ref{rmk-math-U}, combining relations \eqref{eq-thm1-est}, \eqref{eq-thm-lap-apprx} and \eqref{eq-conveg-in-S}, we are allowed to choose a sequence of convex combinations $(\check{u}_{n},\check{\tilde{u}}_n,\check{v}_{n})\in conv\{(u_{k},\tilde{u}_k,v_{k}):\,k\geq n,k\in \bN\}$ such that $(\check{u}_{n},\check{\tilde{u}}_n,\check{v}_{n})$ converges strongly to $(u,\tilde{u},v)$ in $\mathcal{H}\times\mathscr{U}\times \cL^2((L^2)^m)$. Thus, $u^{\mu}\in\mathcal{H}$ and $\mu$ is a stochastic regular measure. By (ii) of Proposition \ref{prop-quasi-conti-Cont_TL2}, we have
  \begin{align}
   \int_{Q}\big( u-\xi \big)(t,x)\,\mu(dt,dx)= \lim_{n\rightarrow \infty} \int_{Q}\big( u_n-\xi_n \big)(t,x)\,\mu_n(dt,dx)=0.
   \label{eq-thm1-skorohd}
  \end{align}
%
Note that the corresponding stochastic regular measure sequence of convex combinations converges vaguely to $\mu$. Consequently, taking limits, we conclude that  for any $\varphi\in \mathcal{D}$,
  \begin{equation*}
  \begin{split}
    &\langle u(t),\,\varphi(t) \rangle +\int_t^T\!\!\big[\langle u(s),\,\partial_s \varphi(s)   \rangle + \langle \nabla u(s),\,\nabla \varphi(s) \rangle \big]\,ds
    =\!
    \int_Q\!\!\varphi(s,x)\,\mu(ds,dx) +\int_t^T\!\!\langle \varphi(s),\,v^r(s) \,dW_s^r\rangle .
  \end{split}
\end{equation*}
  Hence, $(u,v,\mu)$ is a solution of RBSPDE \eqref{RBSPDE-Laplac1} with $u=\Gamma(0,\xi)$.
\medskip

  \textbf{Step 2.}
  Consider the general $\xi$ satisfying assumption $(\mathcal{A}4)$. In view of Remark \ref{rmk-assumpt4}, we assume further that $\xi$ satisfy assumption $(\mathcal{A}4b)$.   By Corollary \ref{cor-prop-obstac-SPDE} and assertion (iii) of Proposition \ref{prop-quasi-conti-Cont_TL2}, there exist $\{\phi_n;n\in\bN\}\subset \mathscr{U}$ and $\{\varphi_n;n\in\bN\}\subset \mathscr{U}$, such that $\{\phi_n\}$ converges decreasingly to $0$, $ d\mathbb{P}\otimes dt \otimes dx$-a.e.,
  $$
  \lim_{n\rightarrow \infty}\|\phi_n\|_{\mathscr{U}}=0 \textrm{ and }
  |\xi-\varphi_n|\leq \phi_n,\quad  d\mathbb{P}\otimes dt \otimes dx-a.e.,\,\,n=1,2,\cdots.
  $$

  For each $n\in\bN$, set
  $$
  \bar{\phi}_n\in \mathscr{U}(I,0,\phi_n(T),0,0),\,\,\bar{\varphi}_n\in\mathscr{U}(I,0,\varphi_n(T)-\xi(T),0,0)
  \textrm{ and }\hat{\varphi}_n\in\mathscr{U}(I,0,|\varphi_n(T)-\xi(T)|,0,0),\quad n=1,2,\cdots.
  $$
  As $|\varphi_n(T)-\xi(T)|\leq \phi_n(T)$, $ d\mathbb{P} \otimes dx$-a.e.,  it follows that  $\bar{\varphi}_n\leq \hat{\varphi}_n\leq \bar{\phi}_n$, with $\{\bar{\phi}_n\}$ converging decreasingly to $0$, $ d\mathbb{P}\otimes dt \otimes dx$-a.e.. Moreover,
  $$
  \lim_{n\rightarrow \infty}
  \Big\{
  \|\hat{\phi}_n\|_{\mathscr{U}}+\|\bar{\varphi}_n\|_{\mathscr{U}}+\|\bar{\phi}_n\|_{\mathscr{U}}
  +\|\hat{\varphi}_n\|_{\mathscr{U}}
  \Big\}=0.
  $$

  For each $n$, by \textbf{Step 1}, RBSPDE \eqref{RBSPDE-Laplac1} associated with obstacle process $\varphi_n-\bar{\varphi}_n$ admits a unique solution $(u_n,v_n,\mu_n)$ with $u_n=\Gamma(0,\varphi_n-\bar{\varphi}_n)\in\mathscr{U}_p$ and we have
  \begin{align}
  \|u_n\|_{\mathcal{H}}+\|v_n\|_{\cL^2((L^2)^m)}\leq C \|\bar{\varphi}_n+\phi_1+\check{\xi}\|_{\mathscr{U}}
  \leq C_0< \infty,
  \end{align}
with $C_0$ being a constant independent of $n$.

  Put $u=\Gamma(0,\xi)$. Then $u\in\mathscr{U}_p$ and by Corollary \ref{cor-compr},
  \begin{align}
    |u-u_n|=|\Gamma(0,\xi)-\Gamma(0,\varphi_n-\bar{\varphi}_n)|&\leq\, \Gamma(0,\varphi_n-\bar{\varphi}_n-\xi)+ \Gamma(0,\xi-\varphi_n+\bar{\varphi}_n)\nonumber\\
    &\leq \,2\,\Gamma(0,\phi_n-\bar{\phi}_n)+2\bar{\phi}_n+2\hat{\varphi}_n\nonumber\\
    &\leq \,2\,\Gamma(0,\phi_n-\bar{\phi}_n)+4\bar{\phi}_n
    \nonumber\\
    &=\,2\,\Gamma(\bar{\phi}_n,\phi_n)+2\bar{\phi}_n
    \longrightarrow 0,\,\textrm{ in }\mathscr{U},\,\textrm{ as }n\rightarrow \infty.
  \end{align}
   As both $\{\phi_n\}$ and $\{\bar{\phi}_n\}$ converge decreasingly to $0$, $ d\mathbb{P}\otimes dt \otimes dx$-a.e., in view of the equivalence relationship between RBSPDEs \eqref{RBSPDE-Laplac} and \eqref{RBSPDE-Laplac1}, we conclude from Corollary \ref{cor-compr} and Theorem \ref{thm-comprsn} that $2\,\Gamma(\bar{\phi}_n,\phi_n)+2\bar{\phi}_n$ converges decreasingly to $0$, $ d\mathbb{P}\otimes dt \otimes dx$-a.e.. Thanks to the  quasi-continuity of $\phi_n$, $\bar{\phi}_n$ (by (i) of Proposition \ref{prop-quasi-conti-Cont_TL2}) and $\Gamma(0,\phi_n-\bar{\phi}_n)$ (by \textbf{Step 1}),  $u$ is almost surely quasi-continuous.

   In a similar way to \textbf{Step 1}, by choosing subsequences and subsequences of convex combinations and taking limits, we find a solution $(u,v,\mu)$ for RBSPDE \eqref{RBSPDE-Laplac1}.
    The proof is complete.
\end{proof}

 In view of the above proof and the equivalence between RBSPDEs \eqref{RBSPDE-Laplac} and \eqref{RBSPDE-Laplac1}, we conclude the following corollary from Theorem \ref{thm-Lapl}.
\begin{cor}\label{cor-lap}
  Let $G\in L^2(\Omega,\sF_T;L^2)$ and $\bar{f},\,\bar{g}^i\in \cL^2(L^2)$, $i=1,2,\cdots,d$, and $\tilde{\xi}$ satisfy assumption $(\mathcal{A}4)$ with $\tilde{\xi}(T)\leq G$, $ d\mathbb{P}\otimes dt \otimes dx$-a.e.. There exists a unique solution $(u,v,\mu)$ to RBSPDE \eqref{RBSPDE-Laplac}
and there holds
\begin{align*}
      \|u\|_{\mathcal{H}}+\|v\|_{\cL^2((L^2)^m)}
  \leq\,C\Big(\|\check{\xi}\|_{\mathscr{U}}+\|G\|_{L^2(\Omega,\sF_T;L^2)}
  +\|f\|_{\cL^2(L^2)}+\|g\|_{\cL^2((L^2)^d)}  \Big),
  \end{align*}
  where $\check{\xi}\in\mathscr{U}$ is the random field dominating $\xi$ from above in assumption $(\mathcal{A}4)$.   Moreover, letting $\tilde{u}\in \mathscr{U}(I,0,G,\bar{f},\bar{g})$, we have
  \begin{align}
  u=\Gamma(\tilde{u},\tilde{\xi}).\label{var-prob}
  \end{align}
\end{cor}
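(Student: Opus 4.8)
The plan is to derive Corollary \ref{cor-lap} from Theorem \ref{thm-Lapl} via the equivalence between RBSPDEs \eqref{RBSPDE-Laplac} and \eqref{RBSPDE-Laplac1}, which has already been set up in the paragraph preceding the present corollary. First I would fix $\tilde u\in\mathscr{U}(I,0,G,\bar f,\bar g)$ and set $\xi:=\tilde\xi-\tilde u$. The discussion above \eqref{RBSPDE-Laplac1} shows that $\xi$ satisfies assumption $(\mathcal{A}4)$ with $\xi(T)=\tilde\xi(T)-G\leq 0$, $d\mathbb{P}\otimes dx$-a.e.\ (here one uses that $\check\xi-\tilde u\in\mathscr{U}$ dominates $\xi$, so that $\check\xi-\tilde u$ plays the role of the dominating field in $(\mathcal{A}4)$), and that $(u,v,\mu)$ solves \eqref{RBSPDE-Laplac} if and only if $(u-\tilde u,\,v-\tilde v,\,\mu)$ solves \eqref{RBSPDE-Laplac1} with obstacle $\xi$, where $\tilde v$ is the diffusion term of $\tilde u$. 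Existence and uniqueness for \eqref{RBSPDE-Laplac} then follow immediately: Theorem \ref{thm-Lapl} gives a unique solution $(w,v',\mu)$ of \eqref{RBSPDE-Laplac1} with $w=\Gamma(0,\xi)$, and setting $(u,v):=(w+\tilde u,\,v'+\tilde v)$ yields the unique solution of \eqref{RBSPDE-Laplac}.

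Next I would establish the estimate. By \eqref{est-thm-lapl} applied to $w$, $\|w\|_{\mathcal{H}}+\|v'\|_{\cL^2((L^2)^m)}\leq C\|\check\xi-\tilde u\|_{\mathscr{U}}$, and by the triangle inequality in $\mathscr{U}$ together with the a priori bound on $\tilde u$ from Remark \ref{rmk-math-U} (namely $\|\tilde u\|_{\mathscr{U}}^2=E[\|G\|^2]+\|\bar f+\nabla\cdot\bar g\|^2_{\cL^2(H^{-1})}$, hence $\|\tilde u\|_{\mathcal{H}}+\|\tilde v\|_{\cL^2((L^2)^m)}\leq C(\|G\|_{L^2(\Omega,\sF_T;L^2)}+\|\bar f\|_{\cL^2(L^2)}+\|\bar g\|_{\cL^2((L^2)^d)})$ via \eqref{repre-H-1}), one gets
\begin{align*}
\|u\|_{\mathcal{H}}+\|v\|_{\cL^2((L^2)^m)}
&\leq \|w\|_{\mathcal{H}}+\|v'\|_{\cL^2((L^2)^m)}+\|\tilde u\|_{\mathcal{H}}+\|\tilde v\|_{\cL^2((L^2)^m)}\\
&\leq C\Big(\|\check\xi\|_{\mathscr{U}}+\|G\|_{L^2(\Omega,\sF_T;L^2)}+\|\bar f\|_{\cL^2(L^2)}+\|\bar g\|_{\cL^2((L^2)^d)}\Big),
\end{align*}
using $\|\check\xi-\tilde u\|_{\mathscr{U}}\leq\|\check\xi\|_{\mathscr{U}}+\|\tilde u\|_{\mathscr{U}}$ and bounding $\|\tilde u\|_{\mathscr{U}}$ by the same right-hand side. (Here $f,g$ in the corollary's statement are understood as $\bar f,\bar g$; I would just write $\bar f,\bar g$ to be safe.)

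Finally, for the variational identity \eqref{var-prob}: since $w=\Gamma(0,\xi)=\Gamma(0,\tilde\xi-\tilde u)$ and $u=w+\tilde u$, property (i) of the list following \eqref{variation}, i.e.\ $\Gamma(u,\zeta)=\Gamma(0,\zeta-u)+u$ with $u\leftarrow\tilde u$ and $\zeta\leftarrow\tilde\xi$, gives exactly $u=\Gamma(0,\tilde\xi-\tilde u)+\tilde u=\Gamma(\tilde u,\tilde\xi)$, which is \eqref{var-prob}. I would remark that this uses $\tilde u\in\mathscr{U}$ and $\tilde\xi$ satisfying $(\mathcal{A}4)$, which are exactly the hypotheses under which $\Gamma(\tilde u,\tilde\xi)$ in \eqref{variation} is defined.

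The main obstacle is not analytic but bookkeeping: one must verify carefully that the shift by $\tilde u$ is compatible with all three defining conditions of a solution in Definition \ref{def-RBSPDE} — in particular that quasi-continuity of $u$ follows from quasi-continuity of $w$ (Theorem \ref{thm-Lapl}) and of $\tilde u$ (assertion (i) of Proposition \ref{prop-quasi-conti-Cont_TL2}), that the Skorohod condition $\int_Q(u-\tilde\xi)\,\mu(dt,dx)=\int_Q(w-\xi)\,\mu(dt,dx)=0$ transfers verbatim since $u-\tilde\xi=w-\xi$, and that the equivalence of the two weak formulations is exactly the content of the computation $g=\tilde g+2\nabla\phi$-type rewriting already indicated above \eqref{RBSPDE-Laplac1}. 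None of these steps requires new estimates; they are all immediate once the substitution $\xi=\tilde\xi-\tilde u$ is in place, so the proof is short.
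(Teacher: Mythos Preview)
Your proposal is correct and follows exactly the approach the paper intends: the paper does not write out a separate proof but simply states that the corollary follows from Theorem \ref{thm-Lapl} via the equivalence between RBSPDEs \eqref{RBSPDE-Laplac} and \eqref{RBSPDE-Laplac1} established just before the corollary. Your write-up is a faithful and careful expansion of that one-line deduction, including the correct observation that the $f,g$ in the displayed estimate should be read as $\bar f,\bar g$.
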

\begin{rmk}
  In view of \eqref{var-prob} above, we observe that the solution of RBSPDE \eqref{RBSPDE-Laplac} corresponds to a minimal point of variational problem \eqref{variation}.
\end{rmk}
\subsection{General case}
\begin{lem}\label{lem-thm-main}
  Let assumptions $(\mathcal{A}1)-(\mathcal{A}4)$ hold with $\xi(T)\leq G$, $ d\mathbb{P} \otimes dx$-a.e.. For $\theta\in [0,1]$ and $(f_l,g_l)\in\cL^2(L^2)\times\cL^2((L^2)^d)$, $l=1,2$, suppose that $(u_l,v_l,\mu_l)$ is the solution of RBSPDE
  \begin{equation}\label{RBSPDE-i}
  \left\{\begin{array}{l}
  \begin{split}
  -du_l(t,x)=\,&\displaystyle \Bigl[(1-\theta)\Delta u_l(t,x)+\theta\partial_{x_j}\bigl(a^{ij}(t,x)\partial_{x_i} u_l(t,x)
        +\sigma^{jr}(t,x) v_l^{r}(t,x)     \bigr) \\
        &\displaystyle +\theta(f+ \nabla \cdot g)(t,x,u_l(t,x),\nabla u_l(t,x),v_l(t,x))
                \Bigr]\, dt+(f_l+\nabla\cdot g_l)(t,x)\,dt\\ &\displaystyle
           +\mu_l(dt,x)-v_l^{r}(t,x)\, dW_{t}^{r}, \quad
                     (t,x)\in Q;\\
    u_l(T,x)=\, &G(x), \quad x\in\cO;\\
    u_l(t,x)\geq\,& \xi(t,x),\,\,d\mathbb{P}\otimes dt\otimes dx-a.e.;\\
    \int_Q \big( u_l(t,x)&-\xi(t,x) \big)\,\mu_l(dt,dx)=0,\,a.s..
    \end{split}
  \end{array}\right.
\end{equation}
Then
\begin{align}
  \|u_1-u_2\|_{\mathcal{H}}+\|v_1-v_2\|_{\cL^2((L^2)^m)}
  \leq C\Big( \|f_1-f_2\|_{\cL^2(L^2)}+\|g_1-g_2\|_{\cL^2((L^2)^d)}   \Big), \label{est-cont-paramet}
\end{align}
where the constant $C$ is independent of $\theta$ and only depends on $\lambda,\varrho,\kappa,\beta,L$ and $T$.
\end{lem}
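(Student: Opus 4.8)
The plan is to apply the It\^o formula of Theorem \ref{thm-ito} with $\Phi(t,y)=y^2$ (which lies in $\mathcal C^{1,2}$, since $\partial_y\Phi(\cdot,0)\equiv0$ and $\partial_{yy}\Phi\equiv2$) to the difference of the two solutions, exactly as in the proofs of Theorem \ref{thm-comprsn} and Lemma \ref{lem-unique}, and then absorb all terms into the left-hand side using super-parabolicity, the Lipschitz bounds of $(\mathcal A1)$, the structural inequality $\lambda-\kappa-\varrho'\beta>0$ of $(\mathcal A2)$, Young's inequality and Gronwall's lemma. First I would set $(\delta u,\delta v)=(u_1-u_2,v_1-v_2)$, $(\delta f,\delta g)=(f_1-f_2,g_1-g_2)$, $\delta F= f(\cdot,u_1,\nabla u_1,v_1)-f(\cdot,u_2,\nabla u_2,v_2)$ and $\delta G^j$ defined analogously with $g^j$, and observe that $\delta u$ solves, in the weak sense of Theorem \ref{thm-ito}, a reflected-type BSPDE with zero terminal value, leading coefficient $\bar a^{ij}:=(1-\theta)\delta^{ij}+\theta a^{ij}$, diffusion $\theta\sigma$, free terms $\bar f:=\theta\delta F+\delta f$ and $\bar g^j:=\theta\delta G^j+\delta g^j$, and random measure $\mu_1-\mu_2$; all data lie in the spaces required by Theorem \ref{thm-ito}.

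The point that makes the final constant independent of $\theta$ is the following structural observation. Since $0\le\theta\le1$ and $\sigma\sigma^\top\ge0$, one has, as quadratic forms, $2\bar a-\varrho\theta^2\sigma\sigma^\top=2(1-\theta)I+\theta\bigl(2a-\varrho\theta\sigma\sigma^\top\bigr)\ge 2(1-\theta)I+\theta\bigl(2a-\varrho\sigma\sigma^\top\bigr)\ge\bigl(2(1-\theta)+\theta\lambda\bigr)I$, while the nonlinear contributions $\theta\delta F,\theta\delta G$ carry the Lipschitz constants $\theta L,\theta\kappa,\theta\beta^{1/2}$, i.e.\ those of $(\mathcal A1)$ scaled by $\theta$. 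Running the computation of Lemma \ref{lem-unique} verbatim then yields, after Young's inequality with a small parameter $\eps>0$, a coefficient of $\|\nabla\delta u(s)\|^2$ bounded below by $2(1-\theta)+\theta(\lambda-\kappa-\varrho'\theta\beta)-O(\eps)$, which is strictly positive for $\eps$ small, uniformly in $\theta\in[0,1]$, since $\lambda-\kappa-\varrho'\theta\beta\ge\lambda-\kappa-\varrho'\beta>0$ by $(\mathcal A2)$; likewise the coefficient of $\|\delta v(s)\|^2$ can be kept $<1$ uniformly in $\theta$. The reflection terms are dealt with by the Skorohod conditions: writing $\delta u=(u_1-\xi)-(u_2-\xi)$ and using $u_l\ge\xi$ q.e., that each $\mu_l\ge0$ does not charge polar sets (Remark \ref{rmk-capacity}), and that $u_l,\xi$ are a.s.\ quasi-continuous, we obtain
$$
\int_{[t,T]\times\cO}\delta u\,d(\mu_1-\mu_2)
\le-\int_{[t,T]\times\cO}(u_2-\xi)\,d\mu_1-\int_{[t,T]\times\cO}(u_1-\xi)\,d\mu_2\le0 .
$$
The two new free terms are absorbed via $2|\langle\delta u,\delta f\rangle|\le\|\delta u\|^2+\|\delta f\|^2$ and $2|\langle\nabla\delta u,\delta g\rangle|\le\eps\|\nabla\delta u\|^2+\eps^{-1}\|\delta g\|^2$, producing the forcing $\|\delta f\|_{\cL^2(L^2)}^2+\|\delta g\|_{\cL^2((L^2)^d)}^2$ on the right.

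Taking expectations, discarding the reflection and the gradient terms by the above, and applying Gronwall's inequality then gives $\sup_{t}E\|\delta u(t)\|^2+E\int_0^T(\|\nabla\delta u(s)\|^2+\|\delta v(s)\|^2)\,ds\le C(\|\delta f\|_{\cL^2(L^2)}^2+\|\delta g\|_{\cL^2((L^2)^d)}^2)$ with $C$ depending only on $\lambda,\varrho,\kappa,\beta,L,T$. To upgrade $\sup_tE$ to $\|\delta u\|_{\cS^2(L^2)}^2=E\sup_t\|\delta u(t)\|^2$, I would, as in \eqref{eq-penaliz-2}, return to the pathwise It\^o identity of Theorem \ref{thm-ito}, take the supremum over $t$, and apply the Burkholder--Davis--Gundy inequality to the martingale term $\int_t^T2\langle\delta u(s),\delta v(s)\,dW_s\rangle$, bounding it by $\tfrac12E\sup_t\|\delta u(t)\|^2+CE\int_0^T\|\delta v(s)\|^2\,ds$ and absorbing; together with the $\cL^2(H^1_0)$ and $\cL^2((L^2)^m)$ bounds already in hand this yields \eqref{est-cont-paramet}. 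The main obstacle is essentially bookkeeping: verifying that the $\theta$-scaling of the nonlinearities and the $(1-\theta)I$ part of the leading coefficient keep both the coercivity gap and the $\delta v$-absorption uniform in $\theta$, so that $C$ is genuinely $\theta$-free (which is exactly what the continuity argument in the next step needs); the reflection sign and the remaining estimates are identical to those in Theorem \ref{thm-comprsn} and Lemma \ref{lem-unique}.
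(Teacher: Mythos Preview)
Your proposal is correct and follows essentially the same route as the paper: apply the It\^o formula of Theorem~\ref{thm-ito} to $|\delta u|^2$, drop the reflection term via the Skorohod condition exactly as you write, absorb the cross terms by Young's inequality using the super-parabolicity gap $\lambda-\kappa-\varrho'\beta>0$, apply Gronwall for $\sup_tE\|\delta u(t)\|^2$, and finish with BDG to reach the $\cS^2(L^2)$ norm. Your tracking of the $\theta$-dependence (noting that $2\bar a-\varrho\theta^2\sigma\sigma^\top\ge(2(1-\theta)+\theta\lambda)I$ and that the Lipschitz constants scale by $\theta$) is slightly sharper than the paper's, which simply uses $\theta$ in place of your $\theta^2$ in the $\sigma\sigma^\top$ term, but both yield the required $\theta$-free constant.
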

\begin{proof}
   Put $  (\delta u,\,\delta v,\delta f,\delta g)=(u_1-u_2,\,v_1-v_2,f_1-f_2,g_1-g_2)$. It\^o formula yields
  \begin{align}
    &  \|\delta u(t)\|^2+\int_t^T \!\!\!\big(\|\delta v(s)\|^2+2(1-\theta)\|\nabla \delta u(s)\|^2\big)\,ds\nonumber\\
    =&\,
    -\!\int_t^T\!\!\!  \langle 2\theta\partial_{x^i}\delta u(s),\,a^{ij}\partial_{x^j}\delta u(s) + \sigma^{ir}\delta v^r(s)+g^i(s,u_1,\nabla u_1,v_1)-g^i(s,u_2,\nabla u_2,v_2) \rangle\,ds
    \nonumber\\
    &\,+\!\int_{[t,T]\times \cO}\!\!2\big(u_1-\xi-(u_2-\xi)\big)(s,x)\,\big(\mu_1(ds,dx)-\mu_{2}(ds,dx)\big)
    -\!\int_t^T\!\!\!2\langle \delta u(s),\, \delta v(s)\,dW_s\rangle
    \nonumber\\
    &\,+\!\!\int_t^T\!\!\!\! \langle 2 \delta u(s),\, \theta f(s,u_1,\nabla u_1,v_1)-\theta f(s,u_2,\nabla u_2,v_2)+\delta f(s)\rangle\,ds
            -\!\!\int_t^T\!\!\!\!2\langle \nabla\delta u(s),\, \delta g(s)\rangle\,ds
    \nonumber\\
    \leq &\,
        -\int_t^T\!\!\!  \theta\langle \partial_{x^i}\delta u(s),\,(2a^{ij}-\varrho \sigma^{jr}\sigma^{ir})\partial_{x^j}\delta u (s)\rangle\, ds
         +\int_t^T\!\!\!\frac{\theta}{\varrho}\|\delta v(s)\|^2\,ds \nonumber\\
     &\,
        +\int_t^T\!\!\! \Big(2\theta\|\nabla \delta u(s)\|\big(L\|\delta u(s)\|+\frac{\kappa}{2} \|\nabla \delta u(s)\|+\beta^{1/2}\|\delta v(s)\|  \big)
        +2\|\nabla \delta u(s)\|\|\delta g(s)\|+2\|\delta u(s)\|\|\delta f(s)\|
        \nonumber \\
     &\,\quad \quad
        +2 L\theta \| \delta u(s)\| \big(\|\delta u(s)\|+ \|\nabla \delta u(s)\|+\|\delta v(s)\|\big) \Big)\,ds
        -\!\int_t^T\!\!\!2\langle \delta u(s),\, \delta v(s)\,dW_s\rangle\nonumber\\
    \leq &\,
         -\!\!\int_t^T\!\!\!\theta
        \big(\lambda-\kappa-\beta(\varrho' +2\eps)\big)\|\nabla \delta u(s)\|^2\, ds
        +\!\!\int_t^T\!\!\!\Big(\frac{\theta}{\varrho}+\frac{\theta}{\varrho'+\eps}\Big)\|\delta v(s)\|^2+\hat{\eps} \!\int_t^T\!\!\!\|\nabla \delta u(s)\|^2\, ds
        \nonumber\\
    &\, +C(\eps,\lambda,\beta,\kappa,\hat{\eps},\varrho)
        \!\int_t^T\!\!\! \left(\| \delta u(s)\|^2 +\|\delta f(s)\|^2+\|\delta g(s)\|^2\right)\,ds
        -\!\int_t^T\!\!\!2\langle \delta u(s),\, \delta v(s)\,dW_s\rangle,\nonumber
  \end{align}
  where $\frac{1}{\varrho}+\frac{1}{\varrho'}=1$, $\eps>0$ and $\hat{\eps}>0$.  Letting $\eps$ and
  $\hat{\eps}$ be so small that
  $$\lambda_0:=\lambda-\kappa-\beta(\varrho +2\eps)>0 \textrm{ and }\hat{\eps} <\frac{1}{2}\min\{\lambda_0,2\},$$
  we obtain
  \begin{align}
    &  \|\delta u(t)\|^2+\int_t^T \!\!\!\Big( \frac{\eps}{\varrho'(\varrho'+\eps)}  \|\delta v(s)\|^2+\min\{\frac{\lambda_0}{2},1\}\|\nabla u(s)\|^2\Big)\,ds\nonumber\\
    \leq &
    C(\eps,\lambda,\beta,\kappa,\hat{\eps},\varrho)
     \!\int_t^T\!\!\! \left(\| \delta u(s)\|^2 +\|\delta f(s)\|^2+\|\delta g(s)\|^2\right)\,ds
     -\!\int_t^T\!\!\!2\langle \delta u(s),\, \delta v(s)\,dW_s\rangle,\nonumber
  \end{align}
which together with
\begin{align}
  E\Big[\sup_{\tau\in[t,T]}\Big| \int_{\tau}^T
  \!\!\!2\langle \delta u(s),\, \delta v(s)&\,dW_s\rangle    \Big|
    \Big]
    \leq\, 4E \Big[\sup_{\tau\in[t,T]}\Big| \int_t^{\tau}
  \!\!\langle \delta u(s),\, \delta v(s)\,dW_s\rangle    \Big|
    \Big] \nonumber\\
    \textrm{(by BDG inequality)}\leq&\, C E\Big[\Big(  \int_t^T \|\delta u(s)\|^2
    \| \delta v(s) \|^2  \,ds \Big)^{1/2} \Big]\nonumber\\
    \leq&\, E\Big[\tilde{\eps}\sup_{s\in[t,T]}\|\delta u(s)\|^2
    +C_{\tilde{\eps}} \int_t^T \!\!\!
    \| \delta v(s) \|^2\,ds  \Big],\,\,\forall\,\tilde{\eps}>0,\nonumber
\end{align}
   implies \eqref{est-cont-paramet}  by Gronwall inequality. We complete the proof.
\end{proof}

\begin{thm}\label{thm-main}
Let assumptions $(\mathcal{A}1)-(\mathcal{A}4)$ hold with $\xi(T)\leq G$, $ d\mathbb{P} \otimes dx$-a.e.. Then RBSPDE \eqref{RBSPDE} admits a unique solution $(u,v,\mu)$ and there holds
\begin{align}
      \|u\|_{\mathcal{H}}^2+\|v\|^2_{\cL^2((L^2)^m)}
  \leq\,C\Big(\|\check{\xi}\|^2_{\mathscr{U}}+\|G\|^2_{L^2(\Omega,\sF_T;L^2)}+\|f_0\|^2_{\cL^2(L^2)}
  +\|g_0\|^2_{\cL^2((L^2)^d)}  \Big),\label{est-thm-main}
  \end{align}
  with $C$ depending on $\lambda,\varrho,\kappa,\beta,L$ and $T$.
\end{thm}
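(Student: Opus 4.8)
Uniqueness is exactly Lemma \ref{lem-unique}. For existence I use the method of continuity along the family \eqref{RBSPDE-i}, taking Corollary \ref{cor-lap} as the starting point ($\theta=0$) and Lemma \ref{lem-thm-main} as the basic contraction estimate. Let $\Theta$ be the set of $\theta\in[0,1]$ such that for \emph{every} $G\in L^2(\Omega,\sF_T;L^2)$, every $\xi$ fulfilling $(\mathcal{A}4)$ with $\xi(T)\leq G$, $d\mathbb{P}\otimes dx$-a.e., and every $(\bar f,\bar g)\in\cL^2(L^2)\times\cL^2((L^2)^d)$, RBSPDE \eqref{RBSPDE-i} with $(f_l,g_l)$ replaced by $(\bar f,\bar g)$ admits a (necessarily unique) solution. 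When $\theta=0$ this is precisely RBSPDE \eqref{RBSPDE-Laplac} with affine source $(\bar f,\bar g)$, so $0\in\Theta$ by Corollary \ref{cor-lap}; when $\theta=1$ and $\bar f=\bar g=0$ it is exactly RBSPDE \eqref{RBSPDE}. Hence it suffices to show $\Theta=[0,1]$.

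For $u\in\mathcal{H}$, $v\in\cL^2((L^2)^m)$ set $b^{f}(u,v):=f(\cdot,u,\nabla u,v)$ and $b^{g,j}(u,v):=(a^{ij}-\delta^{ij})\partial_{x_i}u+\sigma^{jr}v^{r}+g^{j}(\cdot,u,\nabla u,v)$, $j=1,\dots,d$, so that
$$\partial_{x_j}\bigl(a^{ij}\partial_{x_i}u+\sigma^{jr}v^{r}\bigr)-\Delta u+(f+\nabla\cdot g)(\cdot,u,\nabla u,v)=b^{f}(u,v)+\nabla\cdot b^{g}(u,v).$$
By the boundedness of $a,\sigma$ in $(\mathcal{A}2)$ and assumptions $(\mathcal{A}1)$, $(\mathcal{A}3)$, the maps $(u,v)\mapsto b^{f}(u,v)$ and $(u,v)\mapsto b^{g}(u,v)$ send $\mathcal{H}\times\cL^2((L^2)^m)$ Lipschitz-continuously into $\cL^2(L^2)$ and $\cL^2((L^2)^d)$, with a Lipschitz constant $K_0$ depending only on $L,\kappa,\beta,\Lambda$ and $T$ (not on $\theta$). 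Fix $\theta_0\in\Theta$ and let $C_0$ be the constant of Lemma \ref{lem-thm-main}. For $\theta\in[0,1]$ with $|\theta-\theta_0|<\delta_0:=(1+2C_0K_0)^{-1}$ and fixed data $(G,\xi,\bar f,\bar g)$, define $\mathcal{T}_\theta$ on $\mathcal{H}\times\cL^2((L^2)^m)$ by letting $\mathcal{T}_\theta(w,\zeta):=(u,v)$ be the unique solution at parameter $\theta_0$ of \eqref{RBSPDE-i} with affine source $\bigl(\bar f+(\theta-\theta_0)b^{f}(w,\zeta),\ \bar g+(\theta-\theta_0)b^{g}(w,\zeta)\bigr)$ (well defined since $\theta_0\in\Theta$). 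Any fixed point of $\mathcal{T}_\theta$, together with its associated measure, solves \eqref{RBSPDE-i} at parameter $\theta$, so that $\theta\in\Theta$. Applying Lemma \ref{lem-thm-main} at the \emph{fixed} parameter $\theta_0$ to the two affine data above,
$$\bigl\|\mathcal{T}_\theta(w_1,\zeta_1)-\mathcal{T}_\theta(w_2,\zeta_2)\bigr\|_{\mathcal{H}\times\cL^2((L^2)^m)}\leq C_0K_0|\theta-\theta_0|\,\bigl\|(w_1,\zeta_1)-(w_2,\zeta_2)\bigr\|_{\mathcal{H}\times\cL^2((L^2)^m)}\leq\tfrac12\bigl\|(w_1,\zeta_1)-(w_2,\zeta_2)\bigr\|_{\mathcal{H}\times\cL^2((L^2)^m)},$$
so $\mathcal{T}_\theta$ is a contraction and has a fixed point; hence $\theta\in\Theta$. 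Since $\delta_0$ is independent of $\theta_0$, covering $[0,1]$ by finitely many such intervals from $0\in\Theta$ gives $\Theta=[0,1]$, and $\theta=1$, $\bar f=\bar g=0$ yields a solution $(u,v,\mu)$ of RBSPDE \eqref{RBSPDE}.

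It remains to prove \eqref{est-thm-main}. Let $\check\xi\in\mathscr{U}$ dominate $\xi$ from above in $(\mathcal{A}4)$, say $-d\check\xi=[\Delta\check\xi+\check f+\nabla\cdot\check g]\,dt-\check v^{r}\,dW^r_t$, $\check\xi(T)=\psi$, with $\|\psi\|^2_{L^2(\Omega,\sF_T;L^2)}+\|\check f+\nabla\cdot\check g\|^2_{\cL^2(H^{-1})}=\|\check\xi\|^2_{\mathscr{U}}$; both $u$ and $\check\xi$ are almost surely quasi-continuous (Definition \ref{def-RBSPDE}, Proposition \ref{prop-quasi-conti-Cont_TL2}(i)). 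Freezing $\bar f:=f(\cdot,u,\nabla u,v)\in\cL^2(L^2)$, $\bar g:=g(\cdot,u,\nabla u,v)\in\cL^2((L^2)^d)$, the difference $w:=u-\check\xi$ satisfies a BSPDE of the form \eqref{BSPDE-refl-linear-1} with leading coefficients $(a,\sigma)$, diffusion $v-\check v$, measures $\mu_1=\mu$, $\mu_2=0$, terminal value $G-\psi$, and affine source in $\cL^2(L^2)\times\cL^2((L^2)^d)$ built from $\bar f-\check f$ and $(a^{ij}-\delta^{ij})\partial_{x_i}\check\xi+\sigma^{jr}\check v^r+\bar g^j-\check g^j$. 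Applying the It\^o formula of Theorem \ref{thm-ito} with $\Phi(s,y)=y^2\in\mathcal{C}^{1,2}$ to $\|w(t)\|^2$, using the super-parabolicity in $(\mathcal{A}2)$, the $\varrho$-inequality on the $\sigma^{jr}(v-\check v)^r$ cross term, and $f(\cdot,u,\nabla u,v)=f_0+[\,\cdot\,]$, $g(\cdot,u,\nabla u,v)=g_0+[\,\cdot\,]$, and absorbing lower-order terms by Young's inequality (here $\lambda-\kappa-\varrho'\beta>0$ enters), exactly as in the proof of Lemma \ref{lem-unique}, one obtains with constants $c_1,c_2>0$
\begin{align*}
&\|w(t)\|^2+c_1\!\int_t^T\!\!\|\nabla w(s)\|^2\,ds+c_2\!\int_t^T\!\!\|(v-\check v)(s)\|^2\,ds\\
\leq&\;\|G-\psi\|^2+2\!\int_{[t,T]\times\cO}\!\!w(s,x)\,\mu(ds,dx)+C\!\int_t^T\!\!\Bigl(\|w(s)\|^2+\|f_0(s)\|^2+\|g_0(s)\|^2+\|\check f(s)\|^2+\|\check g(s)\|^2\Bigr)ds\\
&\;-2\!\int_t^T\!\!\langle w(s),\,(v-\check v)(s)\,dW_s\rangle.
\end{align*}
By the Skorohod condition $\mu$ is carried by $\{u=\xi\}$, on which $w=\xi-\check\xi\leq0$ q.e.\ (two almost surely quasi-continuous processes ordered $d\mathbb{P}\otimes dt\otimes dx$-a.e.\ are ordered q.e.), while $\mu\geq0$ charges no polar set (Remark \ref{rmk-capacity}); hence $\int_{[t,T]\times\cO}w\,d\mu\leq0$. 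Controlling the stochastic integral by the Burkholder--Davis--Gundy inequality and invoking Gronwall's inequality gives $\|w\|_{\mathcal{H}}^2+\|v-\check v\|^2_{\cL^2((L^2)^m)}\leq C\bigl(\|G\|^2_{L^2(\Omega,\sF_T;L^2)}+\|\check\xi\|^2_{\mathscr{U}}+\|f_0\|^2_{\cL^2(L^2)}+\|g_0\|^2_{\cL^2((L^2)^d)}\bigr)$; since $\|\check\xi\|_{\mathcal{H}}^2+\|\check v\|^2_{\cL^2((L^2)^m)}\leq C\|\check\xi\|^2_{\mathscr{U}}$ by Remark \ref{rmk-math-U}, the triangle inequality yields \eqref{est-thm-main}. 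The main obstacle is the continuity step: one must recast the $\theta$-interpolated leading operator together with the quasi-linear nonlinearity as an affine perturbation of the $\theta_0$-problem whose perturbation map is Lipschitz on $\mathcal{H}\times\cL^2((L^2)^m)$ with constant independent of $\theta$, so that Lemma \ref{lem-thm-main} furnishes a contraction whose radius $\delta_0$ is uniform in $\theta_0$.
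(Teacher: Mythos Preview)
Your proof is correct and follows essentially the same route as the paper: uniqueness via Lemma \ref{lem-unique}, existence by the method of continuity along the $\theta$-interpolated family starting from Corollary \ref{cor-lap} at $\theta=0$ and using Lemma \ref{lem-thm-main} for the uniform-step contraction, and the a priori estimate by applying the It\^o formula of Theorem \ref{thm-ito} to $\|u-\check\xi\|^2$ and discarding the measure term via the Skorohod condition together with $\xi\leq\check\xi$. The only differences from the paper are cosmetic --- you formalize the perturbation maps $b^f,b^g$ and the set $\Theta$ a bit more explicitly, and you represent $\check\xi$ with Laplacian leading term rather than with $(a,\sigma)$ (equivalent by Remark \ref{rmk-math-U}).
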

\begin{proof}
  First the uniqueness follows from Lemma \ref{lem-unique}. It remains to prove the existence and estimate \eqref{est-thm-main}.

  For $\theta\in[0,1]$, consider RBSPDE
    \begin{equation}\label{RBSPDE-theta}
  \left\{\begin{array}{l}
  \begin{split}
  -du(t,x)=\,&\displaystyle \Bigl[(1-\theta)\Delta u(t,x)+\theta\partial_{x_j}\bigl(a^{ij}(t,x)\partial_{x_i} u(t,x)
        +\sigma^{jr}(t,x) v^{r}(t,x)     \bigr) \\
        &\displaystyle +\theta(f+ \nabla \cdot g)(t,x,u(t,x),\nabla u(t,x),v(t,x))
                \Bigr]\, dt+\mu(dt,x)\\ &\displaystyle
-v^{r}(t,x)\, dW_{t}^{r}, \quad
                     (t,x)\in Q:=[0,T]\times \mathcal {O};\\
    u(T,x)=\, &G(x), \quad x\in\cO;\\
    u(t,x)\geq\,& \xi(t,x),\,\,d\mathbb{P}\otimes dt\otimes dx-a.e.;\\
    \int_Q \big( u(t,x)&-\xi(t,x) \big)\,\mu(dt,dx)=0,\,a.s..
    \end{split}
  \end{array}\right.
\end{equation}
  Assume that RBSPDE \eqref{RBSPDE-theta} has a unique solution $ (u,v,\mu)$ for $\theta=\theta_0.$ Theorem \ref{thm-Lapl} and Corollary \ref{cor-lap} guarantee that this assumption is true for $\theta_0=0.$ For any $(u_1,v_1)\in\mathcal{H}\times\cL^2((L^2)^m)$, the following RBSPDE
      \begin{equation}\label{RBSPDE-thetaa}
  \left\{\begin{array}{l}
  \begin{split}
  -du(t,x)=\,&\displaystyle \Bigl[(1-\theta_0)\Delta u(t,x)+\theta_0\partial_{x_j}\bigl(a^{ij}(t,x)\partial_{x_i} u(t,x)
        +\sigma^{jr}(t,x) v^{r}(t,x)     \bigr) \\
        &\displaystyle +\theta_0(f+ \nabla \cdot g)(t,x,u(t,x),\nabla u(t,x),v(t,x))
                \Bigr]\, dt+\mu(dt,dx)\\ &\displaystyle
        +\Bigl[(\theta-\theta_0)\partial_{x_j}\bigl(a^{ij}(t,x)\partial_{x_i} u_1(t,x)
        +\sigma^{jr}(t,x) v_1^{r}(t,x) -\partial_{x_j}u_1(t,x)    \bigr)\\
        &\displaystyle +(\theta-\theta_0)(f+ \nabla \cdot g)(t,x,u_1(t,x),\nabla u_1(t,x),v_1(t,x))
        \Bigr]\,dt\\ &\displaystyle
-v^{r}(t,x)\, dW_{t}^{r}, \quad
                     (t,x)\in Q;\\
    u(T,x)=\, &G(x), \quad x\in\cO;\\
    u(t,x)\geq\,& \xi(t,x),\,\,d\mathbb{P}\otimes dt\otimes dx-a.e.;\\
    \int_Q \big( u(t,x)&-\xi(t,x) \big)\,\mu(dt,dx)=0,\,a.s.,
    \end{split}
  \end{array}\right.
\end{equation}
  admits a unique solution $(u,v,\mu)$ and we can define the
solution map as follows
$$\mathfrak{R}_{\theta_0}:~\mathcal{H}\times \cL^2((L^2)^m) \rightarrow
\mathcal{H}\times \cL^2((L^2)^m) ,\quad (u_1,v_1)\mapsto (u,v).$$
Note that there is always a unique stochastic regular measure $\mu$ along with  $\mathfrak{R}_{\theta_0}(u_1,v_1)$.

For any $(u_k,v_k)\in\mathcal{H}\times \cL^2((L^2)^m)$, denote $(\bar{u}_k,\bar{v}_k)=\mathfrak{R}_{\theta_0}(u_k,v_k)$, $k=1,2$. By Lemma \ref{lem-thm-main}, we have
\begin{align}
  &\|\bar{u}_1-\bar{u}_2\|_{\mathcal{H}} + \|\bar{v}_1-\bar{v}_2\|_{\cL^2((L^2)^m)}
  \nonumber\\
  \leq&\,
  C|\theta-\theta_0| \Big(
  \|a\nabla (u_1-u_2)
        +\sigma (v_1-v_2) -\nabla (u_1-u_2)
        +g(u_1,\nabla u_1,v_1)-g(u_2,\nabla u_2,v_2)\|_{\cL^2((L^2)^d)}\nonumber\\
  &\,\quad\quad+\|f(u_1,\nabla u_1,v_1)-f(u_2,\nabla u_2,v_2)\|_{\cL^2(L^2)}
  \Big)
  \nonumber\\
  \leq&\,
   \bar{C}|\theta-\theta_0|\Big(
   \|{u}_1-{u}_2\|_{\mathcal{H}} + \|{v}_1-{v}_2\|_{\cL^2((L^2)^m)}
   \Big),\nonumber
\end{align}
  where the positive constant $\bar{C}$ is finite and does not depend on  $\theta $ and $\theta_0.$ If $\bar{C}|\theta-\theta_0|<1/2$, $\mathfrak{R}_{\theta_0}$ is a
contraction mapping and it has a unique fixed point $(u,v)\in\mathcal{H}\times \cL^2((L^2)^m)$ which together with some stochastic regular measure $\mu$ solves RBSPDE \eqref{RBSPDE-theta}. In this way, if \eqref{RBSPDE-theta} is
 solvable for $\theta_0$, then it is solvable for $\theta$ satisfying
 $\bar{C}|\theta-\theta_0|<1/2$. In finite number of steps starting
  from $\theta=0$, we arrive at $\theta=1$. Hence, RBSPDE \eqref{RBSPDE} admits a unique solution $(u,v,\mu)$.

  By assumption $(\mathcal{A}4)$, there exist $G_1\in L^2(\Omega,\sF_T;L^2)$ and $(f_1,g_1)\in\cL^2(L^2)\times \cL^2((L^2)^d)$ such that $\check{\xi}\in\mathscr{U}(a,\sigma,G_1,f_1,g_1)$ with diffusion term $\zeta$. We apply It\^o formula to $|u-\check{\xi}|^2$ and obtain
   \begin{align}
    &  \|u-\check{\xi}(t)\|^2+\int_t^T \!\!\!\| \zeta(s)-v(s)\|^2\,ds
    -\|G-G_1\|^2+\!\int_t^T\!\!\!2\langle (u-\check{\xi})(s),\, (v-\zeta)(s)\,dW_s\rangle\nonumber\\
    =&\,
    -\!\int_t^T\!\!\!  \langle 2\partial_{x^i}(u-\check{\xi})(s),\,a^{ij}\partial_{x^j}(u-\check{\xi})(s)
    + \sigma^{ir} (v-\zeta)^r(s)+g^i(s,u,\nabla u,v)-g_1^i(s) \rangle\,ds
    \nonumber\\
        &\,+\!\int_{[t,T]\times \cO}\!\!2\big(u-\xi+\xi-\check{\xi}\big)(s,x)\,\mu(ds,dx)
        +\!\!\int_t^T\!\!\!\! \langle 2  (u-\check{\xi})(s),\,  f(s,u,\nabla u,v)- f_1(s)\rangle\,ds\nonumber\\
    \leq &\,
        -\int_t^T\!\!\!  \langle \partial_{x^i}(u-\check{\xi})(s),\,
        (2a^{ij}-\varrho \sigma^{jr}\sigma^{ir})\partial_{x^j}(u-\check{\xi}) (s)\rangle\, ds
        +\int_t^T\!\!\!\frac{1}{\varrho}\|\zeta(s)- v(s)\|^2\,ds
        \nonumber\\
     &\,
        +\int_t^T\!\!\! \Big(2\|\nabla (u-\check{\xi})(s)\|\big(L\|u(s)\|+\frac{\kappa}{2} \|\nabla u(s)\|+\beta^{1/2}\|v(s)\| +\|g_0(s)\|+\|g_1\| \big)
        \nonumber \\
        &\,\quad \quad +2  \| (u-\check{\xi})(s)\| \Big( L\big(\|u(s)\|+ \|\nabla u(s)\|+\| v(s)\|\big) +\|f_0\|+\|f_1\|\Big)\Big)\,ds
        \nonumber\\
    \leq &\,
        -\!\!\int_t^T\!\!\!
        \big(\lambda-\kappa-\beta(\varrho' +2\eps)\big)\|\nabla (u-\check{\xi})(s)\|^2\, ds
        +\!\!\int_t^T\!\!\!\Big(\frac{1}{\varrho}+\frac{1}{\varrho'+\eps}\Big)\|\zeta(s)- v(s)\|^2
        \nonumber\\
    &\, +C
        \!\int_t^T\!\!\! \left(\| (u-\check{\xi})(s)\|^2+\|\check{\xi}(s)\|_1^2+\|\zeta(s)\|^2 +\| f_0(s)\|^2+\| f_1(s)\|^2+\| g_1(s)\|^2+\|g_0(s)\|^2\right)\,ds,\nonumber
  \end{align}
  where $\frac{1}{\varrho}+\frac{1}{\varrho'}=1$ and $\eps>0$ is small enough. In a similar way to the proof of Lemma \ref{lem-thm-main}, using the BDG and Gronwall inequalities, we obtain
  \begin{align*}
    &\|u-\check{\xi}\|_{\mathcal{H}}+\|\zeta-v\|_{\cL^2((L^2)^m)}\\
  \leq\,& C\Big(\|G-G_1\|_{L^2(\Omega,\sF_T;L^2)}
  +\|\check{\xi}\|_{\cL^2(H_0^1)}+\|\zeta\|_{\cL^2((L^2)^m)}
  \\
  &\quad+\|f_0\|_{\cL^2(L^2)} + \|f_1\|_{\cL^2(L^2)}+\|g_0\|_{\cL^2((L^2)^d)}+\|g_1\|_{\cL^2((L^2)^d)}   \Big),
  \end{align*}
  which implies estimate \eqref{est-thm-main}. We complete the proof.
\end{proof}
\begin{rmk}\label{rmk-thm-main}
  It is worth noting that the boundedness of coefficients $a$ and $\sigma$ guarantees the finiteness of $\bar{C}$ in the above proof. This is why we assume $a$ and $\sigma$ are bounded in $(\mathcal{A}2)$.
\end{rmk}

%
%
%
\section{Connections with reflected BSDEs and optimal stopping problems}

In this section, we assume $\cO=\bR^d$. The connections between RBSPDEs and reflected BSDEs will be established on an enlarged filtered probability space. Let $(\Omega',\sF',\{\sF_t'\}_{t\geq 0},\bP')$ be another complete filtered probability space on which is defined a $d$-dimensional standard Brownian motion $B=\{B_t:t\in[0,\infty)\}$ such that $\{\sF_t'\}_{t\geq0}$ is the natural filtration generated by $B$ and augmented by all the
$\bP'$-null sets in $\sF'$. Set
$$(\bar{\Omega},\bar{\sF},\{\bar{\sF}_t\}_{t\geq 0},\bar{\bP})
=(\Omega'\times\Omega,\sF'\otimes\sF,\{{\sF}'_t\otimes\sF_t\}_{t\geq 0},\bP'\otimes\bP).$$
Then $B$ and $W$ are two mutually independent Wiener processes on $(\bar{\Omega},\bar{\sF},\{\bar{\sF}_t\}_{t\geq 0},\bar{\bP})$.

Given $(\bar{f},\bar{g})\in \cL^2(L^2)\times\cL^2((L^2)^d) $ and $G\in L^2(\Omega,\sF_T;L^2)$, let $(u,v)\in\mathcal{H}\times \cL^2((L^2)^m)$ be the unique solution  of the following BSPDE
\begin{equation}\label{BSPDE-sec-optimal-stp}
  \left\{\begin{array}{l}
  \begin{split}
  -du(t,x)=\,&\displaystyle \big[\Delta u(t,x) +(\bar{f}+ \nabla \cdot \bar{g})(t,x)
                \big]\, dt
           -v^{r}(t,x)\, dW_{t}^{r},\,
                     (t,x)\in Q;\\
    u(T,x)=\, &G(x), \quad x\in\cO.
    \end{split}
  \end{array}\right.
\end{equation}
Take $\{g_n;n\in\bN\}\subset\cL^2((H_0^1)^d)$ such that $\lim_{n\rightarrow \infty} {\|g_n-\bar{g}\|_{\cL^2((L^2)^d)}}=0$. For each $n$, the above BSPDE \eqref{BSPDE-sec-optimal-stp} with $g$ replaced by $g_n$ admits a unique solution $(u_n,v_n)\in\mathcal{H}\times\cL^2((L^2)^m)$. By the generalized It\^o-Wentzell formula (see \cite[Theorem 1]{Krylov_09}) and the probabilistic interpretation for the divergence (see \cite[Lemma 3.1]{Stoica-2003}), one has
\begin{align}
  u_n(t,x+\sqrt{2}B_t)=\,&
  G(x+\sqrt{2}B_T)+\!\int_{t}^T\!\!\!\Big(\bar{f}(s,x+\sqrt{2}B_s)+\nabla\cdot g_n(s,x+\sqrt{2}B_s)\Big)\,ds
  \nonumber\\
  &-\!\int_t^T\!\!\!v_n(s,x+\sqrt{2}B_s)\,dW_s
  -\!\int_t^T\!\!\!\sqrt{2}\nabla u_n(s,x+\sqrt{2}B_s)\,dB_s
  \nonumber\\
  =\,&
  G(x+\sqrt{2}B_T)+\!\int_{t}^T\!\!\!\bar{f}(s,x+\sqrt{2}B_s)\,ds+\frac{1}{\sqrt{2}}\!\int_{t}^T\!\!\! g_n(s,x+\sqrt{2}B_s)*dB_s
  \nonumber\\
  &-\!\int_t^T\!\!\!v_n(s,x+\sqrt{2}B_s)\,dW_s
  -\!\int_t^T\!\!\!\sqrt{2}\nabla u_n(s,x+\sqrt{2}B_s)\,dB_s
  ,\quad d\bar{\mathbb{P}}\otimes dt \otimes dx-a.e.,\nonumber
\end{align}
where the compositions like $f(s,x+\sqrt{2}B_s)$ make senses $d\bar{\mathbb{P}}\otimes dt \otimes dx$-a.e. by \cite[Theorem 14.3]{BarlesLesigne_BSDEPDE97_inbook} (see also
\cite[Lemma 3.1]{Delbaen-Qiu-Tang-NS-2012}) and
$$
  \int_t^s\!\!\!g_n(\tau,x+\sqrt{2} B_{\tau})\ast dB_{\tau}
  =\sum_{i=1}^d\left( \int_t^s\!\!\! g_n^i(\tau,x+\sqrt{2}B_{\tau})\,dB_{\tau}^i
  +\!\int_t^s \!\!\!g^i_n(\tau,x+\sqrt{2}B_{\tau})\,\overleftarrow{dB}^i_{\tau}  \right),
  $$
   with  integral $\overleftarrow{dB}^i_{\tau}$ being the \emph{backward} stochastic integral (see \cite{NualaPardou98}) and  $dB_{\tau}^i$ the standard It\^o integral. Letting $n\longrightarrow \infty$ and in view of Remark \ref{rmk-math-U}, it is straightforward to check that
$(u_n,v_n)$ converges to $(u,v)$ in $\mathcal{H}\times \cL^2((L^2)^m)$ and
\begin{align}
  u(t,x+\sqrt{2}B_t)
  =\,&
  G(x+\sqrt{2}B_T)+\!\int_{t}^T\!\!\!\bar{f}(s,x+\sqrt{2}B_s)\,ds+\frac{1}{\sqrt{2}}\!\int_{t}^T\!\!\! \bar{g}(s,x+\sqrt{2}B_s)*dB_s
  \nonumber\\
  &-\!\int_t^T\!\!\!v(s,x+\sqrt{2}B_s)\,dW_s
  -\!\int_t^T\!\!\!\sqrt{2}\nabla u(s,x+\sqrt{2}B_s)\,dB_s
  ,\quad d\bar{\mathbb{P}}\otimes dt \otimes dx-a.e..\label{BSDE-linear-stp}
\end{align}
In summary, we have
\begin{lem}\label{lem-linear-stp}
  For the unique solution $(u,v)$ to BSPDE \eqref{BSPDE-sec-optimal-stp}, one has the stochastic representation \eqref{BSDE-linear-stp} and by BSDE theory (see \cite{Karoui_Peng_Quenez,ParPeng_90}), $\{u(s,x+\sqrt{2}B_s)\}_{s\in[0,T]}$ admits an almost surely continuous version for almost every $x\in\bR^d$.
\end{lem}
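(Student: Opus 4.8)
The plan is to treat the claim in two independent pieces, exactly as the statement bundles them. The representation formula \eqref{BSDE-linear-stp} is essentially a composition identity obtained by pushing the weak solution $u$ of the \emph{backward} SPDE \eqref{BSPDE-sec-optimal-stp} along the characteristics $x\mapsto x+\sqrt{2}B_t$ of the Laplacian. The second assertion --- that $s\mapsto u(s,x+\sqrt{2}B_s)$ has an a.s.\ continuous version for a.e.\ $x$ --- is then a soft consequence of \eqref{BSDE-linear-stp} together with standard BSDE regularity, because the right-hand side of \eqref{BSDE-linear-stp} is, for fixed $x$, a continuous semimartingale in $s$ on the enlarged space $(\bar\Omega,\bar\sF,\{\bar\sF_t\},\bar\bP)$.

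First I would establish \eqref{BSDE-linear-stp} at the level of the approximating equations. Fix $n$ and apply the generalized It\^o--Wentzell formula of Krylov \cite{Krylov_09} to $u_n(t,x+\sqrt2 B_t)$: since $u_n$ solves a backward SPDE driven by $W$ with coefficient field smooth enough (here $g_n\in\cL^2((H_0^1)^d)$, which is precisely what lets us pass the divergence inside), composition with the independent Brownian flow $x+\sqrt2 B_t$ produces the $\Delta u_n\,dt$ term cancelling against the second-order It\^o correction $\tfrac12\cdot 2\,\Delta u_n\,dt$, leaves the drift $\bar f$, the martingale term in $W$ (namely $-v_n\,dW$), and a new martingale term $-\sqrt2\nabla u_n\,dB$ coming from the flow. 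The divergence term $\nabla\cdot g_n(s,x+\sqrt2 B_s)$ must then be rewritten using the probabilistic interpretation of the divergence (Stoica \cite{Stoica-2003}, Lemma 3.1): it equals $\tfrac{1}{\sqrt2}$ times the Stratonovich-type symmetric integral $g_n(s,x+\sqrt2 B_s)\ast dB_s$, the sum of a forward It\^o and a backward It\^o integral as displayed in the excerpt. That all the compositions $f(s,x+\sqrt2 B_s)$, $g_n(s,x+\sqrt2 B_s)$, etc., are well defined $d\bar\bP\otimes dt\otimes dx$-a.e.\ is exactly \cite[Theorem 14.3]{BarlesLesigne_BSDEPDE97_inbook} (cf.\ \cite{Delbaen-Qiu-Tang-NS-2012}); I would simply invoke it.

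Next I would pass to the limit $n\to\infty$. By Remark \ref{rmk-math-U} (the $\mathscr{U}$-estimate), $\|u_n-u\|_{\mathcal H}+\|v_n-v\|_{\cL^2((L^2)^m)}\le C\|g_n-\bar g\|_{\cL^2((L^2)^d)}\to 0$, and $\|\nabla u_n-\nabla u\|_{\cL^2((L^2)^d)}\to0$ as well. Composition with the independent flow $x+\sqrt2 B_\cdot$ is an $L^2(d\bar\bP\otimes dt\otimes dx)$-isometry (Fubini plus the Gaussian density bound, or directly \cite[Theorem 14.3]{BarlesLesigne_BSDEPDE97_inbook}), so each term in the displayed identity for $u_n$ converges in $L^2(\bar\Omega\times[0,T]\times\bR^d)$ to the corresponding term with $u,v,\bar g$ in place of $u_n,v_n,g_n$; for the stochastic integrals I would use the It\^o (resp.\ backward It\^o) isometry after this composition step. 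Extracting a subsequence convergent $d\bar\bP\otimes dt\otimes dx$-a.e.\ gives \eqref{BSDE-linear-stp}. For the final continuity statement: for a.e.\ fixed $x$, read \eqref{BSDE-linear-stp} as a backward equation on $(\bar\Omega,\bar\sF,\{\bar\sF_t\},\bar\bP)$ with terminal datum $G(x+\sqrt2 B_T)\in L^2(\bar\sF_T)$, generator $\bar f(s,x+\sqrt2 B_s)$, and martingale integrands $v(s,x+\sqrt2 B_s)$, $\sqrt2\nabla u(s,x+\sqrt2 B_s)$ which lie in $\cL^2$ for a.e.\ $x$ by Fubini; standard BSDE theory (\cite{ParPeng_90,Karoui_Peng_Quenez}) then yields that the $Y$-component admits an a.s.\ continuous modification, which is the claim.

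The main obstacle is the rigorous application of the generalized It\^o--Wentzell formula to a \emph{backward} SPDE composed with an \emph{independent} forward flow: one must be careful that $B$ is independent of the filtration carrying $u_n$ (so that $u_n$ remains a suitable random field over the enlarged filtration and no extra cross-variation with $B$ appears beyond the $\nabla u_n\,dB$ term), and that the regularity $g_n\in\cL^2((H_0^1)^d)$ is genuinely enough to make both Krylov's formula and Stoica's divergence identity applicable --- this is why the approximation in $g$ is introduced in the first place, and passing the limit back is what forces the use of the sharp $\mathscr U$-estimate of Remark \ref{rmk-math-U} rather than a cruder bound.
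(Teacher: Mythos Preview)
Your proposal is correct and follows essentially the same route as the paper: the proof there (given in the paragraph immediately preceding the lemma) approximates $\bar g$ by $g_n\in\cL^2((H_0^1)^d)$, applies Krylov's generalized It\^o--Wentzell formula together with Stoica's probabilistic interpretation of the divergence to obtain the representation for $u_n(t,x+\sqrt2 B_t)$, passes to the limit via the $\mathscr U$-estimate of Remark~\ref{rmk-math-U}, and then reads off the continuity of $s\mapsto u(s,x+\sqrt2 B_s)$ from standard BSDE theory. Your identification of the main technical point---that the extra regularity $g_n\in\cL^2((H_0^1)^d)$ is precisely what is needed to invoke both Krylov's and Stoica's results before taking limits---matches the paper's rationale exactly.
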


Let $\tilde{\xi}$ satisfy assumption $(\mathcal{A}4)$ such that $\tilde{\xi}(T)\leq G$, $ d\mathbb{P}\otimes dx$-a.e., and $(\tilde{\xi}(t,x+\sqrt{2}B_t))_{t\in[0,T]}$ is a continuous process for almost every $x\in\bR^d$. We consider RBSPDE \eqref{RBSPDE-Laplac} which is equivalent to the RBSPDE \eqref{RBSPDE-Laplac1} with $\xi=\tilde{\xi}-\phi$ and $\phi\in\mathscr{U}(I,0,G,\bar{f},\bar{g})$. Corresponding to the penalized BSPDE \eqref{BSPDE-penalized}, the penalized BSDE reads
\begin{align}
  -dY_n^x(t)=\,n\big( Y_n^x(t)- \xi(t,x+\sqrt{2}B_t) \big)^-\,dt-Z_n^x(t)\,dW_t-\tilde{Z}_n^x(t)\,dB_t,\,\, t\in[0,T];\quad Y_n^x(T)=0.\nonumber
\end{align}
Set
$$K_n^x(s)=\int_0^s \!\!\!n\big( Y_n^x(t)- \xi(t,x+\sqrt{2}B_t) \big)^-\,dt. $$
By the reflected BSDE theory \cite[Page 719-723]{El_Karoui-reflec-1997}, we have for almost every $x\in\bR^d$, $Y^x_n(t)$ converges increasingly $ d\bar{\mathbb{P}}\otimes dt $-a.e., and
$$
\lim_{n,k\rightarrow \infty}E\left[ \sup_{t\in[0,T]}\!\!\big\{ |Y_n^x(t)-Y_k^x(t)|^2+|K_n^x(t)-K_k^x(t)|^2
\big\}
+\!\int_0^T\!\!\!\!\big(|Z_n^x(t)-Z_k^x(t)|^2+|\tilde{Z}_n^x(t)-\tilde{Z}_k^x(t)|^2\big)\,dt  \right]=0,
$$
with the limit
$$(Y^x,Z^x,\tilde{Z}^x,K^x)\in\cS^2(\bR)\times\cL^2(\bR^m)\times\cL^2(\bR^d)\times\cS^2(\bR)$$ satisfying reflected BSDE
\begin{equation}\label{RBSDE-linear-stp}
  \left\{\begin{array}{l}
  \begin{split}
  &Y^x(t)
  =
    K^x(T)-K^x(t)-\int_t^T\!\!\!Z^x(s)\,dW_s-\int_t^T\!\!\!\tilde{Z}^x(s)\,dB_s;\\
  &Y^x(t)\geq \xi(t,x+\sqrt{2}B_t),\,\,\,  d\bar{\mathbb{P}}\otimes dt-a.e.;\\
  &\int_0^T\!\!\!\big(Y^x(t)-\xi(t,x+\sqrt{2}B_t)\big)\,dK^x(t)=0,\,\,\, a.s.,
    \end{split}
  \end{array}\right.
\end{equation}
for almost every $x\in\bR^d$. Let $(\bar{u},\bar{v},\bar{\mu})$ be the unique solution to RBSPDE \eqref{RBSPDE-Laplac1}.
In view of the penalized procedure \eqref{BSPDE-penalized}-\eqref{eq-penaliz-4} and Lemma \ref{lem-linear-stp}, one has
\begin{align}
  (Y^x,Z^x,\tilde{Z}^x)(t)=(\bar{u},\bar{v},\sqrt{2}\nabla \bar{u})(t,x+\sqrt{2}B_t),\quad d\bar{\mathbb{P}}\otimes dt \otimes dx-a.e..\label{eq-relat-uv-stp}
\end{align}
For each $\varphi\in\mathcal{D}_T$, It\^o  formula yields
\begin{align}
  \varphi(t,x+\sqrt{2}B_t)=\varphi(t,x)+
  \!\int_0^t\!\!\!\big(\partial_s\varphi(s,x+\sqrt{2}B_s)+\Delta\varphi(s,x+\sqrt{2}B_t)\big)\,ds
  +\!\int_0^t\!\!\!\sqrt{2}\nabla\varphi(s,x+\sqrt{2}B_t)\,dB_s,\nonumber
\end{align}
and
\begin{align*}
  &Y^x(t)\varphi(t,x+\sqrt{2}B_t)
  +\sqrt{2}\!\int_t^T\!\!\!\tilde{Z}^x(s)\cdot\nabla\varphi(s,x+\sqrt{2}B_s)\,ds
  \\
  =&\int_t^T\!\!\!\varphi(s,x+\sqrt{2}B_s)\,dK^x(s)
  -\int_t^T\!\!\!\left[\varphi(s,x+\sqrt{2}B_s)\tilde{Z}^x(s)
  +\sqrt{2}Y^x(s)\nabla\varphi(s,x+\sqrt{2}B_s)\right]\,dB_s
  \\
  &-\!\int_t^T\!\!\!\varphi(s,x+\sqrt{2}B_s)Z^x(s)\,dW_s
  -\!\int_t^T\!\!\!Y^x(s)(\partial_t\varphi+\Delta\varphi)(s,x+\sqrt{2}B_s)\,ds.
\end{align*}
Integrating both sides of the above equality with respect to $ d\mathbb{P}'\otimes dx$ and inserting \eqref{eq-relat-uv-stp}, we obtain
\begin{align*}
  \langle \bar{u}(t),\,\varphi(t)\rangle
  =& -\!\int_t^T\!\!\!\langle\nabla\varphi(s),\,\nabla\bar{u}(s) \rangle\,ds
  -\!\int_t^T\!\!\!\langle\bar{u}(s),\,\partial_s\varphi(s) \rangle\,ds-\!\int_t^T\!\!\!\langle\varphi(s),\,\bar{v}(s)\,dW_s\rangle
  \\
  &+\int_{\Omega'\times \bR^d}\int_t^T \!\!\!\varphi(s,x+\sqrt{2}B_s)\,dK^x(s)\, d\mathbb{P}'\otimes dx.
\end{align*}
Thus, by (ii) of Definition \ref{def-RBSPDE}, we have
\begin{align}
  \int_Q \varphi(t,x)\,\bar{\mu}(dt,dx)
  =   \int_{\Omega'\times\bR^d}\int_0^T\!\!\! \varphi(s,x+\sqrt{2}B_s)\,dK^x(s)\, d\mathbb{P}'\otimes dx,\,\,a.s.,\quad \forall\,\varphi\in\mathcal{D}_T.\label{eq-relat-mu-stp}
\end{align}

Through \eqref{eq-relat-uv-stp} and \eqref{eq-relat-mu-stp}, we established the the relations between RBSPDE \eqref{RBSPDE-Laplac1} and reflected BSDE \eqref{RBSDE-linear-stp}. In view of  the equivalence between RBSPDEs \eqref{RBSPDE-Laplac} and \eqref{RBSPDE-Laplac1}, 
we further conclude
\begin{thm}
 Suppose that $\tilde{\xi}$ satisfies $(\mathcal{A}4)$ such that $\tilde{\xi}(T)\leq G$, $ d\mathbb{P}\otimes dx$-a.e., and $(\tilde{\xi}(t,x+\sqrt{2}B_t))_{t\in[0,T]}$ is a continuous process for almost every $x\in\bR^d$.
  Let $(u,v,\mu)$ be the unique solution to RBSPDE \eqref{RBSPDE-Laplac} and $(Y,K,Z,\tilde{Z})$ be the unique solution to the following reflected BSDE
  \begin{equation}\label{RBSDE-linear1-stp}
  \left\{\begin{array}{l}
  \begin{split}
  &Y^x(t)
  =G(x+\sqrt{2}B_T)+\!\int_t^T\!\!\!\bar{f}(s,x+\sqrt{2}B_s)\,ds
  +\frac{1}{\sqrt{2}}\!\int_t^T\!\!\!\bar{g}(s,x+\sqrt{2}B_s)*dB_s+
    K^x(T)-K^x(t)\\
  &\quad    \,\,\,  \quad\quad- \int_t^T\!\!\!Z^x(s)\,dW_s-\int_t^T\!\!\!\tilde{Z}^x(s)\,dB_s;\\
  &Y^x(t)\geq \tilde{\xi}(t,x+\sqrt{2}B_t),\,\,\,  d\bar{\mathbb{P}}\otimes dt \otimes dx-a.e.;\\
  &\int_0^T\!\!\!\big(Y^x(t)-\tilde{\xi}(t,x+\sqrt{2}B_t)\big)\,dK^x(t)=0,\,\,\, d\bar{\mathbb{P}}\otimes dx-a.e..
    \end{split}
  \end{array}\right.
\end{equation}
Then, one has
\begin{align}
  (Y^x,Z^x,\tilde{Z}^x)(t)=({u},{v},\sqrt{2}\nabla {u})(t,x+\sqrt{2}B_t),\,\, d\bar{\mathbb{P}}\otimes dt \otimes dx-a.e. ,
  \label{eq-relat-uv-mu-stp}
\end{align}
and
\begin{align}
  \int_Q \varphi(t,x)\,\mu(dt,dx)
  =   \int_{\Omega'\times\bR^d}\int_0^T\!\!\! \varphi(s,x+\sqrt{2}B_s)\,dK^x(s)\, d\mathbb{P}'\otimes dx,\,\,a.s.,\quad \forall\,\varphi\in\mathcal{D}_T. \label{eq-relat-uv-mu-stp1}
\end{align}
Furthermore, in view of connections between the reflected BSDEs and optimal stopping time problems (see \cite[Proposition 2.3.]{El_Karoui-reflec-1997}), we have
\begin{align*}
  u(t,x+\sqrt{2}B_t)
  =\esssup_{\tau\in\mathscr{J}_t}E\bigg[ &\int_t^{\tau}\!\!\!\bar{f}(s,x+\sqrt{2}B_s)\,ds
  +\frac{1}{\sqrt{2}}\!\int_t^{\tau}\!\!\!\bar{g}(s,x+\sqrt{2}B_s)*dB_s
  +\tilde{\xi}(\tau,x+\sqrt{2}B_{\tau})1_{\{\tau<T\}}\\
  &\,\,+G(x+\sqrt{2}B_T)1_{\{\tau=T\}} \Big|\bar{\sF}_t \bigg],\quad  d\bar{\mathbb{P}}\otimes dt \otimes dx-a.e.,
\end{align*}
where
$$
\mathscr{J}_t=\{ \tau\in\mathscr{J}:\,t\leq \tau\leq T  \},
$$
with $\mathscr{J}$ being the set of all the stopping times dominated by $T$.
\end{thm}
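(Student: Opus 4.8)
The plan is to transfer the correspondence already obtained for the ``homogeneous'' RBSPDE \eqref{RBSPDE-Laplac1} and the reflected BSDE \eqref{RBSDE-linear-stp} to the present setting by means of the equivalence between RBSPDEs \eqref{RBSPDE-Laplac} and \eqref{RBSPDE-Laplac1}, and then to read off the optimal stopping formula from the Snell envelope characterization of reflected BSDEs. First I would fix $\phi\in\mathscr{U}(I,0,G,\bar f,\bar g)$ with diffusion term $v^\phi$; then $(u,v,\mu)$ solves RBSPDE \eqref{RBSPDE-Laplac} if and only if $(\bar u,\bar v,\bar\mu):=(u-\phi,\,v-v^\phi,\,\mu)$ solves RBSPDE \eqref{RBSPDE-Laplac1} with obstacle $\xi=\tilde\xi-\phi$. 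By Lemma \ref{lem-linear-stp} the process $s\mapsto\phi(s,x+\sqrt2B_s)$ is a.s.\ continuous and obeys \eqref{BSDE-linear-stp} for almost every $x$; combined with the hypothesis that $(\tilde\xi(t,x+\sqrt2B_t))_{t\in[0,T]}$ is continuous, this makes $(\xi(t,x+\sqrt2B_t))_{t\in[0,T]}=(\tilde\xi-\phi)(t,x+\sqrt2B_t)$ continuous, so the penalized reflected BSDEs preceding \eqref{RBSDE-linear-stp} are well posed and the identities \eqref{eq-relat-uv-stp} and \eqref{eq-relat-mu-stp} hold for $(\bar u,\bar v,\bar\mu)$ together with the solution $(Y_0^x,Z_0^x,\tilde Z_0^x,K_0^x)$ of \eqref{RBSDE-linear-stp} (whose lower obstacle is $\xi(\cdot,x+\sqrt2B_\cdot)$ and whose terminal value is $0$).

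Next I would superpose: set $\hat Y^x(t):=Y_0^x(t)+\phi(t,x+\sqrt2B_t)$, $\hat Z^x(t):=Z_0^x(t)+v^\phi(t,x+\sqrt2B_t)$, $\hat{\tilde Z}^x(t):=\tilde Z_0^x(t)+\sqrt2\nabla\phi(t,x+\sqrt2B_t)$, and keep $K_0^x$ unchanged. Adding \eqref{BSDE-linear-stp} to \eqref{RBSDE-linear-stp} shows that $(\hat Y^x,\hat Z^x,\hat{\tilde Z}^x,K_0^x)$ satisfies the forward--backward system in \eqref{RBSDE-linear1-stp} with terminal datum $G(x+\sqrt2B_T)$; the constraint $\hat Y^x(t)\ge\tilde\xi(t,x+\sqrt2B_t)$ and the Skorohod condition are inherited from \eqref{RBSDE-linear-stp} because $\hat Y^x-\tilde\xi(\cdot,x+\sqrt2B_\cdot)=Y_0^x-\xi(\cdot,x+\sqrt2B_\cdot)$. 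By uniqueness of the solution of the reflected BSDE \eqref{RBSDE-linear1-stp} (see \cite{El_Karoui-reflec-1997}) this quadruple is the $(Y^x,Z^x,\tilde Z^x,K^x)$ of the statement; in particular $K^x=K_0^x$. Unwinding $\bar u=u-\phi$, $\bar v=v-v^\phi$ and using \eqref{eq-relat-uv-stp} then yields $Y^x(t)=\bar u(t,x+\sqrt2B_t)+\phi(t,x+\sqrt2B_t)=u(t,x+\sqrt2B_t)$, and likewise $Z^x(t)=v(t,x+\sqrt2B_t)$ and $\tilde Z^x(t)=\sqrt2\nabla u(t,x+\sqrt2B_t)$, $ d\bar\bP\otimes dt\otimes dx$-a.e., which is \eqref{eq-relat-uv-mu-stp}. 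Since $\mu=\bar\mu$ and $K^x=K_0^x$, the measure identity \eqref{eq-relat-uv-mu-stp1} is exactly \eqref{eq-relat-mu-stp}.

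For the last assertion I would invoke the Snell envelope representation of the solution of the reflected BSDE, i.e.\ Proposition 2.3 of \cite{El_Karoui-reflec-1997}, applied to \eqref{RBSDE-linear1-stp}: there the generator does not depend on $(Y,Z,\tilde Z)$, the terminal reward is $G(x+\sqrt2B_T)$, the lower obstacle is $\tilde\xi(t,x+\sqrt2B_t)$, and the reward accrued on $[t,\tau]$ is $\int_t^\tau\bar f(s,x+\sqrt2B_s)\,ds+\tfrac1{\sqrt2}\int_t^\tau\bar g(s,x+\sqrt2B_s)*dB_s$ (a mild rewriting is needed to absorb the forward part of the $*dB$ integral into the martingale part and to handle its backward part, but the Snell envelope argument is otherwise standard); combined with $u(t,x+\sqrt2B_t)=Y^x(t)$ this gives the announced formula over the stopping times $\tau\in\mathscr J_t$.

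I expect the main obstacle to lie not in the superposition argument above, which is a routine linear manipulation once \eqref{eq-relat-uv-stp}--\eqref{eq-relat-mu-stp} are available, but in guaranteeing that all compositions with the flow $x\mapsto x+\sqrt2B_\cdot$ are meaningful $ d\bar\bP\otimes dt\otimes dx$-a.e.\ --- especially the divergence term, which becomes the mixed forward/backward Stratonovich-type integral $\tfrac1{\sqrt2}\bar g*dB$ --- and in checking that the obstacle process along this flow is continuous, so that the classical reflected BSDE theory of \cite{El_Karoui-reflec-1997} (well-posedness, comparison, Snell envelope) genuinely applies. These are exactly the points for which Lemma \ref{lem-linear-stp}, the generalized It\^o--Wentzell formula \cite{Krylov_09}, the probabilistic interpretation of the divergence \cite{Stoica-2003}, and the composition results \cite{BarlesLesigne_BSDEPDE97_inbook,Delbaen-Qiu-Tang-NS-2012} are used in the discussion preceding the theorem.
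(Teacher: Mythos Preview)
Your proposal is correct and follows essentially the same approach as the paper. The paper's ``proof'' is really the discussion immediately preceding the theorem: it establishes \eqref{eq-relat-uv-stp} and \eqref{eq-relat-mu-stp} for the homogeneous RBSPDE \eqref{RBSPDE-Laplac1} via the penalization scheme and Lemma~\ref{lem-linear-stp}, and then simply says ``in view of the equivalence between RBSPDEs \eqref{RBSPDE-Laplac} and \eqref{RBSPDE-Laplac1}, we further conclude'' --- your explicit superposition $\hat Y^x=Y_0^x+\phi(\cdot,x+\sqrt2B_\cdot)$ is precisely what makes that sentence rigorous, and your identification of the technical ingredients (continuity of the shifted obstacle, the $*dB$ representation of the divergence, composition results) matches the paper's use of them.
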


\begin{cor}
  Under the same hypothesis of Theorem \ref{thm-main}, we assume further that $(\xi(t,x+\sqrt{2}B_t))_{t\in[0,T]}$ is a continuous process for almost every $x\in\bR^d$. Let $(u,v,\mu)$ be the unique solution of RBSPDE \eqref{RBSPDE}. Set
  \begin{align}
  (Y^x_t,Z^x_t,\tilde{Z}^x_t)=({u},{v},\sqrt{2}\nabla {u})(t,x+\sqrt{2}B_t),
  \label{eq-relat-uv-mu-stp2}
\end{align}
Then $(Y,Z,\tilde{Z})$ admits a version (denoted by itself) which together with some increasing parameterized process $K^x$ consists of a solution to the following reflected BSDE
  \begin{equation}\label{RBSDE-linear1-stp1}
  \left\{\begin{array}{l}
  \begin{split}
  &Y^x_t
  =G(x+\sqrt{2}B_T) +K^x_T-K^x_t
    +\!\int_t^T\!\!\!{f}(s,x+\sqrt{2}B_s,Y^x_s,\frac{1}{\sqrt{2}}\tilde{Z}^x_s,Z^x_s)\,ds\\
  &\quad\quad\quad\,\,
  +\!\int_t^T\!\!\!\frac{1}{\sqrt{2}}\Big(\frac{1}{\sqrt{2}}(a-I)(s,x+\sqrt{2}B_s)\tilde{Z}^x_s
     +\sigma(s,x+\sqrt{2}B_s)Z^x_s
\\
  &\quad    \,\,\,  \quad\quad
  +{g}(s,x+\sqrt{2}B_s,Y^x_s,\frac{1}{\sqrt{2}}\tilde{Z}^x_s,Z^x_s)\Big)*dB_s- \int_t^T\!\!\!Z^x_s\,dW_s-\int_t^T\!\!\!\tilde{Z}^x_s\,dB_s
  ;\\
  &Y^x_t\geq \xi(t,x+\sqrt{2}B_t),\,\,\,  d\bar{\mathbb{P}}\otimes dt \otimes dx-a.e.;\\
  &\int_0^T\!\!\!\big(Y^x_t-\xi(t,x+\sqrt{2}B_t)\big)\,dK^x(t)=0,\,\,\, d\bar{\mathbb{P}}\otimes dx-a.e..
    \end{split}
  \end{array}\right.
\end{equation}
Furthermore, for any $\varphi\in\mathcal{D}_T$,
\begin{align*}
  \int_Q \varphi(t,x)\,\mu(dt,dx)
  =   \int_{\Omega'\times\bR^d}\int_0^T\!\!\! \varphi(s,x+\sqrt{2}B_s)\,dK^x(s)\, d\mathbb{P}'\otimes dx,\,\,a.s.,
\end{align*}
and
\begin{align*}
  u(t,x+\sqrt{2}B_t)
  =\esssup_{\tau\in\mathscr{J}_t}E\bigg[ &\int_t^{\tau}\!\!\!{f}(s,x+\sqrt{2}B_s,Y^x_s,\frac{1}{\sqrt{2}}\tilde{Z}^x_s,Z^x_s)\,ds
  +\frac{1}{\sqrt{2}}\!\int_t^{\tau}\!\!\!\tilde{g}_s*dB_s+G(x+\sqrt{2}B_T)1_{\{\tau=T\}}\\
  &+\xi(\tau,x+\sqrt{2}B_{\tau})1_{\{\tau<T\}}
  \,\, \Big|\bar{\sF}_t \bigg],\quad  d\bar{\mathbb{P}}\otimes dt \otimes dx-a.e.,
\end{align*}
with
$$
\tilde{g}_s=\frac{1}{\sqrt{2}}(a-I)(s,x+\sqrt{2}B_s)\tilde{Z}^x_s
     +\sigma(s,x+\sqrt{2}B_s)Z^x_s
  +{g}(s,x+\sqrt{2}B_s,Y^x_s,\frac{1}{\sqrt{2}}\tilde{Z}^x_s,Z^x_s).
$$
\end{cor}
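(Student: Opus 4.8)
The plan is to reduce the general quasi-linear RBSPDE \eqref{RBSPDE} to a reflected BSPDE with \emph{Laplacian} leading part by freezing the non-Laplacian terms along the (already known) solution, so that the preceding theorem (for RBSPDE \eqref{RBSPDE-Laplac}) applies verbatim, and then to ``unfreeze'' the coefficients using the probabilistic representation of $(u,\nabla u,v)$ along the shifted Brownian path $x+\sqrt2 B$.

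First I would take $(u,v,\mu)$ to be the unique solution of RBSPDE \eqref{RBSPDE} granted by Theorem \ref{thm-main} and rewrite it in Laplacian leading form: since $\partial_{x_j}\big(a^{ij}\partial_{x_i}u+\sigma^{jr}v^r\big)=\Delta u+\nabla\cdot\big((a-I)\nabla u+\sigma v\big)$, the triple $(u,v,\mu)$ solves RBSPDE \eqref{RBSPDE-Laplac} with data $\bar f:=f(\cdot,u,\nabla u,v)\in\cL^2(L^2)$, $\bar g:=(a-I)\nabla u+\sigma v+g(\cdot,u,\nabla u,v)\in\cL^2((L^2)^d)$ (both memberships from $(\mathcal A1)$--$(\mathcal A3)$, the boundedness of $a,\sigma$ and $(u,v)\in\mathcal H\times\cL^2((L^2)^m)$) and obstacle $\tilde\xi=\xi$; the hypotheses of the preceding theorem hold because $\xi$ satisfies $(\mathcal A4)$ with $\xi(T)\le G$ (from the hypothesis of Theorem \ref{thm-main}) and $(\xi(t,x+\sqrt2 B_t))_{t\in[0,T]}$ is continuous for a.e.\ $x$ (by assumption). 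Applying the preceding theorem to this frozen RBSPDE then delivers, for a.e.\ $x$: an a.s.\ continuous version of $\{u(s,x+\sqrt2 B_s)\}_s$, the identification \eqref{eq-relat-uv-mu-stp2}, namely $(Y^x,Z^x,\tilde Z^x)(t)=(u,v,\sqrt2\nabla u)(t,x+\sqrt2 B_t)$ $d\bar{\bP}\otimes dt\otimes dx$-a.e.\ with an increasing parameterized $K^x$; the reflected BSDE \eqref{RBSDE-linear1-stp} with sources $\bar f(s,x+\sqrt2 B_s)\,ds$ and $\tfrac1{\sqrt2}\bar g(s,x+\sqrt2 B_s)\ast dB_s$; the measure representation \eqref{eq-relat-uv-mu-stp1}; and the corresponding optimal stopping formula for $u(t,x+\sqrt2 B_t)$.

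Next I would ``unfreeze'': using the identification above, $u(s,x+\sqrt2 B_s)=Y^x_s$, $\nabla u(s,x+\sqrt2 B_s)=\tfrac1{\sqrt2}\tilde Z^x_s$, $v(s,x+\sqrt2 B_s)=Z^x_s$, while the compositions $a(s,x+\sqrt2 B_s)$, $\sigma(s,x+\sqrt2 B_s)$, $f(s,x+\sqrt2 B_s,\cdot)$, $g(s,x+\sqrt2 B_s,\cdot)$ are meaningful $d\bar{\bP}\otimes dt\otimes dx$-a.e.\ by \cite[Theorem 14.3]{BarlesLesigne_BSDEPDE97_inbook} (see also \cite[Lemma 3.1]{Delbaen-Qiu-Tang-NS-2012}), the translation invariance of Lebesgue measure turning $d\bar{\bP}\otimes dt\otimes dx$-null sets into $d\bar{\bP}\otimes dt\otimes dx$-null sets. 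Hence $\bar f(s,x+\sqrt2 B_s)=f(s,x+\sqrt2 B_s,Y^x_s,\tfrac1{\sqrt2}\tilde Z^x_s,Z^x_s)$ and $\bar g(s,x+\sqrt2 B_s)=\tfrac1{\sqrt2}(a-I)(s,x+\sqrt2 B_s)\tilde Z^x_s+\sigma(s,x+\sqrt2 B_s)Z^x_s+g(s,x+\sqrt2 B_s,Y^x_s,\tfrac1{\sqrt2}\tilde Z^x_s,Z^x_s)=\tilde g_s$, which converts \eqref{RBSDE-linear1-stp} into \eqref{RBSDE-linear1-stp1} and the optimal stopping representation into the stated one; the obstacle inequality $Y^x_t\ge\xi(t,x+\sqrt2 B_t)$, the Skorohod condition $\int_0^T(Y^x_t-\xi(t,x+\sqrt2 B_t))\,dK^x(t)=0$ for a.e.\ $x$, and the measure representation \eqref{eq-relat-uv-mu-stp1} are inherited unchanged, and the final optimal stopping formula then follows from \cite[Proposition 2.3]{El_Karoui-reflec-1997}.

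The main obstacle is the unfreezing step: one must verify that $\nabla u(s,x+\sqrt2 B_s)=\tfrac1{\sqrt2}\tilde Z^x_s$ genuinely holds $d\bar{\bP}\otimes dt\otimes dx$-a.e.\ --- this is precisely the generalized It\^o--Wentzell identity underlying \eqref{BSDE-linear-stp}, transported to the reflected problem through the penalization of the preceding theorem --- and that composing the \emph{random} coefficients $a,\sigma,f,g$ with the shifted path $x+\sqrt2 B_s$ is legitimate $d\bar{\bP}\otimes dt\otimes dx$-a.e.\ and respects the a.e.\ identities above. Everything else is a relabelling of the preceding theorem.
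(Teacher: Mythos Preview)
Your proposal is correct and follows exactly the approach implicit in the paper, which states the result as a corollary without proof: rewrite RBSPDE \eqref{RBSPDE} in Laplacian form by setting $\bar f=f(\cdot,u,\nabla u,v)$ and $\bar g=(a-I)\nabla u+\sigma v+g(\cdot,u,\nabla u,v)$, apply the preceding theorem, and then substitute $(u,\nabla u,v)(s,x+\sqrt2 B_s)=(Y^x_s,\tfrac{1}{\sqrt2}\tilde Z^x_s,Z^x_s)$ to unfreeze. Your identification of the only nontrivial point---that the compositions of the random coefficients with $x+\sqrt2 B_s$ are well defined $d\bar{\bP}\otimes dt\otimes dx$-a.e.\ via \cite[Theorem 14.3]{BarlesLesigne_BSDEPDE97_inbook}---is exactly right.
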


\bibliographystyle{siam}

\end{document}